\documentclass[11pt,a4paper]{article}
\usepackage[T1]{fontenc}
\usepackage{lmodern}
\usepackage{amsmath,amssymb,amsthm,mathtools}
\usepackage{mathrsfs}
\usepackage[textwidth=150mm,textheight=230mm,centering]{geometry}
\usepackage{cite}
\usepackage[hidelinks]{hyperref}
\usepackage{microtype}
\allowdisplaybreaks[2]
\numberwithin{equation}{section}

\newtheorem{theorem}{Theorem}[section]
\newtheorem{proposition}[theorem]{Proposition}
\newtheorem{lemma}[theorem]{Lemma}
\newtheorem{corollary}[theorem]{Corollary}
\newtheorem{maintheorem}{Theorem}

\theoremstyle{definition}
\newtheorem{definition}[theorem]{Definition}
\newtheorem{example}[theorem]{Example}
\newtheorem{remark}[theorem]{Remark}

\newcommand{\R}{\mathbb R}
\newcommand{\C}{\mathbb C}
\newcommand{\bp}{\bar\partial_b}
\newcommand{\Ric}{\operatorname{Ric}}
\newcommand{\tr}{\operatorname{tr}}
\newcommand{\divg}{\operatorname{div}}
\newcommand{\rank}{\operatorname{rank}}
\newcommand{\Sec}{\operatorname{Sec}}
\newcommand{\Bisec}{\operatorname{Bisec}}
\newcommand{\HSC}{\operatorname{HSC}}
\newcommand{\Hess}{\operatorname{Hess}}
\newcommand{\Vol}{\operatorname{Vol}}
\newcommand{\op}{\mathrm{op}}
\newcommand{\HS}{\mathrm{HS}}
\newcommand{\TW}{\mathrm{TW}}
\newcommand{\Rea}{\operatorname{Re}}
\newcommand{\Ima}{\operatorname{Im}}
\newcommand{\Id}{\operatorname{Id}}
\newcommand{\dd}{\,\mathrm{d}v_g}
\newcommand{\ip}[2]{\langle #1,#2\rangle}
\newcommand{\norm}[1]{\lVert #1\rVert}
\newcommand{\Jt}{J_N}
\newcommand{\calR}{\mathcal R}
\newcommand{\calS}{\mathcal S}
\newcommand{\calI}{\mathcal I}
\newcommand{\calL}{\mathcal L}
\newcommand{\calE}{\mathcal E}
\newcommand{\calU}{\mathcal U}
\newcommand{\calQ}{\mathcal Q}
\newcommand{\nablaf}{\nabla^f}
\newcommand{\grad}{\nabla^g}
\newcommand{\eps}{\varepsilon}
\newcommand{\keywords}[1]{\par\smallskip\noindent\textbf{Keywords}\quad #1}
\newcommand{\msc}[1]{\par\smallskip\noindent\textbf{Mathematics Subject Classification (2020)}\quad #1}

\title{Schwarz lemmas and rigidity of \(\bar\partial_b\)-harmonic maps}
\author{Xin Huang \and Hui Liu}
\date{}

\begin{document}
	\maketitle
	
	\begin{abstract}
		We prove Schwarz lemmas for \(\bar\partial_b\)-harmonic maps from complete
		pseudo-Hermitian manifolds into K\"ahler manifolds of negative curvature.
		For maps of bounded generalized dilatation we bound the full and the
		horizontal differential. For CR maps a coupled Bochner identity yields
		rank-dependent horizontal estimates and a bound for the Reeb differential.
		On Sasakian sources and complex hyperbolic targets we classify the maps
		attaining the horizontal bound, bound the deviation from equality in
		integral form by the deficit at one point, and prove compactness of
		almost extremal sequences. As an application, a Siu-type divergence
		identity yields foliation, pluriharmonicity and rank rigidity under
		volume growth conditions.
	\end{abstract}
	\keywords{Schwarz lemma, pseudo-Hermitian manifold,
		\(\bar\partial_b\)-harmonic map, Sasakian manifold, rigidity}
	\msc{53C43, 32V20, 53C55, 58E20}
	
	\section{Introduction}\label{sec:intro}
	
	The Ahlfors--Schwarz lemma bounds the pull-back metric of a holomorphic
	map from the unit disc into a Riemann surface whose Gaussian curvature is
	bounded above by a negative constant \cite{Ahlfors}. Chern \cite{Chern}
	studied volume ratios of holomorphic maps between Hermitian manifolds of
	equal dimension, and Lu \cite{Lu} derived differential identities for the
	trace of the pull-back Hermitian metric. For a complete K\"ahler source
	with Ricci curvature bounded below by \(-K\), Yau \cite{Yau} proved a
	differential estimate when the holomorphic bisectional curvature of the
	target is bounded above by \(-\kappa<0\). Under an upper bound \(-b<0\)
	for the holomorphic sectional curvature of the target, Royden
	\cite{Royden} obtained the coefficient \(2dK/((d+1)b)\) for the energy
	density of a holomorphic map of rank at most \(d\). Mok and Yau
	\cite{MokYau} proved volume comparison for equidimensional K\"ahler
	manifolds under a lower scalar curvature bound on the complete source and
	a negativity condition on the Ricci form of the target. Shen \cite{Shen}
	established differential estimates for harmonic maps of bounded
	generalized dilatation between complete Riemannian manifolds, under a
	lower Ricci bound on the source and a negative upper bound for the
	sectional curvature of the target. Chen and Nie \cite{ChenNie} showed
	that equality at one point in the estimates of Yau and Royden forces
	global equality, constant rank and total geodesy.
	
	In pseudo-Hermitian geometry, Dong, Ren and Yu \cite{DRY} proved Schwarz
	lemmas for CR and transversally holomorphic maps under curvature bounds.
	Chong, Dong, Ren and Yu \cite{CDRY21} obtained the bound \(K/\kappa\) for
	foliated CR maps from complete pseudo-Hermitian manifolds with
	Tanaka--Webster Ricci curvature bounded below by \(-K\) and torsion
	bounded in \(C^1\) into Hermitian manifolds with bisectional curvature
	bounded above by \(-\kappa<0\). Chong, Dong, Ren and Yang \cite{CDRY19}
	proved foliation and pluriharmonicity results for harmonic and
	pseudoharmonic maps. Huang and Yu \cite{HY} estimated the transverse
	differential of transversally harmonic maps of bounded generalized
	transversal dilatation between Riemannian foliations. Li and Son
	\cite{LS} obtained a CR analogue of Siu's formula. Dong, Liu and Zhao
	\cite{DLZ} introduced a partial energy containing a Reeb term; they
	proved that its critical maps from compact Sasakian manifolds of
	dimension at least five into strongly seminegative K\"ahler manifolds are
	foliated and pluriharmonic, and that they are CR or anti-CR under strong
	negativity and a rank condition. In the present paper we study these maps
	on complete, possibly noncompact, sources. We prove a priori estimates for
	the horizontal and the Reeb differential, analyse the equality case, and
	derive rigidity results.
	
	Let \((M^{2m+1},H,J,\theta)\) be a strictly pseudoconvex pseudo-Hermitian
	manifold with Webster metric \(g\), Levi metric \(G=g|_H\), Reeb field
	\(\xi\), Tanaka--Webster connection \(\nabla\) and torsion \(\mathsf A\),
	normalized by \(d\theta(X,Y)=2g(JX,Y)\). Let \((N,h,J_N)\) be a K\"ahler
	manifold. For \(\Omega\Subset M\) put
	\begin{equation}\label{eq:intro-energy}
		\calE_{\bp,\xi}(f;\Omega)
		=\int_\Omega\Bigl(|\bp f|^2+\frac14|df(\xi)|^2\Bigr)\dd .
	\end{equation}
	A map is called \emph{\(\bp\)-harmonic} if it is critical for
	\eqref{eq:intro-energy} on every relatively compact domain; by
	Proposition~\ref{prop:variation} this is equivalent to
	\begin{equation}\label{eq:intro-pde}
		\tau(f)=2m\,J_N\,df(\xi).
	\end{equation}
	
	\begin{maintheorem}\label{thmA}
		Let \(M\) be complete with \(\Ric^g\ge-K_gg\) and \(|D\xi|_{\op}\le L\),
		where \(D\) is the Levi-Civita connection of \(g\), and let
		\(\Sec_N\le-\kappa<0\). Let \(f\colon M\to N\) be \(\bp\)-harmonic and
		put \(E=|df|^2\), \(U=|df|_H|^2\), with real Hilbert--Schmidt norms. If
		the squared singular values \(\lambda_1\ge\dots\ge\lambda_{2m+1}\ge0\) of
		\(df\) satisfy \(\lambda_1\le\Lambda^2\sum_{j\ge2}\lambda_j\), then
		\[
		E\le\frac{(1+\Lambda^2)(K_g+m^2+2mL)}{\kappa}.
		\]
		Put \(Z=\Delta^D\xi+\Ric_g^\sharp\xi\) and assume in addition
		\(|\mathsf A|_{\HS}\le T\), \(|\ip Z\xi|\le z_0\) and \(|Z_H|\le z_1\).
		If the squared singular values \(\mu_1\ge\dots\ge\mu_{2m}\ge0\) of
		\(df|_H\) satisfy \(\mu_1\le\Lambda_H^2\sum_{j\ge2}\mu_j\), then
		\[
		U\le\max\Bigl\{\frac{2b}{\kappa\delta},
		\frac{a+\sqrt{a^2+2z_1^2}}{2\kappa\delta}\Bigr\},
		\]
		where \(\delta=(1+\Lambda_H^2)^{-1}\), \(a=K_g+3L^2+2mL+4m^2\) and
		\(b=K_g+z_0+2T^2\).
	\end{maintheorem}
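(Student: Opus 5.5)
The plan follows Shen's argument for harmonic maps of bounded generalized dilatation: a Bochner-type differential inequality for \(E\) (respectively \(U\)), and then the Omori--Yau maximum principle on the complete source. The Omori--Yau principle is available because \(\Ric^g\ge-K_gg\).

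\textbf{Full differential.} View \(f\) as a map between Riemannian manifolds \((M,g)\to(N,h)\) whose tension field is \(\tau(f)=2mJ_N df(\xi)\) by \eqref{eq:intro-pde}. The Eells--Sampson Bochner formula gives
\[
\tfrac12\Delta E=|\nabla df|^2+\ip{df(\Ric^g\cdot)}{df}-\sum\ip{R^N(df e_i,df e_j)df e_j}{df e_i}+\ip{d\tau(f)}{df}.
\]
The target curvature term is handled in the singular value frame: with \(\Sec_N\le-\kappa\) it is at least \(\kappa\sum_{i\ne j}\lambda_i\lambda_j\). The dilatation hypothesis \(\lambda_1\le\Lambda^2\sum_{j\ge2}\lambda_j\) gives \(\sum_{i\neq j}\lambda_i\lambda_j\ge E^2/(1+\Lambda^2)\), exactly as in Shen.

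The tension term needs care. Since \(J_N\) is parallel, \(\nabla\tau=2mJ_N\nabla(df(\xi))\), and
\[
\nabla_X(df(\xi))=(\nabla_X df)(\xi)+df(D_X\xi).
\]
The second piece is bounded by \(2mL E\) using \(|D\xi|_\op\le L\). The first piece is absorbed by Cauchy--Schwarz into \(|\nabla df|^2\) at the cost of \(m^2E\). This yields
\[
\tfrac12\Delta E\ge\tfrac12|\nabla df|^2\cdots-(K_g+m^2+2mL)E+\tfrac{\kappa}{1+\Lambda^2}E^2 .
\]
Kato's inequality controls the gradient terms when we pass to \(\sqrt{E}\) or to \(E/(1+\cdot)\). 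Apply Omori--Yau to a bounded function of \(E\); if boundedness is not known a priori, first use a cutoff \(\phi(r)\) with Laplacian comparison. At an almost-maximum point this gives \(\kappa E/(1+\Lambda^2)\le K_g+m^2+2mL\).

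\textbf{Horizontal differential.} Compute \(\Delta U\) for \(U=|df|_H|^2=E-|df(\xi)|^2\). This requires a separate Bochner formula for \(V=|df(\xi)|^2\). Differentiating \(df(\xi)\) twice yields \(df(\Delta^D\xi+\Ric^\sharp\xi)=df(Z)\) plus curvature terms of \(N\). This is where \(Z\) enters: \(\ip Z\xi\) contributes \(z_0V\), and the cross term \(\ip{df(Z_H)}{df(\xi)}\) is bounded by \(z_1\sqrt{UV}\). The torsion \(\mathsf A\) appears when horizontal derivatives of \(\xi\) are expressed through the Tanaka--Webster connection, since \(D_X\xi=-JX-\mathsf A X\) up to normalization, giving the \(2T^2\) and \(3L^2\) terms.

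The target curvature term restricted to horizontal singular values gives \(\kappa\delta U^2\) by the \(\Lambda_H\) hypothesis. The result is a coupled system of the form
\[
\Delta U\ge 2\kappa\delta U^2-2aU-z_1\sqrt{UV}\cdot(\ldots),\qquad \Delta V\ge \ldots-2bV.
\]
At an Omori--Yau point for \(U\), there are two cases. In the first, the \(V\)-coupling is dominated by the \(b\) term, which gives \(U\le 2b/(\kappa\delta)\). In the second, the quadratic inequality \(\kappa\delta U^2-aU-z_1^2/2\le0\) holds after Young's inequality, and it gives the root \((a+\sqrt{a^2+2z_1^2})/(2\kappa\delta)\). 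Taking the maximum of the two cases gives the stated bound.

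\textbf{Main obstacle.} I expect the main obstacle to be the horizontal step. The mixed Bochner identity must be derived with exact constants, and the Reeb coupling term must be split so that the two cases close with precisely \(a\) and \(b\) as defined. I would first derive \(\Delta V\) cleanly in terms of \(Z\), and then choose the Young weights to match the stated constants.
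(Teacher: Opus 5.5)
Your treatment of the full differential is essentially the paper's argument. You use the Bochner formula with $\tau(f)=2mJ_N\,df(\xi)$, the splitting $\nabla^f_X df(\xi)=S(X,\xi)+df(D_X\xi)$, the curvature bound $\kappa(E^2-\tr(F^*F)^2)\ge\kappa\delta E^2$, and a maximum principle. The paper uses a cutoff with Laplacian comparison and Calabi's trick, which is interchangeable with Omori--Yau here. One bookkeeping point: $2m|S|\sqrt E\le|S|^2+m^2E$ uses up all of $|\nabla df|^2$. So you cannot keep $\tfrac12|\nabla df|^2$ and still have the constant $m^2$. No gradient term, and no Kato inequality, is needed in either maximum-principle argument.

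The horizontal part has a genuine gap: you never say how the Reeb energy $v=|df(\xi)|^2$ is controlled. It has no a priori bound, since only $df|_H$ is assumed to have bounded dilatation.

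Subtract the identity for $\tfrac12\Delta v$ from the one for $\tfrac12\Delta E$. This cancels the magnetic term $2m\ip{J_N\nabla^f_\xi V}{V}$ exactly, together with $|S(\xi,\xi)|^2$. The negative terms $-K_gv$, $-\ip Z\xi\,v$, $-2T^2v$ (from $2\sum_a\ip{S(e_a,\mathsf Ae_a)}{V}$) and $-z_1\sqrt{Uv}$ all land in the inequality for $\tfrac12\Delta U$ itself. A separate inequality for $\Delta v$ is of no use at an almost-maximum point of $U$. A lower bound for $\Delta v$ could not be subtracted in any case.

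What absorbs these terms is the mixed curvature term $\mathcal Q_0=-\sum_a\ip{R^N(V,Fe_a)Fe_a}{V}$, which you dropped by keeping only $\kappa\delta U^2$. It appears with coefficient $2$ in $\tfrac12\Delta E$ and with coefficient $1$ in $\tfrac12\Delta v$, so it survives the subtraction. By $\Sec_N\le-\kappa$ and the horizontal dilatation bound applied with $Y=V$ (no orthogonality of $V$ to $df(H)$ is needed),
\[
\mathcal Q_0\ge\kappa\bigl(Uv-|F_H^*V|^2\bigr)\ge\kappa\delta\,Uv .
\]
With $A=\kappa\delta$ and $z_1\sqrt{Uv}\le\tfrac A2Uv+\tfrac{z_1^2}{2A}$ one gets
\[
\tfrac12\Delta U\ge AU^2-aU-\frac{z_1^2}{2A}+\Bigl(\frac A2U-b\Bigr)v .
\]
The threshold $2b/A$ is exactly where the coefficient of $v$ becomes nonnegative. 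On $\{U>2b/A\}$ you drop the $v$ term and run the maximum principle.

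So your first case is the reverse of what you wrote: it comes from $\tfrac A2U$ dominating $b$, not from the coupling being dominated by $b$. Your quadratic should read $\kappa\delta U^2-aU-z_1^2/(2\kappa\delta)\le0$. With $z_1^2/2$ you would not obtain $\sqrt{a^2+2z_1^2}$. Without $\mathcal Q_0$, Young's inequality can only trade $z_1\sqrt{Uv}$ for another multiple of $v$, and the argument does not close.
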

	
	\begin{maintheorem}\label{thmB}
		Let \(M\) be complete and connected with \(\Ric^{\TW}\ge-KG\),
		\(|\mathsf A|_{\op}\le a_0\) and \(|\nabla\mathsf A|\le a_1\). Let
		\(f\colon M\to N\) be a CR \(\bp\)-harmonic map, let \(P\) be the
		Hermitian endomorphism of \(H^{1,0}\) defined by
		\(G(PZ,\bar W)=h(df Z,\overline{df W})\), and put
		\(u=\tr P=\tfrac12|df|_H|^2\). Assume \(\rank_\C P\le d\) for an integer
		\(1\le d\le\min\{m,\dim_\C N\}\). If \(\Bisec_N\le-\kappa<0\), then
		\[
		u\le\frac{K+2(m+2)a_0}{\kappa},\qquad
		|df(\xi)|^2\le\frac{2a_0^2}{\kappa}.
		\]
		If in addition \(\dim_\C N\ge2\) and the holomorphic sectional curvature
		satisfies \(\HSC_N\ge-k_{\mathrm h}\), then
		\[
		u\le\frac{K+2(m+2)a_0}{\kappa+\kappa^2/[d(k_{\mathrm h}-\kappa)]}.
		\]
		For the complex hyperbolic space \(\mathbb{CH}^n_\kappa\) of holomorphic
		sectional curvature \(-2\kappa\),
		\[
		u\le\frac{d\,\{K+2(m+2)a_0\}}{(d+1)\kappa}.
		\]
		These horizontal bounds are strict if \(a_0>0\). If \(M\) is Sasakian,
		then \(f\) is foliated and descends locally to a holomorphic map on the
		local K\"ahler quotients of the Reeb foliation.
	\end{maintheorem}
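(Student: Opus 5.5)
We prove Theorem~\ref{thmB} by a Bochner formula for \(u=\tr P\) and for \(v=|df(\xi)|^2\), followed by the Omori--Yau maximum principle on the complete manifold \(M\). The lower bound on \(\Ric^{\TW}\) and the bound on \(|\mathsf A|\) give the volume-comparison control that the CR Laplacian comparison requires; a Cheng--Yau cutoff built from the Carnot--Carath\'eodory distance also works. Throughout we work in a local unitary frame \(\{\eta_\alpha\}\) of \(H^{1,0}\). Since \(f\) is CR, \(df(\eta_\alpha)\in T^{1,0}N\) and \(df(\bar\eta_\alpha)=0\). We write \(f^i_\alpha=df(\eta_\alpha)^i\), so that \(P_{\alpha\bar\beta}=f^i_\alpha\overline{f^i_\beta}\), and \(f^i_0=df(\xi)^i\).

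\emph{Step 1 (coupled Bochner identity).} Commute covariant derivatives using the Tanaka--Webster commutation relations \(f_{\alpha\bar\beta}-f_{\bar\beta\alpha}=2i\delta_{\alpha\beta}f_0\) and \(f_{\alpha 0}-f_{0\alpha}=A_{\alpha}^{\bar\gamma}f_{\bar\gamma}+\dots\). Combined with the CR condition and the equation \eqref{eq:intro-pde}, the horizontal tension becomes \(\sum_\alpha f^i_{\alpha\bar\alpha}\), and this ties it to \(f_0\). From this we expect
\[
\tfrac12\Delta_b u=|\nabla' df|^2+\Ric^{\TW}(P)-R^N(P,P)+\text{(torsion terms)}+\text{(terms in }f_0\text{)},
\]
together with a parallel identity for \(\Delta_b v\). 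In the second identity the term \(R^N(df\xi,\cdot)\) has a good sign, and the source term is of the form \(A\ast df|_H\). Bisectional curvature \(\le-\kappa\) gives \(-R^N(P,P)\ge\kappa\sum_{\alpha,\beta}\lambda_\alpha\lambda_\beta=\kappa u^2\), where \(\lambda_\alpha\) are the eigenvalues of \(P\). For \(v\) we will show \(\Delta_b v\ge 2\kappa\,u v-C a_0\sqrt{uv}-a_1(\dots)\). At a maximum this should give \(v\le 2a_0^2/\kappa\), after the \(u\)-factor is cancelled using that \(f_0\) lies in the image of \(df|_H\). The cancellation comes from the CR relation \(f_0\sim\sum f_{\alpha\bar\alpha}\).

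\emph{Step 2 (maximum principle).} Apply Omori--Yau to \(u\) (bounded or not: first treat \(\log u\), or use a cutoff). We obtain \(0\ge -Ku-2(m+2)a_0u+\kappa u^2\), where the constant \(2(m+2)a_0\) collects the torsion contributions. The coefficient \(m+2\) arises from the term \(\tr A\) contributions together with the \(f_0\) coupling, which is estimated by Cauchy--Schwarz against the good term \(|\nabla'df|^2\). This yields the first bound. For strictness when \(a_0>0\), the estimates of the torsion terms are equalities only if \(|\mathsf A|\equiv0\) on the support; a strong maximum principle argument then shows that the supremum cannot be attained.

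\emph{Step 3 (refined curvature estimates).} For \(\HSC_N\ge-k_{\mathrm h}\) and \(\dim_\C N\ge2\), replace the bisectional estimate by the algebraic inequality of Royden and Chen--Nie type:
\[
-R(P,P)\ge\bigl(\kappa+\tfrac{\kappa^2}{d(k_{\mathrm h}-\kappa)}\bigr)u^2
\]
for \(\rank P\le d\). This uses diagonalization of \(P\) and a polarization argument. For \(\mathbb{CH}^n_\kappa\) the exact curvature gives \(-R(P,P)=\kappa(\tr P^2+(\tr P)^2)\ge\kappa(1+1/d)u^2\) by Cauchy--Schwarz on the \(d\) nonzero eigenvalues. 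Substituting these into Step~2 gives the stated bounds.

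\emph{Step 4 (Sasakian case).} If \(\mathsf A=0\), then \(a_0=0\) and the Reeb bound gives \(df(\xi)=0\), so \(f\) is foliated. Because \(f\) is CR, \(df|_H\) is \(J\)-linear, so the induced map on local leaf spaces is holomorphic.

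The main obstacle is Step~1, namely organizing the torsion and \(f_0\) cross terms so that they are absorbed with exactly the constants \(2(m+2)a_0\) and \(2a_0^2\). I would first derive the identity for \(\Delta_b|f_0|^2\) carefully, since it controls the coupling.
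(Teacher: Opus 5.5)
\textbf{Gap 1: the horizontal bound.} Step~2 cannot work as described, because the ``$f_0$ coupling'' in $\Delta_bu$ is not absorbable by horizontal Hessian terms. For a CR $\bp$-harmonic map one has $f_{i\bar j}=2i\delta_{ij}r$, $f_{00}=0$, $f_{0i}=f_{i0}$, and
\[
\Delta_bu=2\sum_{i,j}\bigl(|f_{ij}|^2+|f_{i\bar j}|^2\bigr)+2\sum_{i,j}\Ric^{\TW}_{i\bar j}\ip{f_j}{f_i}+2\mathscr C+4(m+2)\Rea\Bigl(i\sum_i\ip{f_{i0}}{f_i}\Bigr).
\]
The last term has three features that defeat your plan:
\begin{itemize}
\item It carries no factor of $a_0$. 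Its coefficient $m+2$ comes from the commutation relations, not from $\tr\mathsf A$, which vanishes.
\item It involves the Reeb derivative $f_{i0}$, whereas the good derivative terms are only $|f_{ij}|^2$ and $\sum_{i,j}|f_{i\bar j}|^2=4m|r|^2$. So Cauchy--Schwarz against $|\nabla'df|^2$ cannot absorb it.
\item It is a multiple of $\Ima\sum_i\ip{f_{i0}}{f_i}$, whereas $\xi u=2\Rea\sum_i\ip{f_{i0}}{f_i}$. So it is not removed at a maximum of $u$ either.
\end{itemize}

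What is needed is the Reeb Jacobi identity $\xi^2u=2\sum_i|f_{i0}|^2+2\mathscr C_0-2T_A$, with $\mathscr C_0\ge\kappa u|r|^2$ and $0\le T_A\le a_0^2u$. One works with the anisotropic operator $\calL_t=\Delta_b+t\xi^2$ and completes the square $2t\sum_i|f_{i0}-\tfrac{i(m+2)}tf_i|^2$. This gives $\calL_tu\ge2\alpha_du^2-2\bigl(K+ta_0^2+(m+2)^2/t\bigr)u$ with $\alpha_d=\kappa+\rho/d$. A cutoff maximum principle for $\calL_t$ then gives $u\le\bigl(K+ta_0^2+(m+2)^2/t\bigr)/\alpha_d$ for every $t>0$; the cutoff needs a bound on $\Hess r(\xi,\xi)$, which is where $a_1$ enters. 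The constant arises as $2(m+2)a_0=\min_t\bigl(ta_0^2+(m+2)^2/t\bigr)$, and for $a_0=0$ one lets $t\to\infty$.

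Strictness also comes from this square, not from ``$\mathsf A\equiv0$''. That conclusion would contradict nothing, since $a_0$ is only an upper bound for $|\mathsf A|$. Instead, suppose $u=C_*$ at one point and take $t=(m+2)/a_0$. The strong maximum principle for $\calL_t-2\alpha_du$ forces the whole remainder to vanish identically, hence $r=0$ and $f_{i0}=0$. Then the square equals $2(m+2)^2u/t>0$, a contradiction.

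\textbf{Gap 2: the Reeb bound.} A maximum principle based on $\Delta_bv\ge2\kappa uv-\dots$ controls at best the product $uv$, and $u$ cannot be cancelled pointwise. The quantity $r$ is tied to $df|_H$ only through the derivative relation $f_{i\bar i}=2ir$, so $u$ may vanish where $r\ne0$. For example, the CR function $t+\tfrac i2|z|^2$ on the Heisenberg group (into $\C$) has $f_{00}=0$, $df|_H=0$ and $df(\xi)\ne0$ at $z=0$. Moreover $f_{0\bar j}=df^{1,0}(\mathsf AZ_{\bar j})$, so $\Delta_b|r|^2$ contains terms of type $\mathsf A\ast f_{ij}$ and $\nabla\mathsf A\ast df$. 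These would give a constant depending on $a_1$ and on second derivatives, whereas $2a_0^2/\kappa$ depends on neither.

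The working argument is one-dimensional, along Reeb orbits:
\begin{itemize}
\item Since $f_{00}=0$, we get $\xi|r|^2=0$, so $\Omega=\{\kappa|r|^2>a_0^2\}$ is invariant under the complete Reeb flow $\phi_s$.
\item For $x\in\Omega$, the function $y(s)=u(\phi_sx)$ is bounded by the horizontal estimate. That estimate must therefore be proved first, without the Reeb bound.
\item It also satisfies $y''\ge2(\kappa|r|^2-a_0^2)y\ge0$. A bounded convex function on $\R$ is constant, and this inequality then forces $y\equiv0$.
\item Thus $df|_H\equiv0$ on the open set $\Omega$. Only then does $f_{i\bar j}=2i\delta_{ij}r$ give $r=0$ on $\Omega$, a contradiction.
\end{itemize}

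Steps~3 and~4 agree with the paper, but Step~4 needs this Reeb argument to obtain foliation.
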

	
	\begin{maintheorem}\label{thmC}
		Let \(M\) be a connected complete Sasakian manifold with
		\(\Ric^{\TW}\ge-KG\), \(K>0\), let \(1\le d\le\min\{m,n\}\), and consider
		CR \(\bp\)-harmonic maps \(f\colon M\to\mathbb{CH}^n_\kappa\) of complex
		rank at most \(d\), with \(P\) and \(u\) as in Theorem~\ref{thmB}. Put
		\(C_d=dK/((d+1)\kappa)\).
		
		If \(u(x_0)=C_d\) for some \(x_0\in M\), then \(u\equiv C_d\) and the
		transverse differential of \(f\) is parallel of complex rank \(d\). Every
		local K\"ahler quotient of the Reeb foliation splits locally as
		\(S\times Q'\) with \(\dim_\C S=d\) and \(\HSC_S=-2K/(d+1)\); the induced
		map is constant along \(Q'\) and restricts to a totally geodesic
		holomorphic homothety on \(S\) with pull-back metric
		\(Kg_S/((d+1)\kappa)\).
		
		For \(x_0\in M\) and \(R>0\) there is \(A_R>0\), depending only on the
		geometry of \(B_{4R}(x_0)\) and on \(K,\kappa,d\), such that every map in
		this class satisfies
		\[
		\int_{B_R(x_0)}\Bigl(|\nabla P|^2+\Bigl|P^2-\frac udP\Bigr|^2\Bigr)\dd
		\le A_R\bigl(C_d-u(x_0)\bigr).
		\]
		Every sequence \(f_\nu\) in this class with \(u_\nu(x_0)\to C_d\) has,
		after composition with holomorphic isometries of the target, a
		subsequence converging in \(C^\infty_{\mathrm{loc}}\) to a map attaining
		equality.
	\end{maintheorem}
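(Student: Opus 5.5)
On a Sasakian source the torsion vanishes, so \(a_0=a_1=0\). By Theorem~\ref{thmB}, \(f\) is foliated and descends locally to holomorphic maps \(\phi\colon Q\to\mathbb{CH}^n_\kappa\) on the local K\"ahler quotients \(Q\) of the Reeb foliation. The transverse Ricci curvature of \(Q\) equals \(\Ric^{\TW}\ge-KG\), and \(u\) is the pull-back of \(\tr_{g_Q}\phi^*h\). The plan is to run Royden's argument with the equality analysis of Chen--Nie on \(Q\), and to keep every remainder term in the Bochner formula instead of discarding it.

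\textbf{Step 1: quantitative Bochner inequality.} Using the coupled Bochner identity for CR maps behind Theorem~\ref{thmB}, with \(\mathsf A=0\), we aim for
\[
\Delta_b u\ \ge\ c_0\Bigl(|\nabla P|^2+\Bigl|P^2-\frac udP\Bigr|^2\Bigr)+\frac{2(d+1)\kappa}{d}\,u\,(u-C_d)\quad\text{(up to a harmless term involving }|\nabla u|^2/u).
\]
For \(\mathbb{CH}^n_\kappa\) the curvature term is exactly \(-\kappa(\tr P^2+u^2)\). The inequality \(\tr P^2\ge u^2/d\) for \(\rank P\le d\) holds with deficit \(\tr P^2-u^2/d=|P-\tfrac udI_{\mathrm{supp}}|^2\), and this deficit controls \(|P^2-\tfrac udP|^2\) up to a factor \(u^2\). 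The Ricci term is \(\ge -Ku\). The gradient term is \(|\nabla P|^2\), hence \(\ge|\nabla u|^2/d\). Using \(\Delta\log u\) or \(\Delta u^{1/2}\) absorbs the \(|\nabla u|^2\) part while keeping a fixed fraction of \(|\nabla P|^2\).

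\textbf{Step 2: rigidity.} By Theorem~\ref{thmB}, \(u\le C_d\) everywhere. Set \(w=C_d-u\ge0\). Step 1 gives \(\Delta_b w\le -c\,(\text{nonnegative quantity})+C\,w\), so \(w\) is a nonnegative supersolution of a linear subelliptic inequality. The strong maximum principle for the sub-Laplacian (Bony) then gives: if \(w(x_0)=0\), then \(w\equiv0\) and the nonnegative terms vanish. So \(\nabla P=0\), and \(P^2=\tfrac udP\), i.e.\ \(P\) is \(\tfrac{C_d}{d}\) times a parallel projection of rank \(d\) (rank is exactly \(d\) since \(u\neq 0\)). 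The parallel projection gives a local de Rham splitting \(Q=S\times Q'\), with \(\phi\) constant on \(Q'\) and \(\phi|_S\) a homothety with \(\phi^*h=\tfrac{C_d}{d}g_S=Kg_S/((d+1)\kappa)\). Parallelism of \(d\phi\) gives total geodesy, so \(S\) is locally isometric, after rescaling, to a totally geodesic complex submanifold of \(\mathbb{CH}^n_\kappa\). Hence \(\HSC_S=-2\kappa\cdot\tfrac{K}{(d+1)\kappa}=-2K/(d+1)\).

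\textbf{Step 3: integral deficit bound.} This is the main obstacle. Since \(w\ge0\) is a supersolution, the weak Harnack inequality for H\"ormander sub-Laplacians gives
\[
\fint_{B_{2R}}w\le C_R\inf_{B_{R}}w\le C_R\,w(x_0).
\]
Multiply the inequality of Step 1 by a cutoff \(\chi^2\) supported in \(B_{2R}\) and integrate by parts. This gives
\[
\int\chi^2(\cdots)\le\int w\,|\Delta_b\chi^2|+C\int\chi^2w\le A_R\,w(x_0).
\]
The constants depend on Sobolev/volume data of \(B_{4R}\), which produces \(A_R\). The delicate point is that the Bochner quantity is nonlinear in \(u\). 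Step 1 must therefore be arranged so that the \(u(u-C_d)\) term is linear in \(w\) up to bounded factors, which holds since \(0\le u\le C_d\). I would first try to work directly with \(\Delta_b u\), not \(\log u\), to avoid small-\(u\) problems.

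\textbf{Step 4: compactness.} We have uniform bounds \(u_\nu\le C_d\) and \(|df_\nu(\xi)|=0\). Fix \(f_\nu(x_0)\) at a base point by composing with isometries of \(\mathbb{CH}^n_\kappa\), which preserves the class. The subelliptic system \eqref{eq:intro-pde} is elliptic in the full sense once \(df(\xi)=0\), since then \(\tau(f)=0\). Standard bootstrap for harmonic maps with bounded energy density into a nonpositively curved target then gives \(C^\infty_{\mathrm{loc}}\) bounds, and Arzel\`a--Ascoli gives a convergent subsequence. The limit is CR, \(\bp\)-harmonic and of rank \(\le d\), and \(u_\infty(x_0)=C_d\). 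By Step 2 it attains equality.
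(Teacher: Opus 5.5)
Your route is the paper's. Foliation on a Sasakian source turns the CR Bochner identity into the exact identity for $u$. The strong maximum principle for $w=C_d-u$ then gives rigidity. Integration against a cutoff plus a weak Harnack inequality gives the stability estimate; the paper chains the Harnack inequality over small coordinate balls covering $\overline{B_{2R}(x_0)}$, which is where the dependence on $B_{4R}(x_0)$ enters. Normalisation plus elliptic bootstrap for the harmonic maps $f_\nu$ gives compactness. Two small points. Since $\xi u=0$, you have $\Delta_b w=\Delta w$, so the classical elliptic strong maximum principle and weak Harnack inequality suffice and you do not need Bony's subelliptic theory. In Step~4 the isometries must be holomorphic, otherwise the CR condition is not preserved.

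The real problem is your description of the gradient term in Step~1, and it breaks Step~2.

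\textbf{What the gradient term is.} For a foliated CR map on a Sasakian source, $f_{i\bar j}=f_{i0}=0$. The gradient term in $\Delta_b u$ is then the full transverse Hessian $2\sum_{i,j}|f_{ij}|^2$, not $|\nabla P|^2$. The identity is exact,
$\Delta_b u=2\alpha_du(u-C_d)+2\sum_{i,j}|f_{ij}|^2+2D_R+2\kappa D_P$,
with $\alpha_d=(d+1)\kappa/d$ and $D_P=|P|^2-u^2/d$, and it contains no $|\nabla u|^2/u$ term.

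\textbf{How it controls $\nabla P$.} The link is the pointwise bound $|\nabla P|^2=\sum_{i,j,k}\bigl(|\ip{f_{ik}}{f_j}|^2+|\ip{f_i}{f_{jk}}|^2\bigr)\le2u\sum_{i,k}|f_{ik}|^2\le2C_d\sum_{i,k}|f_{ik}|^2$. Together with $|P^2-\tfrac udP|^2\le u^2D_P$, this gives your Step~1 inequality directly for $\Delta_b u$. The detour through $\log u$ is unnecessary, and it would actually lose control of $\nabla P$. The inequality $|\partial u|^2\le u\sum_{i,k}|f_{ik}|^2$ becomes an equality when each $f_{ik}$ is proportional to $f_i$, and then $\nabla P\neq0$.

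\textbf{Why Step~2 fails as written.} If you retain only $|\nabla P|^2$ and $|P^2-\tfrac udP|^2$, you cannot conclude that $d\phi$ is parallel. $\nabla P=0$ only kills the tangential components $\ip{f_{ik}}{f_j}$, and every holomorphic isometric immersion has $\nabla P=0$ without being totally geodesic. You must keep $\sum_{i,j}|f_{ij}|^2$ in the remainder. Its vanishing in the equality case gives $\nabla(df|_H)=0$, hence total geodesy, and together with the homothety this yields $\HSC_S=-2K/(d+1)$.
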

	
	\begin{maintheorem}\label{thmD}
		Let \(M^{2m+1}\), \(m\ge2\), be a connected complete Sasakian manifold
		with \(\Ric^g\ge-K_gg\), and let \(N\) be a strongly seminegative K\"ahler
		manifold with \(\Sec_N\le-\kappa<0\). If \(f\colon M\to N\) is
		\(\bp\)-harmonic of bounded generalized dilatation and
		\[
		\Vol B_R(o)\le CR^2\log R\,\log\log R\qquad(R\ge R_0),
		\]
		then \(f\) is foliated, \(\bp\)-pluriharmonic and harmonic.
	\end{maintheorem}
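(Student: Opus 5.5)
The plan for Theorem~\ref{thmD} is a Siu-type argument: integrate a divergence identity against cut-off functions adapted to the volume growth, using the uniform bound on \(|df|\) from Theorem~\ref{thmA}.

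\emph{Step 1: a priori bound.} Since \(f\) has bounded generalized dilatation, \(\Ric^g\ge-K_gg\) and \(\Sec_N\le-\kappa\), Theorem~\ref{thmA} applies. On a Sasakian manifold \(D\xi=-J\) up to normalization, so \(|D\xi|_{\op}\) is bounded by a constant \(L\). Hence \(E=|df|^2\le C_0\) on all of \(M\). In particular \(|df(\xi)|\) and \(|\bp f|\) are bounded.

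\emph{Step 2: Siu-type divergence identity.} Following the CR Siu formula of Li--Son and the compact computation of Dong--Liu--Zhao, I would construct a \(1\)-form (equivalently a horizontal vector field) \(\omega\), built from \(\partial_b f\), \(\bp f\) and \(\bar\partial\)-type contractions with \(d\theta\). Using the Sasakian identities (vanishing torsion, \([\xi,H]\subset H\)) and the equation \(\tau(f)=2m J_N df(\xi)\), its divergence should satisfy
\[
\divg\omega=|\nabla'' \bp f|^2_{\text{sym}}+Q_N(\bp f)+c_m|df(\xi)|^2\ \ (\text{or } |\nabla_\xi\bp f|^2\text{-type terms}),
\]
with all terms nonnegative. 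Here \(Q_N\ge0\) is the curvature term, which is nonnegative by strong seminegativity, and \(m\ge2\) guarantees \(c_m>0\), exactly as in the compact case. I would also aim for the pointwise bound
\[
|\omega|^2\le C\,E\cdot(\text{the nonnegative integrand}),
\]
i.e.\ \(|\omega|\le C|df|\,|\nabla df|\) with the gradient term controlled by the good terms. If that fails, I would fall back on \(|\omega|\le C|df|^2\le C'\) using Step 1.

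\emph{Step 3: parabolicity cut-off.} The growth \(\Vol B_R\le CR^2\log R\log\log R\) is precisely the borderline for which \(\int^\infty r\,dr/\Vol B_r=\infty\)-type tests hold. With \(\omega\) bounded, however, one needs the sharper Karp-type criterion: if \(\liminf_{R\to\infty} R^{-1}\int_{B_R}|\omega|=0\), or more generally \(\int^\infty \bigl(\int_{B_r}|\omega|\bigr)^{-1}dr=\infty\), then \(\int\divg\omega=0\) for \(\divg\omega\ge0\). Bounded \(\omega\) gives \(\int_{B_r}|\omega|\lesssim r^2\log r\log\log r\), which is too large for that criterion. I would therefore use the quadratic bound from Step 2: writing \(F=\divg\omega\ge0\), we have \(|\omega|^2\le C F\) (since \(E\) is bounded). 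Take a logarithmic cut-off \(\phi_R\) with \(|\nabla\phi_R|\lesssim 1/(r\log r\log\log r)\) on the annulus \(R\le r\le R^{*}\), with iterated-exponential radii. Then
\[
\int\phi^2F\le 2\int\phi|\nabla\phi||\omega|\le 2\Bigl(\int_{\mathrm{supp}\nabla\phi}\phi^2F\Bigr)^{1/2}\Bigl(C\int|\nabla\phi|^2\Bigr)^{1/2},
\]
and the growth condition makes \(\int|\nabla\phi_R|^2\) bounded. This is the standard dyadic/layer-cake estimate \(\sum \Vol(B_{r_{k+1}}\setminus B_{r_k})/(r_k\log r_k\log\log r_k)^2\) bounded. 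The tail integral \(\int_{\mathrm{supp}\nabla\phi}F\) tends to \(0\) once \(F\in L^1\), which itself follows from the same inequality. Hence \(F\equiv0\).

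\emph{Step 4: conclusion.} Vanishing of the terms gives \(df(\xi)=0\), so \(f\) is foliated. It also gives \(\nabla\bp f\) of the relevant \((1,1)\)-type equal to zero, i.e.\ \(\bp\)-pluriharmonicity. Then \(\tau(f)=2mJ_Ndf(\xi)=0\), so \(f\) is harmonic.

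I expect the main obstacle to be Step 2: producing \(\omega\) with the quadratic control \(|\omega|^2\le C\,E\,F\). The naive Siu form only gives \(|\omega|\le C|df||\nabla df|\), and one must check that every component of \(\nabla df\) appearing in \(\omega\) is among the terms controlled in \(F\) on a Sasakian source.
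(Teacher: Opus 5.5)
\textbf{There is a genuine gap in Steps~2 and~4.}

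Your outer architecture matches the paper's: the Schwarz bound on $E$ from Theorem~\ref{thmA}, turning the volume growth into cutoffs of small Dirichlet energy, and a Caccioppoli absorption for a current $\omega$ with $\divg\omega\ge0$ and $|\omega|^2\le C\,E\,\divg\omega$ (Theorem~\ref{thm:full-schwarz}, Proposition~\ref{prop:capacity-growth}, Theorem~\ref{thm:caccioppoli}). Your Step~3 is sound.

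The gap is Step~2, which carries all the content and is only postulated. The obstruction is the magnetic term $\tau(f)=2m\Jt V$. The divergence of a Siu current built from $f_{i\bar j}$ picks up indefinite terms such as $\Rea\sum_i\ip{\nablaf_{Z_i}f_{00}}{p_i}$ and $T_0=\Rea\ip{if_{00}}{r}$, and these cannot be absorbed pointwise into good terms. The paper cancels them exactly:
\begin{itemize}
\item it uses the trace-free part $\mathsf T^\circ$ of $f_{i\bar j}$ (current $X_\eta$);
\item it adds a current $X_\zeta$ built from $\ip{f_{00}}{p_i}$ to remove the first term;
\item it adds $\grad|r|^2$, whose Bochner formula \eqref{eq:reeb-siu-local} contains $+4mT_0$, to remove the second.
\end{itemize}
This forces the weights $a_m=(m-1)/(2m)$ and $b_m=4m/(m-1)$. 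That, not a coefficient in front of $|df(\xi)|^2$, is where $m\ge2$ enters. The resulting field $\mathcal Y$ satisfies $\divg\mathcal Y=\calI=2s+|f_{00}|^2+b_m(|\mathsf T^\circ|^2+\calS)+\calQ_0$ and $|\mathcal Y|\le c_m^{\mathrm S}\sqrt E\sqrt\calI$. This is exactly the quadratic control you were hoping for.

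Even granting this, your Step~4 does not follow, because $\calI$ contains no term $c|df(\xi)|^2$. Vanishing of $\calI$ gives only $\nablaf V=0$, $f_{00}=0$, $\mathsf T^\circ=0$ and $\calS=0$. A parallel $V$ need not vanish. Moreover, by \eqref{eq:siu-traces} the trace $\tr\mathsf T=-\tfrac12f_{00}+2imr$ still involves $r$, so $\bp$-pluriharmonicity also depends on first proving $V=0$.

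The paper supplies a second identity for this, the contact current $\alpha(X)=h(V,df(JX))$ of Lemma~\ref{lem:contact-current}. Once $\nablaf V=0$, it satisfies $\divg\alpha^\sharp=-2m|V|^2$ and $|\alpha|^2\le vU$, and the same cutoffs give $V=0$.

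Under the hypotheses of Theorem~\ref{thmD} there is also a shortcut. The full dilatation bound, via \eqref{eq:mixed-lower} with $Y=V$ and \eqref{eq:Q0-alg}, gives $\calQ_0\ge\kappa\delta E v\ge\kappa\delta v^2$ with $\delta=(1+\Lambda^2)^{-1}$. So $\calI\equiv0$ already forces $V=0$. Your proposal invokes neither argument.

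Only after $V=0$ do you get $r=0$ and $\mathsf T=\mathsf T^\circ=0$. Then \eqref{eq:basic-commutators} gives $f_{i\bar j}=f_{\bar ji}=0$, and $\tau(f)=2m\Jt V=0$.
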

	
	The paper is organized as follows. Section~\ref{sec:geometry} fixes the
	normalizations of the pseudo-Hermitian structure, derives the
	Euler--Lagrange equation \eqref{eq:intro-pde} and develops the algebraic
	curvature estimates on the K\"ahler target. In particular,
	Lemma~\ref{lem:pinched-curvature} converts a pinching of the holomorphic
	sectional curvature into a gain that depends on the rank; this gain is
	the source of all rank-dependent constants below. Section~\ref{sec:bochner}
	collects the differential identities on which the rest of the paper
	rests. Subtracting the Bochner formula for the Reeb energy from the one
	for the full energy removes the indefinite magnetic term and yields an
	identity for the horizontal energy. For CR maps, the horizontal Bochner
	formula and a Jacobi identity along Reeb trajectories combine, through
	the anisotropic operator \(\Delta_b+t\xi^2\), into an exact identity
	whose remainder is a sum of nonnegative geometric quantities. On
	Sasakian sources this identity becomes independent of \(t\). For general
	maps on Sasakian sources, a Siu-type current is constructed so that its
	divergence cancels the magnetic term exactly.
	
	Section~\ref{sec:general} derives the Schwarz lemmas from these
	identities. The horizontal identity, combined with nondegeneracy of the
	differential, gives Theorem~\ref{thmA}; the exact identity, combined
	with an anisotropic maximum principle and a convexity argument along
	Reeb orbits, gives Theorem~\ref{thmB}. Section~\ref{sec:equality}
	analyses the case of equality. Here every term of the remainder must
	vanish, which forces a parallel transverse differential and a local
	splitting of the transverse geometry. The same remainder, integrated
	against cutoff functions and combined with a weak Harnack inequality,
	gives the stability estimate and the compactness statement of
	Theorem~\ref{thmC}. Section~\ref{sec:rigidity} returns to general
	\(\bp\)-harmonic maps: the Siu-type identity of
	Section~\ref{sec:bochner} yields a Caccioppoli inequality, the Schwarz
	estimate of Theorem~\ref{thmA} turns volume growth into energy growth,
	and together they give Theorem~\ref{thmD} and a rank rigidity result.
	Appendix~\ref{app:connections} contains the connection and commutator
	calculations used throughout, and Appendix~\ref{app:localization} the
	localization lemmas for the Riemannian and the anisotropic maximum
	principles.
	
	\section{Preliminaries}\label{sec:geometry}
	
	Throughout, \(K,K_g,a_0,a_1,L,T,z_0,z_1\ge0\) and \(\kappa>0\).
	Completeness and geodesic balls \(B_R(x)\) refer to the Webster metric,
	and \(d_{cc}\) denotes the Carnot--Carath\'eodory distance determined by
	\(G\).
	
	\subsection{Pseudo-Hermitian structures}
	
	Let \(M^{2m+1}\), \(m\ge1\), be a connected strictly pseudoconvex CR
	manifold with contact form \(\theta\), horizontal distribution
	\(H=\ker\theta\), complex structure \(J\) on \(H\) extended by
	\(J\xi=0\), and Reeb field \(\xi\). We use the normalization
	\begin{equation}\label{eq:contact-normalization}
		\theta(\xi)=1,\qquad\iota_\xi d\theta=0,\qquad d\theta(X,Y)=2g(JX,Y),
	\end{equation}
	the Levi metric \(G(X,Y)=\tfrac12d\theta(X,JY)\) and the Webster metric
	\(g=G+\theta\otimes\theta\). The Tanaka--Webster connection \(\nabla\)
	\cite{Tanaka,Webster,DT} preserves \(g\), \(J\) and \(\xi\), and its
	torsion is
	\begin{equation}\label{eq:tw-torsion}
		T^\nabla(X,Y)=2g(JX,Y)\xi+\theta(X)\mathsf AY-\theta(Y)\mathsf AX .
	\end{equation}
	The endomorphism \(\mathsf A\) is \(g\)-symmetric, satisfies
	\(\mathsf A\xi=0\) and \(\mathsf AJ=-J\mathsf A\); in particular
	\(\tr\mathsf A=\tr(J\mathsf A)=0\). The structure is Sasakian if and only
	if \(\mathsf A=0\) \cite{BG}.
	
	\begin{lemma}\label{lem:connection-difference}
		The Levi-Civita connection of \(g\) is \(D=\nabla+\mathsf C\), where
		\begin{equation}\label{eq:connection-difference}
			\begin{aligned}
				\mathsf C(X,Y)={}&-\{g(JX,Y)+g(\mathsf AX,Y)\}\xi+\theta(Y)\mathsf AX\\
				&+\theta(X)JY+\theta(Y)JX .
			\end{aligned}
		\end{equation}
		Consequently
		\begin{equation}\label{eq:reeb-geometry}
			D_X\xi=JX+\mathsf AX,\qquad D_\xi\xi=0,\qquad\divg\xi=0,\qquad
			\sum_\mu\mathsf C(e_\mu,e_\mu)=0 .
		\end{equation}
	\end{lemma}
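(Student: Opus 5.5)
The plan is to identify \(D\) by uniqueness: the Levi-Civita connection is the only metric, torsion-free connection. So I will define \(\widetilde D=\nabla+\mathsf C\) with \(\mathsf C\) as in \eqref{eq:connection-difference}, and check two things: that \(\widetilde D\) is torsion-free, and that \(\widetilde D g=0\). Both reduce to algebraic identities for the tensor \(\mathsf C\), because \(\nabla g=0\) and the torsion of \(\nabla\) is \eqref{eq:tw-torsion}.

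\emph{Torsion.} The torsion of \(\widetilde D\) is \(T^\nabla(X,Y)+\mathsf C(X,Y)-\mathsf C(Y,X)\), so I need
\[
\mathsf C(X,Y)-\mathsf C(Y,X)=-T^\nabla(X,Y).
\]
The antisymmetric part of \(\mathsf C\) comes from three sources:
\begin{itemize}
\item \(g(JX,Y)\) is skew, so the first term contributes \(-2g(JX,Y)\xi\);
\item \(g(\mathsf AX,Y)\) is symmetric, since \(\mathsf A\) is \(g\)-symmetric, so it drops out;
\item the remaining terms give \(\theta(Y)\mathsf AX-\theta(X)\mathsf AY\), because \(\theta(X)JY+\theta(Y)JX\) is symmetric.
\end{itemize}
This is exactly \(-T^\nabla\).

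\emph{Metric compatibility.} Here I need \(g(\mathsf C(X,Y),Z)+g(Y,\mathsf C(X,Z))=0\) for all \(X,Y,Z\). I expand using \(g(\xi,Z)=\theta(Z)\), the skew-symmetry of \(J\), the symmetry of \(\mathsf A\), and \(J\xi=\mathsf A\xi=0\). The terms \(\theta(X)\{g(JY,Z)+g(Y,JZ)\}\) vanish by skew-symmetry. The term \(-g(JX,Y)\theta(Z)\) cancels against \(\theta(Y)g(JX,Z)\) with its \(Y\leftrightarrow Z\) partner. Similarly, \(-g(\mathsf AX,Y)\theta(Z)\) cancels with \(\theta(Z)g(Y,\mathsf AX)\) coming from the \(\theta(Z)\mathsf AX\) term. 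This bookkeeping is the main place an error could creep in. In particular, the sign conventions in \(d\theta(X,Y)=2g(JX,Y)\) must match the \(\xi\)-component of \eqref{eq:tw-torsion}, so I would verify the cancellations term by term. Uniqueness of the Levi-Civita connection then gives \(D=\widetilde D\).

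\emph{Consequences.}
\begin{itemize}
\item Since \(\nabla\xi=0\), we have \(D_X\xi=\mathsf C(X,\xi)\). Using \(\theta(\xi)=1\), \(J\xi=\mathsf A\xi=0\) and \(g(JX,\xi)=g(\mathsf AX,\xi)=0\) (because \(J\) and \(\mathsf A\) take values in \(H\)), this equals \(\mathsf AX+\theta(X)J\xi+JX=JX+\mathsf AX\).
\item Taking \(X=\xi\) gives \(D_\xi\xi=0\).
\item For the divergence, \(\divg\xi=\tr(X\mapsto JX+\mathsf AX)=\tr J+\tr\mathsf A=0\), using \(\tr\mathsf A=0\) and the skew-symmetry of \(J\).
\item Finally, let \(e_\mu\) be an orthonormal frame. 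Then
\[
\sum_\mu\mathsf C(e_\mu,e_\mu)=-(\tr_gJ+\tr\mathsf A)\xi+\mathsf A\Bigl(\sum_\mu\theta(e_\mu)e_\mu\Bigr)+2J\Bigl(\sum_\mu\theta(e_\mu)e_\mu\Bigr).
\]
Here \(\sum_\mu\theta(e_\mu)e_\mu=\xi\), and \(\mathsf A\xi=J\xi=0\), so every term vanishes.
\end{itemize}
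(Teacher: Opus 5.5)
Your proposal is correct and follows the paper's proof step for step. You define \(\widetilde D=\nabla+\mathsf C\), verify metric compatibility and \(\mathsf C(X,Y)-\mathsf C(Y,X)=-T^\nabla(X,Y)\), conclude \(\widetilde D=D\) by uniqueness, and read off \(D_X\xi=\mathsf C(X,\xi)\), \(\divg\xi=\tr(J+\mathsf A)\) and the trace of \(\mathsf C\). The metric-compatibility check you left to be done term by term does close: the ten terms cancel in five pairs, one pair by skew-symmetry of \(J\) and the other four directly by symmetry of \(g\) and of \(\mathsf A\).
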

	
	The proof is given in Appendix~\ref{app:connections}.
	
	Let \(e_1,\dots,e_m,Je_1,\dots,Je_m,\xi\) be a real orthonormal frame and
	\(Z_j=(e_j-iJe_j)/\sqrt2\), \(Z_{\bar j}=\overline{Z_j}\). Then
	\(g(Z_i,Z_{\bar j})=\delta_{ij}\) and
	\(d\theta=2i\sum_j\theta^j\wedge\theta^{\bar j}\). The Tanaka--Webster
	Ricci tensor is
	\(\Ric^{\TW}_{i\bar j}=\sum_kg(R^\nabla(Z_k,Z_{\bar k})Z_i,Z_{\bar j})\),
	regarded as a \(J\)-invariant symmetric form on \(H\). For functions,
	\begin{equation}\label{eq:laplacians}
		\Delta_bu=\sum_j(u_{j\bar j}+u_{\bar jj}),\qquad
		\Delta u=\tr_g\Hess u=\Delta_bu+\xi^2u .
	\end{equation}
	If at least one of the smooth functions \(a,b\) has compact support, the
	divergence theorem gives
	\begin{equation}\label{eq:laplacian-symmetry}
		\begin{aligned}
			\int_Ma\,\Delta b\dd&=-\int_M\ip{da}{db}\dd=\int_Mb\,\Delta a\dd,\\
			\int_Ma\,\Delta_bb\dd&=-\int_M\ip{d_Ha}{d_Hb}\dd,\\
			\int_Ma\,\xi^2b\dd&=-\int_M(\xi a)(\xi b)\dd .
		\end{aligned}
	\end{equation}
	
	\begin{lemma}\label{lem:source-reeb}
		Put \(Z=\Delta^D\xi+\Ric_g^\sharp\xi\). Then
		\begin{equation}\label{eq:z-div}
			Z=2\divg\mathsf A,
		\end{equation}
		and for horizontal \(X\),
		\begin{equation}\label{eq:reeb-curvature}
			R^g(X,\xi)\xi=X-2J\mathsf AX-\mathsf A^2X-(\nabla_\xi\mathsf A)X .
		\end{equation}
		In particular \(\ip{R^g(X,\xi)\xi}{X}\ge(1-2a_0-a_0^2-a_1)|X|^2\)
		whenever \(|\mathsf A|_{\op}\le a_0\) and \(|\nabla\mathsf A|\le a_1\).
	\end{lemma}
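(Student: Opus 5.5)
We prove both identities directly from Lemma~\ref{lem:connection-difference}, writing the Levi-Civita connection as \(D=\nabla+\mathsf C\) and using \(D_X\xi=JX+\mathsf AX\), \(D_\xi\xi=0\).

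\emph{The identity \eqref{eq:z-div}.} View \(D\xi\) as the endomorphism \(S=J+\mathsf A\); it vanishes on \(\xi\) and maps into \(H\). The rough Laplacian is
\[
\Delta^D\xi=\sum_\mu (D_{e_\mu}S)e_\mu,
\]
the sum taken over a local orthonormal frame \(\{e_\mu\}\) of \(g\). Since \(\sum_\mu\mathsf C(e_\mu,e_\mu)=0\), we may compute \(D_{e_\mu}\) of \(S\) in terms of \(\nabla\):
\[
(D_XS)Y=(\nabla_XS)Y+\mathsf C(X,SY)-S\,\mathsf C(X,Y).
\]
Because \(\nabla J=0\), we get \(\nabla S=\nabla\mathsf A\), and the trace \(\sum_\mu(\nabla_{e_\mu}\mathsf A)e_\mu\) is \(\divg\mathsf A\).

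The \(\mathsf C\)-terms are evaluated with \eqref{eq:connection-difference}. For horizontal \(X=e_\mu\) we have \(\mathsf C(X,SX)=-\{g(JX,SX)+g(\mathsf AX,SX)\}\xi\). Here \(g(JX,\mathsf AX)=0\) because \(J\mathsf A\) is skew, so summing gives \(-(2m+|\mathsf A|^2)\xi\). The remaining term \(S\,\mathsf C(X,X)\) vanishes, since \(\mathsf C(X,X)\) is proportional to \(\xi\) and \(S\xi=0\). For \(X=\xi\), the term \(\mathsf C(\xi,S\xi)=0\) and \(\mathsf C(\xi,\xi)=0\).

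On the other side, the Ricci term is obtained from the Bochner formula. For a vector field \(V\) with dual 1-form, \(\Delta^DV+\Ric^\sharp V=-\delta d V+\ldots\), but it is cleaner to use the Killing-type identity
\[
\sum_\mu D^2_{e_\mu,e_\mu}\xi=-\Ric^\sharp\xi+\text{terms from the symmetric part }\mathsf A,
\]
which comes from commuting derivatives:
\[
\sum_\mu R(e_\mu,X)\xi\ \text{traced}\;=\;\sum_\mu\bigl[(D_{e_\mu}S)X-(D_XS)e_\mu\bigr].
\]
Tracing against \(X\) and using \(\divg\xi=\tr S=0\) gives \(\Ric(\xi,X)=\langle\divg S,X\rangle\) up to sign. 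Writing \(\divg S=\divg J+\divg\mathsf A\), the antisymmetric part \(J\) contributes \(\Delta^D\xi\) with the opposite sign to its Ricci contribution (the Killing part cancels), while \(\mathsf A\) is symmetric and doubles. Hence \(Z=2\divg\mathsf A\).

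\emph{The identity \eqref{eq:reeb-curvature}.} We use
\[
R^g(X,\xi)\xi=D_XD_\xi\xi-D_\xi D_X\xi-D_{[X,\xi]}\xi,
\]
together with \(D_\xi\xi=0\), so the first term drops. Next write \([X,\xi]=D_X\xi-D_\xi X=SX-D_\xi X\). This gives
\[
R^g(X,\xi)\xi=-(D_\xi S)X-S\,D_\xi X-S(SX-D_\xi X)=-(D_\xi S)X-S^2X.
\]
Now compute \(D_\xi S\) through \(\nabla\) and \(\mathsf C(\xi,\cdot)=J\cdot+\theta(\cdot)\mathsf A\xi\ldots\), where \(\mathsf C(\xi,Y)=JY\) for horizontal \(Y\). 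Then
\[
(D_\xi S)X=(\nabla_\xi\mathsf A)X+JSX-SJX=(\nabla_\xi\mathsf A)X+2J\mathsf AX,
\]
using \(\mathsf AJ=-J\mathsf A\). Also
\[
S^2=J^2+J\mathsf A+\mathsf AJ+\mathsf A^2=-\Id+\mathsf A^2
\]
on \(H\). Together these give
\[
R^g(X,\xi)\xi=X-\mathsf A^2X-2J\mathsf AX-(\nabla_\xi\mathsf A)X.
\]

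\emph{The inequality.} This follows from \(|\langle J\mathsf AX,X\rangle|\le a_0|X|^2\) (in fact this term is \(0\) by skewness, so the stated bound is generous), \(\langle\mathsf A^2X,X\rangle\le a_0^2|X|^2\), and \(|\nabla_\xi\mathsf A|\le a_1\).

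The main obstacle is the first identity: tracking signs and the trace of the \(\mathsf C\)-corrections in \(\Delta^D\xi\) versus \(\Ric\) so that the \(J\)-part cancels exactly. I would verify it first on the Heisenberg group with \(\mathsf A=0\), where \(Z=0\) must hold.
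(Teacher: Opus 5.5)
Your proof is correct in substance, and for \eqref{eq:reeb-curvature} it takes a different, more economical route than the paper. The paper substitutes \(D=\nabla+\mathsf C\) into the general formula for \(R^g-R^\nabla\) in terms of \(\nabla\mathsf C\), \(\mathsf C\circ\mathsf C\) and \(\mathsf C(T^\nabla(\cdot,\cdot),\cdot)\), and evaluates each term at \((X,\xi,\xi)\). You instead use that \(\xi\) is a unit geodesic field, which gives the Riccati-type identity \(R^g(X,\xi)\xi=-(D_\xi S)X-S^2X\) for \(S=D\xi=J+\mathsf A\). You then need only \(\mathsf C(\xi,Y)=JY\) on \(H\) to get \(D_\xi S=\nabla_\xi\mathsf A+[J,S]=\nabla_\xi\mathsf A+2J\mathsf A\) on \(H\); no derivatives of \(\mathsf C\) and no torsion correction appear. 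For \eqref{eq:z-div}, the argument that actually carries the proof is the commutation \(R^g(e_\mu,X)\xi=(D_{e_\mu}S)X-(D_XS)e_\mu\) together with \(\tr S=0\). This is the paper's identity \(\Delta^DY+\Ric^\sharp Y=2\divg(\operatorname{sym}DY)\) split into two halves. Your preliminary \(\mathsf C\)-expansion of \(\Delta^D\xi\) is never used.

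Three statements need correcting.

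First, \(J\mathsf A\) is symmetric, not skew: \((J\mathsf A)^*=\mathsf A^*J^*=-\mathsf AJ=J\mathsf A\). So \(g(JX,\mathsf AX)\) does not vanish pointwise; only its frame sum does, by \(\tr(J\mathsf A)=0\), which is all you use there. Likewise \(\ip{J\mathsf AX}{X}\neq0\) in general, and the bound is not generous. Since \((J\mathsf A)^2=\mathsf A^2\) on \(H\), at a unit eigenvector of \(J\mathsf A\) with eigenvalue \(|\mathsf A|_{\op}=a_0\) the first three terms give exactly \(1-2a_0-a_0^2\). Your proof of the inequality survives because it relies only on \(|\ip{J\mathsf AX}{X}|\le a_0|X|^2\).

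Second, tracing the commutation does not give \(\Ric(\xi,X)=\pm\ip{\divg S}{X}\). It gives \(\Ric(\xi,X)=\sum_\mu\ip{(D_{e_\mu}S)X}{e_\mu}\), that is, \(\Ric^\sharp\xi=\sum_\mu(D_{e_\mu}S^*)e_\mu\) with \(S^*=-J+\mathsf A\), while \(\Delta^D\xi=\sum_\mu(D_{e_\mu}S)e_\mu\). It is the adjoint that makes the \(J\)-parts cancel and the \(\mathsf A\)-parts double. With \(\pm\divg S\), both parts would cancel or double together.

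Third, the divergence in \(Z=2\divg\mathsf A\) must be the Levi-Civita one, \(\sum_\mu(D_{e_\mu}\mathsf A)e_\mu\). That is what your commutation argument produces, whereas your first paragraph calls \(\sum_\mu(\nabla_{e_\mu}\mathsf A)e_\mu\) the divergence. Your own \(\mathsf C\)-computation shows that they differ: \(\sum_\mu(D_{e_\mu}\mathsf A)e_\mu=\sum_\mu(\nabla_{e_\mu}\mathsf A)e_\mu-|\mathsf A|^2\xi\). Indeed \(\ip{Z}{\xi}=-|D\xi|^2+\Ric(\xi,\xi)=-2|\mathsf A|^2\), while the Tanaka--Webster divergence of \(\mathsf A\) has no \(\xi\)-component. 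Under that reading the identity would be false whenever \(\mathsf A\neq0\).
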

	
	The proof is given in Appendix~\ref{app:connections}. Since
	\(D\mathsf A-\nabla\mathsf A\) is algebraic in \(J,\theta,\mathsf A\),
	the bounds \(|\mathsf A|_{\op}\le a_0\) and \(|\nabla\mathsf A|\le a_1\)
	permit the choices
	\begin{equation}\label{eq:source-data}
		L=1+a_0,\qquad T=\sqrt{2m}\,a_0,\qquad
		z_0+z_1\le c_m(a_1+a_0+a_0^2),
	\end{equation}
	where \(z_0=\sup_M|\ip Z\xi|\), \(z_1=\sup_M|Z_H|\) and \(c_m\) depends
	only on \(m\).
	
	\subsection{Maps and the Euler--Lagrange equation}
	
	For a smooth map \(f\colon M\to N\) we write
	\[
	F=df,\qquad V=df(\xi),\qquad E=|df|^2,\qquad
	U=|df|_H|^2,\qquad v=|V|^2,\qquad E=U+v .
	\]
	Let \(\nablaf\) be the pull-back of the Levi-Civita connection of \(N\),
	and let \(\widetilde\nabla\) be the connection on \(T^*M\otimes f^*TN\)
	induced by \(D\) and \(\nablaf\). The second fundamental form and its
	Tanaka--Webster analogue are
	\begin{equation}\label{eq:beta}
		S(X,Y)=(\widetilde\nabla_XF)(Y)=\nablaf_Xdf(Y)-df(D_XY),\qquad
		\beta(X,Y)=\nablaf_Ydf(X)-df(\nabla_YX).
	\end{equation}
	By \eqref{eq:reeb-geometry}, \(\tau(f)=\tr_gS=\tr_g\beta\). For a local
	unitary frame \(\{\eta_\alpha\}\) of \(T^{1,0}N\) write
	\[
	df^{1,0}=f^\alpha_i\theta^i\otimes\eta_\alpha
	+f^\alpha_{\bar i}\theta^{\bar i}\otimes\eta_\alpha
	+f^\alpha_0\theta\otimes\eta_\alpha,
	\]
	and put \(p_i=f^\alpha_i\eta_\alpha\), \(q_i=f^\alpha_{\bar i}\eta_\alpha\),
	\(r=f^\alpha_0\eta_\alpha\). Then \(V=r+\bar r\) and \(v=2|r|^2\). Put
	\(u=\sum_i|p_i|^2\), \(\bar u=\sum_i|q_i|^2\) and \(v_c=|r|^2\); here
	\(\bar u\) denotes the antiholomorphic energy density, not the complex
	conjugate of \(u\). The type decomposition
	\[
	df(Z_i)=p_i+\overline{q_i},\qquad
	df(Z_{\bar i})=q_i+\overline{p_i},\qquad
	|df(Z_i)|^2=|p_i|^2+|q_i|^2
	\]
	gives
	\begin{equation}\label{eq:real-complex-energy}
		\begin{aligned}
			U&=\sum_{i=1}^m\bigl(|df(e_i)|^2+|df(Je_i)|^2\bigr)
			=2\sum_{i,\alpha}\bigl(|f_i^\alpha|^2+|f_{\bar i}^\alpha|^2\bigr)
			=2(u+\bar u),\\
			|\bp f|^2&=\sum_i|df^{1,0}(Z_{\bar i})|^2=\bar u,\\
			E&=U+2|r|^2=2(u+\bar u+v_c).
		\end{aligned}
	\end{equation}
	The Hermitian product on \(T^{1,0}N\) is complex linear in the first
	variable. Unqualified norms of real tensors are Hilbert--Schmidt norms,
	and \(|\bp f|\) is the norm of \(\bp f\) as a section of
	\((H^{0,1})^*\otimes f^*T^{1,0}N\). Covariant derivatives
	\(f^\alpha_{AB}=\beta(Z_A,Z_B)^\alpha\) use \(\nabla\) on source indices
	and \(\nablaf\) on target indices. By Appendix~\ref{app:connections},
	\begin{equation}\label{eq:basic-commutators}
		\begin{aligned}
			f^\alpha_{ij}&=f^\alpha_{ji}, &
			f^\alpha_{i\bar j}-f^\alpha_{\bar ji}&=2i\delta_{ij}f^\alpha_0,\\
			f^\alpha_{0i}-f^\alpha_{i0}&=df^{1,0}(\mathsf AZ_i)^\alpha, &
			f^\alpha_{0\bar i}-f^\alpha_{\bar i0}&=df^{1,0}(\mathsf AZ_{\bar i})^\alpha,
		\end{aligned}
	\end{equation}
	and the third derivatives satisfy the Ricci identity
	\begin{equation}\label{eq:ricci-identity}
		f_{ABC}-f_{ACB}=R^N\bigl(df(Z_C),df(Z_B)\bigr)f_A
		-df\bigl(R^\nabla(Z_C,Z_B)Z_A\bigr)+f_{AD}T^D_{BC}.
	\end{equation}
	
	\begin{definition}\label{def:CR}
		A map is \emph{CR} if \(q_i=0\) for all \(i\), \emph{anti-CR} if
		\(p_i=0\) for all \(i\), and \emph{foliated} if \(V=0\). A foliated map is
		\emph{\(\bp\)-pluriharmonic} if \(f^\alpha_{i\bar j}=f^\alpha_{\bar ji}=0\)
		for all indices.
	\end{definition}
	
	For a CR map, write a real horizontal vector as
	\[
	X=\sum_j(a_je_j+b_jJe_j),\qquad x^j=\frac{a_j+ib_j}{\sqrt2}.
	\]
	Extending \(h\) complex bilinearly and using \(f_{\bar j}=0\),
	\begin{align}
		X^{1,0}&=\frac{X-iJX}2=\sum_jx^jZ_j,\notag\\
		G(X^{1,0},\overline{X^{1,0}})&=\sum_j|x^j|^2
		=\frac12\sum_j(a_j^2+b_j^2)=\frac12G(X,X),\notag\\
		df(X^{1,0})&=\sum_jx^jf_j\in T^{1,0}N,\qquad
		df(\overline{X^{1,0}})=\overline{df(X^{1,0})},\notag\\
		df(X)&=df(X^{1,0})+\overline{df(X^{1,0})},\notag\\
		|df(X)|^2&=2h\bigl(df(X^{1,0}),\overline{df(X^{1,0})}\bigr),
		\label{eq:real-complex-vector-norm}
	\end{align}
	since \(h(df(X^{1,0}),df(X^{1,0}))=0\) by type. With
	\(P_{i\bar j}=\ip{f_i}{f_j}\) this gives
	\begin{align}
		|df(X)|^2&=2G(PX^{1,0},\overline{X^{1,0}})
		=2\sum_{i,j}P_{i\bar j}x^i\overline{x^j}\notag\\
		&\le2\lambda_{\max}(P)\sum_j|x^j|^2
		=\lambda_{\max}(P)\,G(X,X)\le u\,G(X,X).\label{eq:cr-metric}
	\end{align}
	In particular \(|df(e_i)|^2=P_{i\bar i}\),
	\(|df|_H|_{\op}^2=\lambda_{\max}(P)\le u\) and \(U=2\tr P=2u\).
	
	\begin{proposition}\label{prop:variation}
		A smooth map is critical for \(\calE_{\bp,\xi}(\,\cdot\,;\Omega)\) with
		respect to variations supported in \(\Omega\), for every relatively
		compact domain \(\Omega\), if and only if
		\begin{equation}\label{eq:partial-pde}
			\tau(f)=2m\Jt V,
		\end{equation}
		equivalently
		\begin{equation}\label{eq:complex-pde}
			2\sum_jf^\alpha_{\bar jj}+f^\alpha_{00}=0 .
		\end{equation}
	\end{proposition}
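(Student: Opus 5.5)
We compute the first variation of \(\calE_{\bp,\xi}(\,\cdot\,;\Omega)\) directly and then translate the resulting equation into the complex frame. Take a variation \(f_s\) with \(f_0=f\) and variational field \(W=\partial_sf_s|_{s=0}\) supported in \(\Omega\). The first task is to rewrite the integrand in real terms. By \eqref{eq:real-complex-energy}, \(|\bp f|^2=\bar u\) and \(U=2(u+\bar u)\). The quantity \(u-\bar u\) is a pairing of \(df|_H\) with \(J_N\circ df\circ J\):
\[
u-\bar u=\tfrac14\sum_{A}\ip{J_Ndf(e_A)}{df(Je_A)},
\]
with the sum over the horizontal real frame \(e_A\in\{e_i,Je_i\}\). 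Hence
\[
|\bp f|^2=\tfrac14U-\tfrac18\sum_A\ip{J_Ndf(e_A)}{df(Je_A)},
\]
so that
\[
|\bp f|^2+\tfrac14|df(\xi)|^2=\tfrac14E-\tfrac18\sum_A\ip{J_Ndf(e_A)}{df(Je_A)}.
\]
The numerical constant in front of the \(J\)-pairing is pinned down by checking it on a CR map, for which \(\bar u=0\).

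Next we differentiate each piece in \(s\). The \(\tfrac14E\) part is the standard one: it gives \(-\tfrac12\int\ip{\tau(f)}{W}\). The \(J\)-pairing is the pull-back of the Kähler form \(\omega_N\) contracted against \(d\theta\). Indeed \(\sum_A\ip{J_Ndf(e_A)}{df(Je_A)}\) equals \(c\,\ip{f^*\omega_N}{d\theta}\) for a constant \(c\) fixed by \eqref{eq:contact-normalization}. Since \(\omega_N\) is closed, \(\tfrac{d}{ds}f_s^*\omega_N=d(\iota_W\omega_N\circ f)\), up to the usual homotopy formula. So the variation of this term is
\[
\int_M\ip{d\alpha}{d\theta}\dd=\int_M\ip{\alpha}{\delta d\theta}\dd,\qquad \alpha=f^*(\iota_W\omega_N).
\]
For the Webster metric, \(\delta d\theta\) is a multiple of \(\theta\); explicitly \(\delta d\theta=4m\theta\) with these normalizations, which one checks from \(D\xi=J+\mathsf A\) (Lemma~\ref{lem:connection-difference}). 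It follows that \(\int\ip{\alpha}{\delta d\theta}\) is a constant times \(\int\omega_N(W,df(\xi))=-\int\ip{J_NV}{W}\). Collecting terms gives the Euler--Lagrange equation \(\tau(f)=c'J_NV\). The main obstacle is the constant bookkeeping needed to land exactly on \(c'=2m\). I would fix it by tracking the contractions of \(d\theta\) with the normalization \(d\theta(X,Y)=2g(JX,Y)\) and checking the result against the complex computation below. An alternative is to integrate by parts directly with the Tanaka--Webster connection, using \(\divg\xi=0\) from \eqref{eq:reeb-geometry}.

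Finally we derive \eqref{eq:complex-pde}. By \eqref{eq:reeb-geometry}, \(\tau(f)=\tr_g\beta\). In the unitary frame, taking \((1,0)\)-components,
\[
\tau(f)^{1,0}=\sum_j(f^\alpha_{j\bar j}+f^\alpha_{\bar jj})+f^\alpha_{00}.
\]
The commutator \eqref{eq:basic-commutators} gives \(f_{j\bar j}=f_{\bar jj}+2if_0\), so this becomes
\[
2\sum_jf^\alpha_{\bar jj}+2mi\,f^\alpha_0+f^\alpha_{00}.
\]
Meanwhile \((J_NV)^{1,0}=i\,r=if^\alpha_0\eta_\alpha\), so \((2mJ_NV)^{1,0}=2mif^\alpha_0\). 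The two terms \(2mif^\alpha_0\) cancel, which yields \(2\sum_jf^\alpha_{\bar jj}+f^\alpha_{00}=0\). Conversely, this complex equation together with its conjugate recovers \eqref{eq:partial-pde}, since the \((1,0)\)-part determines the real vector. This cancellation also serves as an independent check that \(c'=2m\).

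Equivalence for all \(\Omega\) is then immediate. Criticality on every \(\Omega\) means \(\int\ip{\tau(f)-2mJ_NV}{W}=0\) for all compactly supported \(W\), which holds if and only if the pointwise equation holds.
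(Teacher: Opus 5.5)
Your route is sound and genuinely different from the paper's, but as written it gets the decisive constant wrong, and the checks you propose do not repair it. The error is in the identity \(u-\bar u=\tfrac14\sum_A\ip{\Jt df(e_A)}{df(Je_A)}\). Expanding \(df(e_i)\) and \(df(Je_i)\) in terms of \(p_i,q_i\) gives \(\ip{\Jt df(e_i)}{df(Je_i)}=|p_i|^2-|q_i|^2\). The term for \(e_A=Je_i\) equals the same quantity. Each pair \(e_i,Je_i\) therefore contributes twice, and
\[
\Sigma:=\sum_{A=1}^{2m}\ip{\Jt df(e_A)}{df(Je_A)}=2(u-\bar u),\qquad |\bp f|^2=\tfrac14(U-\Sigma).
\]
So the integrand is \(\tfrac14(E-\Sigma)\), not \(\tfrac14E-\tfrac18\Sigma\). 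Your own CR test detects this: for a CR map \(df\circ J=\Jt\circ df\) on \(H\), so \(\Sigma=U\), and your formula would give \(\bar u=U/8\ne0\).

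The factor is not cosmetic. With \(\omega_N=h(\Jt\cdot,\cdot)\) and \(\ip\alpha\beta=\sum_{\mu<\nu}\alpha(e_\mu,e_\nu)\beta(e_\mu,e_\nu)\), one has exactly \(\Sigma=\ip{f^*\omega_N}{d\theta}\), so \(c=1\). Combined with your correct value \(\delta d\theta=4m\theta\), the coefficient \(\tfrac18\) yields \(\frac{d}{ds}\big|_{s=0}\calE_{\bp,\xi}(f_s;\Omega)=-\tfrac12\int\ip{W}{\tau(f)-m\Jt V}\dd\). That is the wrong equation \(\tau(f)=m\Jt V\).

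Your proposed safeguard is circular. You derive \eqref{eq:complex-pde} from \eqref{eq:partial-pde} through \(f_{j\bar j}=f_{\bar jj}+2ir\), so the cancellation of the terms \(2mir\) holds by construction for \(c'=2m\). It says nothing about which \(c'\) is the Euler--Lagrange coefficient, and that coefficient is the whole content of the proposition.

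With the correct \(\tfrac14\), the variation of \(-\tfrac14\int\Sigma\) is \(-m\int\omega_N(W,V)\dd=m\int\ip{W}{\Jt V}\dd\). This gives \(-\tfrac12\int\ip{W}{\tau(f)-2m\Jt V}\dd\), as in the paper. A genuinely independent check, and in fact a complete alternative proof, is to vary \(\int(\bar u+\tfrac12v_c)\dd\) directly in the unitary frame. That gives \(-\Rea\int\ip{W^{1,0}}{2\sum_jf_{\bar jj}+f_{00}}\dd\).

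Once corrected, your argument differs from the paper's. The paper integrates \(\sum_\mu\ip{\nabla^f_{e_\mu}W}{\Jt df(Je_\mu)}\) by parts, using \(\nabla^N\Jt=0\), the symmetry of \(S\) and \(\sum_\mu(D_{e_\mu}J)e_\mu=-2m\xi\). Your version needs only \(d\omega_N=0\). It also makes the magnetic character of the term transparent, since the \(J\)-pairing sees the source only through \(\delta d\theta=4m\theta\), i.e.\ through the Reeb direction.
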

	
	\begin{proof}
		The complex antilinear part of the horizontal differential is
		\(F^-_H=\tfrac12(F_H+\Jt F_HJ)\), with \(|F^-_H|^2_{\HS}=2|\bp f|^2\).
		Expanding the square,
		\[
		|\bp f|^2=\bar u
		=\frac14\Bigl(U-\sum_{a=1}^{2m}\ip{\Jt df(e_a)}{df(Je_a)}\Bigr),
		\]
		and therefore the integrand of \eqref{eq:intro-energy} equals
		\[
		\bar u+\frac14v=\frac14\Bigl(|df|^2
		-\sum_{a=1}^{2m}\ip{\Jt df(e_a)}{df(Je_a)}\Bigr).
		\]
		Let \(f_s\) be a variation supported in \(\Omega\), with variation field
		\(W\). Since \(\nabla^N\Jt=0\) and \(\nablaf_{\partial_s}df(X)=\nablaf_XW\),
		\[
		\frac{d}{ds}\Big|_{s=0}\calE_{\bp,\xi}(f_s;\Omega)
		=\frac12\int_\Omega\sum_{\mu=0}^{2m}
		\ip{\nablaf_{e_\mu}W}{df(e_\mu)+\Jt df(Je_\mu)}\dd,\qquad e_0=\xi ;
		\]
		the two mixed terms coincide because
		\(\sum_a\ip{\Jt P(e_a)}{Q(Je_a)}\) is symmetric in \(P,Q\) under
		\(e_a\mapsto Je_a\). Integrating by parts, the one-form
		\(X\mapsto\ip W{df(X)}\) contributes \(-\ip W{\tau(f)}\), and the one-form
		\(X\mapsto\ip W{\Jt df(JX)}\) contributes
		\[
		-\sum_\mu\ip W{\Jt S(e_\mu,Je_\mu)}
		-\sum_\mu\ip W{\Jt df\bigl((D_{e_\mu}J)e_\mu\bigr)}.
		\]
		The first sum vanishes because \(S\) is symmetric and \(J\) is skew. By
		\eqref{eq:connection-difference} and \(\nabla J=0\),
		\[
		(D_XJ)Y=\mathsf C(X,JY)-J\mathsf C(X,Y),\qquad
		\sum_\mu(D_{e_\mu}J)e_\mu=-2m\xi-\tr(J\mathsf A)\xi=-2m\xi .
		\]
		Hence
		\[
		\frac{d}{ds}\Big|_{s=0}\calE_{\bp,\xi}(f_s;\Omega)
		=-\frac12\int_\Omega\ip W{\tau(f)-2m\Jt V}\dd,
		\]
		which proves \eqref{eq:partial-pde}. For the \((1,0)\)-part, use
		\[
		\tau(f)^{1,0}=\sum_j(f_{j\bar j}+f_{\bar jj})+f_{00},\qquad
		f_{j\bar j}=f_{\bar jj}+2ir,\qquad (\Jt V)^{1,0}=ir .
		\]
		Then \eqref{eq:partial-pde} reads \(2\sum_jf_{\bar jj}+2imr+f_{00}=2imr\),
		which is \eqref{eq:complex-pde}.
	\end{proof}
	
	For a CR map \(\sum_jf_{\bar jj}=0\), so \eqref{eq:complex-pde} reduces to
	\begin{equation}\label{eq:cr-geodesic}
		f^\alpha_{00}=0,
	\end{equation}
	that is, \(f\) maps Reeb trajectories to geodesics. For a foliated map
	\eqref{eq:partial-pde} is the harmonic map equation. Equation
	\eqref{eq:partial-pde} is a magnetic map equation in the sense of
	\cite{IM14,IM15}.
	
	\subsection{K\"ahler curvature algebra}
	
	We use \(R(X,Y)=D_XD_Y-D_YD_X-D_{[X,Y]}\), so that the sectional
	curvature is \(\ip{R(X,Y)Y}{X}\) on orthonormal pairs. On \(N\) put
	\(R_{\alpha\bar\beta\gamma\bar\delta}
	=h(R(\eta_\alpha,\bar\eta_\beta)\eta_\gamma,\bar\eta_\delta)\). The
	condition \(\Bisec_N\le-\kappa\) means
	\begin{equation}\label{eq:bisectional}
		R(P,\bar P,Q,\bar Q)\le-\kappa|P|^2|Q|^2\qquad(P,Q\in T^{1,0}N),
	\end{equation}
	and for \(P\ne0\) we write \(\HSC_N(P)=R(P,\bar P,P,\bar P)/|P|^4\). For a
	complex matrix \(W=(W^{\alpha\bar\beta})\) put
	\begin{equation}\label{eq:siu-form}
		\calR(W)=-R_{\alpha\bar\beta\gamma\bar\delta}W^{\alpha\bar\beta}
		\overline{W^{\delta\bar\gamma}} .
	\end{equation}
	Following Siu \cite{Siu}, \(N\) is \emph{strongly seminegative} if
	\(\calR(A\otimes\bar B-C\otimes\bar D)\ge0\) for all
	\(A,B,C,D\in T^{1,0}N\), and \emph{strongly negative} if equality holds
	only when \(A\otimes\bar B=C\otimes\bar D\). For real vectors
	\(X=p+\bar p\) and \(Y=q+\bar q\),
	\[
	\ip{R(X,Y)Y}{X}=-\calR(p\otimes\bar q-q\otimes\bar p),
	\]
	so strong seminegativity implies \(\Sec_N\le0\).
	
	\begin{lemma}\label{lem:pinched-curvature}
		Let \(\dim_\C N\ge2\), \(\Bisec_N\le-\kappa<0\) and
		\(\HSC_N\ge-k_{\mathrm h}\). Then \(k_{\mathrm h}\ge2\kappa\) and
		\begin{equation}\label{eq:holomorphic-gap}
			\HSC_N\le-\kappa-\rho,\qquad\rho=\frac{\kappa^2}{k_{\mathrm h}-\kappa}.
		\end{equation}
		For vectors \(p_1,\dots,p_m\in T^{1,0}N\) put
		\begin{equation}\label{eq:gram-matrix}
			P_{i\bar j}=\ip{p_i}{p_j},\qquad u=\tr P,\qquad |P|^2=\tr P^2,\qquad
			\mathscr C=-\sum_{i,j}R(p_i,\bar p_i,p_j,\bar p_j).
		\end{equation}
		Then
		\begin{equation}\label{eq:pinched-contraction}
			\mathscr C\ge\kappa u^2+\rho|P|^2 ,
		\end{equation}
		and if \(\rank P\le d\),
		\begin{equation}\label{eq:rank-curvature}
			\mathscr C\ge\Bigl(\kappa+\frac\rho d\Bigr)u^2,\qquad
			|P|^2-\frac{u^2}d\ge0 .
		\end{equation}
	\end{lemma}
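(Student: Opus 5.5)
The plan is to prove the pointwise holomorphic gap \eqref{eq:holomorphic-gap} first, by a polarization argument in a complex two-plane. Then reduce the contraction \(\mathscr C\) to a diagonal configuration by a unitary change of the vectors \(p_i\), so that \eqref{eq:holomorphic-gap} and \eqref{eq:bisectional} apply term by term. The rank statement then follows from Cauchy--Schwarz on the eigenvalues of \(P\).

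\emph{Step 1 (the gap).} Fix a unit vector \(X\in T^{1,0}N\). Since \(\dim_\C N\ge2\) we may choose a unit \(Y\perp X\). Put
\[
h=R(X,\bar X,X,\bar X),\qquad y=R(Y,\bar Y,Y,\bar Y),\qquad b=R(X,\bar X,Y,\bar Y).
\]
Let \(c\) collect the remaining mixed components \(R(X,\bar Y,X,\bar Y)\) and those of type \(R(X,\bar X,X,\bar Y)\).

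For a real parameter \(t>0\) and a phase \(e^{i\varphi}\), consider \(A_\pm=X\pm te^{i\varphi}Y\). Apply \eqref{eq:bisectional} to the pair \((A_+,A_-)\), which gives
\[
R(A_+,\bar A_+,A_-,\bar A_-)\le-\kappa|A_+|^2|A_-|^2=-\kappa(1+t^2)^2 .
\]
Apply \(\HSC_N\ge-k_{\mathrm h}\) to \(A_+\) and \(A_-\), which gives \(R(A_\pm,\bar A_\pm,A_\pm,\bar A_\pm)\ge-k_{\mathrm h}(1+t^2)^2\).

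Expand everything using the Kähler symmetries of \(R\). Averaging over \(\varphi\in\{0,\pi/2,\pi,3\pi/2\}\) kills the odd and the \(e^{\pm2i\varphi}\) mixed terms, so \(c\) drops out. What remains are linear inequalities in \(h\), \(y\), \(b\) with coefficients polynomial in \(t\). Schematically they read
\[
h+t^4y+(\text{coefficient})\,t^2b\le-\kappa(1+t^2)^2,\qquad
h+t^4y+4t^2b\ge-k_{\mathrm h}(1+t^2)^2 .
\]
Subtracting eliminates \(h+t^4y\) and bounds \(b\) in terms of \(\kappa\) and \(k_{\mathrm h}\). Feeding this back, together with \(y\ge-k_{\mathrm h}\), yields an upper bound for \(h\) depending on \(t\). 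Optimizing in \(t\) should give \(h\le-\kappa-\kappa^2/(k_{\mathrm h}-\kappa)\).

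The inequality \(k_{\mathrm h}\ge2\kappa\) should come out of the same system. The combined inequalities force \(k_{\mathrm h}-\kappa\) to be positive, so \(\rho\) is well defined. Then \(-k_{\mathrm h}\le h\le-\kappa-\rho\) gives \((k_{\mathrm h}-\kappa)^2\ge\kappa^2\), that is, \(k_{\mathrm h}\ge2\kappa\).

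I expect Step 1 to be the main obstacle. The bookkeeping of the mixed curvature components under averaging has to be done correctly, and the choice of \(t\) that produces exactly \(\rho=\kappa^2/(k_{\mathrm h}-\kappa)\) must be identified. My first attempt would be \(t=1\) with the phase average; if that gives a weaker constant, I would keep \(t\) free and minimize.

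\emph{Step 2 (contraction).} The quantity \(\mathscr C\) is \(-R\) applied to the tensor \(\sum_ip_i\otimes\bar p_i\), and the matrix \(P\) transforms by conjugation. Both are therefore invariant up to conjugation under \(p_i\mapsto\sum_jU_{ij}p_j\) with \(U\) unitary. Choose \(U\) diagonalizing \(P\), so that the new \(p_i\) are mutually orthogonal with \(|p_i|^2=\lambda_i\). Then
\[
\mathscr C=\sum_i\bigl(-\HSC_N(p_i)\bigr)|p_i|^4-\sum_{i\ne j}R(p_i,\bar p_i,p_j,\bar p_j).
\]
Apply \eqref{eq:holomorphic-gap} to the diagonal terms (trivially when \(p_i=0\)) and \eqref{eq:bisectional} to the off-diagonal terms. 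This gives
\[
\mathscr C\ge(\kappa+\rho)\sum_i\lambda_i^2+\kappa\sum_{i\ne j}\lambda_i\lambda_j=\kappa u^2+\rho|P|^2,
\]
which is \eqref{eq:pinched-contraction}.

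\emph{Step 3 (rank).} If \(\rank P\le d\), at most \(d\) of the \(\lambda_i\) are nonzero. Cauchy--Schwarz then gives \(u^2=(\sum\lambda_i)^2\le d\sum\lambda_i^2=d|P|^2\). Inserting this into \eqref{eq:pinched-contraction}, using \(\rho\ge0\), gives \eqref{eq:rank-curvature}.
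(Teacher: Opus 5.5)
Your proposal follows the paper's proof. The paper also averages the bisectional term for the pair \(p\pm\sqrt x\,e^{i\varphi}q\) over the phase, then diagonalizes \(P\) by a unitary change of the \(p_i\), applies \eqref{eq:holomorphic-gap} and \eqref{eq:bisectional} term by term, and finishes with Cauchy--Schwarz on the eigenvalues.

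To close Step 1, observe that the coefficient of \(t^2b\) in the averaged bisectional term is \(1-1-1+1=0\) by the K\"ahler symmetries. So you do not need the holomorphic-sectional bound on \(A_\pm\) or the subtraction step. The inequality \(h+t^4y\le-\kappa(1+t^2)^2\) together with \(y\ge-k_{\mathrm h}\) gives \(h+\kappa\le-2\kappa t^2+(k_{\mathrm h}-\kappa)t^4\). Letting \(t\to\infty\) forces \(k_{\mathrm h}>\kappa\), and the choice \(t^2=\kappa/(k_{\mathrm h}-\kappa)\) yields exactly \eqref{eq:holomorphic-gap}.
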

	
	\begin{proof}
		Let \(p,q\in T^{1,0}N\) be orthonormal, let \(x>0\), and put
		\[
		A_\varphi=p+\sqrt x\,e^{i\varphi}q,\qquad
		B_\varphi=p-\sqrt x\,e^{i\varphi}q,\qquad
		\beta_{pq}=R(p,\bar p,q,\bar q).
		\]
		The K\"ahler symmetries give
		\begin{align*}
			R(q,\bar q,p,\bar p)&=\beta_{pq},&
			R(q,\bar p,p,\bar q)&=\beta_{pq},\\
			R(p,\bar q,q,\bar p)&=\beta_{pq},&
			R(p,\bar p,q,\bar q)&=\beta_{pq}.
		\end{align*}
		In the multilinear expansion of
		\(R(A_\varphi,\bar A_\varphi,B_\varphi,\bar B_\varphi)\) only the terms
		with equally many factors \(e^{i\varphi}\) and \(e^{-i\varphi}\) survive
		averaging; the four terms of order \(x\) carry the signs \(+,-,-,+\).
		Hence
		\begin{align*}
			\frac1{2\pi}\int_0^{2\pi}
			R(A_\varphi,\bar A_\varphi,B_\varphi,\bar B_\varphi)\,d\varphi
			&=\HSC_N(p)+x(\beta_{pq}-\beta_{pq}-\beta_{pq}+\beta_{pq})
			+x^2\HSC_N(q)\\
			&=\HSC_N(p)+x^2\HSC_N(q).
		\end{align*}
		Since \(|A_\varphi|^2=|B_\varphi|^2=1+x\), \eqref{eq:bisectional} and
		\(\HSC_N(q)\ge-k_{\mathrm h}\) give
		\begin{align}
			\HSC_N(p)+x^2\HSC_N(q)&\le-\kappa(1+x)^2,\notag\\
			\HSC_N(p)+\kappa&\le-2\kappa x+(k_{\mathrm h}-\kappa)x^2
			\qquad(x>0).\label{eq:polarization-gap}
		\end{align}
		The right-hand side of \eqref{eq:polarization-gap} is negative for small
		\(x\); if \(k_{\mathrm h}\le\kappa\) it tends to \(-\infty\) as
		\(x\to\infty\), which is impossible. Hence \(k_{\mathrm h}>\kappa\), and
		\(x=\kappa/(k_{\mathrm h}-\kappa)\) gives \eqref{eq:holomorphic-gap}.
		Combined with \(\HSC_N(p)\ge-k_{\mathrm h}\),
		\[
		k_{\mathrm h}-\kappa\ge\frac{\kappa^2}{k_{\mathrm h}-\kappa}
		\quad\Longrightarrow\quad(k_{\mathrm h}-\kappa)^2\ge\kappa^2
		\quad\Longrightarrow\quad k_{\mathrm h}\ge2\kappa .
		\]
		
		The spectral theorem supplies \(A\in U(m)\) with
		\(APA^*=\operatorname{diag}(\lambda_1,\dots,\lambda_m)\). Unitarity
		gives \(\sum_iA_i{}^k\overline{A_i{}^\ell}=\delta_{k\ell}\). Put
		\(\widetilde p_i=\sum_kA_i{}^kp_k\). Then
		\begin{align*}
			\widetilde P_{i\bar j}
			&=\sum_{k,\ell}A_i{}^k\overline{A_j{}^\ell}P_{k\bar\ell}
			=(APA^*)_{i\bar j}=\lambda_i\delta_{ij},\\
			\widetilde u&=\tr(APA^*)=u,\qquad
			|\widetilde P|^2=\tr(AP^2A^*)=|P|^2,\\
			\widetilde{\mathscr C}
			&=-\sum_{k,\ell,a,b}
			\Bigl(\sum_iA_i{}^k\overline{A_i{}^\ell}\Bigr)
			\Bigl(\sum_jA_j{}^a\overline{A_j{}^b}\Bigr)
			R(p_k,\bar p_\ell,p_a,\bar p_b)\\
			&=-\sum_{k,a}R(p_k,\bar p_k,p_a,\bar p_a)=\mathscr C .
		\end{align*}
		The matrix \(A\) acts on the list of vectors through the source indices;
		the target frame is not changed. After suppressing tildes,
		\(|p_i|^2=\lambda_i\) and \(\ip{p_i}{p_j}=0\) for \(i\ne j\). The
		diagonal terms are estimated by \eqref{eq:holomorphic-gap} and the
		off-diagonal terms by \eqref{eq:bisectional}:
		\[
		-R(p_i,\bar p_i,p_i,\bar p_i)\ge(\kappa+\rho)\lambda_i^2,\qquad
		-R(p_i,\bar p_i,p_j,\bar p_j)\ge\kappa\lambda_i\lambda_j\quad(i\ne j);
		\]
		both hold trivially if a vector vanishes. Therefore
		\begin{align*}
			\mathscr C
			&=-\sum_iR(p_i,\bar p_i,p_i,\bar p_i)
			-\sum_{i\ne j}R(p_i,\bar p_i,p_j,\bar p_j)\\
			&\ge(\kappa+\rho)\sum_i\lambda_i^2+\kappa\sum_{i\ne j}\lambda_i\lambda_j
			=\kappa u^2+\rho|P|^2 .
		\end{align*}
		If \(\rank P\le d\), list the eigenvalues as \(\lambda_1,\dots,\lambda_d\),
		including zeros if necessary. Then \(\sum_{i=1}^d\lambda_i=u\),
		\(\sum_{i=1}^d\lambda_i^2=|P|^2\), and
		\[
		|P|^2-\frac{u^2}d=\sum_{i=1}^d\Bigl(\lambda_i-\frac ud\Bigr)^2\ge0,
		\]
		which also gives the first inequality in \eqref{eq:rank-curvature}.
	\end{proof}
	
	\begin{corollary}\label{cor:no-hol-curv}
		If \(\dim_\C N\ge2\) and \(\Bisec_N\le-\kappa<0\), then
		\(\HSC_N(p)<-\kappa\) for every nonzero \(p\in T^{1,0}N\).
	\end{corollary}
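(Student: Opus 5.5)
The corollary follows directly from Lemma~\ref{lem:pinched-curvature}; the only point needing care is that the lemma assumes a lower bound \(\HSC_N\ge-k_{\mathrm h}\), which is not given here and may fail globally on a noncompact \(N\). Since the claim is pointwise, I will localize to a compact set before applying the lemma.

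Fix a nonzero \(p\in T^{1,0}_yN\) and normalize \(|p|=1\). The holomorphic sectional curvature at the single point \(y\) is a continuous function on the unit sphere of \(T^{1,0}_yN\). That sphere is compact, so there is a finite \(k_{\mathrm h}(y)\) with \(\HSC_N\ge-k_{\mathrm h}(y)\) on all of \(T^{1,0}_yN\). I then rerun the first part of the proof of Lemma~\ref{lem:pinched-curvature} at the point \(y\) only. The averaging argument producing \eqref{eq:polarization-gap} is purely pointwise: it uses \eqref{eq:bisectional} at \(y\), the bound \(\HSC_N(q)\ge-k_{\mathrm h}(y)\) for a unit vector \(q\perp p\) in \(T^{1,0}_yN\), and the K\"ahler symmetries. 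Such a \(q\) exists because \(\dim_\C N\ge2\).

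The lemma's argument first shows \(k_{\mathrm h}(y)>\kappa\). Choosing \(x=\kappa/(k_{\mathrm h}(y)-\kappa)\) then gives
\[
\HSC_N(p)\le-\kappa-\frac{\kappa^2}{k_{\mathrm h}(y)-\kappa}<-\kappa .
\]
This is the claim.

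As a sanity check, one can see directly from \eqref{eq:polarization-gap} why the inequality is strict. The right-hand side \(-2\kappa x+(k_{\mathrm h}-\kappa)x^2\) is strictly negative for all sufficiently small \(x>0\), whatever the value of \(k_{\mathrm h}\). So finiteness of the local lower bound is the only input needed.
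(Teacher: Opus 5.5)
Your proposal is correct and is essentially the paper's own proof. The paper also fixes a point, uses compactness of the unit sphere in \(T^{1,0}N\) to get a finite pointwise lower bound for \(\HSC_N\), and applies \eqref{eq:holomorphic-gap} at that point. Your extra remark that \eqref{eq:polarization-gap} already forces strictness for small \(x>0\) is a valid shortcut.
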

	
	\begin{proof}
		At a fixed point \(\HSC_N\) is bounded below on the unit sphere of
		\(T^{1,0}N\); apply \eqref{eq:holomorphic-gap} at that point.
	\end{proof}
	
	For the complex hyperbolic space \(\mathbb{CH}^n_\kappa\) of holomorphic
	sectional curvature \(-2\kappa\),
	\begin{equation}\label{eq:complex-hyperbolic-tensor}
		R_{\alpha\bar\beta\gamma\bar\delta}=-\kappa\bigl(h_{\alpha\bar\beta}
		h_{\gamma\bar\delta}+h_{\alpha\bar\delta}h_{\gamma\bar\beta}\bigr).
	\end{equation}
	For nonzero \(p,q\),
	\[
	\frac{R(p,\bar p,q,\bar q)}{|p|^2|q|^2}
	=-\kappa\Bigl(1+\frac{|\ip pq|^2}{|p|^2|q|^2}\Bigr),\qquad
	-2\kappa\le\Bisec(p,q)\le-\kappa,
	\]
	and consequently
	\begin{equation}\label{eq:complex-hyperbolic-contraction}
		\mathscr C=\kappa(u^2+|P|^2),\qquad
		-\sum_iR(p_i,\bar r,r,\bar p_i)
		=\kappa\Bigl(u|r|^2+\sum_i|\ip{p_i}{r}|^2\Bigr).
	\end{equation}
	We fix a parameter \(\rho\ge0\) such that
	\begin{equation}\label{eq:curvature-contraction-assumption}
		\Bisec_N\le-\kappa,\qquad\mathscr C\ge\kappa u^2+\rho|P|^2
	\end{equation}
	for all \(p_1,\dots,p_m\). Admissible choices are \(\rho=0\) in general,
	\(\rho=\kappa^2/(k_{\mathrm h}-\kappa)\) under the hypotheses of
	Lemma~\ref{lem:pinched-curvature}, and \(\rho=\kappa\) for
	\(\mathbb{CH}^n_\kappa\). For an integer \(d\ge1\) put
	\begin{equation}\label{eq:effective-curvature}
		\alpha_d=\kappa+\frac\rho d .
	\end{equation}
	
	\section{Differential identities}\label{sec:bochner}
	
	In this section \(f\) satisfies \(\tau(f)=\mathfrak c\Jt V\) with
	\(\mathfrak c=2m\). Put \(\mathsf B=D\xi=J+\mathsf A\) and
	\[
	\calQ_H=-\sum_{a,b=1}^{2m}\ip{R^N(Fe_a,Fe_b)Fe_b}{Fe_a},\qquad
	\calQ_0=-\sum_{a=1}^{2m}\ip{R^N(V,Fe_a)Fe_a}{V}.
	\]
	
	\subsection{The full energy and the Reeb energy}
	
	\begin{lemma}\label{lem:weitzenbock}
		Let \(\Delta F=\sum_{\mu=0}^{2m}\bigl(\widetilde\nabla_{e_\mu}
		\widetilde\nabla_{e_\mu}F-\widetilde\nabla_{D_{e_\mu}e_\mu}F\bigr)\).
		Then
		\begin{equation}\label{eq:weitzenbock}
			(\Delta F)(X)=\nablaf_X\tau(f)+F(\Ric_g^\sharp X)
			+\sum_\mu R^N(Fe_\mu,FX)Fe_\mu .
		\end{equation}
	\end{lemma}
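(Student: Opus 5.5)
This is the standard Weitzenb\"ock formula for \(F=df\), viewed as an \(f^*TN\)-valued one-form on \((M,g)\). The plan is to compute at a point \(x\in M\) in a normal frame. Take a local \(g\)-orthonormal frame \(\{e_\mu\}_{\mu=0}^{2m}\) with \((D e_\mu)_x=0\), and extend \(X\) so that \((DX)_x=0\). No pseudo-Hermitian structure enters; only the Levi-Civita connection \(D\) is used. Hence the hypothesis \(\tau(f)=\mathfrak c\Jt V\) plays no role here, and \(\tau(f)\) is kept as it is.

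The first step is to record that \(S=\widetilde\nabla F\) is symmetric, \(S(X,Y)=S(Y,X)\). This follows because \(D\) and \(\nabla^N\) are torsion free:
\[
\nablaf_Xdf(Y)-\nablaf_Ydf(X)=df([X,Y]).
\]
At \(x\), the rough Laplacian then reads
\[
(\Delta F)(X)=\sum_\mu(\widetilde\nabla_{e_\mu}S)(e_\mu,X)=\sum_\mu(\widetilde\nabla_{e_\mu}S)(X,e_\mu).
\]

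The second step commutes the two derivatives with the Ricci identity for \(T^*M\otimes f^*TN\)-valued tensors:
\[
(\widetilde\nabla_{e_\mu}\widetilde\nabla_XF)(e_\mu)-(\widetilde\nabla_X\widetilde\nabla_{e_\mu}F)(e_\mu)
=R^{f^*TN}(e_\mu,X)F(e_\mu)-F\bigl(R^g(e_\mu,X)e_\mu\bigr).
\]
Here \(R^{f^*TN}(e_\mu,X)=R^N(Fe_\mu,FX)\). Summing over \(\mu\) gives three contributions:
\[
\sum_\mu(\widetilde\nabla_X\widetilde\nabla_{e_\mu}F)(e_\mu)=\nablaf_X\tau(f),\qquad
-\sum_\mu F\bigl(R^g(e_\mu,X)e_\mu\bigr)=F(\Ric_g^\sharp X),
\]
and the remaining term \(\sum_\mu R^N(Fe_\mu,FX)Fe_\mu\). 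The first identity holds because the trace commutes with \(\widetilde\nabla_X\), since the frame is parallel at \(x\). The second uses the sign convention \(R(X,Y)=D_XD_Y-D_YD_X-D_{[X,Y]}\) and \(\Ric(X,Y)=\sum_\mu\ip{R(e_\mu,X)Y}{e_\mu}\). Together these are exactly \eqref{eq:weitzenbock}.

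The only delicate point is sign bookkeeping. One must check that \(-\sum_\mu R^g(e_\mu,X)e_\mu=\sum_\mu R^g(X,e_\mu)e_\mu=\Ric_g^\sharp X\) with the paper's convention. One must also check that the curvature of the pull-back connection is \(R^N(Fe_\mu,FX)\) in the same order. Both are verified by the same convention, so the term \(\sum_\mu R^N(Fe_\mu,FX)Fe_\mu\) appears with a plus sign. This matches the standard Eells--Sampson formula.
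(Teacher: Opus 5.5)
Your proposal is correct and follows the paper's proof essentially step for step: a frame with \(De_\mu=0\) at the point, symmetry of \(S\) from torsion-freeness, and the Ricci identity on \(T^*M\otimes f^*TN\), which yields the term \(R^N(Fe_\mu,FX)Fe_\mu\) and \(-\sum_\mu F(R^g(e_\mu,X)e_\mu)=F(\Ric_g^\sharp X)\); commuting the trace with \(\widetilde\nabla_X\) then gives \(\nablaf_X\tau(f)\). The only cosmetic difference is that you commute \(\widetilde\nabla_{e_\mu}\widetilde\nabla_X\) acting on \(F\), which is why you also need \((DX)_x=0\), whereas the paper applies the Ricci identity directly to the tensor \(\widetilde\nabla S\).
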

	
	\begin{proof}
		At a point choose an orthonormal frame with \(De_\mu=0\). The symmetry
		of \(S\) and the Ricci identity give
		\begin{align*}
			(\widetilde\nabla_{e_\mu}S)(e_\mu,X)
			&=(\widetilde\nabla_{e_\mu}S)(X,e_\mu),\\
			(\widetilde\nabla_{e_\mu}S)(X,e_\mu)
			&=(\widetilde\nabla_XS)(e_\mu,e_\mu)+R^N(Fe_\mu,FX)Fe_\mu
			-F\bigl(R^g(e_\mu,X)e_\mu\bigr).
		\end{align*}
		Summing over \(\mu\),
		\begin{align*}
			\sum_\mu(\widetilde\nabla_XS)(e_\mu,e_\mu)
			&=\nablaf_X\Bigl(\sum_\mu S(e_\mu,e_\mu)\Bigr)=\nablaf_X\tau(f),\\
			-\sum_\mu F\bigl(R^g(e_\mu,X)e_\mu\bigr)&=F(\Ric_g^\sharp X),
		\end{align*}
		which gives \eqref{eq:weitzenbock}.
	\end{proof}
	
	\begin{proposition}\label{prop:coupled-real}
		\begin{align}
			\tfrac12\Delta E={}&|S|^2+\sum_{\mu,\nu=0}^{2m}\Ric^g_{\mu\nu}
			\ip{Fe_\mu}{Fe_\nu}+\calQ_H+2\calQ_0\notag\\
			&+\mathfrak c\sum_{a=1}^{2m}\ip{\Jt\nablaf_{e_a}V}{Fe_a}
			+\mathfrak c\ip{\Jt\nablaf_\xi V}{V},\label{eq:full-bochner}\\
			\tfrac12\Delta v={}&|\nablaf V|^2+\calQ_0
			+\mathfrak c\ip{\Jt\nablaf_\xi V}{V}\notag\\
			&+\ip{F(Z)}{V}+2\sum_{\mu=0}^{2m}\ip{S(e_\mu,\mathsf Be_\mu)}{V}.
			\label{eq:reeb-real-bochner}
		\end{align}
	\end{proposition}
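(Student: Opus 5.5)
Both identities are Bochner computations in the Levi-Civita connection \(D\). They rest on Lemma~\ref{lem:weitzenbock} and the equation \(\tau(f)=\mathfrak c\Jt V\). Work at a point with a \(D\)-normal orthonormal frame \(e_0=\xi,e_1,\dots,e_{2m}\).

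For \eqref{eq:full-bochner}, start from
\[
\tfrac12\Delta E=|\widetilde\nabla F|^2+\sum_\mu\ip{(\Delta F)(e_\mu)}{Fe_\mu},
\]
where \(|\widetilde\nabla F|^2=|S|^2\). Insert \eqref{eq:weitzenbock}. The Ricci term gives \(\sum\Ric^g_{\mu\nu}\ip{Fe_\mu}{Fe_\nu}\), and the curvature term gives \(-\sum_{\mu,\nu}\ip{R^N(Fe_\mu,Fe_\nu)Fe_\nu}{Fe_\mu}\). Split this double sum according to whether \(\mu,\nu\) equal \(0\). The purely horizontal part is \(\calQ_H\). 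The two mixed parts \((\mu,\nu)=(0,a)\) and \((a,0)\) each equal \(\calQ_0\), by the pair symmetry of \(R^N\). The \((0,0)\) term vanishes. The tension term is \(\mathfrak c\sum_\mu\ip{\nablaf_{e_\mu}(\Jt V)}{Fe_\mu}\); since \(\nabla^N\Jt=0\), splitting \(\mu=0\) from \(\mu=a\) gives the two \(\mathfrak c\)-terms.

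For \eqref{eq:reeb-real-bochner}, write \(\tfrac12\Delta v=|\nablaf V|^2+\ip{\Delta^{f}V}{V}\), with \(\Delta^f\) the rough Laplacian on \(f^*TN\). Since \(V=F(\xi)\), the Leibniz rule gives
\[
\nablaf_XV=S(X,\xi)+F(D_X\xi)=S(X,\xi)+F(\mathsf BX).
\]
Differentiating once more and tracing yields
\[
\Delta^fV=(\Delta F)(\xi)+2\sum_\mu S(e_\mu,\mathsf Be_\mu)+F(\Delta^D\xi).
\]
Apply Lemma~\ref{lem:weitzenbock} with \(X=\xi\):
\[
(\Delta F)(\xi)=\nablaf_\xi\tau(f)+F(\Ric_g^\sharp\xi)+\sum_\mu R^N(Fe_\mu,V)Fe_\mu .
\]
Three contributions then assemble. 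The two \(F(\cdot)\) terms combine to \(F(Z)\), with \(Z=\Delta^D\xi+\Ric_g^\sharp\xi\). The curvature term paired with \(V\) is \(\calQ_0\); the \(\mu=0\) term drops out. The tension term gives \(\mathfrak c\ip{\Jt\nablaf_\xi V}{V}\).

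The only delicate step is the second covariant derivative of \(V\). One must check that the cross term \(\widetilde\nabla S\) contracted as \(\sum_\mu(\widetilde\nabla_{e_\mu}S)(e_\mu,\xi)\) is exactly \((\Delta F)(\xi)\), which follows from the symmetry of \(S\). One must also check that the \(\mathsf B\)-terms combine to \(2\sum_\mu S(e_\mu,\mathsf Be_\mu)\) and not to one copy plus a term in \(S(\mathsf Be_\mu,e_\mu)\); this again uses the symmetry of \(S\). The remaining bookkeeping is routine.
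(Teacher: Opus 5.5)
Your proposal is correct and is essentially the paper's own argument: the same splitting of the curvature trace into \(\calQ_H+\calQ_0+\calQ_0\) by skew-adjointness and pair symmetry, and the same Leibniz expansion \(\Delta^fV=(\Delta F)(\xi)+2\sum_\mu S(e_\mu,\mathsf Be_\mu)+F(\Delta^D\xi)\) combined with Lemma~\ref{lem:weitzenbock} at \(X=\xi\). The two checks you flag as delicate do not actually need the symmetry of \(S\): \(\sum_\mu(\widetilde\nabla_{e_\mu}S)(e_\mu,\xi)=(\Delta F)(\xi)\) holds by the definition of \(\Delta F\), and the two \(\mathsf B\)-terms \(S(Y,D_X\xi)\) and \((\widetilde\nabla_XF)(\mathsf BY)\) coincide once \(X=Y=e_\mu\).
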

	
	\begin{proof}
		By \(\tfrac12\Delta|F|^2=|\widetilde\nabla F|^2+\ip{\Delta F}{F}\),
		Lemma~\ref{lem:weitzenbock} and
		\(\nablaf_X\tau(f)=\mathfrak c\Jt\nablaf_XV\),
		\begin{align*}
			\tfrac12\Delta E&=|S|^2+\sum_\nu\ip{(\Delta F)(e_\nu)}{Fe_\nu},\\
			\sum_\nu\ip{F(\Ric_g^\sharp e_\nu)}{Fe_\nu}
			&=\sum_{\mu,\nu}\Ric^g_{\mu\nu}\ip{Fe_\mu}{Fe_\nu},\\
			\sum_\nu\ip{\nablaf_{e_\nu}\tau(f)}{Fe_\nu}
			&=\mathfrak c\sum_{a=1}^{2m}\ip{\Jt\nablaf_{e_a}V}{Fe_a}
			+\mathfrak c\ip{\Jt\nablaf_\xi V}{V}.
		\end{align*}
		For the curvature trace, the skew-adjointness of \(R^N\) gives
		\begin{align*}
			\sum_{\mu,\nu=0}^{2m}\ip{R^N(Fe_\mu,Fe_\nu)Fe_\mu}{Fe_\nu}
			={}&-\sum_{a,b=1}^{2m}\ip{R^N(Fe_a,Fe_b)Fe_b}{Fe_a}\\
			&-\sum_{a=1}^{2m}\ip{R^N(V,Fe_a)Fe_a}{V}
			-\sum_{a=1}^{2m}\ip{R^N(Fe_a,V)V}{Fe_a}\\
			={}&\calQ_H+\calQ_0+\calQ_0,
		\end{align*}
		the last two sums coming from the ordered pairs \((\xi,e_a)\) and
		\((e_a,\xi)\). This proves \eqref{eq:full-bochner}.
		
		For \eqref{eq:reeb-real-bochner}, note that \(V=F(\xi)\) and
		\[
		\nablaf_XV=S(X,\xi)+F(D_X\xi)=S(X,\xi)+F(\mathsf BX).
		\]
		Differentiating once more in a normal frame and using
		Lemma~\ref{lem:weitzenbock} with \(X=\xi\),
		\begin{align*}
			\Delta^fV&=(\Delta F)(\xi)+2\sum_\mu S(e_\mu,\mathsf Be_\mu)
			+F(\Delta^D\xi)\\
			&=\mathfrak c\Jt\nablaf_\xi V+F(Z)+2\sum_\mu S(e_\mu,\mathsf Be_\mu)
			+\sum_\mu R^N(Fe_\mu,V)Fe_\mu .
		\end{align*}
		Pairing with \(V\) and using
		\(\tfrac12\Delta|V|^2=|\nablaf V|^2+\ip{\Delta^fV}{V}\) gives
		\eqref{eq:reeb-real-bochner}.
	\end{proof}
	
	\subsection{The horizontal energy}
	
	At a point fix an orthonormal basis \(\{e_a\}\) of \(H\) and put
	\[
	\mathsf S_a=S(e_a,\xi),\qquad T_a=F(\mathsf Be_a),\qquad
	|S_{HH}|^2=\sum_{a,b}|S(e_a,e_b)|^2 .
	\]
	Since \(D_\xi\xi=0\), we have \(\nablaf_{e_a}V=\mathsf S_a+T_a\) and
	\(\nablaf_\xi V=S(\xi,\xi)\). Hence
	\begin{equation}\label{eq:hessian-difference}
		\begin{aligned}
			|S|^2-|\nablaf V|^2
			&=|S_{HH}|^2+2\sum_a|\mathsf S_a|^2+|S(\xi,\xi)|^2
			-\sum_a|\mathsf S_a+T_a|^2-|S(\xi,\xi)|^2\\
			&=|S_{HH}|^2+\sum_a|\mathsf S_a|^2
			-2\sum_a\ip{\mathsf S_a}{T_a}-\sum_a|T_a|^2 .
		\end{aligned}
	\end{equation}
	
	\begin{proposition}\label{prop:horizontal-identity}
		\begin{align}
			\tfrac12\Delta U={}&|S_{HH}|^2+\sum_a|\mathsf S_a|^2
			-2\sum_a\ip{\mathsf S_a}{T_a}-\sum_a|T_a|^2\notag\\
			&+\calQ_H+\calQ_0+\sum_{\mu,\nu}\Ric^g_{\mu\nu}\ip{Fe_\mu}{Fe_\nu}
			-\ip{F(Z)}{V}\notag\\
			&+\mathfrak c\sum_a\ip{\Jt(\mathsf S_a+T_a)}{Fe_a}
			-2\sum_a\ip{S(e_a,\mathsf Ae_a)}{V}.\label{eq:horizontal-identity}
		\end{align}
	\end{proposition}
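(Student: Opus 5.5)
The plan is to compute \(\tfrac12\Delta U=\tfrac12\Delta E-\tfrac12\Delta v\) by subtracting \eqref{eq:reeb-real-bochner} from \eqref{eq:full-bochner} in Proposition~\ref{prop:coupled-real}, and then to simplify each group of terms. The magnetic term \(\mathfrak c\ip{\Jt\nablaf_\xi V}{V}\) occurs in both formulas with the same coefficient, so it cancels; this is the point of passing to the horizontal energy. The curvature terms give \(\calQ_H+2\calQ_0-\calQ_0=\calQ_H+\calQ_0\). The Ricci term \(\sum_{\mu,\nu}\Ric^g_{\mu\nu}\ip{Fe_\mu}{Fe_\nu}\) is carried over unchanged, and \(-\ip{F(Z)}{V}\) comes directly from \eqref{eq:reeb-real-bochner}.

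For the Hessian terms, substitute \eqref{eq:hessian-difference} for \(|S|^2-|\nablaf V|^2\). This produces the first line of \eqref{eq:horizontal-identity}. In the remaining magnetic term of \eqref{eq:full-bochner}, use \(\nablaf_{e_a}V=\mathsf S_a+T_a\), which holds because \(D_\xi\xi=0\) and \(D_{e_a}\xi=\mathsf Be_a\). That gives \(\mathfrak c\sum_a\ip{\Jt(\mathsf S_a+T_a)}{Fe_a}\).

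The only step that needs real work is the last term of \eqref{eq:reeb-real-bochner}, namely \(-2\sum_{\mu}\ip{S(e_\mu,\mathsf Be_\mu)}{V}\). I would split it as follows.

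1. The \(\mu=0\) term vanishes because \(\mathsf B\xi=D_\xi\xi=0\).
2. For \(\mu=a\) horizontal, write \(\mathsf B=J+\mathsf A\), noting that both \(J\) and \(\mathsf A\) preserve \(H\).
3. The \(J\)-part is \(\sum_a S(e_a,Je_a)\). It vanishes because \(S\) is symmetric while the bilinear form \((X,Y)\mapsto g(X,JY)\) is skew, so the trace of \(S\) against \(J\) over an orthonormal basis of \(H\) is zero. Concretely, replacing \(e_a\) by \(Je_a\) reindexes the basis and gives \(\sum_a S(Je_a,J^2e_a)=-\sum_a S(Je_a,e_a)\).
4. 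What remains is \(-2\sum_a\ip{S(e_a,\mathsf Ae_a)}{V}\), which is exactly the last term claimed.

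I would also check that no extra term comes from \(D_{e_\mu}e_\mu\neq0\) in the frame used for \eqref{eq:hessian-difference}. This is fine because \(|S|^2\) and \(|\nablaf V|^2\) are frame-independent, and the pointwise splitting only uses \(D_\xi\xi=0\) and \(D_X\xi=\mathsf BX\) from Lemma~\ref{lem:connection-difference}.

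The main obstacle, such as it is, is this \(J\)-cancellation in step 3 together with the bookkeeping that signs and factors of \(2\) match between \eqref{eq:full-bochner} and \eqref{eq:reeb-real-bochner}. Everything else is direct substitution.
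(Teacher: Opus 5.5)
Your proposal is correct and matches the paper's proof step for step. You subtract \eqref{eq:reeb-real-bochner} from \eqref{eq:full-bochner} so that the magnetic term cancels, then insert \eqref{eq:hessian-difference} and $\nablaf_{e_a}V=\mathsf S_a+T_a$, and finally reduce $\sum_\mu S(e_\mu,\mathsf Be_\mu)$ to $\sum_a S(e_a,\mathsf Ae_a)$ using $\mathsf B\xi=0$ and the vanishing of the $J$-trace of the symmetric form $S$; your reindexing argument $e_a\mapsto Je_a$ supplies the detail the paper leaves implicit.
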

	
	\begin{proof}
		Subtract \eqref{eq:reeb-real-bochner} from \eqref{eq:full-bochner}; the
		term \(\mathfrak c\ip{\Jt\nablaf_\xi V}{V}\) cancels. Since
		\(\mathsf B\xi=0\), \(\mathsf B=J+\mathsf A\) and \(S\) is symmetric,
		\[
		\sum_{\mu=0}^{2m}S(e_\mu,\mathsf Be_\mu)
		=\sum_{a=1}^{2m}S(e_a,Je_a)+\sum_{a=1}^{2m}S(e_a,\mathsf Ae_a)
		=\sum_{a=1}^{2m}S(e_a,\mathsf Ae_a).
		\]
		Inserting \eqref{eq:hessian-difference} gives \eqref{eq:horizontal-identity}.
	\end{proof}
	
	\begin{lemma}\label{lem:horizontal-differential}
		Assume \(\Ric^g\ge-K_gg\), \(|\mathsf B|_{\op}\le L\),
		\(|\mathsf A|_{\HS}\le T\), \(|\ip Z\xi|\le z_0\) and \(|Z_H|\le z_1\).
		Then
		\begin{equation}\label{eq:horizontal-differential}
			\begin{aligned}
				\tfrac12\Delta U
				\ge{}&\calQ_H+\calQ_0-aU-bv-z_1\sqrt{Uv}\\
				&+\tfrac12|S_{HH}|^2+\tfrac14\sum_a|\mathsf S_a|^2,
			\end{aligned}
		\end{equation}
		with \(a=K_g+3L^2+\mathfrak cL+\mathfrak c^2\) and \(b=K_g+z_0+2T^2\).
	\end{lemma}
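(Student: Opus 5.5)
Starting from the identity \eqref{eq:horizontal-identity} of Proposition~\ref{prop:horizontal-identity}, we bound every indefinite term from below. The positive resources available for absorption are \(|S_{HH}|^2+\sum_a|\mathsf S_a|^2\); half of \(|S_{HH}|^2\) and three quarters of \(\sum_a|\mathsf S_a|^2\) may be sacrificed, leaving the stated remainder. We keep \(\calQ_H+\calQ_0\) untouched. The bound \(|\mathsf B|_{\op}\le L\) gives \(|T_a|\le L|Fe_a|\), so \(\sum_a|T_a|^2\le L^2U\). Likewise \(|\mathsf A|_{\op}\le|\mathsf B|_{\op}+1\) is not needed if we use \(\mathsf A\) only through \(T\).

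We estimate the terms one at a time.

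\emph{(i) Ricci term.} Since \(\Ric^g\ge-K_gg\), we have \(\sum_{\mu,\nu}\Ric^g_{\mu\nu}\ip{Fe_\mu}{Fe_\nu}\ge-K_g(U+v)\).

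\emph{(ii) Source term.} Split \(Z=\ip Z\xi\xi+Z_H\). Then \(\ip{F(Z)}{V}=\ip Z\xi v+\ip{F(Z_H)}{V}\), and \(|F(Z_H)|\le|F|_H|_{\op}|Z_H|\le\sqrt U z_1\). This gives \(-\ip{F(Z)}{V}\ge-z_0v-z_1\sqrt{Uv}\).

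\emph{(iii) Mixed term.} By Cauchy--Schwarz, \(2\sum_a|\ip{\mathsf S_a}{T_a}|\le\tfrac12\sum_a|\mathsf S_a|^2+2L^2U\). Together with \(-\sum|T_a|^2\ge-L^2U\), this produces \(-3L^2U\), which accounts for the \(3L^2\) in \(a\).

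\emph{(iv) Magnetic term.} We have \(\mathfrak c\sum_a\ip{\Jt T_a}{Fe_a}\ge-\mathfrak cLU\). For the other piece, \(\mathfrak c|\sum_a\ip{\Jt\mathsf S_a}{Fe_a}|\le\tfrac14\sum_a|\mathsf S_a|^2+\mathfrak c^2U\), using \(\sum_a|Fe_a|^2=U\). This uses another quarter, leaving \(\tfrac14\sum|\mathsf S_a|^2\), and gives the terms \(\mathfrak cL+\mathfrak c^2\) in \(a\).

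\emph{(v) Torsion term.} Write \(\sum_aS(e_a,\mathsf Ae_a)=\sum_{a,b}\mathsf A_{ba}S(e_a,e_b)\). Its norm is at most \(|\mathsf A|_{\HS}|S_{HH}|\le T|S_{HH}|\), because \(\mathsf A\) maps \(H\) to \(H\) and annihilates \(\xi\). Hence \(2|\ip{\cdot}{V}|\le2T|S_{HH}|\sqrt v\le\tfrac12|S_{HH}|^2+2T^2v\).

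Adding (i)--(v) gives the coefficient \(K_g+3L^2+\mathfrak cL+\mathfrak c^2\) on \(U\) and \(K_g+z_0+2T^2\) on \(v\), with remainder \(\tfrac12|S_{HH}|^2+\tfrac14\sum_a|\mathsf S_a|^2\). This is exactly \eqref{eq:horizontal-differential}.

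The only delicate points are the bookkeeping in (iv) and the identification in (v). In (iv), the operator norm of \(\Jt\) is \(1\), and the Cauchy--Schwarz estimate must run over \(a\) with the \(\ell^2\) sum, giving \(\sqrt{\sum|\mathsf S_a|^2}\sqrt U\). In (v), the contraction \(\sum_aS(e_a,\mathsf Ae_a)\) must involve only the horizontal block \(S_{HH}\). This holds because \(\mathsf A\xi=0\) and \(\mathsf A(H)\subset H\), which follows from \(\mathsf A\) being \(g\)-symmetric with \(\mathsf A\xi=0\). The rest is routine.
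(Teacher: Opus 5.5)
Your proposal is the paper's proof term by term. It uses the same five estimates, the same Young splittings (half and then a quarter of \(\sum_a|\mathsf S_a|^2\), half of \(|S_{HH}|^2\)), and arrives at the same constants \(a\) and \(b\).

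One justification needs fixing. The pointwise bound \(|T_a|\le L|Fe_a|\) is false in general, because \(T_a=F(\mathsf Be_a)\) and \(\mathsf Be_a\) is a different vector. For example, with \(\mathsf B=J\), take \(Fe_1=0\ne F(Je_1)\).

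What is true, and what you actually use, is the summed bound
\(\sum_a|F(\mathsf Be_a)|^2=\tr_H(\mathsf B^*F^*F\mathsf B)\le|\mathsf B|_{\op}^2\tr_H(F^*F)\le L^2U\).
This holds because \(\mathsf B\) preserves \(H\), which is exactly how the paper phrases it.
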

	
	\begin{proof}
		Since \(\mathsf B\) preserves \(H\), \(\sum_a|T_a|^2\le L^2U\). From
		\(2|\ip{\mathsf S_a}{T_a}|\le\tfrac12|\mathsf S_a|^2+2|T_a|^2\),
		\[
		\sum_a|\mathsf S_a|^2-2\sum_a\ip{\mathsf S_a}{T_a}-\sum_a|T_a|^2
		\ge\tfrac12\sum_a|\mathsf S_a|^2-3L^2U .
		\]
		By the Cauchy--Schwarz and Young inequalities,
		\begin{align*}
			\mathfrak c\sum_a\ip{\Jt(\mathsf S_a+T_a)}{Fe_a}
			&\ge-\mathfrak c\Bigl(\sum_a|\mathsf S_a|^2\Bigr)^{1/2}\sqrt U
			-\mathfrak cLU\\
			&\ge-\tfrac14\sum_a|\mathsf S_a|^2-\mathfrak c^2U-\mathfrak cLU .
		\end{align*}
		Moreover
		\begin{align*}
			2\Bigl|\sum_a\ip{S(e_a,\mathsf Ae_a)}{V}\Bigr|
			&\le2|S_{HH}|\,|\mathsf A|_{\HS}\sqrt v
			\le\tfrac12|S_{HH}|^2+2T^2v,\\
			\sum_{\mu,\nu}\Ric^g_{\mu\nu}\ip{Fe_\mu}{Fe_\nu}&\ge-K_g(U+v),\\
			-\ip{F(Z)}{V}&\ge-z_1\sqrt{Uv}-z_0v .
		\end{align*}
		Adding these estimates to \eqref{eq:horizontal-identity} gives
		\eqref{eq:horizontal-differential}.
	\end{proof}
	
	\subsection{CR maps: the pull-back tensor and the remainder identity}
	
	Let \(f\) be CR and \(\bp\)-harmonic, and put \(c_m=m+2\). By
	\eqref{eq:basic-commutators} and \eqref{eq:cr-geodesic},
	\begin{equation}\label{eq:cr-first-derivatives}
		f_{\bar i}=0,\qquad f_{i\bar j}=2i\delta_{ij}r,\qquad f_{00}=0,\qquad
		f_{0i}=f_{i0};
	\end{equation}
	the last identity holds because \(\mathsf AZ_i\) has type \((0,1)\) and
	\(q_i=0\). The remaining mixed derivative is
	\begin{equation}\label{eq:cr-anti-reeb}
		f_{0\bar i}=df^{1,0}(\mathsf AZ_{\bar i}).
	\end{equation}
	Put
	\[
	P_{i\bar j}=\ip{f_i}{f_j},\qquad s_0=\sum_i|f_{i0}|^2,\qquad
	T_A=\sum_i|df^{1,0}(\mathsf AZ_{\bar i})|^2,
	\]
	and regard \(P\) as the Hermitian endomorphism of \(H^{1,0}\) with
	\(G(PX,\bar Y)=h(dfX,\overline{dfY})\). Derivatives of \(P\) are taken
	with respect to the connection induced by \(\nabla\).
	
	\begin{proposition}\label{prop:cr-tensor}
		Define sections of \(f^*T^{1,0}N\) by
		\begin{align}
			\mathcal V_i&=2ic_mf_{i0}+\sum_j\Ric^{\TW}_{i\bar j}f_j
			-\sum_jR^N(f_j,\bar f_j)f_i,\label{eq:cr-C}\\
			\mathcal J_i&=-R^N(f_i,\bar r)r-df^{1,0}(\mathsf A^2Z_i).\label{eq:cr-H}
		\end{align}
		Then
		\begin{align}
			(\Delta_bP)_{i\bar j}&=2\sum_k\bigl(\ip{f_{ik}}{f_{jk}}
			+\ip{f_{i\bar k}}{f_{j\bar k}}\bigr)+\ip{\mathcal V_i}{f_j}
			+\ip{f_i}{\mathcal V_j},\label{eq:tensor-horizontal}\\
			(\nabla_\xi^2P)_{i\bar j}&=2\ip{f_{i0}}{f_{j0}}+\ip{\mathcal J_i}{f_j}
			+\ip{f_i}{\mathcal J_j}.\label{eq:tensor-reeb}
		\end{align}
	\end{proposition}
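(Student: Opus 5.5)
The plan is a direct computation in a local unitary frame \(Z_1,\dots,Z_m\) of \(H^{1,0}\) that is \(\nabla\)-parallel at the point under consideration, so that covariant derivatives of \(P_{i\bar j}=\ip{f_i}{f_j}\) reduce to derivatives of components. Since the Hermitian product is complex linear in the first slot and \(\nablaf\) is metric,
\[
P_{i\bar j,A}=\ip{f_{iA}}{f_j}+\ip{f_i}{\overline{f_{\bar j\bar A}}}.
\]
Because \(f\) is CR, \(\overline{f_{\bar j\bar A}}\) is the conjugate of the \((1,0)\)-part of \(\beta(Z_{\bar j},Z_{\bar A})\). For a CR map the \((1,0)\)-component of \(df(Z_{\bar j})\) vanishes identically, so this derivative reduces to \(\ip{f_{jA}}{\cdot}\) with the appropriate conjugation. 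Differentiating a second time gives, schematically,
\[
P_{i\bar j,AB}=\ip{f_{iAB}}{f_j}+\ip{f_{iA}}{f_{j\bar B}}+\ip{f_{iB}}{f_{j\bar A}}+\ip{f_i}{f_{j\bar A\bar B}},
\]
with the bars on the \(j\)-factors placed according to conjugation.

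For \eqref{eq:tensor-reeb} I take \(A=B=0\). The cross terms give \(2\ip{f_{i0}}{f_{j0}}\). The task is then to compute \(f_{i00}\). I write \(f_{i00}=f_{0i0}\) using \eqref{eq:cr-first-derivatives}, commute via the Ricci identity \eqref{eq:ricci-identity} to obtain \(f_{00i}\) plus curvature and torsion terms, and use \(f_{00}=0\). The curvature term is \(R^N(df(\xi),df(Z_i))r\), whose \((1,0)\)-part in Kähler type is \(R^N(\bar r,f_i)r=-R^N(f_i,\bar r)r\). The torsion term \(f_{AD}T^D_{0i}\) involves \(T(\xi,Z_i)=\mathsf AZ_i\) and the source term \(R^\nabla(\xi,Z_i)\). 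Together with the derivative of \(f_{0i}-f_{i0}=df^{1,0}(\mathsf AZ_i)\), which vanishes for CR maps, these should collapse to \(-df^{1,0}(\mathsf A^2Z_i)\). The \(\nabla_\xi\mathsf A\)-contributions cancel between the \(R^\nabla(\xi,\cdot)\) term (a divergence of \(\mathsf A\) by the Bianchi identities of \cite{DT}) and the torsion term. Pairing with \(f_j\), and symmetrically on the other side, yields \(\mathcal J_i\).

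For \eqref{eq:tensor-horizontal} I sum over \(A=k,B=\bar k\) and \(A=\bar k,B=k\). The cross terms give \(2\sum_k(\ip{f_{ik}}{f_{jk}}+\ip{f_{i\bar k}}{f_{j\bar k}})\). For the third-order term \(\sum_k(f_{ik\bar k}+f_{i\bar kk})\) I use \(f_{ik}=f_{ki}\) and \(f_{i\bar k}=2i\delta_{ik}r\). This gives \(f_{i\bar kk}=2i\delta_{ik}r_k=2i\delta_{ik}f_{0k}\), summing to \(2if_{0i}=2if_{i0}\). Next, \(f_{ik\bar k}=f_{ki\bar k}\) is commuted to \(f_{k\bar ki}\) via \eqref{eq:ricci-identity}. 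The torsion \(T(Z_i,Z_{\bar k})=2i\delta_{ik}\xi\) contributes \(2if_{i0}\) per relevant term. The curvature terms give \(\Ric^{\TW}_{i\bar j}f_j\) and \(-R^N(f_j,\bar f_j)f_i\). Finally \(f_{k\bar k i}=(2ir)_i=2if_{0i}\). Counting the \(2if_{i0}\) contributions should give \(2ic_mf_{i0}=2i(m+2)f_{i0}\): the \(\sum_k\) torsion terms produce the \(m\), and the remaining terms produce the \(2\).

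I expect the main obstacle to be bookkeeping rather than ideas: the exact constant \(c_m=m+2\), the signs in the Ricci identity, and verifying the cancellation of the \(\nabla_\xi\mathsf A\) terms in the Reeb direction. I would check the constant against the Heisenberg group with a linear CR map, and the sign conventions against the vanishing of \eqref{eq:tensor-reeb} when \(f\) is foliated and \(M\) is Sasakian.
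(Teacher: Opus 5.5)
Your route is the paper's in outline: product rule for \(P_{i\bar j}\) in a frame normal at the point, then commuting third derivatives with \eqref{eq:ricci-identity}. But the mechanism you give for the Reeb step is wrong, and the ingredient that actually produces \(\mathsf A^2\) is missing.

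When you commute \(f_{0i0}\) to \(f_{00i}\), the first index is \(A=0\). So the source term is \(df(R^\nabla(\xi,Z_i)\xi)\), which vanishes identically because \(\nabla\xi=0\). There are no \(\nabla_\xi\mathsf A\) terms, no Bianchi identity is needed, and nothing cancels. The torsion term is \(-\beta^{1,0}(\xi,\mathsf AZ_i)=-\nablaf_{\mathsf AZ_i}r\), and evaluating it requires \eqref{eq:cr-anti-reeb}, which you never invoke. Since \(\mathsf AZ_i\) has type \((0,1)\), \(\nablaf_{\mathsf AZ_i}r\) is a combination of the \(\nablaf_{Z_{\bar j}}r=f_{0\bar j}=df^{1,0}(\mathsf AZ_{\bar j})\), and this gives \(df^{1,0}(\mathsf A^2Z_i)\). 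The relation \(f_{0i}=f_{i0}\) that you cite only yields \(f_{i00}=f_{0i0}\). The second factor of \(\mathsf A\) comes from the commutator in the antiholomorphic direction together with \(f_{\bar i0}=0\). The paper does the same computation as \(\nablaf_\xi\nablaf_{Z_i}r=\nablaf_{Z_i}\nablaf_\xi r+R^N(df\xi,dfZ_i)r+\nablaf_{[\xi,Z_i]}r\), with \([\xi,Z_i]=-\mathsf AZ_i\) in a frame parallel along the Reeb orbit; this is equivalent to your Ricci-identity version once the torsion term is evaluated correctly.

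In the horizontal step your listed ingredients are right, but your count of the \(2if_{i0}\) terms is not.
\begin{itemize}
\item The torsion \(T^\nabla(Z_i,Z_{\bar k})=2i\delta_{ik}\xi\) contributes a single \(2if_{i0}\), since only \(k=i\) survives.
\item The factor \(m\) comes from the trace \(\sum_kf_{k\bar ki}=2imf_{0i}\) of \(f_{k\bar k}=2ir\).
\item The term \(\sum_kf_{i\bar kk}=2if_{0i}\) supplies the remaining unit.
\end{itemize}
So \(c_m=m+1+1\). With torsion producing \(m\), as you say, and your own \(f_{k\bar ki}=2if_{0i}\) for each \(k\), the total would be \(2m+1\).

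Finally, in your first-derivative formula the second term must be \(\ip{f_i}{f_{j\bar A}}\), the derivative of \(f_j\) along \(Z_{\bar A}\). The \((1,0)\)-part of \(\beta(Z_{\bar j},Z_{\bar A})\) is the derivative of \(q_j\equiv0\) and vanishes, so read literally your formula drops a term. Your second-order formula already has the correct form.
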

	
	\begin{proof}
		Choose unitary frames whose connection forms vanish at the point.
		Metric compatibility and the convention on the Hermitian product give
		\begin{align*}
			P_{i\bar j,k}&=\ip{f_{ik}}{f_j}+\ip{f_i}{f_{j\bar k}},\\
			P_{i\bar j,k\bar k}&=\ip{f_{ik\bar k}}{f_j}+\ip{f_{ik}}{f_{jk}}
			+\ip{f_{i\bar k}}{f_{j\bar k}}+\ip{f_i}{f_{j\bar kk}},\\
			P_{i\bar j,\bar kk}&=\ip{f_{i\bar kk}}{f_j}+\ip{f_{i\bar k}}{f_{j\bar k}}
			+\ip{f_{ik}}{f_{jk}}+\ip{f_i}{f_{jk\bar k}}.
		\end{align*}
		Differentiating \(f_{i\bar k}=2i\delta_{ik}r\) and using
		\(f_{ik}=f_{ki}\) together with \eqref{eq:ricci-identity},
		\begin{align*}
			\sum_kf_{i\bar kk}&=2if_{0i},\qquad
			\sum_kf_{k\bar ki}=2imf_{0i},\\
			\sum_kf_{ik\bar k}&=\sum_kf_{k\bar ki}+2if_{i0}
			+\sum_j\Ric^{\TW}_{i\bar j}f_j-\sum_kR^N(f_k,\bar f_k)f_i .
		\end{align*}
		Here the torsion term of \eqref{eq:ricci-identity} gives \(2if_{i0}\),
		the source term gives
		\(-\sum_kdf^{1,0}(R^\nabla(Z_{\bar k},Z_i)Z_k)
		=\sum_j\Ric^{\TW}_{i\bar j}f_j\), and the target term was rewritten by
		\[
		R^N(\bar f_k,f_i)f_k=-R^N(f_i,\bar f_k)f_k=-R^N(f_k,\bar f_k)f_i .
		\]
		With \(f_{0i}=f_{i0}\) we obtain
		\begin{equation}\label{eq:cr-contracted-commutation}
			\sum_k(f_{ik\bar k}+f_{i\bar kk})=2ic_mf_{i0}
			+\sum_j\Ric^{\TW}_{i\bar j}f_j-\sum_kR^N(f_k,\bar f_k)f_i=\mathcal V_i .
		\end{equation}
		Adding \(P_{i\bar j,k\bar k}\) and \(P_{i\bar j,\bar kk}\) and summing
		over \(k\) proves \eqref{eq:tensor-horizontal}.
		
		Along a Reeb trajectory choose a \(\nabla\)-parallel source frame.
		Since \(\nabla\xi=0\), \eqref{eq:tw-torsion} gives
		\[
		[\xi,Z_i]=-\mathsf AZ_i,\qquad \nablaf_\xi r=f_{00}=0,\qquad
		\nablaf_\xi f_i=\nablaf_{Z_i}r .
		\]
		The curvature identity for the pull-back connection yields
		\begin{align*}
			\nablaf_\xi\nablaf_\xi f_i
			&=\nablaf_\xi\nablaf_{Z_i}r\\
			&=\nablaf_{Z_i}\nablaf_\xi r+R^N(df(\xi),df(Z_i))r
			+\nablaf_{[\xi,Z_i]}r\\
			&=-R^N(f_i,\bar r)r-\nablaf_{\mathsf AZ_i}r\\
			&=-R^N(f_i,\bar r)r-df^{1,0}(\mathsf A^2Z_i)=\mathcal J_i,
		\end{align*}
		where the last step uses \eqref{eq:cr-anti-reeb}, since
		\(\mathsf AZ_i\) has type \((0,1)\). Differentiating \(P_{i\bar j}\)
		twice along \(\xi\) gives \eqref{eq:tensor-reeb}.
	\end{proof}
	
	\begin{corollary}\label{cor:cr-horizontal}
		With \(\mathscr C\) defined by \eqref{eq:gram-matrix} for \(p_i=f_i\) and
		\(\mathscr C_0=-\sum_iR(f_i,\bar r,r,\bar f_i)\),
		\begin{align}
			\Delta_bu={}&2\sum_{i,j}\bigl(|f_{ij}|^2+|f_{i\bar j}|^2\bigr)
			+2\sum_{i,j}\Ric^{\TW}_{i\bar j}\ip{f_j}{f_i}\notag\\
			&+2\mathscr C+4c_m\Rea\Bigl(i\sum_i\ip{f_{i0}}{f_i}\Bigr),
			\label{eq:cr-horizontal-bochner}\\
			\xi^2u={}&2s_0+2\mathscr C_0-2T_A .\label{eq:cr-reeb-jacobi}
		\end{align}
		If \(\Bisec_N\le-\kappa\) and \(|\mathsf A|_{\op}\le a_0\), then
		\begin{equation}\label{eq:cr-reeb-lower}
			\xi^2u\ge2s_0+2\kappa uv_c-2a_0^2u .
		\end{equation}
	\end{corollary}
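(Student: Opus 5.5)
The plan is to take the trace of Proposition~\ref{prop:cr-tensor} in both identities and then rewrite each term by its type; the Reeb lower bound then comes from the curvature hypotheses applied term by term.

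\emph{The horizontal formula \eqref{eq:cr-horizontal-bochner}.} Since \(u=\tr P=\sum_iP_{i\bar i}\) and the unitary frame is orthonormal, \(\Delta_bu=\sum_i(\Delta_bP)_{i\bar i}\). From \eqref{eq:tensor-horizontal}, the derivative terms give \(2\sum_{i,k}(|f_{ik}|^2+|f_{i\bar k}|^2)\). The remaining terms give \(2\Rea\sum_i\ip{\mathcal V_i}{f_i}\), which we split along \eqref{eq:cr-C}:
\begin{itemize}
\item The Ricci piece is \(\sum_{i,j}\Ric^{\TW}_{i\bar j}\ip{f_j}{f_i}\). It is real because \(\Ric^{\TW}\) is Hermitian and \(P\) is Hermitian, so it contributes twice itself.
\item The curvature piece is \(-\sum_{i,j}\ip{R^N(f_j,\bar f_j)f_i}{f_i}=-\sum_{i,j}R(f_j,\bar f_j,f_i,\bar f_i)\). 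This equals \(\mathscr C\) by the symmetry \(R(p,\bar p,q,\bar q)=R(q,\bar q,p,\bar p)\), and it is real.
\item The Reeb piece is \(2\Rea(2ic_m\sum_i\ip{f_{i0}}{f_i})\), which is exactly the last term of \eqref{eq:cr-horizontal-bochner}.
\end{itemize}
The only care needed is the sign convention \(R_{\alpha\bar\beta\gamma\bar\delta}=h(R(\eta_\alpha,\bar\eta_\beta)\eta_\gamma,\bar\eta_\delta)\) and linearity in the first slot, so that \(\ip{R^N(f_j,\bar f_j)f_i}{f_i}=R(f_j,\bar f_j,f_i,\bar f_i)\).

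\emph{The Reeb formula \eqref{eq:cr-reeb-jacobi}.} Trace \eqref{eq:tensor-reeb}. Since \(\xi u=\tr\nabla_\xi P\) with \(\nabla\xi=0\) and \(\nabla\) unitary, we get \(\xi^2u=2s_0+2\Rea\sum_i\ip{\mathcal J_i}{f_i}\). Expanding \(\mathcal J_i\) from \eqref{eq:cr-H}:
\begin{itemize}
\item The curvature part is \(-\sum_i\ip{R^N(f_i,\bar r)r}{f_i}=-\sum_iR(f_i,\bar r,r,\bar f_i)=\mathscr C_0\). It is real, being a sum of bisectional-type terms by the K\"ahler symmetries.
\item The torsion part is \(-\sum_i\ip{df^{1,0}(\mathsf A^2Z_i)}{f_i}\). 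Because \(\mathsf A\) is \(g\)-symmetric, \(\mathsf AZ_{\bar i}\) is of type \((1,0)\). Write \(\mathsf AZ_i=\sum_k A_{i\bar k}Z_{\bar k}\); then \(\mathsf A^2Z_i=\sum_{k,l}A_{i\bar k}A_{\bar k l}Z_l\) and \(df^{1,0}(\mathsf A^2Z_i)=\sum A_{i\bar k}A_{\bar kl}f_l\). The traced term is then \(-\sum_{k,l,i}A_{\bar kl}A_{i\bar k}\ip{f_l}{f_i}\).
\item On the other side, \(df^{1,0}(\mathsf AZ_{\bar k})=\sum_lA_{\bar kl}f_l\), so \(T_A=\sum_k|\sum_lA_{\bar kl}f_l|^2\). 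The symmetry \(A_{i\bar k}=\overline{A_{\bar i k}}\), with \(A_{\bar kl}=A_{l\bar k}\) symmetric in the sense of \(g\), identifies the traced term with \(-T_A\).
\end{itemize}
This gives \(\xi^2u=2s_0+2\mathscr C_0-2T_A\). The main obstacle is this index bookkeeping for \(\mathsf A^2\). I will check it in a frame where \(\mathsf A\) restricted to \(H\) is diagonalized as a symmetric anti-\(J\)-linear map.

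\emph{The Reeb lower bound \eqref{eq:cr-reeb-lower}.} Two estimates are needed.
\begin{itemize}
\item For \(\mathscr C_0\), apply \eqref{eq:bisectional} to each term: \(-R(f_i,\bar f_i,r,\bar r)\ge\kappa|f_i|^2|r|^2\), using \(R(f_i,\bar r,r,\bar f_i)=R(f_i,\bar f_i,r,\bar r)\) from the K\"ahler symmetries. Summing gives \(\mathscr C_0\ge\kappa uv_c\).
\item For \(T_A\), note that \(\mathsf A Z_{\bar k}\) has norm at most \(a_0\) and \(df|_H\) has operator norm at most \(\lambda_{\max}(P)^{1/2}\). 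A Hilbert--Schmidt estimate then gives \(T_A=\sum_k|df^{1,0}(\mathsf AZ_{\bar k})|^2\le|\mathsf A|_{\op}^2\sum_l|f_l|^2\le a_0^2u\). Concretely, \(T_A=\tr(A^*PA)\le|A|_{\op}^2\tr P\).
\end{itemize}
Combining the two estimates with \eqref{eq:cr-reeb-jacobi} yields \eqref{eq:cr-reeb-lower}.
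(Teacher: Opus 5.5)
Your proposal is correct and follows the paper's proof: you trace Proposition~\ref{prop:cr-tensor}, identify the curvature terms with \(\mathscr C\) and \(\mathscr C_0\) via the K\"ahler symmetries, recognize the torsion term as \(-T_A\), and bound \(\mathscr C_0\ge\kappa uv_c\) by \eqref{eq:bisectional} and \(T_A\le a_0^2u\) by \(|\mathsf A|_{\op}\le a_0\). Two bookkeeping corrections: the type swap \(\mathsf AZ_{\bar i}\in H^{1,0}\) comes from \(\mathsf AJ=-J\mathsf A\), not from \(g\)-symmetry; and the identity you need is \(A_{i\bar k}=\overline{A_{\bar k i}}\), which follows from reality (\(A_{i\bar k}=\overline{A_{\bar i k}}\)) together with \(g\)-symmetry (\(A_{\bar i k}=A_{\bar k i}\)), whereas \(A_{\bar kl}=A_{l\bar k}\) as you wrote it holds only when these coefficients are real.
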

	
	\begin{proof}
		Taking the trace of \eqref{eq:tensor-horizontal} gives
		\eqref{eq:cr-horizontal-bochner}. For the trace of
		\eqref{eq:tensor-reeb}, write \(\mathsf AZ_{\bar i}=\sum_jA_{ji}Z_j\).
		The symmetry of \(\mathsf A\) gives
		\begin{align*}
			\Rea\sum_i\ip{df^{1,0}(\mathsf A^2Z_i)}{f_i}
			&=\sum_i\Bigl|\sum_jA_{ji}f_j\Bigr|^2=T_A,\qquad 0\le T_A\le a_0^2u,\\
			\mathscr C_0&=-\sum_iR(r,\bar r,f_i,\bar f_i)\ge\kappa uv_c,
		\end{align*}
		the last inequality by \eqref{eq:bisectional}. This proves
		\eqref{eq:cr-reeb-jacobi} and \eqref{eq:cr-reeb-lower}.
	\end{proof}
	
	Assume \(\Ric^{\TW}\ge-KG\), \(|\mathsf A|_{\op}\le a_0\),
	\eqref{eq:curvature-contraction-assumption} and \(\rank_\C P\le d\).
	Define
	\begin{equation}\label{eq:remainder-components}
		\begin{aligned}
			D_R&=\sum_{i,j}(\Ric^{\TW}_{i\bar j}+K\delta_{ij})\ip{f_j}{f_i}, &
			D_N&=\mathscr C-\kappa u^2-\rho|P|^2,\\
			D_0&=\mathscr C_0-\kappa uv_c, &
			D_A&=a_0^2u-T_A,\qquad D_P=|P|^2-\frac{u^2}d .
		\end{aligned}
	\end{equation}
	The source Ricci bound, \eqref{eq:curvature-contraction-assumption},
	Corollary~\ref{cor:cr-horizontal} and Lemma~\ref{lem:pinched-curvature}
	give \(D_R,D_N,D_0,D_A,D_P\ge0\), and
	\[
	\mathscr C=\alpha_du^2+D_N+\rho D_P .
	\]
	For \(t>0\) put
	\begin{equation}\label{eq:Bt-Ct}
		b_t=K+ta_0^2+\frac{c_m^2}t,\qquad C_t=\frac{b_t}{\alpha_d},\qquad
		\calL_t=\Delta_b+t\xi^2,
	\end{equation}
	and let \(\Gamma_t(a,b)=\ip{d_Ha}{d_Hb}+t(\xi a)(\xi b)\),
	\(\Gamma_t(a)=\Gamma_t(a,a)\). In a target unitary frame the curvature
	contractions read
	\begin{align}
		\mathscr C&=-\sum_{i,j,\alpha,\beta,\gamma,\delta}
		R_{\alpha\bar\beta\gamma\bar\delta}f_i^\alpha\overline{f_i^\beta}
		f_j^\gamma\overline{f_j^\delta},\label{eq:curvature-component-trace}\\
		\mathscr C_0&=-\sum_{i,\alpha,\beta,\gamma,\delta}
		R_{\alpha\bar\beta\gamma\bar\delta}f_i^\alpha\overline{r^\beta}
		r^\gamma\overline{f_i^\delta}.\label{eq:curvature-component-reeb}
	\end{align}
	
	\begin{proposition}\label{prop:exact-remainder}
		\begin{equation}\label{eq:exact-remainder}
			\calL_tu=2\alpha_du(u-C_t)+\mathscr D_t,
		\end{equation}
		where
		\begin{equation}\label{eq:Dt-definition}
			\begin{aligned}
				\mathscr D_t={}&2\sum_{i,j}|f_{ij}|^2+8mv_c+2t\kappa uv_c
				+2t\sum_i\Bigl|f_{i0}-\frac{ic_m}tf_i\Bigr|^2\\
				&+2D_R+2D_N+2\rho D_P+2tD_0+2tD_A\ \ge0 .
			\end{aligned}
		\end{equation}
	\end{proposition}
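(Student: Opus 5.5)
The plan is to add the two identities of Corollary~\ref{cor:cr-horizontal}, forming \(\calL_tu=\Delta_bu+t\,\xi^2u\). I would then rewrite every curvature and torsion contraction through the nonnegative defects \eqref{eq:remainder-components}, and complete a square in the mixed derivatives \(f_{i0}\). No new differentiation is needed; everything is algebra on \eqref{eq:cr-horizontal-bochner} and \eqref{eq:cr-reeb-jacobi}.

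\emph{Step 1 (horizontal part).} In \eqref{eq:cr-horizontal-bochner} I use \(f_{i\bar j}=2i\delta_{ij}r\) from \eqref{eq:cr-first-derivatives}. This gives \(2\sum_{i,j}|f_{i\bar j}|^2=8m|r|^2=8mv_c\). By definition of \(D_R\),
\[
2\sum_{i,j}\Ric^{\TW}_{i\bar j}\ip{f_j}{f_i}=2D_R-2Ku .
\]
By the identity \(\mathscr C=\alpha_du^2+D_N+\rho D_P\) stated before the proposition,
\[
2\mathscr C=2\alpha_du^2+2D_N+2\rho D_P .
\]
This leaves \(2\sum|f_{ij}|^2\) and the cross term \(4c_m\Rea\bigl(i\sum_i\ip{f_{i0}}{f_i}\bigr)\).

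\emph{Step 2 (Reeb part).} In \eqref{eq:cr-reeb-jacobi} I substitute \(\mathscr C_0=\kappa uv_c+D_0\) and \(T_A=a_0^2u-D_A\). This gives
\[
t\,\xi^2u=2ts_0+2t\kappa uv_c+2tD_0+2tD_A-2ta_0^2u .
\]

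\emph{Step 3 (completing the square).} Since the Hermitian product is complex linear in the first slot, \(\ip{a}{\lambda b}=\bar\lambda\ip ab\). Expanding with this rule,
\[
2t\sum_i\Bigl|f_{i0}-\frac{ic_m}tf_i\Bigr|^2
=2ts_0+4c_m\Rea\Bigl(i\sum_i\ip{f_{i0}}{f_i}\Bigr)+\frac{2c_m^2}tu .
\]
So \(2ts_0\) together with the cross term from Step 1 equals the square minus \(2c_m^2u/t\). Collecting the terms linear in \(u\) gives
\[
-2u\Bigl(K+ta_0^2+\frac{c_m^2}t\Bigr)=-2b_tu=-2\alpha_dC_tu .
\]
Combined with \(2\alpha_du^2\), this produces \(2\alpha_du(u-C_t)\). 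What remains is exactly \(\mathscr D_t\) as in \eqref{eq:Dt-definition}.

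\emph{Step 4 (nonnegativity).} \(\mathscr D_t\ge0\) holds termwise. The first four terms are squares or products of nonnegative quantities, since \(t,\kappa>0\). The defects \(D_R,D_N,D_P,D_0,D_A\ge0\) by the remark after \eqref{eq:remainder-components}, and \(\rho\ge0\).

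The only delicate point is the sign and conjugation bookkeeping in Step 3. One must check that the cross term in \eqref{eq:cr-horizontal-bochner} matches the one produced by the square with the coefficient \(-ic_m/t\), rather than \(+ic_m/t\). I would verify this using the stated linearity convention. If it failed, the opposite sign would be used in the square and the defining expression adjusted accordingly; the linear coefficient \(2c_m^2/t\) is unaffected by that choice.
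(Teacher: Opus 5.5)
Your proposal is correct and is essentially the paper's proof. You insert the defects into \eqref{eq:cr-horizontal-bochner} and \eqref{eq:cr-reeb-jacobi}, use \(\sum_{i,j}|f_{i\bar j}|^2=4mv_c\), and complete the square with \(z=ic_m/t\). Its conjugate \(\bar z=-ic_m/t\) yields exactly the cross term \(4c_m\Rea\bigl(i\sum_i\ip{f_{i0}}{f_i}\bigr)\), so the sign check you flag succeeds and the fallback in your last paragraph is unnecessary.
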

	
	\begin{proof}
		By \eqref{eq:cr-first-derivatives},
		\(\sum_{i,j}|f_{i\bar j}|^2=\sum_i|2ir|^2=4mv_c\). Inserting
		\eqref{eq:remainder-components} into \eqref{eq:cr-horizontal-bochner}
		and \eqref{eq:cr-reeb-jacobi},
		\begin{align*}
			\calL_tu={}&2\sum_{i,j}|f_{ij}|^2+8mv_c-2Ku+2D_R
			+2\kappa u^2+2\rho|P|^2+2D_N\\
			&+2t\kappa uv_c+2tD_0-2ta_0^2u+2tD_A\\
			&+2ts_0+4c_m\Rea\Bigl(i\sum_i\ip{f_{i0}}{f_i}\Bigr).
		\end{align*}
		For a Hermitian product linear in the first variable and \(z\in\C\),
		\begin{align*}
			|a-zb|^2&=|a|^2+|z|^2|b|^2-2\Rea\bigl(\bar z\ip ab\bigr),\\
			-2\Rea\Bigl(\overline{\tfrac{ic_m}t}\ip{f_{i0}}{f_i}\Bigr)
			&=\frac{2c_m}t\Rea\bigl(i\ip{f_{i0}}{f_i}\bigr).
		\end{align*}
		Hence
		\begin{align*}
			2t\sum_i\Bigl|f_{i0}-\frac{ic_m}tf_i\Bigr|^2
			&=2ts_0+\frac{2c_m^2}tu+4c_m\Rea\Bigl(i\sum_i\ip{f_{i0}}{f_i}\Bigr),\\
			2\kappa u^2+2\rho|P|^2&=2\alpha_du^2+2\rho D_P .
		\end{align*}
		Substituting these identities, the linear terms in \(u\) combine to
		\(-2(K+ta_0^2+c_m^2/t)u=-2\alpha_dC_tu\), which proves
		\eqref{eq:exact-remainder}.
	\end{proof}
	
	\paragraph{Sasakian sources.}
	Let \(M\) be Sasakian and \(f\) foliated; the latter follows from
	Theorem~\ref{thm:reeb-bound} below. Then \(a_0=0\), \(r=0\), and
	\eqref{eq:cr-first-derivatives} gives
	\[
	f_{i\bar j}=f_{i0}=0,\qquad\xi u=0,\qquad\calL_tu=\Delta u=\Delta_bu .
	\]
	In \eqref{eq:Dt-definition} the following terms vanish or simplify:
	\begin{align*}
		8mv_c&=0,& 2t\kappa uv_c&=0,& D_0&=0,& D_A&=0,\\
		C_t&=C_0+\frac{c_m^2}{\alpha_dt},&
		2t\sum_i\Bigl|f_{i0}-\frac{ic_m}tf_i\Bigr|^2&=\frac{2c_m^2}tu,
	\end{align*}
	where \(C_0=K/\alpha_d\). Consequently
	\(2\alpha_du(u-C_t)+\mathscr D_t=2\alpha_du(u-C_0)+\mathscr D_{\mathrm S}\),
	and \eqref{eq:exact-remainder} becomes, independently of \(t\),
	\begin{equation}\label{eq:sasaki-deficit-identity}
		\begin{gathered}
			\Delta u=2\alpha_du(u-C_0)+\mathscr D_{\mathrm S},\qquad
			\Delta w=2\alpha_duw-\mathscr D_{\mathrm S},\\
			w=C_0-u,\qquad
			\mathscr D_{\mathrm S}=2\sum_{i,j}|f_{ij}|^2+2D_R+2D_N+2\rho D_P\ge0 .
		\end{gathered}
	\end{equation}
	
	\subsection{A Siu-type divergence identity}\label{sec:siu}
	
	In this subsection \(M\) is Sasakian with \(m\ge2\), \(N\) is strongly
	seminegative, and \(f\) is \(\bp\)-harmonic; \(f\) is not assumed to be
	CR or foliated. Put
	\begin{equation}\label{eq:siu-derivative-definitions}
		\mathsf T_{i\bar j}=f_{i\bar j},\qquad\sigma=\sum_if_{\bar ii},\qquad
		\mathsf Y=f_{00}=\nablaf_\xi r,\qquad\mathsf Y_i=\nablaf_{Z_i}\mathsf Y .
	\end{equation}
	By \eqref{eq:complex-pde} and \eqref{eq:basic-commutators},
	\begin{equation}\label{eq:siu-traces}
		\sigma=-\tfrac12\mathsf Y,\qquad\tr\mathsf T=\sigma+2imr .
	\end{equation}
	Define
	\begin{equation}\label{eq:siu-B}
		\mathsf T^\circ_{i\bar j}=\mathsf T_{i\bar j}
		-\frac{\tr\mathsf T}m\delta_{ij},\qquad
		\calU_{ij}=q_j\otimes\bar p_i-q_i\otimes\bar p_j,\qquad
		\calS=\frac12\sum_{i,j}\calR(\calU_{ij})\ge0 .
	\end{equation}
	Expanding \eqref{eq:siu-form} and exchanging \(i,j\) in one pair of
	terms,
	\begin{equation}\label{eq:siu-expanded}
		\calS=\Rea\sum_{i,j}\bigl\{R(q_j,\bar p_i,p_j,\bar q_i)
		-R(q_j,\bar p_i,p_i,\bar q_j)\bigr\}.
	\end{equation}
	For a horizontal \((0,1)\)-form \(\eta=\eta_{\bar j}\theta^{\bar j}\)
	define the real vector field
	\begin{equation}\label{eq:real-current}
		X_\eta=\tfrac12\sum_j\bigl(\eta_{\bar j}Z_j
		+\overline{\eta_{\bar j}}Z_{\bar j}\bigr).
	\end{equation}
	Then \(\ip{d\phi}{X_\eta}=\Rea\sum_j\phi_j\eta_{\bar j}\),
	\(|X_\eta|^2=\tfrac12|\eta|^2\), and, since the trace of \(\mathsf C\)
	vanishes,
	\begin{equation}\label{eq:current-divergence}
		\divg X_\eta=\Rea\sum_j\eta_{\bar j,j}.
	\end{equation}
	Put
	\[
	\eta_{\bar j}=\sum_i\ip{\mathsf T^\circ_{i\bar j}}{p_i},\qquad
	\zeta_{\bar i}=\ip{\mathsf Y}{p_i},\qquad a_m=\frac{m-1}{2m},\qquad
	T_0=\Rea\ip{i\mathsf Y}{r}.
	\]
	
	\begin{lemma}\label{lem:siu-currents}
		\begin{align}
			\divg X_\eta&=|\mathsf T^\circ|^2+\calS
			-a_m\Rea\sum_i\ip{\mathsf Y_i}{p_i},\label{eq:eta-div}\\
			\divg X_\zeta&=\Rea\sum_i\ip{\mathsf Y_i}{p_i}-\tfrac12|\mathsf Y|^2
			-2mT_0,\label{eq:zeta-div}
		\end{align}
		and consequently
		\begin{equation}\label{eq:siu-combined-current}
			\divg(X_\eta+a_mX_\zeta)=|\mathsf T^\circ|^2+\calS
			-\frac{a_m}2|\mathsf Y|^2-(m-1)T_0 .
		\end{equation}
	\end{lemma}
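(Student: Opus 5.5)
The plan is to compute both divergences directly from \eqref{eq:current-divergence}, in a frame whose Tanaka--Webster connection forms vanish at the point, and then reduce the third-order terms with the Ricci identity \eqref{eq:ricci-identity}. On a Sasakian source \(\mathsf A=0\), so the only torsion term is \(2i\delta_{ij}\xi\). For \(X_\zeta\),
\[
\sum_i\zeta_{\bar i,i}=\sum_i\ip{\mathsf Y_i}{p_i}+\sum_i\ip{\mathsf Y}{\nablaf_{Z_{\bar i}}p_i},
\]
where the conjugate-linearity in the second slot turns \(\nablaf_{Z_i}\) on \(\bar p_i\) into \(\overline{\nablaf_{Z_{\bar i}}p_i}=\overline{f_{i\bar i}}\). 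By \eqref{eq:siu-traces}, \(\sum_if_{i\bar i}=\tr\mathsf T=-\tfrac12\mathsf Y+2imr\). Hence \(\ip{\mathsf Y}{\tr\mathsf T}=-\tfrac12|\mathsf Y|^2-2im\,\overline{\ip{r}{\mathsf Y}}\)\,-type terms, and taking real parts yields \(-\tfrac12|\mathsf Y|^2-2mT_0\). Along the way I must keep track of the sign conventions for \(i\) and conjugation so that the remainder is exactly \(-2m\Rea\ip{i\mathsf Y}{r}\). This gives \eqref{eq:zeta-div}.

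For \(X_\eta\), expand
\[
\sum_j\eta_{\bar j,j}=\sum_{i,j}\ip{\mathsf T^\circ_{i\bar j,j}}{p_i}+\sum_{i,j}\ip{\mathsf T^\circ_{i\bar j}}{f_{i\bar j}} .
\]
The second sum equals \(|\mathsf T^\circ|^2\), because \(\mathsf T^\circ\) is trace-free and so \(\ip{\mathsf T^\circ}{\mathsf T}=\ip{\mathsf T^\circ}{\mathsf T^\circ}\). For the first sum, write \(\mathsf T^\circ_{i\bar j,j}=f_{i\bar jj}-\tfrac1m(\tr\mathsf T)_{,i}\) and commute \(f_{i\bar jj}\) into \(f_{ij\bar j}\) and then into \(f_{j i\bar j}\) via \(f_{ij}=f_{ji}\) and \eqref{eq:ricci-identity}, in the same way as in the proof of Proposition~\ref{prop:cr-tensor}. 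Three kinds of terms appear:
\begin{itemize}
\item the torsion term \(2i\delta_{ij}f_{i0}\);
\item the source curvature term, which contracts to a Ricci term \(\Ric^{\TW}\);
\item the target curvature term \(R^N(df(Z_{\bar j}),df(Z_j))f_i\), whose types mix \(p\) and \(q\).
\end{itemize}

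The crux is a second commutation, \(f_{j i\bar j}\to f_{j\bar j i}\), so that we can use \(\sum_jf_{j\bar j}=\tr\mathsf T\) together with \(f_{j\bar j}-f_{\bar jj}=2ir\). Using \(\sigma=-\tfrac12\mathsf Y\), the derivative of the trace terms becomes a multiple of \(\mathsf Y_i\) and \(f_{0i}\). Two cancellations then have to occur. First, the source Ricci contributions from the two commutations must cancel against each other. Second, the torsion \(f_{i0}\) terms must combine with \(r_{,i}\), through the \(\tfrac1m\) trace subtraction, to leave precisely \(-a_m\Rea\ip{\mathsf Y_i}{p_i}\), with \(a_m=(m-1)/(2m)\) and no residual \(\ip{f_{0i}}{p_i}\). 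The coefficient \(\tfrac{m-1}{2m}\) should come out as \(\tfrac12-\tfrac1{2m}\): the \(\tfrac12\) from \(\sigma\) and the \(\tfrac1{2m}\) from the trace correction.

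The remaining quartic target curvature terms, after expansion with \(df(Z_j)=p_j+\bar q_j\) and \(df(Z_{\bar j})=q_j+\bar p_j\), should reduce by the Kähler symmetries to the expression \eqref{eq:siu-expanded}, that is, to \(\calS\). This term matching, which is the analogue of Siu's computation, is the step I expect to be the main obstacle; I would verify it term by term in a unitary target frame.

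Finally, \eqref{eq:siu-combined-current} is pure arithmetic. Adding \(a_m\) times \eqref{eq:zeta-div} to \eqref{eq:eta-div} cancels the \(\mathsf Y_i\) terms, and since \(2ma_m=m-1\) the remaining terms are exactly \(-\tfrac{a_m}2|\mathsf Y|^2-(m-1)T_0\).
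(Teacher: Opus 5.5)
Your proposal is correct, and it follows the paper for the $X_\zeta$ computation, for the term $|\mathsf T^\circ|^2$, and for the final combination. The genuine difference is in how you reduce $\sum_jf_{i\bar jj}$.

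\textbf{The paper's reduction.} The paper differentiates the first-order relation $f_{i\bar j}-f_{\bar ji}=2i\delta_{ij}r$, which gives $f_{i\bar jj}=f_{\bar jij}+2i\delta_{ij}f_{0j}$. It then applies \eqref{eq:ricci-identity} only once, with $(A,B,C)=(\bar j,i,j)$. Both derivative directions are of type $(1,0)$, so on a Sasakian source $T^\nabla(Z_i,Z_j)=0$ and $R^\nabla(Z_j,Z_i)=0$. Only the target term $\mathcal A_i=\sum_j\{R^N(p_j,\bar q_i)q_j-R^N(p_i,\bar q_j)q_j\}$ survives, and $\sum_j\mathsf T^\circ_{i\bar j,j}=\tfrac{m-1}m\sigma_i+\mathcal A_i$ follows immediately.

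\textbf{Your reduction.} You follow Proposition~\ref{prop:cr-tensor}: two mixed commutations, ending at $(\tr\mathsf T)_{,i}$ instead of $\sigma_i$. This does close, but only through three cancellations that you should state explicitly.

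First, the Ricci terms $\mp\sum_k\Ric^{\TW}_{i\bar k}p_k$ from the two commutations cancel because $\sum_jR^\nabla(Z_{\bar j},Z_i)Z_j=-\sum_jR^\nabla(Z_j,Z_{\bar j})Z_i$. This is the first Bianchi identity on horizontal vectors, valid on a Sasakian manifold since $T^\nabla$ is parallel and $T^\nabla(\xi,\cdot)=0$.

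Second, the torsion terms total $-2imf_{i0}+2if_{i0}=-2i(m-1)f_{i0}$. The first commutation contributes $-2if_{i0}$ for each of the $m$ values of $j$. This total cancels the $\tfrac{m-1}m\cdot 2imf_{0i}$ inside $(1-\tfrac1m)(\tr\mathsf T)_{,i}$ only because $f_{i0}=f_{0i}$ when $\mathsf A=0$.

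Third, the target terms contain, besides the $q$-terms, a pure-$p$ piece $\sum_j\{R^N(p_j,\bar p_j)p_i-R^N(p_i,\bar p_j)p_j\}$. It vanishes by the K\"ahler symmetry $R(X,\bar Y)Z=R(Z,\bar Y)X$ for $X,Z$ of type $(1,0)$, and the same symmetry turns the $q$-terms into $\mathcal A_i$.

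You therefore reach the paper's intermediate identity $\sum_j\mathsf T^\circ_{i\bar j,j}=-a_m\mathsf Y_i+\mathcal A_i$. From there the step $\Rea\sum_i\ip{\mathcal A_i}{p_i}=\calS$ is identical in both routes. The paper's ordering never generates the source-curvature, torsion and pure-$p$ terms. Yours makes the analogy with the CR Bochner identity visible, at the price of that bookkeeping.
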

	
	\begin{proof}
		In unitary frames normal at the point, since \(\mathsf T^\circ\) is trace
		free,
		\[
		\Rea\sum_j\eta_{\bar j,j}=|\mathsf T^\circ|^2
		+\Rea\sum_{i,j}\ip{\mathsf T^\circ_{i\bar j,j}}{p_i}.
		\]
		Write \(r_j=f_{0j}\). Differentiating
		\(f_{i\bar j}-f_{\bar ji}=2i\delta_{ij}r\) gives
		\(f_{i\bar jj}=f_{\bar jij}+2i\delta_{ij}r_j\). On a Sasakian manifold
		\(T^\nabla(Z_i,Z_j)=0\) and \(R^\nabla(Z_j,Z_i)=0\), the transverse
		curvature being of type \((1,1)\); hence \eqref{eq:ricci-identity} with
		\((A,B,C)=(\bar j,i,j)\) contains only the target term, and since
		\(df(Z_j)=p_j+\bar q_j\),
		\begin{align*}
			f_{\bar jij}-f_{\bar jji}
			&=R^N(df(Z_j),df(Z_i))q_j
			=R^N(p_j,\bar q_i)q_j-R^N(p_i,\bar q_j)q_j,\\
			\sum_jf_{\bar jij}&=\sigma_i+\mathcal A_i,\qquad
			\mathcal A_i=\sum_j\bigl\{R^N(p_j,\bar q_i)q_j-R^N(p_i,\bar q_j)q_j\bigr\}.
		\end{align*}
		Therefore
		\[
		\sum_j\mathsf T^\circ_{i\bar j,j}
		=\sigma_i+2ir_i-\frac1m(\sigma_i+2imr_i)+\mathcal A_i
		=\frac{m-1}m\sigma_i+\mathcal A_i=-a_m\mathsf Y_i+\mathcal A_i,
		\]
		where \eqref{eq:siu-traces} was used. The K\"ahler symmetries give
		\begin{align*}
			\Rea\sum_i\ip{\mathcal A_i}{p_i}
			&=\Rea\sum_{i,j}\bigl\{R(p_j,\bar q_i,q_j,\bar p_i)
			-R(p_i,\bar q_j,q_j,\bar p_i)\bigr\}\\
			&=\Rea\sum_{i,j}\bigl\{R(q_j,\bar p_i,p_j,\bar q_i)
			-R(q_j,\bar p_i,p_i,\bar q_j)\bigr\}=\calS ,
		\end{align*}
		which proves \eqref{eq:eta-div}. Next,
		\[
		\sum_i\zeta_{\bar i,i}=\sum_i\ip{\mathsf Y_i}{p_i}
		+\ip{\mathsf Y}{\tr\mathsf T}
		=\sum_i\ip{\mathsf Y_i}{p_i}-\tfrac12|\mathsf Y|^2-2im\ip{\mathsf Y}{r},
		\]
		and taking real parts gives \eqref{eq:zeta-div}. In the combination
		\eqref{eq:siu-combined-current} the terms
		\(\Rea\sum_i\ip{\mathsf Y_i}{p_i}\) cancel.
	\end{proof}
	
	Put \(s=\sum_i(|f_{0i}|^2+|f_{0\bar i}|^2)\), \(\omega_0=|\mathsf Y|^2\)
	and \(b_m=4m/(m-1)\).
	
	\begin{proposition}\label{prop:positive-divergence}
		The vector field \(\mathcal Y=\grad v_c+b_m(X_\eta+a_mX_\zeta)\)
		satisfies
		\begin{equation}\label{eq:positive-divergence}
			\divg\mathcal Y=\calI:=2s+\omega_0+b_m\bigl(|\mathsf T^\circ|^2
			+\calS\bigr)+\calQ_0\ \ge0
		\end{equation}
		and
		\begin{equation}\label{eq:Y-control}
			|\mathcal Y|\le c_m^{\mathrm S}\sqrt E\sqrt\calI,\qquad
			c_m^{\mathrm S}=1+\sqrt2+\sqrt{\frac{2m}{m-1}} .
		\end{equation}
	\end{proposition}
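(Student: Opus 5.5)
By Lemma~\ref{lem:siu-currents}, $\divg(X_\eta+a_mX_\zeta)$ is already known. So the plan is to compute $\divg\grad v_c=\tfrac12\Delta(2v_c)\cdot\tfrac12$, that is $\Delta v_c$, on the Sasakian source. The indefinite terms $-\frac{a_m}2|\mathsf Y|^2-(m-1)T_0$ produced by the current must then be absorbed: the weight $b_m$ is chosen so that $b_m(m-1)T_0=4mT_0$ cancels the magnetic term in $\Delta v_c$.

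For $\Delta v_c$ I will use \eqref{eq:reeb-real-bochner} with $v=2v_c$ and $\mathsf A=0$. On a Sasakian manifold $Z=2\divg\mathsf A=0$ by Lemma~\ref{lem:source-reeb}, and $\sum_\mu S(e_\mu,\mathsf Be_\mu)=\sum_aS(e_a,Je_a)=0$ by the symmetry of $S$ (this is the computation in Proposition~\ref{prop:horizontal-identity}). Hence
\[
\Delta v_c=\tfrac12\Delta v=|\nablaf V|^2+\calQ_0+\mathfrak c\ip{\Jt\nablaf_\xi V}{V}.
\]
Next I convert to complex notation. We have $\nablaf_\xi V=\mathsf Y+\bar{\mathsf Y}$, and
\[
\ip{\Jt(\mathsf Y+\bar{\mathsf Y})}{r+\bar r}=2\Rea\ip{i\mathsf Y}{r}=2T_0,
\]
so the magnetic term equals $4mT_0$. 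For the gradient term, $|\nablaf V|^2=2\sum_{A}|f_{0A}|^2$ over $A\in\{i,\bar i,0\}$ with the appropriate normalization, which gives $|\nablaf V|^2=2s+2|\mathsf Y|^2$. The sign and normalization here must be checked carefully against $v=2|r|^2$ and $|Z_i|=1$.

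Combining, $\divg\mathcal Y=2s+2\omega_0+\calQ_0+4mT_0+b_m(|\mathsf T^\circ|^2+\calS)-b_m\frac{a_m}{2}\omega_0-b_m(m-1)T_0$. Since $b_ma_m/2=1$ and $b_m(m-1)=4m$, the $T_0$ terms cancel and this leaves $2s+\omega_0+\dots$, which is \eqref{eq:positive-divergence}. Nonnegativity follows from $\calS\ge0$ (strong seminegativity) and from $\calQ_0\ge0$, since strong seminegativity gives $\Sec_N\le0$. The main obstacle is getting the factor conventions exactly right, namely $v$ versus $v_c$, the magnetic coefficient $\mathfrak c=2m$, and the sign of $T_0$, so that the cancellation is exact. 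If the sign of $T_0$ comes out reversed, I would recheck $(\Jt V)^{1,0}=ir$ and the orientation of the first-slot linearity.

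For \eqref{eq:Y-control} I will bound each piece by Cauchy--Schwarz.
\begin{itemize}
\item First, $|\grad v_c|=|d|r|^2|\le2|r|\,|\nablaf r|$. Here $|\nablaf r|^2$ is controlled by $s+\omega_0\le\calI$, and $|r|\le\sqrt{E/2}$ because $E=2(u+\bar u+v_c)$. With the normalization of the real gradient this should give a constant $\sqrt2$ (or $1$).
\item Next, $|X_\eta|=|\eta|/\sqrt2\le|\mathsf T^\circ|\sqrt u/\sqrt2$, using $\sqrt{\sum_i|p_i|^2}=\sqrt u$. Then $b_m|\mathsf T^\circ|\le\sqrt{b_m}\sqrt{\calI}$, so $b_m|X_\eta|\le\sqrt{b_m u/2}\,\sqrt\calI\le\sqrt{2m/(m-1)}\sqrt{E}\sqrt\calI$ after using $u\le E/2$.
\item Finally, $b_ma_m|X_\zeta|=2|X_\zeta|\le\sqrt2|\mathsf Y|\sqrt u\le\sqrt{\omega_0}\sqrt E$, which gives the constant $1$.
\end{itemize}
Summing the three contributions gives $c_m^{\mathrm S}=1+\sqrt2+\sqrt{2m/(m-1)}$, up to the assignment of the $1$ and the $\sqrt2$ between the first and third terms, which I will settle in the computation.
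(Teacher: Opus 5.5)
Your proposal is correct and follows the paper's proof essentially step for step. You reduce \eqref{eq:reeb-real-bochner} on a Sasakian source to $\Delta v_c=2s+2\omega_0+4mT_0+\calQ_0$, cancel the $T_0$ terms via $b_m(m-1)=4m$ and $b_ma_m/2=1$, and bound the three pieces of $\mathcal Y$ by Cauchy--Schwarz, exactly as the paper does. Your hedges resolve as you expect: the gradient term carries $\sqrt2$ and the $X_\zeta$ term carries $1$. Also, $\tfrac12\Delta(2v_c)$ equals $\Delta v_c$; your extra factor $\tfrac12$ is a slip. Your use of $u\le E/2$ in the $X_\eta$ term even gives the slightly smaller constant $\sqrt{m/(m-1)}$ in place of $\sqrt{2m/(m-1)}$.
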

	
	\begin{proof}
		For \(\mathsf A=0\) we have \(\mathsf B=J\), \(Z=0\) by \eqref{eq:z-div},
		and \(\sum_\mu S(e_\mu,Je_\mu)=0\), so \eqref{eq:reeb-real-bochner}
		becomes \(\tfrac12\Delta v=|\nablaf V|^2+2m\ip{\Jt\nablaf_\xi V}{V}
		+\calQ_0\). With \(v=2v_c\), \(|\nablaf V|^2=2(s+\omega_0)\) and
		\(\ip{\Jt\nablaf_\xi V}{V}=2T_0\),
		\begin{equation}\label{eq:reeb-siu-local}
			\Delta v_c=2s+2\omega_0+4mT_0+\calQ_0 .
		\end{equation}
		The coefficients satisfy
		\[
		b_m(m-1)=4m,\qquad \frac{b_ma_m}2=\frac{4m}{m-1}\cdot\frac{m-1}{2m}
		\cdot\frac12=1 .
		\]
		Adding \(b_m\) times \eqref{eq:siu-combined-current} to
		\eqref{eq:reeb-siu-local},
		\begin{align*}
			\divg\mathcal Y&=\Delta v_c+b_m\divg(X_\eta+a_mX_\zeta)\\
			&=2s+2\omega_0+4mT_0+\calQ_0+b_m\bigl(|\mathsf T^\circ|^2+\calS\bigr)
			-\frac{b_ma_m}2\omega_0-b_m(m-1)T_0\\
			&=2s+\omega_0+b_m\bigl(|\mathsf T^\circ|^2+\calS\bigr)+\calQ_0 .
		\end{align*}
		Strong seminegativity gives \(\calS\ge0\) and \(\Sec_N\le0\), hence
		\(\calQ_0\ge0\). This is \eqref{eq:positive-divergence}.
		
		For \eqref{eq:Y-control}, metric compatibility gives
		\(dv_c(Y)=2\Rea\ip{\nablaf_Yr}{r}\). With
		\(e_i=(Z_i+Z_{\bar i})/\sqrt2\) and \(Je_i=i(Z_i-Z_{\bar i})/\sqrt2\),
		\begin{align*}
			\nablaf_{e_i}r&=\frac{f_{0i}+f_{0\bar i}}{\sqrt2},\qquad
			\nablaf_{Je_i}r=\frac{i(f_{0i}-f_{0\bar i})}{\sqrt2},\qquad
			\nablaf_\xi r=f_{00}=\mathsf Y,\\
			|\nablaf r|^2&=\frac12\sum_i\bigl(|f_{0i}+f_{0\bar i}|^2
			+|f_{0i}-f_{0\bar i}|^2\bigr)+|\mathsf Y|^2=s+\omega_0,\\
			|dv_c|&\le2|r|\,|\nablaf r|=2|r|\sqrt{s+\omega_0}.
		\end{align*}
		By Cauchy--Schwarz,
		\begin{align*}
			|\eta|^2&=\sum_j\Bigl|\sum_i\ip{\mathsf T^\circ_{i\bar j}}{p_i}\Bigr|^2
			\le\sum_j\Bigl(\sum_i|\mathsf T^\circ_{i\bar j}|^2\Bigr)
			\Bigl(\sum_i|p_i|^2\Bigr)=u|\mathsf T^\circ|^2,\\
			|\zeta|^2&=\sum_i|\ip{\mathsf Y}{p_i}|^2\le u|\mathsf Y|^2,\qquad
			|X_\eta|^2\le\tfrac12u|\mathsf T^\circ|^2,\qquad
			|X_\zeta|^2\le\tfrac12u|\mathsf Y|^2 .
		\end{align*}
		Using \(2u\le E\), \(2|r|^2\le E\), \(s+\omega_0\le\calI\),
		\(b_m|\mathsf T^\circ|^2\le\calI\), \(|\mathsf Y|^2\le\calI\) and
		\(b_ma_m=2\),
		\begin{align*}
			|dv_c|&\le2\sqrt{E/2}\,\sqrt\calI=\sqrt2\sqrt E\sqrt\calI,\\
			b_m|X_\eta|&\le\sqrt{b_m}\sqrt{u/2}\sqrt{b_m|\mathsf T^\circ|^2}
			\le\sqrt{b_m/2}\,\sqrt E\sqrt\calI=\sqrt{\tfrac{2m}{m-1}}\sqrt E\sqrt\calI,\\
			b_ma_m|X_\zeta|&=2|X_\zeta|\le\sqrt{2u}\,|\mathsf Y|\le\sqrt E\sqrt\calI .
		\end{align*}
		Summing gives \(|\mathcal Y|\le|dv_c|+b_m|X_\eta|+b_ma_m|X_\zeta|
		\le c_m^{\mathrm S}\sqrt E\sqrt\calI\).
	\end{proof}
	
	\section{Schwarz lemmas}\label{sec:general}
	
	\subsection{General maps}
	
	\begin{definition}\label{def:dilatation}
		A linear map \(F\) between Euclidean spaces with squared singular values
		\(\lambda_1\ge\dots\ge\lambda_d\ge0\) has \emph{generalized dilatation at
			most \(\Lambda\)} if \(\lambda_1\le\Lambda^2\sum_{j\ge2}\lambda_j\). The
		full and horizontal dilatation of \(f\) refer to \(df\) and \(df|_H\).
	\end{definition}
	
	\begin{lemma}\label{lem:linear-algebra}
		Let \(F\) have generalized dilatation at most \(\Lambda\), and put
		\(s=|F|^2\), \(\delta=(1+\Lambda^2)^{-1}\). Then for every \(Y\),
		\begin{align}
			\lambda_1&\le(1-\delta)s,\label{eq:lambda-cap}\\
			s^2-\tr(F^*F)^2&\ge\delta s^2,\label{eq:wedge-lower}\\
			s|Y|^2-|F^*Y|^2&\ge\delta s|Y|^2 .\label{eq:mixed-lower}
		\end{align}
	\end{lemma}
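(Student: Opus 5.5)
The plan is to work entirely with the eigenvalues of \(F^*F\), that is, the squared singular values \(\lambda_1\ge\dots\ge\lambda_d\ge0\). Then \(s=\sum_j\lambda_j\) and \(\tr(F^*F)^2=\sum_j\lambda_j^2\). The dilatation hypothesis reads \(\lambda_1\le\Lambda^2(s-\lambda_1)\), so \((1+\Lambda^2)\lambda_1\le\Lambda^2s\). Dividing by \(1+\Lambda^2\) gives
\[
\lambda_1\le\frac{\Lambda^2}{1+\Lambda^2}\,s=(1-\delta)s,
\]
which is \eqref{eq:lambda-cap}. Everything else will be reduced to this single cap on the largest eigenvalue.

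For \eqref{eq:wedge-lower} I bound the sum of squares by the largest eigenvalue times the trace:
\[
\tr(F^*F)^2=\sum_j\lambda_j^2\le\lambda_1\sum_j\lambda_j=\lambda_1s\le(1-\delta)s^2 .
\]
Hence \(s^2-\tr(F^*F)^2\ge\delta s^2\).

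For \eqref{eq:mixed-lower} I use that \(|F^*Y|^2=\ip{FF^*Y}{Y}\). The operator \(FF^*\) has the same nonzero eigenvalues as \(F^*F\), so its operator norm is \(\lambda_1\). Therefore
\[
|F^*Y|^2\le\lambda_1|Y|^2\le(1-\delta)s|Y|^2,
\]
and rearranging gives \(s|Y|^2-|F^*Y|^2\ge\delta s|Y|^2\).

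None of these steps is a real obstacle. The only point needing care is the identification of the spectrum of \(FF^*\) with that of \(F^*F\) away from zero, which follows from the singular value decomposition. The degenerate case \(s=0\) is trivial in all three inequalities, since then \(F=0\) and both sides vanish.
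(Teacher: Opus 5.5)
Your proof is correct and follows the paper's argument essentially line for line. Both derive the cap \((1+\Lambda^2)\lambda_1\le\Lambda^2 s\), then use \(\sum_j\lambda_j^2\le\lambda_1 s\) and \(|F^*Y|^2=\ip{FF^*Y}{Y}\le\lambda_1|Y|^2\); the paper merely writes the last two as \(s(s-\lambda_1)\) and \((s-\lambda_1)|Y|^2\) before applying the cap.
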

	
	\begin{proof}
		Adding \(\Lambda^2\lambda_1\) to both sides of the hypothesis gives
		\((1+\Lambda^2)\lambda_1\le\Lambda^2s\), which is \eqref{eq:lambda-cap}.
		Moreover
		\begin{align*}
			\tr(F^*F)^2&=\sum_j\lambda_j^2\le\lambda_1s,&
			s^2-\tr(F^*F)^2&\ge s(s-\lambda_1)\ge\delta s^2,\\
			|F^*Y|^2&=\ip{FF^*Y}{Y}\le\lambda_1|Y|^2,&
			s|Y|^2-|F^*Y|^2&\ge(s-\lambda_1)|Y|^2\ge\delta s|Y|^2 .\qedhere
		\end{align*}
	\end{proof}
	
	For the induced inner product on exterior powers,
	\begin{equation}\label{eq:wedge-identities}
		|F|^4-\tr(F^*F)^2=2\sum_{a<b}|Fe_a\wedge Fe_b|^2,\qquad
		|F|^2|Y|^2-|F^*Y|^2=\sum_a|Y\wedge Fe_a|^2,
	\end{equation}
	as follows from \(|X\wedge Y|^2=|X|^2|Y|^2-\ip XY^2\).
	
	\begin{lemma}\label{lem:curv-coercive}
		If \(\Sec_N\le-\kappa\), then for every linear \(F\colon\R^d\to T_yN\)
		and \(Y\in T_yN\),
		\begin{align}
			-\sum_{a,b}\ip{R^N(Fe_a,Fe_b)Fe_b}{Fe_a}
			&\ge\kappa\bigl(|F|^4-\tr(F^*F)^2\bigr),\label{eq:QH-alg}\\
			-\sum_a\ip{R^N(Y,Fe_a)Fe_a}{Y}
			&\ge\kappa\bigl(|F|^2|Y|^2-|F^*Y|^2\bigr).\label{eq:Q0-alg}
		\end{align}
	\end{lemma}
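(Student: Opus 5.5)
The proof is pure linear algebra at a single point \(y\in N\), combined with the elementary wedge identities \eqref{eq:wedge-identities}. The one fact used is the following. If \(\Sec_N\le-\kappa\), then for any two vectors \(X,W\in T_yN\),
\[
\ip{R^N(X,W)W}{X}\le-\kappa|X\wedge W|^2 .
\]
For linearly independent \(X,W\) this is the sectional curvature bound applied to the plane they span. By the convention fixed at the start of the K\"ahler curvature subsection, \(\ip{R(X,Y)Y}{X}\) is the sectional curvature on orthonormal pairs, and \(|X\wedge W|^2=|X|^2|W|^2-\ip XW^2\) is the squared area of the parallelogram they span. Both sides are bilinear in the pair and scale compatibly, so the bound holds for all independent pairs. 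For dependent pairs both sides vanish.

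For \eqref{eq:QH-alg}, apply this with \(X=Fe_a\), \(W=Fe_b\) and sum over all ordered pairs \((a,b)\). The diagonal terms \(a=b\) vanish on both sides, and each unordered pair is counted twice. This gives
\[
-\sum_{a,b}\ip{R^N(Fe_a,Fe_b)Fe_b}{Fe_a}\ge\kappa\sum_{a\ne b}|Fe_a\wedge Fe_b|^2=2\kappa\sum_{a<b}|Fe_a\wedge Fe_b|^2 .
\]
By the first identity in \eqref{eq:wedge-identities}, the right-hand side equals \(\kappa(|F|^4-\tr(F^*F)^2)\).

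For \eqref{eq:Q0-alg}, apply the same bound with \(X=Y\), \(W=Fe_a\) and sum over \(a\). This gives \(\kappa\sum_a|Y\wedge Fe_a|^2\), which equals \(\kappa(|F|^2|Y|^2-|F^*Y|^2)\) by the second identity in \eqref{eq:wedge-identities}.

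The only point needing care is the verification of these identities themselves. For the first, write
\[
\sum_{a,b}\ip{Fe_a}{Fe_b}^2=\tr(F^*F)^2
\]
and note that \(\sum_{a,b}|Fe_a|^2|Fe_b|^2=|F|^4\). For the second, note that \(\sum_a\ip Y{Fe_a}^2=|F^*Y|^2\). Both follow from these observations, and the sign convention has already been checked above, so there is no genuine obstacle.
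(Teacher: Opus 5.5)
Your proposal is correct and matches the paper's proof: both apply the pointwise bound \(-\ip{R^N(X,W)W}{X}\ge\kappa(|X|^2|W|^2-\ip XW^2)\) to the pairs \((Fe_a,Fe_b)\) and \((Y,Fe_a)\), sum, and identify the right-hand sides via \(\sum_{a,b}\ip{Fe_a}{Fe_b}^2=\tr(F^*F)^2\) and \(\sum_a\ip Y{Fe_a}^2=|F^*Y|^2\), which is what \eqref{eq:wedge-identities} encodes. One wording fix: the two sides of the pointwise bound are quadratic in \(X\wedge W\), not ``bilinear in the pair'', so both change by the square of the determinant under a change of basis of the plane, and that is what reduces the bound to orthonormal pairs.
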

	
	\begin{proof}
		For all \(X,Y\), \(-\ip{R(X,Y)Y}{X}\ge\kappa(|X|^2|Y|^2-\ip XY^2)\).
		Summing over \(X=Fe_a\), \(Y=Fe_b\) and using
		\(\sum_{a,b}\ip{Fe_a}{Fe_b}^2=\tr(F^*F)^2\) gives \eqref{eq:QH-alg}.
		Summing over \(X=Y\), \(Y=Fe_a\) and using
		\(\sum_a\ip Y{Fe_a}^2=|F^*Y|^2\) gives \eqref{eq:Q0-alg}.
	\end{proof}
	
	\begin{remark}\label{rem:nondeg}
		The three conditions in Lemma~\ref{lem:linear-algebra} are equivalent up
		to the value of \(\delta\). Since \(\tr(F^*F)^2\ge\lambda_1^2\),
		\[
		s^2-\tr(F^*F)^2\ge\delta s^2\ \Longrightarrow\
		\lambda_1\le\sqrt{1-\delta}\,s\le\Bigl(1-\frac\delta2\Bigr)s ;
		\]
		choosing for \(Y\) a unit eigenvector of \(FF^*\) for \(\lambda_1\)
		shows that \eqref{eq:mixed-lower} for all \(Y\) is equivalent to
		\eqref{eq:lambda-cap}; and \eqref{eq:lambda-cap} is equivalent to a
		dilatation bound with \(\Lambda^2=(1-\delta)/\delta\). Hence, as long as
		only \(\Sec_N\le-\kappa\) is used, no condition on the singular values
		weaker than a dilatation bound makes the curvature terms coercive.
	\end{remark}
	
	\begin{theorem}\label{thm:full-schwarz}
		Let \(M\) be complete with \(\Ric^g\ge-K_gg\) and \(|D\xi|_{\op}\le L\),
		let \(\Sec_N\le-\kappa<0\), and let \(f\colon M\to N\) be
		\(\bp\)-harmonic with \(df\) of generalized dilatation at most
		\(\Lambda\). Then
		\begin{equation}\label{eq:full-schwarz}
			|df|^2\le\frac{(1+\Lambda^2)(K_g+m^2+2mL)}\kappa ,
		\end{equation}
		and \(f^*h\) is bounded by the right-hand side times \(g\).
	\end{theorem}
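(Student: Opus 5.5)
We plan to work with the full Bochner formula \eqref{eq:full-bochner} for \(E=|df|^2\) and close it with an Omori--Yau type maximum principle on the complete source. The localization lemma for the Riemannian maximum principle (Appendix~\ref{app:localization}) is the tool for this. It needs only \(\Ric^g\ge-K_gg\) and the resulting Laplacian comparison for the distance function.

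The first step is a differential inequality of the form \(\tfrac12\Delta E\ge\tfrac12|S|^2+\kappa\delta E^2-(K_g+m^2+2mL)E\) with \(\delta=(1+\Lambda^2)^{-1}\). We bound the terms of \eqref{eq:full-bochner} one at a time.
\begin{itemize}
\item The Ricci term is at least \(-K_gE\).
\item For the curvature terms, \(\calQ_H+2\calQ_0\) is exactly the full sum \(-\sum_{\mu,\nu}\ip{R^N(Fe_\mu,Fe_\nu)Fe_\nu}{Fe_\mu}\) over the whole frame \(e_0=\xi,e_1,\dots,e_{2m}\). Lemma~\ref{lem:curv-coercive}, applied to \(F=df\) on all of \(T_xM\), gives the lower bound \(\kappa(E^2-\tr(F^*F)^2)\). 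By \eqref{eq:wedge-lower} this is at least \(\kappa\delta E^2\).
\item The magnetic terms combine to \(\mathfrak c\sum_\mu\ip{\Jt\nablaf_{e_\mu}V}{Fe_\mu}\). Here \(\nablaf_XV=S(X,\xi)+F(\mathsf BX)\) and \(|\mathsf B|_{\op}=|D\xi|_{\op}\le L\). The \(F(\mathsf B\cdot)\) part is bounded by \(\mathfrak cLE\). The \(S\) part is bounded by \(\mathfrak c|S|\sqrt E\le\tfrac12|S|^2+\tfrac{\mathfrak c^2}2E\). With \(\mathfrak c=2m\) this yields the constants \(2mL\) and \(2m^2\).
\end{itemize}
To land on the stated constant \(m^2\), we apply Young's inequality to \(|S|^2\) fully, using \(\mathfrak c|S|\sqrt E\le|S|^2+\tfrac{\mathfrak c^2}4E\). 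We then get
\[
\tfrac12\Delta E\ge\kappa\delta E^2-(K_g+m^2+2mL)E ,
\]
which is the key inequality. If a Kato-type gradient term is needed in the next step, we keep part of \(|S|^2\), since \(|\nabla\sqrt E|\le|S|\).

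The second step is the maximum principle. At a point, \(\Delta E\ge2\kappa\delta E(E-C)\) with \(C=(K_g+m^2+2mL)/(\kappa\delta)\). We follow Yau/Shen and consider \(\phi=(R^2-r^2)^2E\) on \(B_R(x_0)\), where \(r\) is the distance to \(x_0\). By Calabi's trick we may assume \(r\) is smooth at the maximum point. There \(\nabla\phi=0\) and \(\Delta\phi\le0\). The comparison bound \(\Delta r^2\le c(1+\sqrt{K_g}r)\) handles the cutoff terms, and the gradient cross term is removed using \(\nabla\phi=0\). This gives \(\phi\le(R^2)^2C+O(R^3)\) on the ball, so \(E(x_0)\le C+O(1/R)\). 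Letting \(R\to\infty\) yields \eqref{eq:full-schwarz}. Alternatively, we can simply cite the localization lemma in Appendix~\ref{app:localization} for inequalities of the form \(\Delta E\ge cE^2-bE\).

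The last assertion follows because \(f^*h(X,X)=|df(X)|^2\le\lambda_1|X|^2\le E|X|^2\).

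The main obstacle is bookkeeping the magnetic term so that the resulting constant is exactly \(m^2+2mL\) and nothing worse. The point to stress is that no \(\sqrt E\) term without a compensating \(|S|\) appears. Subtracting no Reeb identity is needed here, because the dilatation bound is applied to the full differential including \(V\).
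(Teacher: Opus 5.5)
Your proposal is correct and follows the paper's proof essentially step for step. It uses the same absorption \(\mathfrak c|S|\sqrt E\le|S|^2+\tfrac{\mathfrak c^2}4E\) of the magnetic term, the same full-frame application of Lemma~\ref{lem:curv-coercive} combined with \eqref{eq:wedge-lower}, and a cutoff maximum principle; the paper cites Lemma~\ref{lem:scalar-localization} with \(T_0=D_0=0\), and your Yau-type cutoff \((R^2-r^2)^2E\) is an equivalent variant of that lemma.
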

	
	\begin{proof}
		Since \(\nablaf_XV=S(X,\xi)+F(\mathsf BX)\),
		\begin{align*}
			\mathfrak c\sum_\mu\ip{\Jt\nablaf_{e_\mu}V}{Fe_\mu}
			&=\mathfrak c\sum_\mu\ip{\Jt S(e_\mu,\xi)}{Fe_\mu}
			+\mathfrak c\sum_\mu\ip{\Jt F(\mathsf Be_\mu)}{Fe_\mu}\\
			&\ge-\mathfrak c|S|\sqrt E-\mathfrak cLE
			\ge-|S|^2-\frac{\mathfrak c^2}4E-\mathfrak cLE,
		\end{align*}
		where we used \(\mathfrak c|S|\sqrt E=2|S|\cdot\tfrac12\mathfrak c\sqrt E
		\le|S|^2+\tfrac14\mathfrak c^2E\). Inserting this into
		\eqref{eq:full-bochner}, using \(\Ric^g\ge-K_gg\) and
		\(\mathfrak c=2m\),
		\[
		\tfrac12\Delta E\ge\calQ_H+2\calQ_0-(K_g+m^2+2mL)E .
		\]
		Lemma~\ref{lem:curv-coercive}, applied to all ordered pairs of the full
		frame, and \eqref{eq:wedge-lower} give
		\begin{align*}
			\calQ_H+2\calQ_0&=-\sum_{\mu,\nu=0}^{2m}
			\ip{R^N(Fe_\mu,Fe_\nu)Fe_\nu}{Fe_\mu}\\
			&\ge\kappa\sum_{\mu,\nu}\bigl(|Fe_\mu|^2|Fe_\nu|^2
			-\ip{Fe_\mu}{Fe_\nu}^2\bigr)\\
			&=\kappa\bigl(E^2-\tr(F^*F)^2\bigr)\ge\frac{\kappa}{1+\Lambda^2}E^2 .
		\end{align*}
		Thus \(\tfrac12\Delta E\ge AE^2-b_0E\) with \(A=\kappa/(1+\Lambda^2)\)
		and \(b_0=K_g+m^2+2mL\), and Lemma~\ref{lem:scalar-localization} with
		\(T_0=D_0=0\) gives \(E\le b_0/A\), which is \eqref{eq:full-schwarz}.
		Finally \(|df(X)|^2\le E|X|^2\).
	\end{proof}
	
	\begin{theorem}\label{thm:horizontal-schwarz}
		In addition to the hypotheses of Theorem~\ref{thm:full-schwarz} on \(M\)
		and \(N\), assume \(|\mathsf A|_{\HS}\le T\), \(|\ip Z\xi|\le z_0\) and
		\(|Z_H|\le z_1\). If \(f\) is \(\bp\)-harmonic and \(df|_H\) has
		generalized dilatation at most \(\Lambda_H\), then
		\begin{equation}\label{eq:horizontal-schwarz}
			\norm{df|_H}^2\le\max\Bigl\{\frac{2b}{\kappa\delta},
			\frac{a+\sqrt{a^2+2z_1^2}}{2\kappa\delta}\Bigr\},
		\end{equation}
		where \(\delta=(1+\Lambda_H^2)^{-1}\), \(a=K_g+3L^2+2mL+4m^2\) and
		\(b=K_g+z_0+2T^2\).
	\end{theorem}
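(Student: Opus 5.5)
Starting from the horizontal inequality of Lemma~\ref{lem:horizontal-differential} with \(\mathfrak c=2m\), so that \(a=K_g+3L^2+2mL+4m^2\), we drop the nonnegative terms \(\tfrac12|S_{HH}|^2+\tfrac14\sum_a|\mathsf S_a|^2\). The problem then reduces to a scalar differential inequality for \(U\) that contains the Reeb energy \(v\) as a parameter.

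The curvature terms are bounded below with Lemma~\ref{lem:curv-coercive} applied to \(F|_H\colon H\to T_{f(x)}N\). Using \eqref{eq:QH-alg} and \eqref{eq:wedge-lower} for the horizontal dilatation gives \(\calQ_H\ge\kappa\delta U^2\). Using \eqref{eq:Q0-alg} with \(Y=V\) and \eqref{eq:mixed-lower} gives \(\calQ_0\ge\kappa\delta Uv\). Hence
\[
\tfrac12\Delta U\ge\kappa\delta U^2+\kappa\delta Uv-aU-bv-z_1\sqrt{Uv}.
\]

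The key step is to remove \(v\), which is a priori uncontrolled by \(U\). View the right-hand side as a function of \(v\ge0\):
\[
\phi(v)=(\kappa\delta U-b)v-z_1\sqrt U\sqrt v .
\]
Suppose at some point \(U\) exceeds the claimed bound, so in particular \(U>2b/(\kappa\delta)\). Then \(\kappa\delta U-b>\tfrac12\kappa\delta U\). Minimizing over \(\sqrt v\) gives
\[
\phi\ge-\frac{z_1^2U}{4(\kappa\delta U-b)}\ge-\frac{z_1^2}{2\kappa\delta}.
\]
Consequently
\[
\tfrac12\Delta U\ge\kappa\delta U^2-aU-\frac{z_1^2}{2\kappa\delta}
\]
holds wherever \(U>2b/(\kappa\delta)\). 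The positive root of \(\kappa\delta U^2-aU-z_1^2/(2\kappa\delta)=0\) is exactly
\[
\frac{a+\sqrt{a^2+2z_1^2}}{2\kappa\delta}.
\]
So if \(U\) exceeds the maximum in \eqref{eq:horizontal-schwarz}, the right-hand side of the inequality is strictly positive, with quadratic growth in \(U\).

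To conclude on a complete noncompact \(M\), invoke the localization Lemma~\ref{lem:scalar-localization} from Appendix~\ref{app:localization}, as in the proof of Theorem~\ref{thm:full-schwarz}. Its hypotheses are a lower Ricci bound \(\Ric^g\ge-K_gg\) for the Laplacian comparison and an inequality \(\tfrac12\Delta U\ge AU^2-b_1U-c_1\) valid on the superlevel set \(\{U>c^*\}\). Here \(A=\kappa\delta\), \(b_1=a\), \(c_1=z_1^2/(2\kappa\delta)\) and \(c^*=2b/(\kappa\delta)\).

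The main obstacle is that the inequality holds only on the superlevel set \(\{U>2b/(\kappa\delta)\}\), not globally. I would handle it by applying the Omori--Yau/cutoff argument to \(U\) itself. The auxiliary function \(\varphi U\), with a distance cutoff \(\varphi\), attains an interior maximum. If that maximum is large, then \(U\) is large at the maximum point, so the inequality is available there. The standard computation then yields \(\sup U\le\max\{c^*,\text{root}\}+O(1/R)\), and letting \(R\to\infty\) gives \eqref{eq:horizontal-schwarz}. If Lemma~\ref{lem:scalar-localization} is stated only for global inequalities, I would replace \(U\) by \(\max\{U,c^*\}\), or argue directly at the maximum point as above.
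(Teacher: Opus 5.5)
Your proposal is correct and follows the paper's proof. You use the same bounds \(\calQ_H\ge\kappa\delta U^2\) and \(\calQ_0\ge\kappa\delta Uv\) in Lemma~\ref{lem:horizontal-differential}, reach the same inequality \(\tfrac12\Delta U\ge\kappa\delta U^2-aU-z_1^2/(2\kappa\delta)\) on \(\{U>2b/(\kappa\delta)\}\), and apply Lemma~\ref{lem:scalar-localization} with \(T_0=2b/(\kappa\delta)\). That lemma is already stated on a superlevel set, so your fallback of truncating \(U\) is unnecessary. The only cosmetic difference is how the \(z_1\sqrt{Uv}\) term is handled: the paper uses Young's inequality \(z_1\sqrt{Uv}\le\tfrac{\kappa\delta}2Uv+z_1^2/(2\kappa\delta)\) and discards \((\tfrac{\kappa\delta}2U-b)v\ge0\), whereas you minimize exactly in \(\sqrt v\); both give the same constant.
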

	
	\begin{proof}
		Put \(A=\kappa\delta\) and \(F_H=F|_H\). Lemma~\ref{lem:linear-algebra}
		with \(s=U\) and \(Y=V\) gives
		\[
		U^2-\tr(F_H^*F_H)^2\ge\delta U^2,\qquad
		U|V|^2-|F_H^*V|^2\ge\delta U|V|^2 ;
		\]
		no orthogonality between \(V\) and \(df(H)\) is needed. By
		Lemma~\ref{lem:curv-coercive}, \(\calQ_H\ge AU^2\) and
		\(\calQ_0\ge AUv\). Lemma~\ref{lem:horizontal-differential} with
		\(\mathfrak c=2m\) therefore gives
		\begin{align*}
			\tfrac12\Delta U+aU&\ge AU^2+(AU-b)v-z_1\sqrt{Uv},\\
			z_1\sqrt{Uv}&=2\sqrt{\tfrac A2Uv}\cdot\frac{z_1}{\sqrt{2A}}
			\le\frac A2Uv+\frac{z_1^2}{2A}.
		\end{align*}
		Hence
		\begin{equation}\label{eq:horizontal-threshold}
			\tfrac12\Delta U\ge AU^2-aU-\frac{z_1^2}{2A}
			+\Bigl(\frac A2U-b\Bigr)v .
		\end{equation}
		On \(\{U>2b/A\}\) the last term is nonnegative, so
		\(\tfrac12\Delta U\ge AU^2-aU-D_0\) there, with \(D_0=z_1^2/(2A)\).
		Lemma~\ref{lem:scalar-localization} with \(T_0=2b/A\) and \(b_0=a\)
		gives
		\[
		U\le\max\Bigl\{\frac{2b}A,\frac{a+\sqrt{a^2+4AD_0}}{2A}\Bigr\},
		\]
		and \(4AD_0=2z_1^2\). This is \eqref{eq:horizontal-schwarz}.
	\end{proof}
	
	\begin{remark}\label{rem:local-general}
		By \eqref{eq:source-data} all constants in
		Theorem~\ref{thm:horizontal-schwarz} are controlled by \(K_g\), \(a_0\)
		and \(a_1\). If the hypotheses hold on \(B_{2R}(o)\) only, the estimates
		hold on \(B_R(o)\) after adding \(c_m(R^{-2}+\sqrt{K_g}R^{-1})\) to
		\(K_g+m^2+2mL\) and to \(a\), respectively. On a Sasakian manifold
		\(L=1\) and \(T=z_0=z_1=0\), so
		\[
		\norm{df|_H}^2\le\frac{1+\Lambda_H^2}\kappa
		\max\{2K_g,\;K_g+3+2m+4m^2\}.
		\]
		For a horizontal curve \(\gamma\),
		\(|(f\circ\gamma)'|\le\norm{df|_H}\,|\gamma'|\); integrating and taking
		the infimum over horizontal curves gives
		\(d_N(f(x),f(y))\le\sqrt{C_*}\,d_{cc}(x,y)\), where \(C_*\) is the
		right-hand side of \eqref{eq:horizontal-schwarz}.
	\end{remark}
	
	\begin{example}\label{ex:rank-one}
		The dilatation hypothesis cannot be removed. On the Heisenberg group
		\(\mathbb H^{2m+1}\) let
		\[
		\theta=dt+\sum_j(x_jdy_j-y_jdx_j),\qquad
		X_j=\partial_{x_j}+y_j\partial_t,\qquad
		Y_j=\partial_{y_j}-x_j\partial_t,\qquad JX_j=Y_j,\qquad\xi=\partial_t .
		\]
		Then \(d\theta=2\sum_jdx_j\wedge dy_j\), in accordance with
		\eqref{eq:contact-normalization}, and the structure is complete and
		Sasakian. The function \(x_1\) satisfies
		\[
		\xi x_1=0,\qquad X_1x_1=1,\qquad Y_1x_1=0,\qquad
		\Delta x_1=\sum_j(X_j^2+Y_j^2)x_1=0,\qquad|d_Hx_1|=1 .
		\]
		For a unit speed geodesic \(\gamma\) of \(\mathbb{CH}^n_\kappa\) and
		\(c>0\) put \(f_c=\gamma\circ(cx_1)\). Then
		\[
		\tau(f_c)=c^2|d_Hx_1|^2\nabla_{\gamma'}\gamma'+c\,\gamma'\,\Delta x_1=0,
		\qquad df_c(\xi)=c\,\xi(x_1)\gamma'=0,
		\]
		so \(f_c\) is \(\bp\)-harmonic. Since \(df_c=c\,\gamma'\otimes dx_1\),
		\(\norm{df_c|_H}^2=c^2\) and \(u=\bar u=c^2/4\). Thus no bound depending
		only on curvature holds for \(U\), nor for \(u\) or \(\bar u\)
		separately; the maps \(f_c\) have rank one and infinite dilatation.
	\end{example}
	
	Theorems~\ref{thm:full-schwarz} and \ref{thm:horizontal-schwarz} prove
	Theorem~\ref{thmA}.
	
	\subsection{CR maps}\label{sec:cr}
	
	\begin{theorem}\label{thm:cr-schwarz}
		Let \(M\) be complete and connected with \(\Ric^{\TW}\ge-KG\),
		\(|\mathsf A|_{\op}\le a_0\) and \(|\nabla\mathsf A|\le a_1\), let \(N\)
		satisfy \eqref{eq:curvature-contraction-assumption}, and let
		\(f\colon M\to N\) be a CR \(\bp\)-harmonic map with \(\rank_\C P\le d\),
		\(1\le d\le\min\{m,\dim_\C N\}\). Then
		\begin{equation}\label{eq:cr-parametric-schwarz}
			u\le C_t=\frac{K+ta_0^2+c_m^2/t}{\alpha_d}\qquad(t>0),
		\end{equation}
		and consequently
		\begin{equation}\label{eq:cr-schwarz-final}
			u\le C_*:=\frac{K+2c_ma_0}{\alpha_d},\qquad f^*h|_H\le C_*G .
		\end{equation}
		If the hypotheses hold on \(B_{2R}(o)\) only, then
		\begin{equation}\label{eq:cr-local-schwarz}
			\sup_{B_R(o)}u\le\frac{K+ta_0^2+c_m^2/t+c_t(R^{-2}+R^{-1})}{\alpha_d},
		\end{equation}
		where \(c_t\) depends only on \(m,K,a_0,a_1,t\).
	\end{theorem}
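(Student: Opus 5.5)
The argument rests on the exact identity of Proposition~\ref{prop:exact-remainder}: for each fixed \(t>0\),
\[
\calL_tu=2\alpha_du(u-C_t)+\mathscr D_t\ge2\alpha_du(u-C_t),
\]
with \(\calL_t=\Delta_b+t\xi^2\) subelliptic (elliptic in the Webster metric after rescaling the Reeb direction). If \(u\) attained an interior maximum on a compact \(M\), then \(\calL_tu\le0\) there, which forces \(u\le C_t\). On a complete noncompact \(M\) I would apply the anisotropic localization lemma of Appendix~\ref{app:localization}, the analogue of Lemma~\ref{lem:scalar-localization} for \(\calL_t\). It is an Omori--Yau/Cheng--Yau cutoff argument: multiply \(u\) by \(\phi=\psi(r/R)\), where \(r\) is the Webster distance to \(o\).

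Carrying it out, at a maximum of \(\phi u\) one has \(d(\phi u)=0\) and \(\calL_t(\phi u)\le0\). Using \(\Gamma_t(\phi)\le c\,\phi\,(1+t)R^{-2}\) and the Laplacian comparison for \(\calL_t r\) gives
\[
2\alpha_d\phi u^2\le2\alpha_d C_t\phi u+c_t(R^{-2}+R^{-1})u .
\]
Here \(\calL_tr=\Delta r+(t-1)\xi^2r\) is controlled by \(\Ric^g\) (bounded below through \(K,a_0,a_1\) by Lemma~\ref{lem:source-reeb} and \eqref{eq:source-data}) and by \(|\Hess r(\xi,\xi)|\). That Hessian term is bounded since \(D\xi=J+\mathsf A\) is bounded and \(D_\xi\xi=0\). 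Barriers at cut points are handled by Calabi's trick. Dividing by \(u\) and evaluating on \(B_R(o)\), where \(\phi=1\), yields \eqref{eq:cr-local-schwarz}. Letting \(R\to\infty\) gives \eqref{eq:cr-parametric-schwarz}.

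Next I would minimize over \(t\). The function \(ta_0^2+c_m^2/t\) has minimum \(2c_ma_0\) at \(t=c_m/a_0\) when \(a_0>0\), which gives \(u\le C_*\). If \(a_0=0\), letting \(t\to\infty\) gives \(u\le K/\alpha_d=C_*\). The metric bound \(f^*h|_H\le C_*G\) then follows from \eqref{eq:cr-metric}, since \(|df(X)|^2\le\lambda_{\max}(P)G(X,X)\le uG(X,X)\).

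The main obstacle is the localization step. The comparison for \(\calL_tr\) must produce a constant depending only on \(m,K,a_0,a_1,t\). This requires converting \(\Ric^{\TW}\ge-KG\) with the torsion bounds into \(\Ric^g\ge-K_g g\), and bounding \(\xi^2 r\) using \(|D\xi|\le1+a_0\). One must also allow \(u\) to be unbounded a priori, so the argument should run on \(\phi u\) without assuming boundedness, in the Cheng--Yau manner. This is exactly the content I would expect Appendix~\ref{app:localization} to package, so here I would invoke it with \(A=\alpha_d\), \(b_0=\alpha_dC_t\), and zero threshold and constant terms.
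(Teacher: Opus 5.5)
The overall architecture is the paper's: the exact identity of Proposition~\ref{prop:exact-remainder}, a cutoff maximum principle for \(\calL_t\) applied to \(\psi_Ru\), \(R\to\infty\), the choice \(t=c_m/a_0\) (or \(t\to\infty\) when \(a_0=0\)), and \(|df(X)|^2\le\lambda_{\max}(P)G(X,X)\). The gap is in your account of the localization step.

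You bound \(\calL_tr=\Delta r+(t-1)\xi^2r\) by a Riemannian Laplacian comparison plus a two-sided bound on \(\Hess r(\xi,\xi)\). Hessian comparison for a distance function is one-sided. Lemma~\ref{lem:reeb-hessian} gives \(\Hess r(\xi,\xi)\le2/r+2\ell_\xi\), but the hypotheses give no lower bound, and in general \(\Hess r\) tends to \(-\infty\) near conjugate points. For \(t\ge1\) only the upper bound enters and your route works; this is Remark~\ref{rem:sublap}. For \(0<t<1\) the term \(-(1-t)\Hess r(\xi,\xi)\) is uncontrolled, so you do not obtain \eqref{eq:cr-local-schwarz} there.

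This matters for the theorem. The optimal parameter \(t=c_m/a_0\) lies below \(1\) as soon as \(a_0>c_m\). Restricting to \(t\ge1\) then only yields \(u\le(K+a_0^2+c_m^2)/\alpha_d\), which exceeds \(C_*\) by \((a_0-c_m)^2/\alpha_d\).

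The paper instead writes \(\calL_tr=\Delta_br+t\Hess r(\xi,\xi)\) and bounds each piece from above, which works for every \(t>0\):
\(\Delta_br\le C(r^{-1}+1)\) comes from the sub-Laplacian comparison \eqref{eq:sublap-comparison}, which uses \(\Ric^{\TW}\ge-KG\) and the torsion bounds directly, and \(\Hess r(\xi,\xi)\) is bounded by Lemma~\ref{lem:reeb-hessian}.

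Two smaller points. First, the upper bound on \(\Hess r(\xi,\xi)\) does not follow from \(|D\xi|\le1+a_0\) and \(D_\xi\xi=0\) alone. The second-variation argument needs \(\ip{R^g(X,\xi)\xi}{X}\ge-k_\xi|X|^2\), which Lemma~\ref{lem:source-reeb} supplies through \(a_0\) and \(a_1\); for non-Killing \(\xi\), \(R^g(X,\xi)\xi\) involves \(D_\xi(D\xi)\). Second, a lower bound \(\Ric^g\ge-K_gg\) is not a hypothesis of the theorem and does not follow from Lemma~\ref{lem:source-reeb} and \eqref{eq:source-data}. It can be derived from the full curvature comparison in Appendix~\ref{app:connections}.

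If you simply invoke Lemma~\ref{lem:anisotropic-localization}, as you say at the end, the proof is complete and coincides with the paper's.
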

	
	\begin{proof}
		By Proposition~\ref{prop:exact-remainder} and \(\mathscr D_t\ge0\),
		\[
		\calL_tu\ge2\alpha_du^2-2b_tu .
		\]
		Let \(\psi=\psi_R\) be the cutoff \eqref{eq:common-cutoff}. At a positive
		maximum of \(\Phi=\psi u\),
		\begin{align*}
			0=d\Phi&=\psi\,du+u\,d\psi,\qquad du=-\frac u\psi\,d\psi,\qquad
			\Gamma_t(\psi,u)=-\frac u\psi\Gamma_t(\psi),\\
			\calL_t\Phi&=\psi\calL_tu+u\calL_t\psi+2\Gamma_t(\psi,u),\\
			0\ge\psi\calL_t\Phi&=\psi^2\calL_tu+\psi u\calL_t\psi-2u\Gamma_t(\psi)\\
			&\ge2\alpha_d\Phi^2-\bigl(2b_t+c_t(R^{-2}+R^{-1})\bigr)\Phi,
		\end{align*}
		where the last step uses Lemma~\ref{lem:anisotropic-localization}. This
		proves \eqref{eq:cr-local-schwarz}, and letting \(R\to\infty\) with \(t\)
		fixed gives \eqref{eq:cr-parametric-schwarz}. If \(a_0>0\),
		\[
		ta_0^2+\frac{c_m^2}t-2c_ma_0=\Bigl(a_0\sqrt t-\frac{c_m}{\sqrt t}\Bigr)^2,
		\]
		so \(t=c_m/a_0\) gives the first inequality in \eqref{eq:cr-schwarz-final};
		if \(a_0=0\), let \(t\to\infty\) after the spatial limit. For real
		\(X\in H\), \eqref{eq:real-complex-vector-norm} and \eqref{eq:cr-metric}
		give
		\[
		|df(X)|^2=2h\bigl(df(X^{1,0}),\overline{df(X^{1,0})}\bigr)
		\le2u\,G(X^{1,0},\overline{X^{1,0}})=u\,G(X,X)\le C_*G(X,X).\qedhere
		\]
	\end{proof}
	
	\begin{corollary}\label{cor:pinched-schwarz}
		Under the hypotheses on \(M\) of Theorem~\ref{thm:cr-schwarz}, let
		\(\dim_\C N\ge2\), \(\Bisec_N\le-\kappa\) and \(\HSC_N\ge-k_{\mathrm h}\).
		Then every CR \(\bp\)-harmonic map of complex rank at most \(d\)
		satisfies
		\begin{equation}\label{eq:pinched-schwarz}
			u\le\frac{K+2c_ma_0}{\kappa+\kappa^2/[d(k_{\mathrm h}-\kappa)]},
		\end{equation}
		and for \(N=\mathbb{CH}^n_\kappa\)
		\begin{equation}\label{eq:hyperbolic-schwarz}
			u\le\frac{d(K+2c_ma_0)}{(d+1)\kappa}.
		\end{equation}
	\end{corollary}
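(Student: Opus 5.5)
This is a direct specialization of Theorem~\ref{thm:cr-schwarz}: the only work is to check that the target satisfies \eqref{eq:curvature-contraction-assumption} with a suitable \(\rho\), and then to evaluate \(\alpha_d=\kappa+\rho/d\) from \eqref{eq:effective-curvature}. The hypotheses on \(M\), the CR and \(\bp\)-harmonic conditions, and the rank bound \(\rank_\C P\le d\) are exactly those of Theorem~\ref{thm:cr-schwarz}. It remains to supply the curvature input and read off \(C_*=(K+2c_ma_0)/\alpha_d\) from \eqref{eq:cr-schwarz-final}.

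For the pinched case I invoke Lemma~\ref{lem:pinched-curvature}. Since \(\dim_\C N\ge2\), \(\Bisec_N\le-\kappa\) and \(\HSC_N\ge-k_{\mathrm h}\), the lemma gives \(k_{\mathrm h}\ge2\kappa>\kappa\), so \(\rho=\kappa^2/(k_{\mathrm h}-\kappa)\) is finite and positive. The lemma also gives \eqref{eq:pinched-contraction}, namely \(\mathscr C\ge\kappa u^2+\rho|P|^2\) for every list \(p_1,\dots,p_m\), in particular for \(p_i=f_i\). Together with \(\Bisec_N\le-\kappa\), this is \eqref{eq:curvature-contraction-assumption}. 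Then \(\alpha_d=\kappa+\kappa^2/[d(k_{\mathrm h}-\kappa)]\), and \eqref{eq:cr-schwarz-final} yields \eqref{eq:pinched-schwarz}.

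For \(N=\mathbb{CH}^n_\kappa\), \eqref{eq:complex-hyperbolic-contraction} gives the identity \(\mathscr C=\kappa(u^2+|P|^2)\), and the displayed pinching \(-2\kappa\le\Bisec\le-\kappa\) gives \(\Bisec_N\le-\kappa\). Hence \eqref{eq:curvature-contraction-assumption} holds with \(\rho=\kappa\), as already noted in the text following it. This gives \(\alpha_d=\kappa(1+1/d)=(d+1)\kappa/d\), and \eqref{eq:cr-schwarz-final} becomes \eqref{eq:hyperbolic-schwarz}.

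There is no real obstacle; the only point to watch is the case \(n=1\). There \(d\le\min\{m,n\}=1\) and the rank argument of Lemma~\ref{lem:pinched-curvature} needs \(\dim_\C N\ge2\) only for the holomorphic gap. For \(\mathbb{CH}^1_\kappa\) the identity \eqref{eq:complex-hyperbolic-contraction} holds directly from the tensor formula \eqref{eq:complex-hyperbolic-tensor}, so \(\rho=\kappa\) is still admissible and no dimension restriction is needed in the second statement.
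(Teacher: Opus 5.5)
Your proposal is correct and is exactly the paper's argument. Insert \(\rho=\kappa^2/(k_{\mathrm h}-\kappa)\) (from Lemma~\ref{lem:pinched-curvature}), respectively \(\rho=\kappa\) (from \eqref{eq:complex-hyperbolic-contraction}), into \(\alpha_d=\kappa+\rho/d\), and read off \(C_*\) from \eqref{eq:cr-schwarz-final}. Your side remark that the \(\mathbb{CH}^n_\kappa\) bound needs no dimension restriction is also correct: for \(n=1\) the tensor formula gives \(\mathscr C=2\kappa u^2=\kappa(u^2+|P|^2)\).
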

	
	\begin{proof}
		Insert \(\rho=\kappa^2/(k_{\mathrm h}-\kappa)\), respectively
		\(\rho=\kappa\), into \eqref{eq:effective-curvature} and
		\eqref{eq:cr-schwarz-final}.
	\end{proof}
	
	\begin{proposition}\label{prop:cr-strict}
		If \(a_0>0\) in Theorem~\ref{thm:cr-schwarz}, then \(u<C_*\) on \(M\).
	\end{proposition}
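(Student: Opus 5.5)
We argue by contradiction, starting from the parametric bound \eqref{eq:cr-parametric-schwarz} at the optimal parameter \(t_*=c_m/a_0\). There \(C_{t_*}=C_*\) and \(b_{t_*}=\alpha_dC_*\). Suppose \(u(x_0)=C_*\) at some \(x_0\in M\). Since \(u\le C_*\) everywhere, \(x_0\) is an interior maximum of \(u\). Put \(w=C_*-u\ge0\). By Proposition~\ref{prop:exact-remainder} with \(t=t_*\),
\[
\calL_{t_*}w=-2\alpha_du(u-C_*)-\mathscr D_{t_*}=2\alpha_du\,w-\mathscr D_{t_*}\le2\alpha_d C_*\,w .
\]
Thus \(w\) is a nonnegative supersolution of \(\calL_{t_*}w-c\,w\le0\) with \(c=2\alpha_dC_*\) bounded, and it vanishes at \(x_0\). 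The operator \(\calL_{t_*}=\Delta_b+t_*\xi^2\) is elliptic: since \(t_*>0\), it is a uniformly elliptic second-order operator in local coordinates. The strong maximum principle for \(w\ge0\) with a zero-order term, applied to \(\calL_{t_*}w-cw\le0\) (equivalently, Hopf's principle for nonnegative supersolutions), then gives \(w\equiv0\) on the connected manifold \(M\). Hence \(u\equiv C_*>0\).

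With \(u\equiv C_*\) we have \(\calL_{t_*}u=0\), and the identity forces \(\mathscr D_{t_*}\equiv0\). Every nonnegative term in \eqref{eq:Dt-definition} therefore vanishes. In particular:
\begin{itemize}
\item \(8mv_c=0\) gives \(r=0\), so \(f\) is foliated;
\item the square term gives \(f_{i0}=\frac{ic_m}{t_*}f_i=\frac{ia_0}{1}\cdot\frac{c_m}{c_m}f_i=ia_0f_i\);
\item \(D_A=0\) gives \(T_A=a_0^2u\).
\end{itemize}
We now derive the contradiction from \(r=0\). Since \(f_{i0}=f_{0i}=\nablaf_{Z_i}r\) by \eqref{eq:cr-first-derivatives}, we get \(f_{i0}=0\). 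Combined with \(f_{i0}=ia_0f_i\) and \(a_0>0\), this yields \(f_i=0\) for all \(i\), so \(u=0\). This contradicts \(u\equiv C_*>0\).

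A subtle case arises if \(C_*=0\), i.e.\ \(K=0\) and \(a_0=0\). This cannot happen because \(a_0>0\) forces \(C_*\ge2c_ma_0/\alpha_d>0\). Hence \(u<C_*\) everywhere.

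The main obstacle is justifying the strong maximum principle for the possibly degenerate anisotropic operator. Since \(t_*>0\), \(\calL_{t_*}\) has principal symbol \(|\zeta_H|^2+t_*\zeta(\xi)^2\), which is positive definite, so the classical Hopf principle applies locally. One also has to check that a strong maximum principle, rather than only the cutoff argument of Theorem~\ref{thm:cr-schwarz}, is needed. For this it suffices that the identity \eqref{eq:exact-remainder} holds pointwise and that the bound \(u\le C_{t_*}\) is already established globally, so that the touching point is an honest interior maximum.
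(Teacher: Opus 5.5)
Your proof is correct and follows the paper's argument. Both fix \(t=c_m/a_0\) and apply the strong maximum principle to \(w=C_*-u\) (your constant zeroth-order coefficient \(2\alpha_dC_*\) in place of \(2\alpha_du\) is immaterial), which forces \(\mathscr D_t\equiv0\). Then \(v_c=0\) gives \(f_{i0}=f_{0i}=\nablaf_{Z_i}r=0\), and the square term yields \(u=0\), contradicting \(C_*>0\).
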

	
	\begin{proof}
		Let \(t=c_m/a_0\), so that \(C_t=C_*\), and put \(w=C_*-u\ge0\). By
		\eqref{eq:exact-remainder},
		\[
		\calL_tw-2\alpha_du\,w=-\mathscr D_t\le0 .
		\]
		If \(w\) vanishes at some point, the strong maximum principle for the
		elliptic operator \(\calL_t-2\alpha_du\) gives \(w\equiv0\); then
		\(\calL_tu=0\) and \(2\alpha_du(u-C_*)=0\), so \(\mathscr D_t\equiv0\).
		In particular \(v_c=0\), hence \(r=0\) and
		\(f_{i0}=f_{0i}=\nablaf_{Z_i}r=0\), and
		\[
		0=2t\sum_i\Bigl|f_{i0}-\frac{ic_m}tf_i\Bigr|^2=\frac{2c_m^2}tu
		=\frac{2c_m^2}tC_*>0,
		\]
		a contradiction.
	\end{proof}
	
	\subsection{The Reeb differential and transverse holomorphicity}
	
	\begin{theorem}\label{thm:reeb-bound}
		Under the hypotheses of Theorem~\ref{thm:cr-schwarz},
		\begin{equation}\label{eq:reeb-schwarz-final}
			|r|^2\le\frac{a_0^2}\kappa,\qquad |df(\xi)|^2\le\frac{2a_0^2}\kappa .
		\end{equation}
		In particular \(f\) is foliated if \(M\) is Sasakian.
	\end{theorem}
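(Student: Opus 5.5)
The plan is a maximum-principle argument for \(v_c=|r|^2\) along the Reeb direction, combined with the horizontal bound of Theorem~\ref{thm:cr-schwarz}. Since \(f\) is CR, \eqref{eq:cr-first-derivatives} gives \(f_{i\bar j}=2i\delta_{ij}r\) and \(\nablaf_\xi r=f_{00}=0\). So \(r\) is parallel along Reeb trajectories and \(v_c\) is constant along them, while the horizontal derivatives of \(r\) are the \(f_{0i}=f_{i0}\) and \(f_{0\bar i}=df^{1,0}(\mathsf AZ_{\bar i})\).

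The key input is the Jacobi identity \eqref{eq:cr-reeb-jacobi}, \(\xi^2u=2s_0+2\mathscr C_0-2T_A\), and its consequence \eqref{eq:cr-reeb-lower},
\[
\xi^2u\ge2s_0+2\kappa uv_c-2a_0^2u .
\]
Along a Reeb trajectory \(\gamma\), the coefficient \(v_c\) is constant, and \(u\circ\gamma\) is bounded by Theorem~\ref{thm:cr-schwarz}. If \(\kappa v_c>a_0^2\) at some point, then on the whole trajectory through it (complete, since \(\xi\) has unit length and \(M\) is complete) we get
\[
(u\circ\gamma)''\ge2(\kappa v_c-a_0^2)\,u\circ\gamma,
\]
so \(u\circ\gamma\) is a bounded nonnegative convex function on \(\R\). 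This is the ``convexity argument along Reeb orbits'' announced in the introduction. A bounded convex function on \(\R\) is constant, hence \(u\circ\gamma\equiv0\) along \(\gamma\), and then \(s_0=0\) along \(\gamma\) too.

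The remaining step is to show that \(r=0\) on the trajectory, contradicting \(\kappa v_c>a_0^2>0\) there. When \(a_0=0\) the same conclusion has to come out, since then any \(v_c>0\) triggers the argument. Vanishing of \(u\) along \(\gamma\) means \(f_i=0\) there. Since \(f_{i\bar j}=2i\delta_{ij}r\), I would differentiate: \(0=\nablaf_\xi f_i=\nablaf_{Z_i}r=f_{0i}\) along \(\gamma\), using \(\nablaf_\xi f_i=\nablaf_{Z_i}r\) from the proof of Proposition~\ref{prop:cr-tensor}. This alone does not kill \(r\), and I expect it to be the main obstacle. What I would try first is to use the commutator relation \(f_{i\bar j}-f_{\bar ji}=2i\delta_{ij}r\) with \(f_{\bar j}\equiv0\) globally. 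It gives \(2ir=f_{i\bar i}=Z_{\bar i}\)-derivative of \(f_i\), which is a horizontal derivative, and so it does not vanish merely because \(f_i\) vanishes on one curve. The fallback is therefore to show that \(u=0\) on an open set: the set \(\{\kappa v_c>a_0^2\}\) is open and saturated by Reeb orbits. On it \(u\equiv0\), hence \(f_i\equiv0\) on an open set, hence \(r=-\tfrac i2 f_{1\bar1}=0\) there, a contradiction. This yields \(v_c\le a_0^2/\kappa\), which is the first inequality in \eqref{eq:reeb-schwarz-final}.

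The second inequality follows from \(|df(\xi)|^2=v=2v_c\). On a Sasakian source \(a_0=0\), so \(r=0\) and \(V=0\), i.e.\ \(f\) is foliated.
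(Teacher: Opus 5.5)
Your proposal is correct and follows the paper's proof essentially step for step. The steps are: \(v_c\) is invariant under the complete Reeb flow; \(u\) is convex and bounded along the orbits in the open, flow-saturated set \(\{\kappa v_c>a_0^2\}\), which forces \(u\equiv0\) there; and \(f_i\equiv0\) on that open set then gives \(2ir=f_{i\bar i}=0\), a contradiction. The only detail you leave implicit is one the paper checks explicitly: \(\nabla_{Z_{\bar j}}Z_i\in H^{1,0}\), so the connection term \(df^{1,0}(\nabla_{Z_{\bar j}}Z_i)\) in \(f_{i\bar j}\) also vanishes on that set.
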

	
	\begin{proof}
		Since \(|\xi|=1\) and \(g\) is complete, the flow \(\phi_s\) of \(\xi\) is
		complete. By \eqref{eq:cr-first-derivatives},
		\[
		\xi v_c=2\Rea\ip{\nablaf_\xi r}{r}=2\Rea\ip{f_{00}}{r}=0,
		\]
		so \(v_c\circ\phi_s=v_c\), and the open set
		\(\Omega=\{\kappa v_c>a_0^2\}\) satisfies \(\phi_s(\Omega)=\Omega\).
		Suppose \(\Omega\ne\varnothing\), let \(x\in\Omega\) and put
		\(y(s)=u(\phi_s(x))\). By Theorem~\ref{thm:cr-schwarz} and
		\eqref{eq:cr-reeb-lower},
		\begin{align*}
			0&\le y(s)\le C_*,\\
			y''(s)&\ge2s_0(\phi_s(x))+2\bigl(\kappa v_c(x)-a_0^2\bigr)y(s)
			\ge2\bigl(\kappa v_c(x)-a_0^2\bigr)y(s)\ge0 .
		\end{align*}
		A bounded convex function on \(\R\) is constant; hence \(y''=0\), and
		\(\kappa v_c(x)-a_0^2>0\) forces \(y\equiv0\). Thus \(u=0\), that is
		\(f_i\equiv0\), on \(\Omega\). Since \(\nabla\theta=0\) and \(\nabla J=0\),
		\[
		\theta(\nabla_{Z_{\bar j}}Z_i)=0,\qquad
		J\nabla_{Z_{\bar j}}Z_i=\nabla_{Z_{\bar j}}(JZ_i)
		=i\nabla_{Z_{\bar j}}Z_i,
		\]
		so \(\nabla_{Z_{\bar j}}Z_i\in H^{1,0}\), and \(df^{1,0}|_{H^{1,0}}=0\)
		on \(\Omega\). Therefore, on \(\Omega\),
		\[
		2i\delta_{ij}r=f_{i\bar j}
		=\nablaf_{Z_{\bar j}}\bigl(df^{1,0}(Z_i)\bigr)
		-df^{1,0}(\nabla_{Z_{\bar j}}Z_i)=0 .
		\]
		Hence \(v_c=0\) on \(\Omega\), which contradicts
		\(\kappa v_c>a_0^2\ge0\) on \(\Omega\). Thus \(\Omega=\varnothing\), and
		\(|df(\xi)|^2=2|r|^2\le2a_0^2/\kappa\).
	\end{proof}
	
	\begin{proposition}\label{prop:transverse-hol}
		Let \(M\) be Sasakian and \(f\colon M\to N\) a foliated CR map. Locally
		the Reeb foliation has a K\"ahler quotient \(\pi\colon\mathcal O\to Q\),
		and \(f=\widehat f\circ\pi\) for a holomorphic map
		\(\widehat f\colon Q\to N\). The quotient metric satisfies
		\(\pi^*g_Q|_H=G\), and its Ricci tensor pulls back to \(\Ric^{\TW}\).
	\end{proposition}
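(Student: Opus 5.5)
The plan is to use the standard structure of Sasakian manifolds and verify three things in turn: that the leaf space exists locally, that $f$ descends, and that the descended map is holomorphic. Curvatures are identified at the end.

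\textbf{Existence of the quotient.} On a Sasakian manifold the Reeb field $\xi$ is a unit Killing field, since $\mathsf A=0$ makes $\mathcal L_\xi g=0$; its flow also preserves $\theta$, $H$ and $J$. Given a point, choose a small transversal slice $Q$ and a flow box $\mathcal O\cong Q\times(-\varepsilon,\varepsilon)$, and let $\pi$ be the projection. Since $\mathcal L_\xi J=0$ and $\mathcal L_\xi G=0$, the data $J|_H$ and $G$ are $\xi$-invariant and basic. They therefore define, via the isomorphism $d\pi\colon H_x\to T_{\pi(x)}Q$, an almost complex structure $J_Q$ and a metric $g_Q$ on $Q$ with $\pi^*g_Q|_H=G$.

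\textbf{Integrability and the K\"ahler property.} Integrability of $J_Q$ follows from the integrability of the CR structure. For basic $(1,0)$ fields $Z,W$, the bracket $[Z,W]$ lies in $H^{1,0}$ because $d\theta(Z,W)=0$ for two $(1,0)$ vectors; its projection is $[\pi_*Z,\pi_*W]$. The Levi form $d\theta=\pi^*\omega_Q$ is closed, and $\omega_Q(X,Y)=2g_Q(J_QX,Y)$, so $g_Q$ is K\"ahler (up to the factor fixed by the normalization).

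\textbf{Descent and holomorphicity.} Since $f$ is foliated, $df(\xi)=V=0$, so $f$ is constant on the connected Reeb arcs of the flow box. Hence $f=\widehat f\circ\pi$ for a smooth $\widehat f$, obtained by restricting $f$ to the slice. Because $d\pi$ maps $H^{1,0}$ onto $T^{1,0}Q$, the CR condition $q_i=0$ says exactly that $d\widehat f$ maps $T^{0,1}Q$ into $T^{0,1}N$, i.e.\ $\bar\partial\widehat f=0$.

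\textbf{Ricci identification (main obstacle).} The step needing care is that the Ricci tensor of $g_Q$ pulls back to $\Ric^{\TW}$. I would show that for basic horizontal fields $X,Y$, the Tanaka--Webster connection satisfies $\nabla_XY=$ horizontal lift of $\nabla^{g_Q}_{\pi_*X}\pi_*Y$. Both sides are torsion-free on $H$ modulo $\xi$ and metric and $J$-compatible, and the $\xi$-component of $T^\nabla(X,Y)=2g(JX,Y)\xi$ is exactly the bracket's vertical part. Using $\mathsf A=0$, $\nabla_\xi X=[\xi,X]=0$ for basic $X$. Then $R^\nabla(X,Y)W$ for horizontal arguments is the lift of $R^{Q}$: the term $\nabla_{[X,Y]}W$ splits as horizontal part plus $-2g(JX,Y)\nabla_\xi W=0$. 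Tracing over $Z_k,Z_{\bar k}$ as in the definition of $\Ric^{\TW}_{i\bar j}$ yields $\pi^*\Ric^{g_Q}=\Ric^{\TW}$. The points to verify are the O'Neill-type identity for $\nabla$ and the vanishing of the vertical correction, both of which rely on $\mathsf A=0$.
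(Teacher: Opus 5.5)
Your proposal is correct and follows essentially the same route as the paper: $\mathsf A=0$ makes $\theta$, $g$ and $J$ invariant under the Reeb flow, which gives a local K\"ahler quotient. Then $df(\xi)=0$ gives descent, the CR condition gives holomorphicity of $\widehat f$, and the Tanaka--Webster connection on projectable horizontal fields projects to the Levi-Civita connection of $g_Q$, so the curvature and its trace project. One small imprecision: $d\theta(Z,W)=0$ only shows that $[Z,W]$ is horizontal; its $(1,0)$ type comes from the CR integrability you invoke in the preceding sentence.
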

	
	\begin{proof}
		Cartan's formula and \(\mathsf A=0\) give
		\begin{align*}
			\mathcal L_\xi\theta&=d(\theta(\xi))+\iota_\xi d\theta=0,\\
			(\mathcal L_\xi g)(X,Y)&=g(D_X\xi,Y)+g(X,D_Y\xi)
			=g(JX,Y)+g(X,JY)=0,\\
			(\mathcal L_\xi J)X&=[\xi,JX]-J[\xi,X]=\nabla_\xi(JX)-J\nabla_\xi X=0 .
		\end{align*}
		Hence a flow box carries tensors \(g_Q,J_Q,\omega_Q\) with
		\[
		\pi^*g_Q|_H=G,\qquad d\pi\circ J=J_Q\circ d\pi,\qquad
		\pi^*\omega_Q=\tfrac12d\theta,\qquad d\omega_Q=0,
		\]
		and integrability of the CR structure gives integrability of \(J_Q\).
		Since \(df(\xi)=0\), the map descends, and
		\[
		d\widehat f\circ J_Q\circ d\pi=df\circ J=\Jt\circ df
		=\Jt\circ d\widehat f\circ d\pi,
		\]
		so \(\widehat f\) is holomorphic. For projectable horizontal fields
		\(d\pi(\nabla_XY)=\nabla^Q_{d\pi X}d\pi Y\), because the induced
		connection preserves \(g_Q\) and has vanishing projected torsion.
		Consequently \(d\pi(R^\nabla(X,Y)Z)=R^Q(d\pi X,d\pi Y)d\pi Z\), and taking
		the horizontal trace proves the Ricci identity.
	\end{proof}
	
	\begin{corollary}\label{cor:cr-liouville}
		If \(M\) is complete Sasakian with \(\Ric^{\TW}\ge0\) and
		\(\Bisec_N\le-\kappa<0\), then every CR \(\bp\)-harmonic map
		\(M\to N\) is constant.
	\end{corollary}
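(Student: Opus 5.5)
The plan is to reduce the statement to the Schwarz lemma for CR maps, Theorem~\ref{thm:cr-schwarz}, with \(K=0\), and to remove the rank hypothesis by taking \(d=\min\{m,\dim_\C N\}\). Since \(\rank_\C P\le\min\{m,\dim_\C N\}\) holds for every CR map, this choice of \(d\) costs nothing. We take \(\rho=0\), which is admissible in \eqref{eq:curvature-contraction-assumption} under \(\Bisec_N\le-\kappa\) alone; then \(\alpha_d=\kappa\). On a Sasakian source \(a_0=a_1=0\), so the hypotheses \(|\mathsf A|_{\op}\le a_0\) and \(|\nabla\mathsf A|\le a_1\) hold trivially. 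The assumption \(\Ric^{\TW}\ge0\) is the case \(K=0\) of \(\Ric^{\TW}\ge-KG\).

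Inequality \eqref{eq:cr-schwarz-final} now gives
\[
u\le\frac{K+2c_ma_0}{\alpha_d}=0,
\]
so \(u\equiv0\), that is \(f_i=0\) for all \(i\). This is exactly where the case \(a_0=0\) of Theorem~\ref{thm:cr-schwarz} enters: the bound is obtained by letting \(t\to\infty\) after \(R\to\infty\) in \eqref{eq:cr-parametric-schwarz}, and that is what removes the term \(c_m^2/t\). I expect this limiting step to be the only point needing care. It is already handled in the proof of Theorem~\ref{thm:cr-schwarz}, so I will only check that nothing there assumed \(K>0\).

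Since \(f\) is CR we have \(q_i=0\), and together with \(p_i=0\) this gives \(df|_H=0\). Theorem~\ref{thm:reeb-bound} with \(a_0=0\) gives \(|df(\xi)|^2\le 2a_0^2/\kappa=0\), so \(df\equiv0\). Alternatively, \(r=0\) follows from \(f_{i\bar j}=2i\delta_{ij}r\) once \(df^{1,0}|_{H^{1,0}}=0\), as in the proof of Theorem~\ref{thm:reeb-bound}. Since \(M\) is connected, \(f\) is constant.
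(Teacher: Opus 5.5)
Your proposal is correct and is essentially the paper's proof: Theorems~\ref{thm:cr-schwarz} and \ref{thm:reeb-bound} with \(K=a_0=0\) give \(u=0\) and \(df(\xi)=0\), and hence \(df=0\). The paper reads the last step off \(E=2(u+\bar u+v_c)\) in \eqref{eq:real-complex-energy}, which matches your use of \(q_i=0\), and your explicit choices \(\rho=0\) and \(d=\min\{m,\dim_\C N\}\) just spell out what the paper leaves implicit.
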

	
	\begin{proof}
		Theorems~\ref{thm:cr-schwarz} and \ref{thm:reeb-bound} with \(K=a_0=0\)
		give \(u=0\) and \(df(\xi)=0\); by \eqref{eq:real-complex-energy},
		\(df=0\).
	\end{proof}
	
	\begin{corollary}\label{cor:cc-cr}
		Under the hypotheses of Theorem~\ref{thm:cr-schwarz}, put
		\(C_V=2a_0^2/\kappa\). Then for every \(\eps>0\)
		\begin{equation}\label{eq:metric-distance}
			f^*h\le(1+\eps)C_*G+(1+\eps^{-1})C_V\,\theta^2,\qquad
			d_N(f(x),f(y))\le\sqrt{C_*}\,d_{cc}(x,y).
		\end{equation}
	\end{corollary}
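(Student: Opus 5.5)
The plan has two parts: a pointwise bound for \(f^*h\) on arbitrary tangent vectors, and then the distance bound.

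\emph{Pointwise bound.} Decompose a tangent vector \(X\in T_xM\) as \(X=X_H+\theta(X)\xi\) with \(X_H\in H\). Then
\[
df(X)=df(X_H)+\theta(X)\,df(\xi),\qquad
|df(X)|^2\le\bigl(|df(X_H)|+|\theta(X)|\,|V|\bigr)^2 .
\]
For any \(\eps>0\), the elementary inequality \((\alpha+\beta)^2\le(1+\eps)\alpha^2+(1+\eps^{-1})\beta^2\) gives
\[
|df(X)|^2\le(1+\eps)|df(X_H)|^2+(1+\eps^{-1})\theta(X)^2|V|^2 .
\]
For the first term, Theorem~\ref{thm:cr-schwarz}, in the form \eqref{eq:cr-schwarz-final}, gives \(|df(X_H)|^2\le C_*G(X_H,X_H)\). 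Since \(G\) vanishes on \(\xi\), this equals \(C_*G(X,X)\). For the second term, Theorem~\ref{thm:reeb-bound} gives \(|V|^2=|df(\xi)|^2\le 2a_0^2/\kappa=C_V\). Together these give the first inequality in \eqref{eq:metric-distance}. The only bookkeeping point is that \(G\) is extended to \(TM\) by \(G(\xi,\cdot)=0\), so that \(G(X,X)=|X_H|^2\).

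\emph{Distance bound.} This follows the pattern of Remark~\ref{rem:local-general}, but now uses the CR bound \(C_*\) instead of a dilatation hypothesis. Let \(\gamma\colon[0,1]\to M\) be a horizontal, absolutely continuous curve from \(x\) to \(y\). Then \(\theta(\gamma')=0\), so the pointwise bound gives
\[
|(f\circ\gamma)'|^2\le C_*\,G(\gamma',\gamma')
\]
almost everywhere, with no \(\eps\) loss: the Reeb term vanishes, so one may take \(\eps\to0\) or simply apply \eqref{eq:cr-schwarz-final} directly. Integrating, the length of \(f\circ\gamma\) is at most \(\sqrt{C_*}\) times the \(G\)-length of \(\gamma\). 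Taking the infimum over horizontal curves gives \(d_N(f(x),f(y))\le\sqrt{C_*}\,d_{cc}(x,y)\). If \(x\) and \(y\) are not joined by a horizontal curve, then \(d_{cc}(x,y)=\infty\) and there is nothing to prove. By Chow's theorem and the contact condition, \(d_{cc}\) is in fact finite on the connected manifold \(M\).

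\emph{Where care is needed.} There is essentially no obstacle, since both ingredients are earlier theorems. The one point to watch is that \eqref{eq:cr-schwarz-final} is stated for real horizontal vectors through the identity \eqref{eq:real-complex-vector-norm}. I would recheck that this identity applies at every point, which it does because \(f\) is CR, and that the constant is \(C_*\) rather than \(2C_*\), using \(U=2u\) and \eqref{eq:cr-metric}.
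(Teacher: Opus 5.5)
Your proposal is correct and follows essentially the same route as the paper. You split a tangent vector into its horizontal and Reeb parts and combine \eqref{eq:cr-schwarz-final} and Theorem~\ref{thm:reeb-bound} through the inequality \((\alpha+\beta)^2\le(1+\eps)\alpha^2+(1+\eps^{-1})\beta^2\); for the distance bound you integrate the horizontal estimate along horizontal curves and take the infimum.
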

	
	\begin{proof}
		For \(X\in H\) and \(s\in\R\),
		\begin{align*}
			|df(X+s\xi)|^2&\le\bigl(\sqrt{C_*}|X|+\sqrt{C_V}|s|\bigr)^2\\
			&\le(1+\eps)C_*|X|^2+(1+\eps^{-1})C_Vs^2 .
		\end{align*}
		For a horizontal curve \(\gamma\) from \(x\) to \(y\),
		\(d_N(f(x),f(y))\le\int|df(\dot\gamma)|\le\sqrt{C_*}\int|\dot\gamma|\);
		taking the infimum over such curves proves the second inequality.
	\end{proof}
	
	Theorems~\ref{thm:cr-schwarz} and \ref{thm:reeb-bound},
	Corollary~\ref{cor:pinched-schwarz} and
	Propositions~\ref{prop:cr-strict} and \ref{prop:transverse-hol} prove
	Theorem~\ref{thmB}.
	
	\section{Equality, stability and compactness}\label{sec:equality}
	
	Throughout this section \(M\) is a connected complete Sasakian manifold
	with \(\Ric^{\TW}\ge-KG\), \(K>0\), the target satisfies
	\eqref{eq:curvature-contraction-assumption}, and \(f\) is a CR
	\(\bp\)-harmonic map of complex rank at most \(d\). By
	Theorem~\ref{thm:reeb-bound}, \(f\) is foliated, so
	\eqref{eq:sasaki-deficit-identity} holds, and
	Theorem~\ref{thm:cr-schwarz} with \(a_0=0\) gives \(0\le u\le C_0\).
	
	\subsection{Equality and models}
	
	\begin{theorem}\label{thm:hyperbolic-equality}
		Let \(f\colon M\to\mathbb{CH}^n_\kappa\), \(1\le d\le\min\{m,n\}\), and
		suppose
		\begin{equation}\label{eq:hyperbolic-equality-assumption}
			u(x_0)=C_d=\frac{dK}{(d+1)\kappa}
		\end{equation}
		for some \(x_0\in M\). Then the transverse differential of \(f\) is
		parallel and of complex rank \(d\) everywhere. Every local K\"ahler
		quotient \(Q\) of the Reeb foliation admits a local product decomposition
		\begin{equation}\label{eq:hyperbolic-product}
			Q=S\times Q',\qquad\dim_\C S=d,\qquad\HSC_S=-\frac{2K}{d+1},
		\end{equation}
		the induced map \(\widehat f\) is constant along \(Q'\), and on \(S\) it
		is a totally geodesic holomorphic homothety,
		\begin{equation}\label{eq:hyperbolic-homothety}
			\widehat f^{\,*}h|_{TS}=\lambda g_S,\qquad\lambda=\frac K{(d+1)\kappa}.
		\end{equation}
	\end{theorem}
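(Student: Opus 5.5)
The plan is to run the strong maximum principle on the Sasakian deficit identity \eqref{eq:sasaki-deficit-identity} and then read off the geometry from the vanishing of the remainder $\mathscr D_{\mathrm S}$.

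For $\mathbb{CH}^n_\kappa$ we take $\rho=\kappa$, so $\alpha_d=(d+1)\kappa/d$ and $C_0=K/\alpha_d=C_d$. Put $w=C_d-u\ge0$. Theorem~\ref{thm:cr-schwarz} with $a_0=0$ gives $u\le C_d$. By \eqref{eq:sasaki-deficit-identity}, $w$ satisfies $\Delta w-2\alpha_du\,w=-\mathscr D_{\mathrm S}\le0$. Since $u\ge0$ is smooth and $w(x_0)=0$, the strong maximum principle gives $w\equiv0$ on the connected manifold $M$. Then $\Delta u=0$ forces $\mathscr D_{\mathrm S}\equiv0$, so each term vanishes identically:
\[
f_{ij}=0,\qquad D_R=0,\qquad D_N=0,\qquad D_P=0 .
\]

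For the CR foliated map, $f_{\bar i}=0$, $f_{i\bar j}=0$ and $f_{i0}=0$; together with $f_{ij}=0$ this says the transverse differential $df|_H$ is $\nabla$-parallel. Hence $P$ is parallel, since $\nabla P$ is built from $f_{ij}$ and $f_{i\bar j}$. The condition $D_P=0$ means that on the range of $P$ all $d$ eigenvalues equal $u/d=C_d/d>0$. So $\rank_\C P=d$ exactly, and
\[
P=\frac{C_d}{d}\,\Pi,\qquad \frac{C_d}{d}=\frac{K}{(d+1)\kappa}=\lambda ,
\]
where $\Pi$ is orthogonal projection onto a rank-$d$ parallel subbundle $\mathcal H\subset H^{1,0}$. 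The kernel $\ker P=\mathcal H^\perp$ is also parallel.

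Next pass to a local Kähler quotient $\pi\colon\mathcal O\to Q$ with $f=\widehat f\circ\pi$ (Proposition~\ref{prop:transverse-hol}). The distributions $\mathcal H$ and $\mathcal H^\perp$ are basic, since they are $\nabla$-parallel and $\nabla_\xi$ agrees with $\mathcal L_\xi$ on basic objects in the Sasakian case. They therefore descend to complementary parallel $J_Q$-invariant distributions on $Q$, and the de Rham decomposition gives $Q=S\times Q'$ locally. The map $\widehat f$ has zero differential on $TQ'=\ker P$, so it is constant along $Q'$. On $S$ the relation $P=\lambda\Pi$ gives $\widehat f^{\,*}h|_{TS}=\lambda g_S$. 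The condition $f_{ij}=0$, together with $f_{i\bar j}=0$, means $\nabla d\widehat f=0$, so $\widehat f|_S$ is a totally geodesic holomorphic homothety. Its image is then a totally geodesic complex submanifold of $\mathbb{CH}^n_\kappa$, namely a copy of $\mathbb{CH}^d_\kappa$. Consequently $S$ is locally holomorphically isometric to $\mathbb{CH}^d$ with metric scaled by $\lambda^{-1}$, so
\[
\HSC_S=\lambda\cdot(-2\kappa)=-\frac{2K}{d+1} .
\]
One can cross-check this with $D_R=0$, which says $\Ric^{\TW}=-KG$ on $\mathcal H$. The Ricci form of $\mathbb{CH}^d$ with holomorphic sectional curvature $-2K/(d+1)$ equals $-(d+1)\cdot\tfrac{K}{d+1}=-K$ times the metric under this normalization, which fits. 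Alternatively, the Gauss equation applied to the parallel homothety gives $R^S=\lambda\,\widehat f^{\,*}R^N$ directly.

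I expect the main obstacle to be the descent and splitting step. One must check that the parallel distribution $\ker P$ on $H$ is invariant under the Reeb flow and projects to a distribution on $Q$ that is parallel for the Levi-Civita connection of $g_Q$. This requires the identification $d\pi(\nabla_XY)=\nabla^Q_{d\pi X}d\pi Y$ from Proposition~\ref{prop:transverse-hol}, applied to basic sections. The curvature identification on $S$ via the homothety should then be routine.
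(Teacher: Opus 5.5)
Your proposal is correct and follows the paper's proof essentially step by step. The shared steps are: the strong maximum principle for \(w=C_d-u\) in \eqref{eq:sasaki-deficit-identity} with \(\rho=\kappa\); vanishing of \(\mathscr D_{\mathrm S}\); \(D_P=0\) forcing \(P=\lambda\Pi\) with \(\Pi\) a parallel projection of rank \(d\); Reeb invariance of its image and kernel via \([\xi,X]=\nabla_\xi X\) for horizontal \(X\); and the local de~Rham splitting of the quotient. The only variation is in identifying \(\HSC_S\): you argue that the image of the parallel holomorphic homothety is a totally geodesic complex submanifold, hence locally \(\mathbb{CH}^d_\kappa\), and rescale by \(\lambda\), whereas the paper inserts \eqref{eq:complex-hyperbolic-tensor} into \eqref{eq:curvature-intertwining} and uses injectivity of \(d\widehat f|_{TS}\). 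That is exactly the Gauss-equation alternative you mention, though your formula \(R^S=\lambda\,\widehat f^{\,*}R^N\) is right only when read as a statement about sectional curvatures.
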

	
	\begin{proof}
		For \(\mathbb{CH}^n_\kappa\) we take \(\rho=\kappa\); then
		\(C_0=C_d\), and \eqref{eq:complex-hyperbolic-contraction} gives
		\(D_N=\mathscr C-\kappa u^2-\kappa|P|^2=0\). Thus \(w=C_d-u\ge0\)
		satisfies
		\begin{equation}\label{eq:hyperbolic-equality-equation}
			\begin{aligned}
				\Bigl(\Delta-2\kappa\Bigl(1+\frac1d\Bigr)u\Bigr)w
				&=-2\sum_{i,j}|f_{ij}|^2-2D_R-2\kappa\Bigl(|P|^2-\frac{u^2}d\Bigr)\le0 .
			\end{aligned}
		\end{equation}
		Since \(M\) is connected and \(w(x_0)=0\), the strong maximum principle
		gives
		\[
		w\equiv0\ \text{on }M\quad\Longrightarrow\quad
		\mathscr D_{\mathrm S}\equiv0\ \text{on }M,
		\]
		and every nonnegative term on the right of
		\eqref{eq:hyperbolic-equality-equation} vanishes:
		\begin{equation}\label{eq:hyperbolic-zero-terms}
			f_{ij}=f_{i\bar j}=f_{i0}=0,\qquad D_R=0,\qquad D_P=0 .
		\end{equation}
		List the eigenvalues of \(P\) as \(\lambda_1,\dots,\lambda_d\), appending
		zeros if \(\rank_\C P<d\). At every point,
		\begin{align*}
			\sum_{i=1}^d\lambda_i&=u=C_d,\qquad\sum_{i=1}^d\lambda_i^2=|P|^2,\\
			0=D_P&=\sum_{i=1}^d\Bigl(\lambda_i-\frac{C_d}d\Bigr)^2,\\
			\lambda_i&=\frac{C_d}d=\lambda>0\qquad(1\le i\le d).
		\end{align*}
		An appended zero would contradict the last line; hence
		\(\rank_\C P=d\) and \(P^2=\lambda P\). Moreover \eqref{eq:hyperbolic-zero-terms}
		gives \(\nabla(df|_H)=0\) and \(\nabla P=0\). The endomorphism
		\(\Pi=\lambda^{-1}P\) satisfies
		\begin{equation}\label{eq:parallel-projection}
			\Pi^2=\Pi,\qquad\Pi^*=\Pi,\qquad\nabla\Pi=0,\qquad[\Pi,J]=0 .
		\end{equation}
		Its image \(\mathcal V\) and kernel \(\mathcal N\) are orthogonal,
		parallel, \(J\)-invariant subbundles of \(H\). For horizontal \(X\),
		\([\xi,X]=\nabla_\xi X\) because \(\mathsf A=0\), and
		\(\nabla_\xi\Pi=0\); hence \(\mathcal V\) and \(\mathcal N\) are Reeb
		invariant and descend to orthogonal parallel complex subbundles on the
		local quotient of Proposition~\ref{prop:transverse-hol}. The local de~Rham
		decomposition theorem yields
		\[
		(Q,g_Q,J_Q)=(S,g_S,J_S)\times(Q',g_{Q'},J_{Q'}),\qquad
		TS=\mathcal V,\qquad TQ'=\mathcal N .
		\]
		
		Let \(\nabla^{Q,N}\) be the connection on \(T^*Q\otimes\widehat f^*TN\)
		induced by the Levi-Civita connections of \(Q\) and \(N\). Then
		\begin{equation}\label{eq:parallel-map-quotient}
			\nabla^{Q,N}d\widehat f=0,\qquad d\widehat f|_{TQ'}=0,\qquad
			\widehat f^{\,*}h|_{TS}=\lambda g_S .
		\end{equation}
		The Ricci identity for \(d\widehat f\) gives
		\begin{align}
			0&=\bigl(\nabla^{Q,N}_X\nabla^{Q,N}_Y-\nabla^{Q,N}_Y\nabla^{Q,N}_X
			-\nabla^{Q,N}_{[X,Y]}\bigr)d\widehat f\,(Z)\notag\\
			&=R^N(d\widehat fX,d\widehat fY)d\widehat fZ
			-d\widehat f\bigl(R^Q(X,Y)Z\bigr).\label{eq:curvature-intertwining}
		\end{align}
		For \(X,Y,Z\in T^{1,0}S\), inserting
		\eqref{eq:complex-hyperbolic-tensor} and \eqref{eq:parallel-map-quotient},
		\begin{align*}
			d\widehat f\bigl(R^S(X,\bar Y)Z\bigr)
			&=R^N(d\widehat fX,\overline{d\widehat fY})d\widehat fZ\\
			&=-\kappa\,h(d\widehat fX,\overline{d\widehat fY})\,d\widehat fZ
			-\kappa\,h(d\widehat fZ,\overline{d\widehat fY})\,d\widehat fX\\
			&=-\kappa\lambda\,d\widehat f\bigl(g_S(X,\bar Y)Z+g_S(Z,\bar Y)X\bigr).
		\end{align*}
		Since \(d\widehat f|_{TS}\) is injective,
		\begin{equation}\label{eq:factor-curvature-tensor}
			R^S(X,\bar Y)Z=-\kappa\lambda\bigl(g_S(X,\bar Y)Z+g_S(Z,\bar Y)X\bigr),
		\end{equation}
		and therefore
		\[
		\HSC_S=-2\kappa\lambda=-\frac{2K}{d+1},\qquad
		\Ric_S=-(d+1)\kappa\lambda\,g_S=-Kg_S .
		\]
		A holomorphic map with \(\nabla^{Q,N}d\widehat f=0\) is totally
		geodesic, which completes the proof.
	\end{proof}
	
	\begin{corollary}\label{cor:global-product}
		In Theorem~\ref{thm:hyperbolic-equality}, assume that the Reeb foliation
		is given by a Riemannian submersion \(\pi\colon(M,g)\to(Q,g_Q)\) with
		connected fibres, \(\pi^*g_Q|_H=G\), onto a complete simply connected
		K\"ahler manifold \(Q\). Then \(Q=S\times Q'\) globally, where \(S\) is
		the simply connected complex space form of holomorphic sectional
		curvature \(-2K/(d+1)\), and \(\widehat f\) is the projection onto \(S\)
		followed by a totally geodesic holomorphic homothety onto a complex
		hyperbolic subspace of \(\mathbb{CH}^n_\kappa\).
	\end{corollary}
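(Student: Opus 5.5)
The plan is to globalize the local conclusions of Theorem~\ref{thm:hyperbolic-equality} by means of the global de~Rham theorem and the Cartan--Ambrose--Hicks (Cartan--Hadamard) rigidity for parallel maps. The first task is to descend the parallel objects from \(M\) to \(Q\). Since \(\pi\) is a Riemannian submersion with connected fibres whose fibres are the Reeb orbits, and \(f\) is foliated by Theorem~\ref{thm:reeb-bound}, \(f\) is constant on fibres. Hence \(f=\widehat f\circ\pi\) globally for a smooth map \(\widehat f\colon Q\to\mathbb{CH}^n_\kappa\). By Proposition~\ref{prop:transverse-hol}, which applies on every flow box, \(\widehat f\) is holomorphic. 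The projection \(\Pi=\lambda^{-1}P\) of \eqref{eq:parallel-projection} is Reeb invariant, so it descends to a globally defined endomorphism \(\widehat\Pi\) of \(TQ\). This \(\widehat\Pi\) is parallel for the Levi-Civita connection of \(g_Q\), because the local statements glue: on each flow box the projected Tanaka--Webster connection is \(\nabla^Q\). In the same way \(\nabla^{Q,N}d\widehat f=0\) holds on all of \(Q\).

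The second step is the splitting itself. The kernel and image of \(\widehat\Pi\) are complementary, orthogonal, parallel, \(J_Q\)-invariant distributions on the complete simply connected manifold \(Q\). The global de~Rham decomposition theorem therefore gives an isometric splitting \(Q=S\times Q'\) with \(TS=\operatorname{im}\widehat\Pi\) and \(TQ'=\ker\widehat\Pi\). Each factor is complete and simply connected, and since \(J_Q\) preserves both distributions the splitting is holomorphic. By \eqref{eq:factor-curvature-tensor}, \(S\) has constant holomorphic sectional curvature \(-2K/(d+1)\). A complete simply connected Kähler manifold of constant holomorphic sectional curvature is the complex space form, so \(S\cong\mathbb{CH}^d\) with the stated normalization.

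The third step identifies \(\widehat f\). We have \(d\widehat f|_{TQ'}=0\) and \(Q'\) is connected. Write a point of \(Q\) as \((s,q')\); then \(\widehat f(s,q')\) is independent of \(q'\), so \(\widehat f=\phi\circ\mathrm{pr}_S\) with \(\phi\colon S\to\mathbb{CH}^n_\kappa\) holomorphic and \(\nabla d\phi=0\). Moreover \(\phi^*h=\lambda g_S\), so \(\phi\) is an isometric immersion of \((S,\lambda g_S)\). Since \(\nabla d\phi=0\), \(\phi\) is totally geodesic. It maps geodesics of \(S\) through a point \(s_0\) onto geodesics through \(\phi(s_0)\) tangent to the complex subspace \(W=d\phi(T_{s_0}S)\). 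Therefore \(\phi(S)\) lies in \(\exp_{\phi(s_0)}(W)\), which is a totally geodesic complex hyperbolic subspace \(\mathbb{CH}^d_\kappa\subset\mathbb{CH}^n_\kappa\). Finally, \(\phi\colon(S,\lambda g_S)\to\mathbb{CH}^d_\kappa\) is a local isometry between complete simply connected manifolds of equal dimension, hence an isometry by Cartan--Hadamard. This shows that \(\phi\) is a totally geodesic holomorphic homothety onto that subspace.

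The main obstacle I expect is the first step. One must verify carefully that the locally defined quotient structures of Proposition~\ref{prop:transverse-hol} coincide with the given global \(g_Q\) and \(J_Q\), so that parallelism of \(\widehat\Pi\) and of \(d\widehat f\) holds globally on \(Q\). I will handle this by observing that \(\pi^*g_Q|_H=G\) determines \(g_Q\) uniquely. Likewise \(d\pi\circ J=J_Q\circ d\pi\) determines \(J_Q\), since \(J\) is Reeb invariant. All local quotients are thus restrictions of the global one, and parallelism is a local property.
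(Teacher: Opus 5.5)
Your proposal is correct and follows essentially the same route as the paper. You descend the parallel projection \(\Pi\) to \(Q\), apply the global de~Rham theorem, and identify \(S\) through \eqref{eq:factor-curvature-tensor}; you then factor \(\widehat f\) through \(S\) and use \(\nabla d\widehat f=0\) to place the image in the totally geodesic subspace \(\exp_{\phi(s_0)}(W)\). You are more explicit than the paper on two points: why the local quotient structures agree with the global \((g_Q,J_Q)\), and why \(\phi\) is bijective onto that subspace (the relation \(\phi\circ\exp_{s_0}=\exp_{\phi(s_0)}\circ d\phi_{s_0}\), with Cartan--Hadamard applied on both sides, gives this directly). These are elaborations rather than a different argument.
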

	
	\begin{proof}
		The parallel projection \eqref{eq:parallel-projection} descends to a
		parallel projection on \(Q\), and the de~Rham decomposition theorem for
		complete simply connected manifolds gives \(Q=S\times Q'\) with complete
		simply connected factors. Equation~\eqref{eq:factor-curvature-tensor}
		identifies \(S\). The differential vanishes along \(Q'\), so
		\(\widehat f\) factors through \(S\). For every geodesic \(\gamma\) of
		\(S\),
		\[
		\nabla^N_{\dot\gamma}\,d\widehat f(\dot\gamma)
		=d\widehat f\bigl(\nabla^S_{\dot\gamma}\dot\gamma\bigr)=0,
		\]
		so the image is the complete totally geodesic complex hyperbolic subspace
		determined by \(d\widehat f(TS)\) at one point.
	\end{proof}
	
	\begin{corollary}\label{cor:cr-equality}
		Let \(\Bisec_N\le-\kappa<0\), and suppose that a CR \(\bp\)-harmonic map
		satisfies \(u(x_0)=K/\kappa\). Then \(\dim_\C N=1\), the transverse
		differential is parallel of complex rank one, every local K\"ahler
		quotient splits locally as \(\Sigma\times Q'\) with a complex curve
		\(\Sigma\) of curvature \(-K\), the induced map factors through
		\(\Sigma\), and
		\begin{equation}\label{eq:curve-homothety}
			\widehat f^{\,*}h|_{T\Sigma}=\frac K\kappa\,g_\Sigma .
		\end{equation}
		The curvature of \(N\) equals \(-\kappa\) along the image.
	\end{corollary}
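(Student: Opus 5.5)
The plan is to run the equality argument of Theorem~\ref{thm:hyperbolic-equality} with \(\rho=0\) and \(d=1\) in \eqref{eq:curvature-contraction-assumption}, so that \(\alpha_1=\kappa\) and \(C_0=K/\kappa\). Since \(u\le C_0\) everywhere, the function \(w=C_0-u\ge0\) satisfies \(\Delta w-2\kappa u\,w=-\mathscr D_{\mathrm S}\le0\) by \eqref{eq:sasaki-deficit-identity}, and \(w(x_0)=0\). The strong maximum principle on the connected manifold \(M\) then gives \(w\equiv0\) and \(\mathscr D_{\mathrm S}\equiv0\). Consequently \(f_{ij}=0\), \(D_R=0\) and \(D_N=\mathscr C-\kappa u^2=0\) everywhere, and \(f_{i\bar j}=f_{i0}=0\) by foliatedness. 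In particular the transverse differential is \(\nabla\)-parallel, so \(P\) is parallel and \(u\equiv K/\kappa>0\).

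Next I extract rank one from \(D_N=0\). Diagonalize \(P\) as in the proof of Lemma~\ref{lem:pinched-curvature}, with \(|p_i|^2=\lambda_i\). The equality \(\mathscr C=\kappa u^2\) forces equality in every term: \(-\HSC(p_i)=\kappa\) wherever \(p_i\ne0\), and \(-R(p_i,\bar p_i,p_j,\bar p_j)=\kappa\lambda_i\lambda_j\) for \(i\ne j\). If \(\dim_\C N\ge2\), Corollary~\ref{cor:no-hol-curv} gives \(\HSC_N(p)<-\kappa\) for every nonzero \(p\), which contradicts the first equality since \(u>0\) forces some \(p_i\ne0\). Hence \(\dim_\C N=1\). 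Then all \(p_i\) lie in the one-dimensional space \(T^{1,0}_{f(x)}N\), so \(\rank_\C P=1\) and \(|P|^2=u^2\). The Gaussian curvature of \(N\) at image points equals \(-\kappa\), because \(\mathscr C=-R(\eta,\bar\eta,\eta,\bar\eta)u^2\) for a unit \(\eta\).

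Finally, the splitting follows the proof of Theorem~\ref{thm:hyperbolic-equality}. Set \(\Pi=u^{-1}P\), which is a parallel, \(J\)-commuting orthogonal projection of rank one. Its image and kernel are Reeb invariant and descend to the local quotient of Proposition~\ref{prop:transverse-hol}, where local de~Rham gives \(Q=\Sigma\times Q'\). There \(\widehat f\) is constant along \(Q'\) and satisfies \(\widehat f^{\,*}h|_{T\Sigma}=u\,g_\Sigma=(K/\kappa)g_\Sigma\), which is \eqref{eq:curve-homothety}. The curvature of \(\Sigma\) comes from \eqref{eq:curvature-intertwining}: on a complex line \(d\widehat f\) is a homothety with factor \(\lambda=K/\kappa\), and the target curvature is \(-\kappa\) along the image. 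Together these give \(\HSC_\Sigma=-\kappa\lambda=-K\), equivalently \(\Ric_\Sigma=-Kg_\Sigma\), which is consistent with \(D_R=0\).

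The main obstacle is the step deriving \(\dim_\C N=1\). It requires Corollary~\ref{cor:no-hol-curv}, which uses only a pointwise lower bound on \(\HSC_N\) and so holds with no global pinching assumption. It also requires correctly identifying that the equality case kills every diagonal term, which needs \(u>0\). A secondary point is keeping the curvature normalization of \(\Sigma\) consistent with the paper's convention, where a complex curve has \(\HSC\) equal to its Gaussian curvature.
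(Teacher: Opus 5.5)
Your proof is correct and follows the paper's own argument. You apply the strong maximum principle to \(w=K/\kappa-u\) in \eqref{eq:sasaki-deficit-identity} with \(\rho=0\). Termwise equality in the bisectional bound then gives \(\HSC_N(f_i)=-\kappa\), hence \(\dim_\C N=1\) by Corollary~\ref{cor:no-hol-curv}. The parallel rank-one projection, the local de~Rham splitting and \eqref{eq:curvature-intertwining} yield \(K_\Sigma=\lambda K_N=-K\).

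The differences are cosmetic. The paper takes \(d=m\) rather than \(d=1\); this is immaterial, since for \(\rho=0\) one has \(\alpha_d=\kappa\) and the \(D_P\) term drops, so no a priori rank bound is used. The paper also reads off termwise equality in the original frame, because \(\kappa u^2=\kappa\sum_{i,j}|f_i|^2|f_j|^2\) in any frame, so diagonalizing \(P\) is unnecessary.
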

	
	\begin{proof}
		Take \(\rho=0\) and \(d=m\) in \eqref{eq:sasaki-deficit-identity}, so
		that \(C_0=K/\kappa\). The strong maximum principle applied to
		\(K/\kappa-u\) gives \(u\equiv K/\kappa\) and
		\[
		f_{ij}=0,\qquad D_R=0,\qquad
		\sum_{i,j}\bigl(-R(f_i,\bar f_i,f_j,\bar f_j)
		-\kappa|f_i|^2|f_j|^2\bigr)=0 .
		\]
		Each summand is nonnegative by \eqref{eq:bisectional}, hence
		\begin{equation}\label{eq:coarse-equality-curvature}
			R(f_i,\bar f_i,f_j,\bar f_j)=-\kappa|f_i|^2|f_j|^2\qquad(i,j).
		\end{equation}
		Some \(f_i\ne0\), and \eqref{eq:coarse-equality-curvature} with \(i=j\)
		gives \(\HSC_N(f_i)=-\kappa\); Corollary~\ref{cor:no-hol-curv} forces
		\(\dim_\C N=1\), so \(\rank_\C P=1\) and
		\[
		P^2=\frac K\kappa P,\qquad\Pi=\frac\kappa KP,\qquad\Pi^2=\Pi,\qquad
		\nabla\Pi=0 .
		\]
		As in Theorem~\ref{thm:hyperbolic-equality}, the local de~Rham theorem
		gives \(Q=\Sigma\times Q'\) with \(d\widehat f|_{TQ'}=0\) and
		\(\widehat f^{\,*}h=\lambda g_\Sigma\), \(\lambda=K/\kappa\). For a real
		\(g_\Sigma\)-orthonormal pair \(X,J_\Sigma X\),
		\begin{align*}
			|d\widehat fX|_h^2&=|d\widehat f(J_\Sigma X)|_h^2=\lambda,\qquad
			h\bigl(d\widehat fX,d\widehat f(J_\Sigma X)\bigr)=0,\\
			h\bigl(R^N(d\widehat fX,d\widehat fJ_\Sigma X)d\widehat fJ_\Sigma X,
			d\widehat fX\bigr)&=K_N\lambda^2,\\
			h\bigl(d\widehat f(R^\Sigma(X,J_\Sigma X)J_\Sigma X),d\widehat fX\bigr)
			&=\lambda K_\Sigma .
		\end{align*}
		By \eqref{eq:curvature-intertwining} the last two expressions coincide,
		and \eqref{eq:coarse-equality-curvature} gives \(K_N=-\kappa\). Hence
		\begin{equation}\label{eq:curve-gauss-curvature}
			K_\Sigma=\lambda K_N=-K,
		\end{equation}
		which proves the corollary.
	\end{proof}
	
	\begin{remark}\label{rem:curve-normalization}
		For a complex curve of constant curvature \(-b<0\) one has
		\(\Bisec_N=\HSC_N=-b\), so Corollary~\ref{cor:cr-equality} applies with
		\((\kappa,\rho,d)=(b,0,1)\), \(\alpha_1=b\), and gives the constant
		\(K/b\). For \(N=\mathbb{CH}^1_\kappa\) the normalization of
		\eqref{eq:complex-hyperbolic-tensor} gives \(b=2\kappa\), hence
		\(K/b=K/(2\kappa)=C_1\), the constant of
		Theorem~\ref{thm:hyperbolic-equality} for \(d=1\). Both descriptions give
		the same remainder:
		\begin{align*}
			|P|^2&=u^2,\qquad D_P=0,\qquad\mathscr C=bu^2=2\kappa u^2,\\
			\mathscr C-bu^2&=0=\mathscr C-\kappa u^2-\kappa|P|^2,\qquad
			\mathscr D_{\mathrm S}=2\sum_{i,j}|f_{ij}|^2+2D_R .
		\end{align*}
	\end{remark}
	
	\begin{example}\label{ex:sharp-cr}
		Fix \(1\le d\le\min\{m,n\}\) and put \(\lambda=K/((d+1)\kappa)\). Let
		\(\iota\colon\mathbb{CH}^d_\kappa\to\mathbb{CH}^n_\kappa\) be a totally
		geodesic holomorphic inclusion, and equip its domain \(S\) with
		\(g_S=\lambda^{-1}\iota^*h\). Then
		\[
		\iota^*h=\lambda g_S,\qquad\HSC_S=-2\kappa\lambda=-\frac{2K}{d+1},
		\qquad\Ric_S=-(d+1)\kappa\,\iota^*h=-Kg_S .
		\]
		Put \(\mathcal B=S\times\C^{m-d}\) with the product metric
		\(g_{\mathcal B}=g_S+g_{\mathrm{eucl}}\), and choose \(\alpha\) with
		\(d\alpha=\omega_{\mathcal B}\), which is possible since \(\mathcal B\) is
		contractible. On \(M=\mathcal B\times\R\) define
		\begin{equation}\label{eq:model-contact}
			\theta=dt+2\alpha,\qquad g=\pi^*g_{\mathcal B}+\theta^2,\qquad
			\xi=\partial_t .
		\end{equation}
		Horizontal lifts of vector fields \(X,Y\) on \(\mathcal B\) satisfy
		\begin{equation}\label{eq:model-bracket}
			[X^H,Y^H]=[X,Y]^H-2\omega_{\mathcal B}(X,Y)\xi,\qquad[\xi,X^H]=0 .
		\end{equation}
		The connection defined by
		\[
		\nabla_{X^H}Y^H=(\nabla^{\mathcal B}_XY)^H,\qquad\nabla\xi=0,\qquad
		\nabla_\xi X^H=0
		\]
		preserves the lifted metric and complex structure, and
		\[
		T^\nabla(X^H,Y^H)=2\omega_{\mathcal B}(X,Y)\xi,\qquad
		T^\nabla(\xi,X^H)=0 .
		\]
		It is therefore the Tanaka--Webster connection, so that
		\begin{equation}\label{eq:model-curvature}
			\mathsf A=0,\qquad R^\nabla(X^H,Y^H)Z^H=(R^{\mathcal B}(X,Y)Z)^H,\qquad
			\Ric^{\TW}=-Kg_S\oplus0 .
		\end{equation}
		For \(X=X_S+X_0\in TS\oplus T\C^{m-d}\),
		\[
		\Ric^{\TW}(X,X)+KG(X,X)=-K|X_S|^2+K\bigl(|X_S|^2+|X_0|^2\bigr)
		=K|X_0|^2\ge0 .
		\]
		The metric is complete. For a curve \(\gamma=(c,t)\),
		\[
		|\dot c|_{g_{\mathcal B}}\le|\dot\gamma|_g,\qquad
		|\dot t|\le|\theta(\dot\gamma)|+2|\alpha|_{g_{\mathcal B}}
		|\dot c|_{g_{\mathcal B}} .
		\]
		If \(\gamma\) has finite length, \(c\) has finite length and stays in a
		compact set on which \(|\alpha|\le C_\gamma\), so
		\(\int|\dot t|\le(1+2C_\gamma)L_g(\gamma)<\infty\) and \(t\) converges.
		The map \(f=\iota\circ\mathrm{pr}_S\circ\pi\) satisfies
		\[
		(\nabla df)|_{H\times H}=0,\qquad df(\xi)=0,\qquad
		f_{i\bar j}=f_{00}=0,
		\]
		the horizontal derivative being taken with the Tanaka--Webster
		connection; hence \(f\) is CR and \(\bp\)-harmonic, and
		\[
		P=\lambda\,\Id_{T^{1,0}S}\oplus0,\qquad u=d\lambda=C_d .
		\]
		Thus equality holds in \eqref{eq:hyperbolic-schwarz} for every
		admissible \(d\).
	\end{example}
	
	\subsection{Stability, compactness and gaps}
	
	\begin{theorem}\label{thm:sasaki-quantitative}
		Assume \(\rho>0\). For \(0\le\phi\in C_c^\infty(M)\) put
		\(I_\phi=\int_M\phi\,\mathscr D_{\mathrm S}\dd\). Then
		\begin{equation}\label{eq:Iphi}
			I_\phi=2\alpha_d\int_M\phi\,uw\dd-\int_Mw\,\Delta\phi\dd .
		\end{equation}
		If \(0\le\phi\le1\), \(\phi=1\) on a measurable set \(\Omega\), and
		\(b_\phi=\|(-\Delta\phi)_+\|_\infty\), then
		\begin{equation}\label{eq:local-sasaki-remainder}
			\int_\Omega\mathscr D_{\mathrm S}\dd\le(2K+b_\phi)
			\int_{\operatorname{supp}\phi}(C_0-u)\dd .
		\end{equation}
		With the Hilbert--Schmidt norm on \(\operatorname{End}H^{1,0}\) and the
		metric trace on its covariant derivative,
		\begin{align}
			\int_M\phi\,|\nabla P|^2\dd&\le C_0\,I_\phi,\label{eq:P-gradient}\\
			\int_M\phi\,\Bigl|P^2-\frac udP\Bigr|^2\dd
			&\le\frac{C_0^2}{2\rho}\,I_\phi,\label{eq:P-polynomial}\\
			\int_M\phi\,\Bigl|P^2-\frac{C_0}dP\Bigr|^2\dd
			&\le\frac{C_0^2}\rho\,I_\phi+\frac{2C_0^2}{d^2}\int_M\phi\,w^2\dd .
			\label{eq:P-fixed-polynomial}
		\end{align}
	\end{theorem}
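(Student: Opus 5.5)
The plan is to integrate the Sasakian deficit identity \eqref{eq:sasaki-deficit-identity} against \(\phi\), and then to bound each quantity on the left of \eqref{eq:P-gradient}--\eqref{eq:P-fixed-polynomial} pointwise by \(\mathscr D_{\mathrm S}\).

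\textbf{Integral identity and local bound.} Multiply \(\Delta w=2\alpha_d uw-\mathscr D_{\mathrm S}\) by \(\phi\). Integrating and using the symmetry \eqref{eq:laplacian-symmetry} (\(\phi\) has compact support) gives \(\int\phi\Delta w=\int w\Delta\phi\), which is \eqref{eq:Iphi}. For \eqref{eq:local-sasaki-remainder} use three facts:
\begin{itemize}
\item \(\phi\ge1_\Omega\) and \(\mathscr D_{\mathrm S}\ge0\);
\item \(0\le u\le C_0\), \(\phi\le1\) and \(w\ge0\), so \(2\alpha_d\phi uw\le2\alpha_dC_0w=2Kw\), because \(\alpha_dC_0=K\);
\item \(-w\Delta\phi\le b_\phi w\).
\end{itemize}
Both integrands are supported in \(\operatorname{supp}\phi\), so \(\int_\Omega\mathscr D_{\mathrm S}\le(2K+b_\phi)\int_{\operatorname{supp}\phi}w\).

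\textbf{Pointwise bounds.} In the Sasakian, foliated CR setting, \eqref{eq:cr-first-derivatives} gives \(f_{i\bar j}=0\). Hence \(P_{i\bar j,k}=\ip{f_{ik}}{f_j}\) and \(P_{i\bar j,\bar k}=\ip{f_i}{f_{jk}}\), and the \(\xi\)-derivative vanishes since \(f_{i0}=0\). By Cauchy--Schwarz,
\[
\sum_{i,j,k}|\ip{f_{ik}}{f_j}|^2\le u\sum_{i,k}|f_{ik}|^2,
\]
and similarly for the conjugate term. With the normalization of the metric trace this yields \(|\nabla P|^2\le 2u\sum|f_{ij}|^2\le C_0\,\mathscr D_{\mathrm S}\), using \(u\le C_0\); the precise constant must be checked against the real/complex conventions. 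Next, in an eigenbasis,
\[
\Bigl|P^2-\tfrac udP\Bigr|^2=\sum_i\lambda_i^2\Bigl(\lambda_i-\tfrac ud\Bigr)^2\le u^2\sum_i\Bigl(\lambda_i-\tfrac ud\Bigr)^2 .
\]
The sum runs over the \(d\) listed eigenvalues, using \(\lambda_i\le u\), and \(\sum_i(\lambda_i-u/d)^2=D_P\), as in the proof of Lemma~\ref{lem:pinched-curvature}. Since \(2\rho D_P\le\mathscr D_{\mathrm S}\) and \(u\le C_0\), this gives \(|P^2-\tfrac udP|^2\le \tfrac{C_0^2}{2\rho}\mathscr D_{\mathrm S}\). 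Multiplying by \(\phi\) and integrating gives \eqref{eq:P-gradient} and \eqref{eq:P-polynomial}.

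\textbf{Fixed polynomial.} Write \(P^2-\tfrac{C_0}dP=(P^2-\tfrac udP)-\tfrac wdP\) and apply \(|a+b|^2\le2|a|^2+2|b|^2\). Together with \(|P|^2\le u^2\le C_0^2\), this gives
\[
\Bigl|P^2-\tfrac{C_0}dP\Bigr|^2\le \frac{C_0^2}{\rho}\mathscr D_{\mathrm S}+\frac{2C_0^2}{d^2}w^2 ,
\]
which integrates to \eqref{eq:P-fixed-polynomial}.

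The main obstacle is the gradient bound. It needs a careful check that \(|\nabla P|^2\), with the metric trace over the real frame and the Hilbert--Schmidt norm, is bounded by the stated multiple of \(\sum|f_{ij}|^2\). This relies on \(f_{i\bar j}=f_{i0}=0\) and on the factor conventions, so that the constant comes out as \(C_0\) rather than some other multiple.
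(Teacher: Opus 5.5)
Your proposal is correct and follows the paper's proof step for step. You integrate the deficit identity against \(\phi\), bound \(2\alpha_d\phi uw\le2Kw\) and \(-w\Delta\phi\le b_\phi w\), and then bound \(|\nabla P|^2\le2u\sum|f_{ik}|^2\) and \(|P^2-\frac udP|^2\le u^2D_P\) pointwise by \(\mathscr D_{\mathrm S}\). The convention worry you flag resolves as the paper assumes: for \(Z_k=(e_k-iJe_k)/\sqrt2\) one has \(|\nabla_{Z_k}P|^2+|\nabla_{Z_{\bar k}}P|^2=|\nabla_{e_k}P|^2+|\nabla_{Je_k}P|^2\), so the metric trace is exactly \(\sum_{i,j,k}(|\ip{f_{ik}}{f_j}|^2+|\ip{f_i}{f_{jk}}|^2)\) and the constant \(C_0\) comes out as stated.
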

	
	\begin{proof}
		By \eqref{eq:sasaki-deficit-identity} and \eqref{eq:laplacian-symmetry},
		\begin{align*}
			\int_M\phi\,\mathscr D_{\mathrm S}\dd
			&=2\alpha_d\int_M\phi\,uw\dd-\int_M\phi\,\Delta w\dd\\
			&=2\alpha_d\int_M\phi\,uw\dd-\int_Mw\,\Delta\phi\dd .
		\end{align*}
		For \(0\le\phi\le1\) with \(\phi|_\Omega=1\), using
		\(\mathbf1_\Omega\le\phi\), \(0\le u\le C_0\) and \(2\alpha_dC_0=2K\),
		\begin{align*}
			\int_\Omega\mathscr D_{\mathrm S}\dd&\le\int_M\phi\,\mathscr D_{\mathrm S}\dd
			\le2K\int_M\phi\,w\dd+\int_Mw(-\Delta\phi)_+\dd\\
			&\le(2K+b_\phi)\int_{\operatorname{supp}\phi}w\dd .
		\end{align*}
		Since \(f_{i\bar k}=f_{i0}=0\), metric compatibility gives
		\[
		P_{i\bar j,k}=\ip{f_{ik}}{f_j},\qquad
		P_{i\bar j,\bar k}=\ip{f_i}{f_{jk}},\qquad P_{i\bar j,0}=0,
		\]
		and therefore
		\begin{align*}
			|\nabla P|^2&=\sum_{i,j,k}\bigl(|\ip{f_{ik}}{f_j}|^2
			+|\ip{f_i}{f_{jk}}|^2\bigr)\\
			&\le2u\sum_{i,k}|f_{ik}|^2\le C_0\,\mathscr D_{\mathrm S},
		\end{align*}
		which gives \eqref{eq:P-gradient}. With the eigenvalues
		\(\lambda_1,\dots,\lambda_d\in[0,u]\) (zeros included),
		\begin{align*}
			\Bigl|P^2-\frac udP\Bigr|^2
			&=\sum_{i=1}^d\lambda_i^2\Bigl(\lambda_i-\frac ud\Bigr)^2
			\le u^2\sum_{i=1}^d\Bigl(\lambda_i-\frac ud\Bigr)^2\\
			&=u^2D_P\le\frac{C_0^2}{2\rho}\,\mathscr D_{\mathrm S},
		\end{align*}
		which gives \eqref{eq:P-polynomial}. Finally, \(u=C_0-w\) and
		\(|P|\le u\le C_0\) give
		\begin{align*}
			\Bigl|P^2-\frac{C_0}dP\Bigr|^2
			&=\Bigl|P^2-\frac udP-\frac wdP\Bigr|^2
			\le2\Bigl|P^2-\frac udP\Bigr|^2+\frac{2w^2}{d^2}|P|^2\\
			&\le\frac{C_0^2}\rho\,\mathscr D_{\mathrm S}+\frac{2C_0^2}{d^2}w^2,
		\end{align*}
		and integration gives \eqref{eq:P-fixed-polynomial}.
	\end{proof}
	
	\begin{proposition}\label{prop:pointwise-stability}
		Assume \(\rho>0\), let \(x_0\in M\) and \(R>0\). There are constants
		\(h_R\) and \(A_R\), depending only on \(g|_{B_{4R}(x_0)}\), \(R\), \(K\),
		\(\kappa\), \(\rho\) and \(d\), such that
		\begin{gather}
			\int_{B_{2R}(x_0)}(C_0-u)\dd\le h_R\bigl(C_0-u(x_0)\bigr),
			\label{eq:integral-deficit-stability}\\
			\int_{B_R(x_0)}\Bigl(|\nabla P|^2+\Bigl|P^2-\frac udP\Bigr|^2\Bigr)\dd
			\le A_R\bigl(C_0-u(x_0)\bigr).\label{eq:pointwise-stability}
		\end{gather}
		One may take
		\begin{equation}\label{eq:stability-constant}
			A_R=\Bigl(C_0+\frac{C_0^2}{2\rho}\Bigr)(2K+b_\phi)\,h_R ,
		\end{equation}
		where \(\phi\in C_c^\infty(B_{2R}(x_0))\), \(0\le\phi\le1\), \(\phi=1\)
		on \(B_R(x_0)\), and \(b_\phi=\|(-\Delta\phi)_+\|_\infty\).
	\end{proposition}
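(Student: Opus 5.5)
The second estimate follows from the first by combining it with Theorem~\ref{thm:sasaki-quantitative}. Fix \(\phi\) as in \eqref{eq:stability-constant}. Adding \eqref{eq:P-gradient} and \eqref{eq:P-polynomial} bounds the left side of \eqref{eq:pointwise-stability} by \((C_0+C_0^2/(2\rho))I_\phi\). The proof of \eqref{eq:local-sasaki-remainder} shows that \(I_\phi\le(2K+b_\phi)\int_{B_{2R}(x_0)}w\dd\). Inserting \eqref{eq:integral-deficit-stability} then yields exactly the constant \(A_R\) of \eqref{eq:stability-constant}. The real work is therefore \eqref{eq:integral-deficit-stability}, a weak Harnack inequality at the centre for the nonnegative deficit \(w=C_0-u\).

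By \eqref{eq:sasaki-deficit-identity}, \(w\ge0\) satisfies
\[
\Delta w-2\alpha_du\,w=-\mathscr D_{\mathrm S}\le0 .
\]
So \(w\) is a nonnegative supersolution of \(\Delta w-c\,w\le0\), with a bounded potential \(0\le c=2\alpha_du\le2\alpha_dC_0=2K\). The Theorem~\ref{thm:cr-schwarz} bound \(u\le C_0\) is what makes this bound on \(c\) uniform over the whole class of maps. Write \(\underline w\) for the essential infimum of \(w\) over \(B_R(x_0)\).

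I would apply the Moser (or Krylov--Safonov) weak Harnack inequality for the smooth operator \(\Delta-c\) on the Riemannian ball \(B_{4R}(x_0)\). In harmonic coordinates, or via a finite chart cover of \(\overline{B_{4R}(x_0)}\), the operator is uniformly elliptic, and its ellipticity, coefficient bounds and volume constants depend only on \(g|_{B_{4R}(x_0)}\). The inequality gives
\[
\frac{1}{\Vol B_{2R}}\int_{B_{2R}(x_0)}w\dd\le C_H\,\underline w .
\]
Here \(C_H\) depends on \(g|_{B_{4R}}\), \(R\) and the bound \(2K\) on \(c\); it is applied on \(B_{2R}\subset B_{4R}\) so that the covering argument has room. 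The inequality as stated controls the infimum over the ball, not the value at the centre. Since \(w\) is continuous, \(\underline w\le w(x_0)\), which gives \eqref{eq:integral-deficit-stability} with \(h_R=C_H\Vol B_{2R}(x_0)\).

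The main obstacle is justifying the weak Harnack step with constants depending only on the stated data. The safest route is to note that \(w\) is smooth, so the classical \(L^1\)-weak Harnack inequality for divergence-form supersolutions, \(\mathrm{div}(a\nabla w)-cw\le0\) in coordinates, applies after a chain-of-balls argument. This is presumably where Appendix~\ref{app:localization} enters. One could worry about the zeroth-order term. It has the favourable sign for supersolutions, and if needed it can be handled by the standard trick of allowing \(c\in L^\infty\) in the Moser iteration. The dependence on \(\rho\) and \(d\) enters only through \(C_0=K/\alpha_d\) and \(A_R\), consistent with the statement.
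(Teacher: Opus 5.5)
Your proposal is correct and follows essentially the paper's argument. You apply the \(L^1\) weak Harnack inequality to the nonnegative supersolution \(w=C_0-u\) of \(\Delta w-2\alpha_du\,w\le0\), whose potential is bounded by \(2K\), and globalize it to \(B_{2R}(x_0)\) by chaining local estimates over a finite cover by small coordinate balls. The paper carries out the chaining explicitly along a rooted spanning tree of the intersection graph, rooted at a ball containing \(x_0\). You then combine the cutoff bound \(\int_{B_R}\mathscr D_{\mathrm S}\dd\le(2K+b_\phi)\int_{B_{2R}}w\dd\) with the pointwise inequalities behind \eqref{eq:P-gradient} and \eqref{eq:P-polynomial}, as the paper does. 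The only inaccuracy is incidental: Appendix~\ref{app:localization} plays no role here, since the chain-of-balls argument is done directly in the proof.
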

	
	\begin{proof}
		By \eqref{eq:sasaki-deficit-identity}, \(w=C_0-u\ge0\) satisfies
		\[
		-\Delta w+a\,w=\mathscr D_{\mathrm S}\ge0,\qquad a=2\alpha_du,\qquad
		0\le a\le2\alpha_dC_0=2K .
		\]
		In local coordinates
		\(\Delta=(\det g)^{-1/2}\partial_i\bigl((\det g)^{1/2}g^{ij}\partial_j\bigr)\)
		is a uniformly elliptic operator in divergence form on relatively
		compact coordinate balls, and \(w\) is a nonnegative supersolution of
		\(Lw=\Delta w-aw\le0\), with zeroth-order coefficient \(-a\le0\). Let
		\(\ell=2m+1\ge3\). The weak Harnack inequality \cite[Theorem~8.18]{GT}
		gives, for \(0<p<\ell/(\ell-2)\) and balls \(B_{4r}(y)\) contained in a
		normal coordinate chart,
		\begin{equation}\label{eq:weak-harnack-exponents}
			\Bigl(\frac1{\Vol B_{2r}(y)}\int_{B_{2r}(y)}w^p\dd\Bigr)^{1/p}
			\le h_{y,r,p}\inf_{B_r(y)}w,
		\end{equation}
		where \(h_{y,r,p}\) depends only on the metric coefficients, \(r\), \(p\)
		and the bound \(2K\). Taking \(p=1\),
		\begin{equation}\label{eq:weak-harnack-local}
			\int_{B_r(y)}w\dd\le\Vol B_{2r}(y)\,h_{y,r,1}\inf_{B_r(y)}w
			=:h_{y,r}\Vol B_r(y)\inf_{B_r(y)}w .
		\end{equation}
		Choose normal coordinate balls \(U_j=B_{r_j}(y_j)\), \(j=1,\dots,N\), with
		\[
		y_j\in\overline{B_{2R}(x_0)},\qquad
		0<r_j<\min\Bigl\{\frac R2,\frac{\operatorname{inj}_g(y_j)}4\Bigr\},
		\qquad x_0\in U_1,
		\]
		covering the compact set \(\overline{B_{2R}(x_0)}\). Then
		\(d_g(x_0,y_j)+4r_j<2R+2R=4R\), so
		\(B_{4r_j}(y_j)\Subset B_{4R}(x_0)\). Since
		\(\overline{B_{2R}(x_0)}\) is connected, the intersection graph of the
		cover is connected; choose a rooted spanning tree with root \(1\) and
		order the vertices so that the parent \(\pi(j)\) satisfies \(\pi(j)<j\).
		Put \(h_j=h_{y_j,r_j}\) and
		\begin{align*}
			v_j&=\Vol U_j,\qquad v_{j,\pi(j)}=\Vol\bigl(U_j\cap U_{\pi(j)}\bigr)>0,\\
			A_1&=h_1v_1,\qquad A_j=\frac{h_jv_j}{v_{j,\pi(j)}}A_{\pi(j)}
			\quad(j\ge2),\qquad h_R=\sum_{j=1}^NA_j .
		\end{align*}
		By \eqref{eq:weak-harnack-local},
		\begin{align*}
			\int_{U_1}w\dd&\le h_1v_1\inf_{U_1}w\le A_1\,w(x_0),\\
			\inf_{U_j}w&\le\frac1{v_{j,\pi(j)}}\int_{U_j\cap U_{\pi(j)}}w\dd,\\
			\int_{U_j}w\dd&\le h_jv_j\inf_{U_j}w
			\le\frac{h_jv_j}{v_{j,\pi(j)}}\int_{U_{\pi(j)}}w\dd
			\le\frac{h_jv_j}{v_{j,\pi(j)}}A_{\pi(j)}w(x_0)=A_jw(x_0),
		\end{align*}
		by induction along the tree. Summation gives
		\begin{equation}\label{eq:deficit-harnack}
			\int_{B_{2R}(x_0)}w\dd\le\sum_{j=1}^N\int_{U_j}w\dd
			\le\Bigl(\sum_{j=1}^NA_j\Bigr)w(x_0)=h_R\,w(x_0),
		\end{equation}
		which is \eqref{eq:integral-deficit-stability}. With the cutoff \(\phi\)
		of the statement, \eqref{eq:local-sasaki-remainder} gives
		\[
		\int_{B_R(x_0)}\mathscr D_{\mathrm S}\dd
		\le(2K+b_\phi)\int_{B_{2R}(x_0)}w\dd\le(2K+b_\phi)\,h_R\,w(x_0),
		\]
		and the pointwise inequalities in the proof of
		Theorem~\ref{thm:sasaki-quantitative} give
		\begin{align*}
			|\nabla P|^2+\Bigl|P^2-\frac udP\Bigr|^2
			&\le\Bigl(C_0+\frac{C_0^2}{2\rho}\Bigr)\mathscr D_{\mathrm S},\\
			\int_{B_R(x_0)}\Bigl(|\nabla P|^2+\Bigl|P^2-\frac udP\Bigr|^2\Bigr)\dd
			&\le\Bigl(C_0+\frac{C_0^2}{2\rho}\Bigr)(2K+b_\phi)\,h_R\,w(x_0).\qedhere
		\end{align*}
	\end{proof}
	
	\begin{theorem}\label{thm:compactness}
		Let \(1\le d\le\min\{m,n\}\), \(x_0\in M\), and let
		\(f_\nu\colon M\to\mathbb{CH}^n_\kappa\) be CR \(\bp\)-harmonic maps of
		complex rank at most \(d\) with
		\begin{equation}\label{eq:near-extremal-sequence}
			u_\nu(x_0)\longrightarrow C_d=\frac{dK}{(d+1)\kappa}.
		\end{equation}
		Then there are holomorphic isometries \(a_\nu\) of \(\mathbb{CH}^n_\kappa\)
		and a subsequence such that \(a_\nu\circ f_\nu\) converges in
		\(C^\infty_{\mathrm{loc}}\) to a map \(f_\infty\) with
		\begin{equation}\label{eq:limit-metric}
			u_\infty\equiv C_d,\qquad P_\infty=\frac K{(d+1)\kappa}\,\Pi_\infty,
			\qquad\nabla\Pi_\infty=0 .
		\end{equation}
	\end{theorem}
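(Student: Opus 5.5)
We first normalize with target isometries, then get uniform derivative bounds, extract a convergent subsequence, and finally pass the equality to the limit. For \(\mathbb{CH}^n_\kappa\) we take \(\rho=\kappa\), so \(C_0=C_d\).

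\textbf{Normalization and a priori bounds.} Since the holomorphic isometry group of \(\mathbb{CH}^n_\kappa\) acts transitively, we choose \(a_\nu\) with \(a_\nu(f_\nu(x_0))=o\) for a fixed base point \(o\). Composition with \(a_\nu\) preserves CR, \(\bp\)-harmonicity, the rank and \(P\). By Theorem~\ref{thm:reeb-bound} each \(f_\nu\) is foliated. Theorem~\ref{thm:cr-schwarz} gives \(u_\nu\le C_d\), so \(|df_\nu|^2=2u_\nu\le 2C_d\) uniformly. By Corollary~\ref{cor:cc-cr} the maps are uniformly Lipschitz, hence \(a_\nu\circ f_\nu(B_R(x_0))\) stays in a fixed compact ball of the target. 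Arzel\`a--Ascoli then gives a \(C^0_{\mathrm{loc}}\)-convergent subsequence.

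\textbf{Higher regularity.} Since the maps are foliated, \eqref{eq:partial-pde} reduces to the harmonic map equation \(\tau(f_\nu)=0\). The images of balls lie in a fixed compact set, and \(|df_\nu|\) is uniformly bounded. Standard elliptic theory in local harmonic coordinates on the target then applies: the equation \(\Delta f^\alpha=-\Gamma^\alpha_{\beta\gamma}(f)\,\partial f^\beta\partial f^\gamma\) has right-hand side bounded in \(L^\infty\), so \(W^{2,p}\) bounds follow, hence \(C^{1,\gamma}\), and bootstrapping gives uniform \(C^k\) bounds on compact sets. A diagonal argument over \(R\to\infty\) and \(k\to\infty\) yields \(C^\infty_{\mathrm{loc}}\) convergence to some \(f_\infty\). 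The conditions ``CR'', \(\bp\)-harmonic (\(f_{00}=0\) and \(\tau=2mJ_NV\)) and ``\(\rank_\C P\le d\)'' are closed under \(C^1\) limits, the last by lower semicontinuity of rank. So \(f_\infty\) lies in the same class, and \(u_\infty(x_0)=\lim u_\nu(x_0)=C_d\).

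\textbf{Conclusion.} Theorem~\ref{thm:hyperbolic-equality} now applies to \(f_\infty\). It gives \(u_\infty\equiv C_d\), \(P_\infty^2=\lambda P_\infty\) with \(\lambda=K/((d+1)\kappa)\), and \(\Pi_\infty=\lambda^{-1}P_\infty\) a parallel projection. This is \eqref{eq:limit-metric}. As a consistency check, which is not needed for the proof, Proposition~\ref{prop:pointwise-stability} shows that \(\int_{B_R}(|\nabla P_\nu|^2+|P_\nu^2-\frac{u_\nu}dP_\nu|^2)\to0\), in agreement with the limit.

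\textbf{Main obstacle.} I expect the main difficulty to be the uniform higher-order estimates. The \(C^0\) bound on \(df\) by itself does not give smooth compactness without the harmonic map structure. One also needs that the constants depend only on the geometry of \(B_{4R}(x_0)\) and on the fixed compact image region, which the normalization by \(a_\nu\) is designed to ensure.
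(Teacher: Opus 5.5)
Your proposal is correct and follows the paper's proof essentially step by step: you normalize by transitive holomorphic isometries, use foliation and the Schwarz bound \(|df_\nu|^2=2u_\nu\le 2C_d\) to get uniform Lipschitz control and the harmonic map equation, bootstrap interior \(W^{2,p}\) and Schauder estimates to \(C^\infty_{\mathrm{loc}}\) compactness, pass the closed conditions to the limit, and invoke Theorem~\ref{thm:hyperbolic-equality}. The only cosmetic difference is that the paper works in the single global chart \(\exp_{y_0}^{-1}\) of \(\mathbb{CH}^n_\kappa\) instead of local target coordinates, which avoids patching charts, and it takes the Riemannian Lipschitz bound directly from \(|df_\nu|_{\op}^2\le C_d\) rather than from Corollary~\ref{cor:cc-cr}.
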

	
	\begin{proof}
		The holomorphic isometry group of \(\mathbb{CH}^n_\kappa\) acts
		transitively. Fix \(y_0\), choose \(a_\nu\) with
		\(a_\nu(f_\nu(x_0))=y_0\), and keep the notation \(f_\nu\) for the
		normalized maps. By Theorems~\ref{thm:cr-schwarz} and
		\ref{thm:reeb-bound} and Corollary~\ref{cor:pinched-schwarz} with
		\(a_0=0\),
		\[
		df_\nu(\xi)=0,\qquad\tau(f_\nu)=0,\qquad
		|df_\nu|_{\op}^2\le u_\nu\le C_d,\qquad|df_\nu|^2=2u_\nu ,
		\]
		hence \(d_N(f_\nu(x),y_0)\le\sqrt{C_d}\,d_g(x,x_0)\). Since
		\(\exp_{y_0}\) is a global diffeomorphism, put
		\(z_\nu=\exp_{y_0}^{-1}\circ f_\nu\colon M\to T_{y_0}N\cong\R^{2n}\). For
		a relatively compact source coordinate domain \(U'\) put
		\(R_{U'}=\sqrt{C_d}\sup_{x\in U'}d_g(x,x_0)\). Then
		\begin{align*}
			|z_\nu(x)|&=d_N(f_\nu(x),y_0)\le R_{U'}\quad(x\in U'),\qquad
			\|z_\nu\|_{L^p(U')}\le R_{U'}\Vol(U')^{1/p},\\
			c_{U'}^{-1}I&\le(h_{\alpha\beta}(z))\le c_{U'}I,\qquad
			\sup_{|z|\le R_{U'}}|\partial^kh(z)|\le c_{U',k},\\
			\sup_{U'}|\partial z_\nu|&\le c_{U'}\sup_{U'}|df_\nu|_{\op}
			\le c_{U'}\sqrt{C_d}.
		\end{align*}
		With the Christoffel symbols \(\Gamma^c_{ab}\) of \(g\) and
		\(\Gamma^\alpha_{\beta\gamma}\) of \(h\), the harmonic map equation reads
		\begin{equation}\label{eq:coordinate-harmonic}
			\begin{aligned}
				g^{ab}\partial_a\partial_bz_\nu^\alpha&=F_\nu^\alpha,\\
				F_\nu^\alpha&=g^{ab}\Gamma^c_{ab}\partial_cz_\nu^\alpha
				-g^{ab}\Gamma^\alpha_{\beta\gamma}(z_\nu)\partial_az_\nu^\beta
				\partial_bz_\nu^\gamma .
			\end{aligned}
		\end{equation}
		For \(U\Subset U'\) and \(1<p<\infty\), the interior \(L^p\) estimate
		\cite[Theorem~9.11]{GT} gives
		\begin{align*}
			\|F_\nu\|_{L^\infty(U')}
			&\le c_{U'}\bigl(\|\partial z_\nu\|_{L^\infty(U')}
			+\|\partial z_\nu\|_{L^\infty(U')}^2\bigr)\le c'_{U'},\\
			\|z_\nu\|_{W^{2,p}(U)}&\le c_{U,U',p}\bigl(\|z_\nu\|_{L^p(U')}
			+\|F_\nu\|_{L^p(U')}\bigr)\le c'_{U,U',p}.
		\end{align*}
		Choose \(p>\ell=2m+1\), \(0<\gamma<1-\ell/p\) and \(U_0\Subset U\).
		Sobolev embedding and the interior Schauder estimate
		\cite[Theorem~6.2]{GT} yield
		\begin{align*}
			\|z_\nu\|_{C^{1,\gamma}(U)}&\le c\,\|z_\nu\|_{W^{2,p}(U)}\le c',\\
			\|F_\nu\|_{C^{0,\gamma}(U)}&\le c\bigl(1+\|z_\nu\|_{C^{1,\gamma}(U)}\bigr)^3
			\le c',\\
			\|z_\nu\|_{C^{2,\gamma}(U_0)}&\le c_{U_0,U}\bigl(\|z_\nu\|_{C^0(U)}
			+\|F_\nu\|_{C^{0,\gamma}(U)}\bigr)\le c'_{U_0,U}.
		\end{align*}
		Differentiating \eqref{eq:coordinate-harmonic} on nested domains gives
		\(\sup_\nu\|z_\nu\|_{C^{k,\gamma}(F)}\le c_{F,k}\) for every compact
		\(F\subset M\) and \(k\ge1\). A diagonal subsequence satisfies
		\[
		z_{\nu_j}\to z_\infty,\qquad f_{\nu_j}\to f_\infty=\exp_{y_0}\circ z_\infty
		\qquad\text{in }C^\infty_{\mathrm{loc}},
		\]
		and the conditions \(df(\xi)=0\), \(f_{\bar i}=0\), \(\tau(f)=0\) and
		\(\rank_\C P\le d\) pass to the limit. Moreover
		\[
		u_\infty(x)=\lim_ju_{\nu_j}(x)\le C_d,\qquad
		u_\infty(x_0)=\lim_ju_{\nu_j}(x_0)=C_d .
		\]
		Theorem~\ref{thm:hyperbolic-equality} applied to \(f_\infty\) gives
		\(u_\infty\equiv C_d\), \(\rank_\C P_\infty=d\),
		\(P_\infty=\lambda\Pi_\infty\) with \(\lambda=C_d/d=K/((d+1)\kappa)\),
		and \(\nabla\Pi_\infty=0\).
	\end{proof}
	
	\begin{theorem}\label{thm:gap}
		Let \(1\le d\le\min\{m,n\}\), and let \(F\subset M\) be a nonempty compact
		set such that at no point of \(F\) the local K\"ahler quotient has a
		local product factor of complex dimension \(d\) and holomorphic sectional
		curvature \(-2K/(d+1)\). Then there is \(\eps_F>0\), depending on the
		pseudo-Hermitian structure of \(M\), on \(F\) and on \(K,\kappa,d,n\),
		such that every CR \(\bp\)-harmonic map \(f\colon M\to\mathbb{CH}^n_\kappa\)
		of complex rank at most \(d\) satisfies
		\begin{equation}\label{eq:compact-gap}
			\sup_Fu\le C_d-\eps_F .
		\end{equation}
	\end{theorem}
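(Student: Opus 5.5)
We argue by contradiction and combine the compactness result, Theorem~\ref{thm:compactness}, with the equality analysis of Theorem~\ref{thm:hyperbolic-equality}. Suppose no such \(\eps_F\) exists. Then there are CR \(\bp\)-harmonic maps \(f_\nu\colon M\to\mathbb{CH}^n_\kappa\) of complex rank at most \(d\) and points \(x_\nu\in F\) with \(u_\nu(x_\nu)\to C_d\). By compactness of \(F\) we pass to a subsequence with \(x_\nu\to x_*\in F\).

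The first step is to transfer near-extremality from \(x_\nu\) to the fixed point \(x_*\), since Theorem~\ref{thm:compactness} is stated at a fixed base point. By Theorems~\ref{thm:cr-schwarz} and \ref{thm:reeb-bound} with \(a_0=0\), each \(f_\nu\) is foliated and harmonic with \(|df_\nu|^2=2u_\nu\le 2C_d\). The interior estimates in the proof of Theorem~\ref{thm:compactness}, after normalizing \(a_\nu(f_\nu(x_*))=y_0\), then give uniform \(C^{k}\) bounds for \(z_\nu=\exp_{y_0}^{-1}\circ a_\nu\circ f_\nu\) on a fixed neighbourhood of \(x_*\). Holomorphic isometries of the target leave \(u\) unchanged. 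Hence \(u_\nu\) is equicontinuous near \(x_*\), and \(u_\nu(x_*)\to C_d\).

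Alternatively, one can bypass equicontinuity altogether. Take a subsequence with \(a_\nu\circ f_\nu\to f_\infty\) in \(C^\infty_{\mathrm{loc}}\), which the same estimates provide. Then \(u_\infty(x_*)=\lim u_\nu(x_\nu)=C_d\) by locally uniform convergence of \(u_\nu\) together with \(x_\nu\to x_*\). The conditions of being CR, foliated and harmonic with \(\rank_\C P\le d\) pass to the limit, as noted in the proof of Theorem~\ref{thm:compactness}. So \(f_\infty\) is a map in the class attaining \(u_\infty(x_*)=C_d\).

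Now apply Theorem~\ref{thm:hyperbolic-equality} to \(f_\infty\) at \(x_0=x_*\). The local Kähler quotient of the Reeb foliation at \(x_*\in F\) splits locally as \(S\times Q'\) with \(\dim_\C S=d\) and \(\HSC_S=-2K/(d+1)\). This contradicts the hypothesis on \(F\), so \(\eps_F>0\) exists.

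The dependence of \(\eps_F\) is only on the geometry near \(F\) and on \(K,\kappa,d,n\). The \(C^\infty_{\mathrm{loc}}\) bounds depend only on the source metric on compact sets, on the target, and on \(C_d\). No map-dependent quantity enters, so the contradiction argument indeed yields a uniform constant.

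The main obstacle is the compactness step when the base points move. Theorem~\ref{thm:compactness} is stated at a fixed \(x_0\) with \(u_\nu(x_0)\to C_d\), and its hypothesis is not directly available here. I would therefore rerun its proof with normalization at \(x_*\), or at \(x_\nu\), the two being at bounded distance. This gives the uniform bounds independently of the near-extremality assumption, which is used only at the end through locally uniform convergence of \(u_\nu=\tfrac12|df_\nu|_H|^2\).
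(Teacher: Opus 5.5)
Your proposal is correct and follows the paper's proof. The paper passes to \(x_\nu\to x_*\in F\) and normalizes by target isometries at \(x_\nu\), using the Lipschitz bound \(d_N(f_\nu(x_*),y_0)\le\sqrt{C_d}\,d_g(x_*,x_\nu)\). It then extracts a \(C^\infty_{\mathrm{loc}}\) limit from the estimates of Theorem~\ref{thm:compactness}, gets \(u_\infty(x_*)=C_d\) by locally uniform convergence, and contradicts the hypothesis on \(F\) via Theorem~\ref{thm:hyperbolic-equality}, exactly as in your "alternative" route.

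Your remark that \(\eps_F\) depends only on the geometry near \(F\) is more than the argument shows, because the class of maps and the a priori bound \(u\le C_d\) rely on the global hypotheses on \(M\). The statement does not require that claim, so it does not affect the proof.
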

	
	\begin{proof}
		If \eqref{eq:compact-gap} fails, there are \(x_\nu\in F\) and maps
		\(f_\nu\) with \(u_\nu(x_\nu)\to C_d\); after passing to a subsequence,
		\(x_\nu\to x\in F\). Normalize by holomorphic isometries so that
		\(f_\nu(x_\nu)=y_0\). Then
		\[
		d_N(f_\nu(x),y_0)=d_N\bigl(f_\nu(x),f_\nu(x_\nu)\bigr)
		\le\sqrt{C_d}\,d_g(x,x_\nu)\to0,
		\]
		and the estimates in the proof of Theorem~\ref{thm:compactness} give a
		subsequence \(f_{\nu_j}\to f_\infty\) in \(C^\infty_{\mathrm{loc}}\). In
		particular \(\|u_{\nu_j}-u_\infty\|_{C^0(F)}\to0\), and
		\begin{align*}
			|u_\infty(x)-C_d|
			&\le|u_\infty(x)-u_\infty(x_{\nu_j})|
			+\|u_\infty-u_{\nu_j}\|_{C^0(F)}+|u_{\nu_j}(x_{\nu_j})-C_d|
			\longrightarrow0 .
		\end{align*}
		Hence \(u_\infty(x)=C_d\), and Theorem~\ref{thm:hyperbolic-equality}
		gives the excluded product factor in the local K\"ahler quotient of
		\(M\) at \(x\), a contradiction.
	\end{proof}
	
	Theorem~\ref{thm:hyperbolic-equality},
	Proposition~\ref{prop:pointwise-stability} and
	Theorem~\ref{thm:compactness} prove Theorem~\ref{thmC}.
	
	\section{Foliation and rigidity}\label{sec:rigidity}
	
	In this section \(M^{2m+1}\), \(m\ge2\), is a connected complete Sasakian
	manifold, \(N\) is a strongly seminegative K\"ahler manifold, and
	\(f\colon M\to N\) is \(\bp\)-harmonic. 
	
	\subsection{A Caccioppoli inequality and foliation}
	
	\begin{theorem}\label{thm:caccioppoli}
		For every compactly supported Lipschitz function \(\chi\),
		\begin{equation}\label{eq:siu-caccioppoli}
			\int_M\chi^2\calI\dd\le4\bigl(c_m^{\mathrm S}\bigr)^2
			\int_M|d\chi|^2E\dd .
		\end{equation}
	\end{theorem}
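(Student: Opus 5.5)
We integrate the divergence identity of Proposition~\ref{prop:positive-divergence} against \(\chi^2\) and absorb the boundary term with the pointwise control \eqref{eq:Y-control}. This is the standard Caccioppoli argument for a vector field with nonnegative divergence that is bounded by the square root of that divergence.

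First, let \(\chi\) be Lipschitz with compact support. Since \(\mathcal Y\) is smooth, \(\chi^2\mathcal Y\) is a compactly supported Lipschitz vector field. The divergence theorem (valid for Lipschitz fields, e.g.\ by mollifying \(\chi\)) gives
\[
0=\int_M\divg(\chi^2\mathcal Y)\dd=\int_M\chi^2\divg\mathcal Y\dd+2\int_M\chi\ip{d\chi}{\mathcal Y}\dd .
\]
Inserting \(\divg\mathcal Y=\calI\) from \eqref{eq:positive-divergence} yields
\[
\int_M\chi^2\calI\dd=-2\int_M\chi\ip{d\chi}{\mathcal Y}\dd\le2\int_M|\chi|\,|d\chi|\,|\mathcal Y|\dd .
\]

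Next, apply \eqref{eq:Y-control} to get \(|\mathcal Y|\le c_m^{\mathrm S}\sqrt E\sqrt\calI\). Then Cauchy--Schwarz gives
\[
\int_M\chi^2\calI\dd\le2c_m^{\mathrm S}\Bigl(\int_M\chi^2\calI\dd\Bigr)^{1/2}\Bigl(\int_M|d\chi|^2E\dd\Bigr)^{1/2}.
\]
Both integrals are finite because \(\chi\) has compact support and all integrands are continuous. If \(\int_M\chi^2\calI\dd=0\), there is nothing to prove. Otherwise, dividing by its square root and squaring gives \eqref{eq:siu-caccioppoli} with the constant \(4(c_m^{\mathrm S})^2\).

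The only point needing care is the regularity of \(\chi\). The divergence identity is needed for Lipschitz rather than smooth \(\chi\). We handle this by approximating \(\chi\) in \(W^{1,2}\) with uniformly bounded supports by smooth compactly supported functions, proving the inequality for these, and passing to the limit. The left side converges because \(\chi_k^2\to\chi^2\) uniformly. The right side converges because \(d\chi_k\to d\chi\) in \(L^2\) on a fixed compact set where \(E\) is bounded. The geometric content is entirely contained in Proposition~\ref{prop:positive-divergence}, so no real obstacle remains.
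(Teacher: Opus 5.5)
Your proposal is correct and follows essentially the same route as the paper: you integrate \(\divg(\chi^2\mathcal Y)=\chi^2\calI+2\chi\ip{d\chi}{\mathcal Y}\), insert the pointwise bound \eqref{eq:Y-control}, and pass from smooth to Lipschitz \(\chi\) by approximation in \(W^{1,2}\) with supports in a fixed compact set. The only difference is the final step: the paper absorbs the cross term with Young's inequality \(2ab\le\tfrac12a^2+2b^2\) instead of applying Cauchy--Schwarz and dividing, and both give the same constant \(4(c_m^{\mathrm S})^2\).
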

	
	\begin{proof}
		For smooth \(\chi\), the divergence theorem,
		\eqref{eq:positive-divergence} and \eqref{eq:Y-control} give
		\begin{align*}
			\int_M\chi^2\calI\dd&=-2\int_M\chi\ip{d\chi}{\mathcal Y}\dd
			\le2\int_M|\chi|\,|d\chi|\,c_m^{\mathrm S}\sqrt E\sqrt\calI\dd\\
			&\le\frac12\int_M\chi^2\calI\dd
			+2\bigl(c_m^{\mathrm S}\bigr)^2\int_M|d\chi|^2E\dd .
		\end{align*}
		Absorbing the first term gives \eqref{eq:siu-caccioppoli}. A Lipschitz
		function with compact support is approximated in \(W^{1,2}\) by smooth
		functions supported in a fixed compact set, on which \(E\) and \(\calI\)
		are bounded.
	\end{proof}
	
	Inequality \eqref{eq:siu-caccioppoli} involves neither second
	derivatives of \(\chi\) nor any lower curvature bound on \(M\). It
	therefore applies whenever the energy admits cutoff functions of small
	weighted Dirichlet energy. We say that \(f\) satisfies the
	\emph{energy cutoff condition} if there are
	\(\chi_k\in\operatorname{Lip}_c(M)\) with
	\begin{equation}\label{eq:cutoff-condition}
		0\le\chi_k\le1,\qquad\chi_k\to1\ \text{locally uniformly},\qquad
		\int_ME|d\chi_k|^2\dd\to0 .
	\end{equation}
	Under \eqref{eq:cutoff-condition}, inequality \eqref{eq:siu-caccioppoli}
	gives \(\calI\equiv0\), and hence \(\nablaf V=0\). A parallel section
	need not vanish, however, so an additional identity is required to
	conclude \(V=0\). It is provided by the following contact current, which
	pairs the Reeb differential with the horizontal differential rotated
	by \(J\).
	
	\begin{lemma}\label{lem:contact-current}
		The horizontal one-form \(\alpha(X)=h(V,df(JX))\), \(\alpha(\xi)=0\),
		satisfies \(|\alpha|\le|V|\sqrt U\) and
		\begin{equation}\label{eq:contact-current}
			\divg\alpha^\sharp=-2m|V|^2+\sum_{a=1}^{2m}\ip{\nablaf_{e_a}V}{df(Je_a)},
		\end{equation}
		where \(\nablaf_{e_a}V=\beta(\xi,e_a)\) is computed with the
		Tanaka--Webster connection. Moreover
		\begin{equation}\label{eq:horizontal-reeb-derivative-norm}
			\sum_{a=1}^{2m}|\nablaf_{e_a}V|^2=2s .
		\end{equation}
	\end{lemma}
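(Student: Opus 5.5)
The plan is to compute everything in a local orthonormal frame \(\{e_a\}\) of \(H\) chosen \(\nabla\)-parallel at the point. Since \(M\) is Sasakian, \(\nabla J=0\), \(\nabla\xi=0\) and \(\mathsf A=0\). The bound \(|\alpha|\le|V|\sqrt U\) is immediate from Cauchy--Schwarz: \(|\alpha(X)|\le|V|\,|df(JX)|\), and \(\sum_a|df(Je_a)|^2=U\) because \(J\) permutes an orthonormal basis of \(H\) up to sign.

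For the divergence, I use \eqref{eq:current-divergence}-type reasoning: the trace of \(\mathsf C\) vanishes, so \(\divg\alpha^\sharp=\sum_a(\nabla_{e_a}\alpha)(e_a)\) can be computed with the Tanaka--Webster connection. Since \(\alpha(\xi)=0\) and \(\nabla\) preserves \(H\), the \(\xi\)-term drops. Expanding \((\nabla_{e_a}\alpha)(e_a)\) gives two contributions:
\[
\ip{\nablaf_{e_a}V}{df(Je_a)}+\ip V{\beta(Je_a,e_a)},
\]
using \(\nabla J=0\). The first sum is the claimed second term. For the second, I antisymmetrize with \(e_a\mapsto Je_a\), which gives \(\sum_a\beta(Je_a,e_a)=\tfrac12\sum_a\bigl(\beta(Je_a,e_a)-\beta(e_a,Je_a)\bigr)\). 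By \eqref{eq:tw-torsion}, \(\beta(X,Y)-\beta(Y,X)=-df(T^\nabla(Y,X))\) up to sign conventions, which equals \(\pm2g(JY,X)\,V\). Summing \(g(Je_a,Je_a)=1\) over \(2m\) vectors yields \(\pm2m V\), hence the term \(-2m|V|^2\). The main obstacle is fixing the sign here. I will check it against \eqref{eq:basic-commutators}, \(f_{i\bar j}-f_{\bar j i}=2i\delta_{ij}r\), translated to real frames via \(Z_j=(e_j-iJe_j)/\sqrt2\). Equivalently, I can verify the identity on the model \(\beta(X,Y)=\nablaf_Ydf(X)-df(\nabla_YX)\) with \([X,JX]\) having Reeb component \(-2|X|^2\xi\), as in \eqref{eq:model-bracket}.

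For \eqref{eq:horizontal-reeb-derivative-norm}, I first write \(V=r+\bar r\), so \(\nablaf_{e_a}V=\nablaf_{e_a}r+\overline{\nablaf_{e_a}r}\) with the two summands of opposite types. This gives \(|\nablaf_{e_a}V|^2=2|\nablaf_{e_a}r|^2\). Next, since \(\nabla\xi=0\) and \(\mathsf A=0\), the Tanaka--Webster derivative \(\beta(\xi,e_a)=\nablaf_{e_a}df(\xi)\) coincides with \(\nablaf_{e_a}V\). The computation already done in the proof of Proposition~\ref{prop:positive-divergence} then gives \(\sum_a|\nablaf_{e_a}r|^2=s\), namely
\[
\tfrac12\sum_i\bigl(|f_{0i}+f_{0\bar i}|^2+|f_{0i}-f_{0\bar i}|^2\bigr)=s .
\]
Multiplying by two gives \(2s\).
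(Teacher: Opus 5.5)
Your proposal follows the paper's proof essentially step for step. The vanishing trace of \(\mathsf C\) reduces \(\divg\alpha^\sharp\) to the Tanaka--Webster contraction, and \(\nabla J=0\) splits it into \(\ip{\nablaf_{e_a}V}{df(Je_a)}+\ip V{\beta(Je_a,e_a)}\). The torsion then produces \(-2mV\), and the type decomposition of \(V\) gives \(2s\); you use \(V=r+\bar r\) in the real frame, while the paper uses \(\nablaf_{Z_i}V=f_{0i}+\overline{f_{0\bar i}}\), which amounts to the same computation.

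The sign you left open follows immediately from the identity you already wrote, \(\beta(X,Y)-\beta(Y,X)=df(T^\nabla(X,Y))\). Since \(T^\nabla(Je_a,e_a)=2g(J^2e_a,e_a)\xi=-2\xi\), you get \(\sum_a\beta(Je_a,e_a)=\tfrac12\sum_a(-2V)=-2mV\), so no check against the model is needed.
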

	
	\begin{proof}
		Since the trace of \(\mathsf C\) vanishes and \(\alpha^\sharp\) is
		horizontal, \(\divg\alpha^\sharp=\sum_a(\nabla_{e_a}\alpha)(e_a)\), and
		\(\nabla J=0\) gives
		\[
		\divg\alpha^\sharp=\sum_a\ip{\nablaf_{e_a}V}{df(Je_a)}
		+\sum_a\ip{V}{\beta(Je_a,e_a)} .
		\]
		For a pair \(X_\ell,JX_\ell\) of the frame, the identity
		\(\beta(X,Y)-\beta(Y,X)=df(T^\nabla(X,Y))\) and \eqref{eq:tw-torsion} give
		\[
		\beta(JX_\ell,X_\ell)-\beta(X_\ell,JX_\ell)
		=-df\bigl(T^\nabla(X_\ell,JX_\ell)\bigr)=-2g(JX_\ell,JX_\ell)V=-2V,
		\]
		and summing over the \(m\) pairs gives \eqref{eq:contact-current}. The
		bound for \(\alpha\) is \(|\alpha|^2=\sum_a\ip{V}{df(Je_a)}^2\le|V|^2U\).
		Next,
		\begin{align*}
			\nablaf_{Z_i}V&=\nablaf_{Z_i}r+\overline{\nablaf_{Z_{\bar i}}r}
			=f_{0i}+\overline{f_{0\bar i}},\qquad
			f_{0i}\in f^*T^{1,0}N,\quad\overline{f_{0\bar i}}\in f^*T^{0,1}N,\\
			|\nablaf_{Z_i}V|^2&=|f_{0i}|^2+|f_{0\bar i}|^2,\qquad
			\nablaf_{Z_i}V=\frac{\nablaf_{e_i}V-i\nablaf_{Je_i}V}{\sqrt2},\\
			2|\nablaf_{Z_i}V|^2&=|\nablaf_{e_i}V|^2+|\nablaf_{Je_i}V|^2,
		\end{align*}
		the last identity because \(\nablaf_{e_i}V\) and \(\nablaf_{Je_i}V\) are
		real. Summing over \(i\) gives \eqref{eq:horizontal-reeb-derivative-norm}.
	\end{proof}
	
	\begin{theorem}\label{thm:capacity-rigidity}
		If \(f\) satisfies \eqref{eq:cutoff-condition}, then
		\[
		df(\xi)=0,\qquad f_{i\bar j}=f_{\bar ji}=0,\qquad
		\calR(\calU_{ij})=0\quad\text{for all }i,j;
		\]
		thus \(f\) is foliated, \(\bp\)-pluriharmonic and harmonic.
	\end{theorem}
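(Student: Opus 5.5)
Insert the cutoffs \(\chi_k\) of \eqref{eq:cutoff-condition} into the Caccioppoli inequality of Theorem~\ref{thm:caccioppoli}. Since \(\chi_k\to1\) locally uniformly and \(\int_ME|d\chi_k|^2\dd\to0\), Fatou's lemma gives \(\int_M\calI\dd=0\). Hence \(\calI\equiv0\), and since every summand in \eqref{eq:positive-divergence} is nonnegative,
\[
s=0,\qquad\omega_0=|\mathsf Y|^2=0,\qquad\mathsf T^\circ=0,\qquad\calS=0,\qquad\calQ_0=0 .
\]
Because \(\calS=\tfrac12\sum\calR(\calU_{ij})\) with each \(\calR(\calU_{ij})\ge0\) by strong seminegativity, every \(\calR(\calU_{ij})\) vanishes. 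From \(s=0\) we get \(f_{0i}=f_{0\bar i}=0\), and with \(\mathsf Y=f_{00}=0\) this gives \(\nablaf r=0\), i.e.\ \(\nablaf V=0\). In particular \(v=|V|^2\) is constant on the connected manifold \(M\).

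The key remaining step is to show \(V=0\), using the contact current of Lemma~\ref{lem:contact-current}. By \eqref{eq:horizontal-reeb-derivative-norm}, \(\nablaf_{e_a}V=0\), so \eqref{eq:contact-current} reduces to \(\divg\alpha^\sharp=-2m|V|^2=-2mv\), a constant. Integrate this against \(\chi_k^2\):
\[
2mv\int_M\chi_k^2\dd=\int_M2\chi_k\ip{d\chi_k}{\alpha^\sharp}\dd\le2\sqrt v\int_M\chi_k|d\chi_k|\sqrt U\dd .
\]
By Cauchy--Schwarz the right side is at most \(2\sqrt v\bigl(\int\chi_k^2\bigr)^{1/2}\bigl(\int E|d\chi_k|^2\bigr)^{1/2}\), using \(U\le E\). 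This yields
\[
m\sqrt v\Bigl(\int\chi_k^2\dd\Bigr)^{1/2}\le\Bigl(\int E|d\chi_k|^2\dd\Bigr)^{1/2}\to0 .
\]
Since \(\int\chi_k^2\) stays bounded below (indeed \(\chi_k\to1\) on a fixed ball), we conclude \(v=0\). This is the step I expect to be the crux: a parallel \(V\) is not excluded by \(\calI=0\) alone, and the argument needs the cutoff condition a second time. If \(M\) has infinite volume, the left side even grows, so the conclusion is only strengthened.

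With \(V=0\), i.e.\ \(r=0\), we have \(\tr\mathsf T=\sigma+2imr=\sigma=-\tfrac12\mathsf Y=0\) by \eqref{eq:siu-traces}. Together with \(\mathsf T^\circ=0\) this gives \(\mathsf T=0\), that is \(f_{i\bar j}=0\). Then \(f_{\bar ji}=f_{i\bar j}-2i\delta_{ij}r=0\) by \eqref{eq:basic-commutators}. So \(f\) is foliated and \(\bp\)-pluriharmonic, and since \(V=0\), \eqref{eq:partial-pde} becomes \(\tau(f)=0\), so \(f\) is harmonic.
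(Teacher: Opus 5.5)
Your proposal is correct and follows the paper's proof: the Caccioppoli inequality with the cutoffs gives \(\calI\equiv0\), hence \(\nablaf V=0\); the contact current tested against \(\chi_k^2\) then forces \(V=0\); and the trace identity gives \(\mathsf T=0\). The only minor difference is that you use constancy of \(v\) (from \(\nablaf V=0\) on connected \(M\)) and a plain Cauchy--Schwarz, whereas the paper absorbs \(\int\chi_k^2v\) by Young's inequality and so never needs \(v\) to be constant.
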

	
	\begin{proof}
		For every compact \(F\subset M\), local uniform convergence of \(\chi_k\)
		and \eqref{eq:siu-caccioppoli} give
		\[
		0\le\int_F\calI\dd\le\liminf_k\int_M\chi_k^2\calI\dd
		\le4\bigl(c_m^{\mathrm S}\bigr)^2\lim_k\int_ME|d\chi_k|^2\dd=0 .
		\]
		Hence \(\calI\equiv0\), so \(s=\omega_0=0\), \(\mathsf T^\circ=0\) and
		\(\calS=0\); in particular \(f_{0i}=f_{0\bar i}=f_{00}=0\), that is
		\(\nablaf V=0\). Then \eqref{eq:contact-current} gives
		\(\divg\alpha^\sharp=-2m|V|^2\), and for the cutoffs
		\begin{align*}
			2m\int\chi_k^2v\dd&=2\int\chi_k\ip{d\chi_k}{\alpha^\sharp}\dd
			\le2\int|\chi_k|\sqrt v\,|d\chi_k|\sqrt U\dd\\
			&\le m\int\chi_k^2v\dd+\frac1m\int|d\chi_k|^2U\dd ,
		\end{align*}
		so \(\int\chi_k^2v\le m^{-2}\int|d\chi_k|^2E\to0\), and \(V=0\). Then
		\(r=\mathsf Y=0\), \(\tr\mathsf T=0\) and \(\mathsf T=\mathsf T^\circ=0\),
		and \eqref{eq:basic-commutators} gives \(f_{\bar ji}=f_{i\bar j}=0\). The
		nonnegative terms \(\calR(\calU_{ij})\) sum to \(2\calS=0\), hence vanish.
		Finally \(\tau(f)=2m\Jt V=0\).
	\end{proof}
	
	Without the cutoff condition, the same two identities still bound the
	Reeb energy on the support of a cutoff function in terms of its
	weighted Dirichlet energy. This quantitative form is recorded in the
	next proposition.
	
	\begin{proposition}\label{prop:approx}
		For a compactly supported Lipschitz function \(\chi\) put
		\(\eps_\chi=\int_ME|d\chi|^2\dd\) and \(H_\chi=\int_M\chi^2E\dd\). Then
		\begin{equation}\label{eq:approx-V}
			\int_M\chi^2|df(\xi)|^2\dd\le\frac4{3m^2}\,\eps_\chi
			+\frac{4c_m^{\mathrm S}}{3m}\sqrt{\eps_\chi H_\chi}.
		\end{equation}
	\end{proposition}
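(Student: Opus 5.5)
Without the cutoff condition, I would repeat the two-step argument of Theorem~\ref{thm:capacity-rigidity}, keeping the error terms instead of sending them to zero.

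\emph{Step 1: Caccioppoli control.} Theorem~\ref{thm:caccioppoli} gives
\[
\int_M\chi^2\calI\dd\le4\bigl(c_m^{\mathrm S}\bigr)^2\eps_\chi .
\]
Since \(\calI\ge2s\), this controls the horizontal derivative of \(V\): by \eqref{eq:horizontal-reeb-derivative-norm},
\[
\int_M\chi^2\sum_a|\nablaf_{e_a}V|^2\dd=2\int_M\chi^2s\dd\le\int_M\chi^2\calI\dd\le4\bigl(c_m^{\mathrm S}\bigr)^2\eps_\chi .
\]

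\emph{Step 2: the contact current.} Integrate \(\chi^2\divg\alpha^\sharp\) against the compactly supported Lipschitz function \(\chi^2\), using \eqref{eq:contact-current}:
\[
2m\int\chi^2v\dd=2\int\chi\ip{d\chi}{\alpha^\sharp}\dd+\int\chi^2\sum_a\ip{\nablaf_{e_a}V}{df(Je_a)}\dd .
\]
For the first term, use \(|\alpha|\le\sqrt v\sqrt U\) and Young's inequality:
\[
2\int|\chi|\sqrt v\,|d\chi|\sqrt U\dd\le\eta\int\chi^2v\dd+\eta^{-1}\eps_\chi,
\]
where \(U\le E\) was used. For the second term, Cauchy--Schwarz pointwise and then in \(L^2\) gives
\[
\Bigl(\int\chi^2\textstyle\sum_a|\nablaf_{e_a}V|^2\Bigr)^{1/2}\Bigl(\int\chi^2U\Bigr)^{1/2}\le2c_m^{\mathrm S}\sqrt{\eps_\chi}\,\sqrt{H_\chi},
\]
by Step 1 and \(U\le E\).

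\emph{Step 3: absorption.} Absorbing \(\eta\int\chi^2v\) into the left side yields
\[
(2m-\eta)\int\chi^2v\dd\le\eta^{-1}\eps_\chi+2c_m^{\mathrm S}\sqrt{\eps_\chi H_\chi}.
\]
The stated constants match \(\eta=m/2\): then \(2m-\eta=3m/2\), and dividing gives
\[
\int\chi^2v\dd\le\frac{2}{3m}\cdot\frac2m\,\eps_\chi+\frac{2}{3m}\,2c_m^{\mathrm S}\sqrt{\eps_\chi H_\chi}=\frac4{3m^2}\eps_\chi+\frac{4c_m^{\mathrm S}}{3m}\sqrt{\eps_\chi H_\chi},
\]
which is exactly \eqref{eq:approx-V}, since \(v=|df(\xi)|^2\).

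\emph{Remaining checks.} I need that integration by parts against a Lipschitz \(\chi^2\) is justified. This follows by the same smooth approximation used in Theorem~\ref{thm:caccioppoli}, because all integrands are bounded on a fixed compact set.

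The main obstacle is the second term in Step 2. It requires the Tanaka--Webster horizontal derivative \(\nablaf_{e_a}V\) appearing in \eqref{eq:contact-current} to be the same quantity whose norm \(2s\) is controlled by \(\calI\). This identification is exactly the content of \eqref{eq:horizontal-reeb-derivative-norm}, so the argument should go through.
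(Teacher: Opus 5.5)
Your proposal is correct and follows the paper's proof essentially line by line. The paper uses the same steps: integrating \eqref{eq:contact-current} against \(\chi^2\), applying Young's inequality with weight \(m/2\) to the boundary term, bounding the second term by Cauchy--Schwarz together with \eqref{eq:horizontal-reeb-derivative-norm}, \(2s\le\calI\) and \eqref{eq:siu-caccioppoli}, and absorbing, which yields exactly the constants in \eqref{eq:approx-V}.
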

	
	\begin{proof}
		Multiplying \eqref{eq:contact-current} by \(\chi^2\) and integrating,
		\[
		2m\int\chi^2v\dd=2\int\chi\ip{d\chi}{\alpha^\sharp}\dd
		+\int\chi^2\sum_a\ip{\nablaf_{e_a}V}{df(Je_a)}\dd .
		\]
		Since \(|\alpha|\le\sqrt{vE}\),
		\[
		2\Bigl|\int\chi\ip{d\chi}{\alpha^\sharp}\dd\Bigr|
		\le\frac m2\int\chi^2v\dd+\frac2m\eps_\chi .
		\]
		By Cauchy--Schwarz, \eqref{eq:horizontal-reeb-derivative-norm},
		\(2s\le\calI\) and \eqref{eq:siu-caccioppoli},
		\begin{align*}
			\Bigl|\int\chi^2\sum_a\ip{\nablaf_{e_a}V}{df(Je_a)}\dd\Bigr|
			&\le\Bigl(\int\chi^2\,2s\dd\Bigr)^{1/2}\Bigl(\int\chi^2U\dd\Bigr)^{1/2}\\
			&\le\Bigl(\int\chi^2\calI\dd\Bigr)^{1/2}H_\chi^{1/2}
			\le2c_m^{\mathrm S}\sqrt{\eps_\chi H_\chi}.
		\end{align*}
		Absorbing \(\tfrac m2\int\chi^2v\) gives
		\(\tfrac{3m}2\int\chi^2v\le\tfrac2m\eps_\chi
		+2c_m^{\mathrm S}\sqrt{\eps_\chi H_\chi}\), which is \eqref{eq:approx-V}.
	\end{proof}
	
	\subsection{Energy growth and volume growth}
	
	The cutoff condition \eqref{eq:cutoff-condition} is a weighted capacity
	condition at infinity. It is implied by an integral condition on the
	energy \(\calE_f(R)=\int_{B_R(o)}E\dd\) of geodesic balls, and, when the
	energy density is bounded by a Schwarz estimate, by a volume growth
	condition.
	
	\begin{proposition}\label{prop:capacity-growth}
		Condition \eqref{eq:cutoff-condition} holds if \(E\equiv0\) or if
		\begin{equation}\label{eq:energy-growth-integral}
			\int_{R_0}^\infty\frac{R\,dR}{\calE_f(R)}=\infty,\qquad
			\calE_f(R)>0\quad(R\ge R_0).
		\end{equation}
		In particular it holds if \(\int_ME\dd<\infty\) or if
		\(\calE_f(R)\le CR^2\log R\log\log R\) for large \(R\).
	\end{proposition}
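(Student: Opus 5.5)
The plan is to build radial cutoffs from the energy function itself, in the spirit of the classical parabolicity criterion. If \(E\equiv0\), take any \(\chi_k\in\operatorname{Lip}_c(M)\) with \(0\le\chi_k\le1\) and \(\chi_k=1\) on \(B_k(o)\); then \(\int_ME|d\chi_k|^2\dd=0\). Otherwise, for \(R_0\le R_1<R_2\) set \(r(x)=d_g(o,x)\) and
\[
\chi(x)=\frac{\int_{r(x)}^{R_2}\frac{t\,dt}{\calE_f(t)}}{\int_{R_1}^{R_2}\frac{t\,dt}{\calE_f(t)}}
\]
for \(R_1\le r(x)\le R_2\), with \(\chi=1\) inside \(B_{R_1}(o)\) and \(\chi=0\) outside \(B_{R_2}(o)\). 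Here \(\calE_f\) is nondecreasing and positive on \([R_0,\infty)\), so \(t/\calE_f(t)\) is locally bounded and \(\chi\) is Lipschitz with compact support, since \(M\) is complete. Also \(|dr|\le1\) almost everywhere, and \(|d\chi|\le(t/\calE_f(t))/I(R_1,R_2)\) at \(r=t\), where \(I(R_1,R_2)=\int_{R_1}^{R_2}t\,dt/\calE_f(t)\).

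The key estimate uses the coarea formula. Writing \(\calE_f'(t)=\int_{\partial B_t}E\) (valid for a.e. \(t\), with \(\calE_f\) absolutely continuous),
\[
\int_ME|d\chi|^2\dd\le\frac1{I^2}\int_{R_1}^{R_2}\frac{t^2}{\calE_f(t)^2}\,d\calE_f(t).
\]
Integrating by parts,
\[
\int\frac{t^2}{\calE_f^2}\,d\calE_f=\Bigl[-\frac{t^2}{\calE_f}\Bigr]_{R_1}^{R_2}+2\int_{R_1}^{R_2}\frac{t\,dt}{\calE_f}\le\frac{R_1^2}{\calE_f(R_1)}+2I .
\]
Hence
\[
\int E|d\chi|^2\le\frac{2}{I}+\frac{R_1^2}{\calE_f(R_1)\,I^2}.
\]
Now fix \(R_1=R_0+k\) and, using divergence of the integral in \eqref{eq:energy-growth-integral}, choose \(R_2\) so large that \(I(R_1,R_2)\ge k\bigl(1+R_1^2/\calE_f(R_1)\bigr)\). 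The bound is then at most \(3/k\to0\). The resulting \(\chi_k\) equal \(1\) on \(B_{R_0+k}(o)\), so they converge to \(1\) locally uniformly, and \eqref{eq:cutoff-condition} holds.

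It remains to check the two particular cases. If \(\int_ME\dd=\mathcal E_\infty<\infty\) and \(E\not\equiv0\), then \(\calE_f\) is eventually positive and at most \(\mathcal E_\infty\), so the integral is at least \(\int R\,dR/\mathcal E_\infty=\infty\). If \(\calE_f(R)\le CR^2\log R\log\log R\), then
\[
\int\frac{R\,dR}{\calE_f}\ge C^{-1}\int\frac{dR}{R\log R\log\log R}=C^{-1}\log\log\log R\to\infty .
\]
The main technical point is the coarea and integration-by-parts step when \(\calE_f\) is merely monotone. To avoid issues there, I would work with the Stieltjes measure \(d\calE_f\) directly. The identity \(\int_{R_1\le r\le R_2}E\,\varphi(r)\dd=\int\varphi\,d\calE_f\) holds for bounded Borel \(\varphi\), and the integration by parts is valid for monotone right-continuous \(\calE_f\) with the continuous integrand \(t^2/\calE_f^{2}\); equivalently, one may use the inequality \(d(-1/\calE_f)\ge d\calE_f/\calE_f^2\).
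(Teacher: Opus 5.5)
Your proof is correct. It is the continuous counterpart of the paper's argument, which is discrete.

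The paper avoids the coarea formula and integration by parts altogether. It works on dyadic annuli \(B_{2^{j+1}}\setminus B_{2^j}\) and lets the cutoff decrease linearly by \(d_j\propto c_j^{-1}\) on the \(j\)-th annulus, where \(c_j=4^{-j}\calE_f(2^{j+1})\). On that annulus it bounds the weighted Dirichlet energy by \(4^{-j}d_j^2\,\calE_f(2^{j+1})\). Summing gives a total of at most \(\bigl(\sum_{j=N}^{M_1}c_j^{-1}\bigr)^{-1}\). Divergence of \(\sum_jc_j^{-1}\) is then read off from \eqref{eq:energy-growth-integral} using only monotonicity of \(\calE_f\), by comparing \(\int_{2^j}^{2^{j+1}}R\,dR/\calE_f(R)\) with \(4^j/\calE_f(2^j)\) and \(4^j/\calE_f(2^{j+1})\).

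Your cutoff \(\chi=\Phi(r)\) with \(\Phi'(t)=-t/(\calE_f(t)\,I)\) is the continuous version of the weights \(d_j\). Your integration by parts gives the explicit bound \(2/I+R_1^2/(\calE_f(R_1)I^2)\), stated directly in terms of the integral in \eqref{eq:energy-growth-integral}. The cost is that you need regularity of \(\calE_f\) to differentiate or integrate by parts.

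That regularity is automatic, so your closing Stieltjes caveat is unnecessary:
\begin{itemize}
\item Level sets \(\{r=t\}\) are null, since \(|\nabla r|=1\) almost everywhere and the coarea formula applies.
\item Hence the push-forward measure \(r_*(E\,\dd)\) has no atoms.
\item In fact \(\calE_f(t)=\int_0^t\int_{\{r=s\}}E\,d\mathcal H^{2m}\,ds\) is absolutely continuous.
\end{itemize}
Incidentally, with open balls \(\calE_f\) would be left-continuous, not right-continuous, had continuity failed.

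The paper's route is more elementary and insensitive to such measure-theoretic details. Yours is shorter to state and yields a cleaner quantitative estimate.
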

	
	\begin{proof}
		Assume \(E\not\equiv0\) and choose \(N_0\) with \(\calE_f(2^{N_0})>0\).
		For \(N_0\le N\le M_1\) put
		\[
		c_j=4^{-j}\calE_f(2^{j+1}),\qquad L_{N,M_1}=\sum_{j=N}^{M_1}c_j^{-1},
		\qquad d_j=\frac{c_j^{-1}}{L_{N,M_1}} .
		\]
		Let \(\chi_{N,M_1}\) be the radial function equal to \(1\) on
		\(B_{2^N}\), decreasing linearly in \(r\) by \(d_j\) on
		\(B_{2^{j+1}}\setminus B_{2^j}\), and vanishing outside
		\(B_{2^{M_1+1}}\). Since \(\sum_jd_j=1\) and \(|d\chi_{N,M_1}|\le
		2^{-j}d_j\) on the \(j\)-th annulus,
		\begin{align*}
			\int_ME|d\chi_{N,M_1}|^2\dd
			&\le\sum_{j=N}^{M_1}4^{-j}d_j^2\,\calE_f(2^{j+1})
			=\sum_{j=N}^{M_1}c_jd_j^2=L_{N,M_1}^{-1}.
		\end{align*}
		Monotonicity of \(\calE_f\) gives
		\[
		\frac{3\cdot4^j}{2\calE_f(2^{j+1})}
		\le\int_{2^j}^{2^{j+1}}\frac{R\,dR}{\calE_f(R)}
		\le\frac{3\cdot4^j}{2\calE_f(2^j)},
		\]
		so the series with terms \(4^j/\calE_f(2^j)\) and
		\(4^j/\calE_f(2^{j+1})\) differ only by an index shift and the factor
		\(4\); thus \eqref{eq:energy-growth-integral} implies
		\(\sum_jc_j^{-1}=\infty\). Choose \(M_1(N)\) with
		\(L_{N,M_1(N)}\ge N\). The functions \(\chi_{N,M_1(N)}\) satisfy
		\(\chi_{N,M_1(N)}\to1\) locally uniformly and
		\(\int_ME|d\chi_{N,M_1(N)}|^2\le N^{-1}\). If \(\calE_f\le C\), the
		integrand in \eqref{eq:energy-growth-integral} is at least \(R/C\).
		Under the stated growth,
		\[
		\frac R{\calE_f(R)}\ge\frac1{CR\log R\log\log R},\qquad
		\int^\infty\frac{dR}{R\log R\log\log R}=\infty .\qedhere
		\]
	\end{proof}
	
	For maps of bounded generalized dilatation, Theorem~\ref{thm:full-schwarz}
	bounds the energy density by a constant, so the energy growth in
	Proposition~\ref{prop:capacity-growth} is controlled by the volume
	growth of \(M\). The full estimate is needed here: the horizontal
	estimate of Theorem~\ref{thm:horizontal-schwarz} controls \(U\) only,
	not the Reeb energy \(v\).
	
	\begin{theorem}\label{thm:volume-rigidity}
		Let \(\Ric^g\ge-K_gg\), \(\Sec_N\le-\kappa<0\), and let \(f\) have
		generalized dilatation at most \(\Lambda\). If
		\begin{equation}\label{eq:volume-growth}
			\Vol B_R(o)\le CR^2\log R\log\log R
		\end{equation}
		for large \(R\), then \(f\) is foliated, \(\bp\)-pluriharmonic and
		harmonic.
	\end{theorem}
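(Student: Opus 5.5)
The plan is to reduce Theorem~\ref{thm:volume-rigidity} to Theorem~\ref{thm:capacity-rigidity}, checking the energy cutoff condition \eqref{eq:cutoff-condition} through Proposition~\ref{prop:capacity-growth}. The standing assumptions of this section already give what Theorem~\ref{thm:capacity-rigidity} needs: \(M\) is a connected complete Sasakian manifold with \(m\ge2\), \(N\) is strongly seminegative, and \(f\) is \(\bp\)-harmonic. So once \eqref{eq:cutoff-condition} is verified, that theorem yields \(df(\xi)=0\), \(f_{i\bar j}=f_{\bar ji}=0\) and \(\tau(f)=0\). These say exactly that \(f\) is foliated, \(\bp\)-pluriharmonic and harmonic.

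To bound the energy density I apply Theorem~\ref{thm:full-schwarz}. Its source hypotheses are completeness, \(\Ric^g\ge-K_gg\) and \(|D\xi|_{\op}\le L\). On a Sasakian manifold \(\mathsf A=0\), so Lemma~\ref{lem:connection-difference} gives \(D\xi=J\) on \(H\) and \(D\xi\cdot\xi=0\). Hence \(|D\xi|_{\op}=1\), and \(L=1\) is admissible, in agreement with \eqref{eq:source-data}. The target hypothesis \(\Sec_N\le-\kappa<0\) is assumed, and so is the dilatation bound on \(df\). Therefore
\[
E\le C_E:=\frac{(1+\Lambda^2)(K_g+m^2+2m)}{\kappa}\qquad\text{on }M .
\]
It is essential to use the full estimate here rather than the horizontal one, since \(\calE_f\) involves \(v\) as well as \(U\).

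Next I convert volume growth into energy growth. The bound \(E\le C_E\) gives \(\calE_f(R)=\int_{B_R(o)}E\dd\le C_E\Vol B_R(o)\). Combined with \eqref{eq:volume-growth}, this yields \(\calE_f(R)\le C_EC\,R^2\log R\log\log R\) for large \(R\). I then split into two cases. If \(E\equiv0\), Proposition~\ref{prop:capacity-growth} gives \eqref{eq:cutoff-condition} directly; in any case \(f\) is then constant and the conclusion is trivial. Otherwise \(E>0\) on some open set. Since \(M\) is connected and the balls \(B_R(o)\) exhaust it, \(\calE_f(R)>0\) for all \(R\ge R_0\) once \(R_0\) is large. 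The stated growth bound together with \(\int^\infty dR/(R\log R\log\log R)=\infty\) then gives \eqref{eq:energy-growth-integral}. So Proposition~\ref{prop:capacity-growth} supplies cutoffs \(\chi_k\) with \(\int_ME|d\chi_k|^2\dd\to0\).

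Finally, Theorem~\ref{thm:capacity-rigidity} applies and gives the conclusion. No step here is a real obstacle, because the analytic content sits in the Caccioppoli inequality \eqref{eq:siu-caccioppoli} and in the Schwarz lemma. The only points needing care are two. First, \(L=1\) must be justified on Sasakian sources so that Theorem~\ref{thm:full-schwarz} applies. Second, the positivity requirement \(\calE_f(R)>0\) in \eqref{eq:energy-growth-integral} has to be handled, which is done by treating the constant case \(E\equiv0\) separately.
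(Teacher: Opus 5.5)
Your proposal is correct and follows the paper's proof: you take \(L=1\) because \(D\xi=J\) on a Sasakian source, apply Theorem~\ref{thm:full-schwarz} to get \(E\le C_E\) and hence \(\calE_f(R)\le C_E\Vol B_R(o)\), and then combine Proposition~\ref{prop:capacity-growth} with Theorem~\ref{thm:capacity-rigidity}. Your separate treatment of the case \(E\equiv0\) and of the positivity \(\calE_f(R)>0\) is the same bookkeeping the paper carries out inside Proposition~\ref{prop:capacity-growth}.
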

	
	\begin{proof}
		Since \(D\xi=J\), we may take \(L=1\) in Theorem~\ref{thm:full-schwarz},
		which gives
		\[
		E\le C_E:=\frac{(1+\Lambda^2)(K_g+m^2+2m)}\kappa,\qquad
		\calE_f(R)\le C_E\Vol B_R(o)\le CC_ER^2\log R\log\log R .
		\]
		Proposition~\ref{prop:capacity-growth} gives \eqref{eq:cutoff-condition},
		and Theorem~\ref{thm:capacity-rigidity} completes the proof.
	\end{proof}
	
	Theorem~\ref{thm:volume-rigidity} is Theorem~\ref{thmD}.
	
	\subsection{Rank rigidity}
	
	Theorem~\ref{thm:capacity-rigidity} also yields the vanishing of the
	Siu curvature terms \(\calR(\calU_{ij})\). If the target is strongly
	negative, this vanishing is an algebraic relation between the
	holomorphic and antiholomorphic parts \(p\) and \(q\) of the horizontal
	differential. Where the real rank is at least three, the relation
	separates the two types; unique continuation then propagates the
	separation to all of \(M\). Combined with the Liouville theorem for CR
	maps, Corollary~\ref{cor:cr-liouville}, this bounds the rank.
	
	\begin{theorem}\label{thm:strong-negative-structure}
		In addition to \eqref{eq:cutoff-condition}, let \(N\) be strongly
		negative. If \(\rank_\R df\ge3\) at some point, then \(f\) is foliated
		and either CR or anti-CR. If moreover \(\Ric^{\TW}\ge0\) and
		\(\Bisec_N\le-\kappa_1<0\), then \(\rank_\R df\le2\) everywhere.
	\end{theorem}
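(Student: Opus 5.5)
By Theorem~\ref{thm:capacity-rigidity}, the cutoff condition \eqref{eq:cutoff-condition} already gives that \(f\) is foliated and \(\bp\)-pluriharmonic, and that \(\calR(\calU_{ij})=0\) for all \(i,j\), where \(\calU_{ij}=q_j\otimes\bar p_i-q_i\otimes\bar p_j\). The plan is to turn this vanishing into an algebraic dichotomy at points of real rank at least three, and then spread the dichotomy over \(M\) by unique continuation.

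\textbf{Pointwise algebra.} Strong negativity turns \(\calR(\calU_{ij})=0\) into \(q_j\otimes\bar p_i=q_i\otimes\bar p_j\) for all \(i,j\). Fix a point where \(\rank_\R df\ge3\); since \(V=0\), this is the rank of \(df|_H\). Suppose some \(p_i\ne0\) and some \(q_k\ne0\).
\begin{itemize}
\item Taking \(j=k\) with \(q_k\ne0\) shows that every \(p_i\) is a multiple of a single vector \(p\).
\item Symmetrically, every \(q_j\) is a multiple of a single vector \(q\).
\end{itemize}
Hence \(df^{1,0}|_H\) takes values in \(\operatorname{span}_\C\{p,q\}\). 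Then \(df(H)\) lies in the real span of \(p+\bar p\), \(i(p-\bar p)\), \(q+\bar q\), \(i(q-\bar q)\). I want to show this forces real rank at most two, which contradicts the assumption. Write \(p_i=a_ip\) and \(q_j=b_jq\). The relation \(b_j\bar a_i=b_i\bar a_j\) says the vectors \((b_j)\) and \((a_j)\) are proportional, say \(b=c\bar a\). Then \(df(X)\) is determined by the single complex number \(\sum_jx^ja_j\) together with \(\overline{\sum_jx^j\bar a_j}\), and I expect this to cut the real rank down to at most two. This linear-algebra count is where care is needed. So at such a point either all \(p_i=0\) or all \(q_i=0\).

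\textbf{Propagation.} The set \(W=\{\rank_\R df\ge3\}\) is open, and on it \(f\) is CR or anti-CR; by continuity each connected component of \(W\) has a fixed type. To propagate, I use the pluriharmonicity \(f_{i\bar j}=0\). In local holomorphic coordinates on the target, \(\bar u=\sum|q_i|^2\) (or \(\bar\partial_bf\)) satisfies an elliptic system with bounded coefficients of the form \(|\Delta q|\le C(|q|+|\nabla q|)\). This comes from differentiating \(f^\alpha_{i\bar j}=0\) and commuting derivatives with \eqref{eq:ricci-identity}. Aronszajn's unique continuation theorem then says that if \(q\) vanishes on an open set, it vanishes on all of connected \(M\). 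Alternatively, since \(f\) descends locally to a pluriharmonic map on the local Kähler quotients, one can apply the known unique continuation for harmonic maps to the holomorphic and antiholomorphic parts there. The main obstacle is writing this system cleanly in coordinates valid near each point and checking that the quadratic terms are linear in \(q\) with bounded coefficients. They are, because every term in the commutator contains a factor \(q\) or \(\nabla q\). I would use a chain-of-balls argument to cross between charts.

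\textbf{Second statement.} Once \(f\) is CR or anti-CR, it is \(\bp\)-harmonic and CR for \(J\) or for \(-J\). Replacing \(J_N\) by \(-J_N\) if needed, Corollary~\ref{cor:cr-liouville} applies when \(\Ric^{\TW}\ge0\) and \(\Bisec_N\le-\kappa_1\), and it gives that \(f\) is constant. This contradicts \(\rank_\R df\ge3\). Hence no point has real rank at least three, that is, \(\rank_\R df\le2\) everywhere.
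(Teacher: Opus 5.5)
Your argument follows the paper's proof step for step. Strong negativity gives \(q_j\otimes\bar p_i=q_i\otimes\bar p_j\), and mixed type at a point forces real rank at most two. The sets \(\{p\ne0\}\) and \(\{q\ne0\}\) then split each component of \(\{\rank_\R df\ge3\}\), Aronszajn's theorem propagates \(q\equiv0\) (or \(p\equiv0\)), and Corollary~\ref{cor:cr-liouville} with \(\pm\Jt\) gives the rank bound. Two details need repair, though neither needs a new idea.

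\textbf{The rank count.} As written it does not close. The relation \(b_j\bar a_i=b_i\bar a_j\) makes \((b_j)\) proportional to \((\bar a_j)\). Then \(\sum_j\bar x^jq_j=c\sum_j\bar x^j\bar a_j\,q=c\,\overline{w}\,q\) with \(w=\sum_jx^ja_j\). It is not governed by \(\overline{\sum_jx^j\bar a_j}\), which for \(m\ge2\) is independent of \(w\) and would give no rank bound. With the correct expression, \(df^{1,0}(X)=wp+c\bar w q\) depends real-linearly on the single complex number \(w\), so \(\rank_\R df\le2\). This is the paper's formula \(\zeta p_i+\bar\zeta q_i\).

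\textbf{Ellipticity for Aronszajn.} Differentiating \(f_{i\bar j}=0\) and commuting only yields an equation for \(\Delta_bq\). Since \(\Delta_b\) is not elliptic, Aronszajn's theorem cannot be applied to it. You also need \(\nabla^{\calE}_\xi q=0\), which holds because \(f_{\bar j0}=f_{0\bar j}-df^{1,0}(\mathsf AZ_{\bar j})=\nablaf_{Z_{\bar j}}r=0\) when \(V=0\) and \(\mathsf A=0\). Together with \(\nabla^{\calE}_{Z_i}q=0\) this gives, as in the paper, \(\Delta^{\calE}q=\sum_iR^{\calE}(Z_i,Z_{\bar i})q\) for the full connection Laplacian. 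Its principal part is the scalar operator \(g^{ab}\partial_a\partial_b\), which is exactly the system you anticipated.

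Finally, when you pass to \(-\Jt\), note that \(f\) stays \(\bp\)-harmonic because \(\tau(f)=0=2m(-\Jt)V\), and that the bisectional curvature is unchanged.
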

	
	\begin{proof}
		By Theorem~\ref{thm:capacity-rigidity} and strong negativity,
		\begin{equation}\label{eq:rank-tensor-relation}
			q_j\otimes\bar p_i=q_i\otimes\bar p_j\qquad\text{for all }i,j .
		\end{equation}
		At a point where \(p\ne0\) and \(q\ne0\), choose \(i\) with \(p_i\ne0\);
		then \(q_i\ne0\), since otherwise \(q_j\otimes\bar p_i=0\) forces
		\(q=0\). Equality of the decomposable tensors gives \(c_j\in\C\) with
		\[
		q_j=c_jq_i,\qquad p_j=\bar c_jp_i .
		\]
		For a horizontal vector represented by \(z\in\C^m\), the
		\((1,0)\)-part of its image is
		\[
		\sum_j(z_jp_j+\bar z_jq_j)=\zeta p_i+\bar\zeta q_i,\qquad
		\zeta=\sum_j\bar c_jz_j ,
		\]
		and since \(df(\xi)=0\) this gives \(\rank_\R df\le2\). Let \(\Omega\) be
		a connected component of the open set \(\{\rank_\R df\ge3\}\) and put
		\[
		\Omega_p=\{x\in\Omega:p(x)\ne0\},\qquad\Omega_q=\{x\in\Omega:q(x)\ne0\}.
		\]
		These sets are open; they are disjoint by the computation above; and
		they cover \(\Omega\), because \(p(x)=q(x)=0\) and \(df(\xi)=0\) give
		\(df_x=0\). Connectedness forces \(q\equiv0\) or \(p\equiv0\) on
		\(\Omega\).
		
		Regard \(q\) as a section of \(\calE=(H^{0,1})^*\otimes f^*T^{1,0}N\).
		Pluriharmonicity, foliation and \(\mathsf A=0\) give
		\(\nabla^{\calE}_{Z_i}q=0\) and \(\nabla^{\calE}_\xi q=0\). The Webster
		connection Laplacian is
		\begin{equation}\label{eq:full-bundle-laplacian}
			\Delta^{\calE}q=\sum_{\mu=0}^{2m}\bigl(\nabla^{\calE}_{e_\mu}
			\nabla^{\calE}_{e_\mu}q-\nabla^{\calE}_{D_{e_\mu}e_\mu}q\bigr).
		\end{equation}
		Define \((\nabla^{\calE})^2_{X,Y}=\nabla^{\calE}_X\nabla^{\calE}_Y
		-\nabla^{\calE}_{\nabla_XY}\). Since the trace of \(D-\nabla\) vanishes
		and \(D_\xi\xi=0\),
		\[
		\Delta^{\calE}q=\sum_i\bigl((\nabla^{\calE})^2_{Z_i,Z_{\bar i}}
		+(\nabla^{\calE})^2_{Z_{\bar i},Z_i}\bigr)q+(\nabla^{\calE})^2_{\xi,\xi}q .
		\]
		The alternation of the Hessian is
		\[
		(\nabla^{\calE})^2_{X,Y}q-(\nabla^{\calE})^2_{Y,X}q
		=R^{\calE}(X,Y)q-\nabla^{\calE}_{T^\nabla(X,Y)}q,\qquad
		T^\nabla(Z_i,Z_{\bar i})=2i\xi .
		\]
		Since \(\nabla\) preserves \(H^{1,0}\) and \(\xi\),
		\begin{align*}
			(\nabla^{\calE})^2_{Z_{\bar i},Z_i}q
			&=\nabla^{\calE}_{Z_{\bar i}}\bigl(\nabla^{\calE}_{Z_i}q\bigr)
			-\nabla^{\calE}_{\nabla_{Z_{\bar i}}Z_i}q=0,\qquad
			(\nabla^{\calE})^2_{\xi,\xi}q=0,\\
			(\nabla^{\calE})^2_{Z_i,Z_{\bar i}}q
			&=(\nabla^{\calE})^2_{Z_{\bar i},Z_i}q+R^{\calE}(Z_i,Z_{\bar i})q
			-2i\nabla^{\calE}_\xi q=R^{\calE}(Z_i,Z_{\bar i})q,
		\end{align*}
		and therefore \(\Delta^{\calE}q=\sum_iR^{\calE}(Z_i,Z_{\bar i})q\). Let
		\(\Gamma^c_{ab}\) be the Christoffel symbols of \(g\), and write
		\(\nabla^{\calE}_a=\partial_a+\Omega_a\) in a local trivialization. Then
		\begin{equation}\label{eq:bundle-coordinate-expansion}
			\begin{aligned}
				0={}&g^{ab}\partial_a\partial_bq
				+\bigl(2g^{ab}\Omega_a-g^{ac}\Gamma^b_{ac}\bigr)\partial_bq\\
				&+\Bigl(g^{ab}\bigl(\partial_a\Omega_b+\Omega_a\Omega_b
				-\Gamma^c_{ab}\Omega_c\bigr)-\sum_iR^{\calE}(Z_i,Z_{\bar i})\Bigr)q .
			\end{aligned}
		\end{equation}
		The principal part is the scalar operator \(g^{ab}\partial_a\partial_b\),
		which is elliptic because
		\[
		\sum_{a=1}^{2m}\zeta(e_a)^2+\zeta(\xi)^2=|\zeta|_g^2\qquad(\zeta\in T^*M).
		\]
		Writing \(\mathbf q=(\Rea q^A,\Ima q^A)_A\) in a local frame, on every
		relatively compact coordinate domain \(U\),
		\[
		\bigl|g^{ab}\partial_a\partial_b\mathbf q\bigr|^2
		\le c_U\Bigl(\sum_a|\partial_a\mathbf q|^2+|\mathbf q|^2\Bigr).
		\]
		Aronszajn's unique continuation theorem \cite{Aronszajn} therefore gives
		\(q\equiv0\) on \(M\) if \(q\) vanishes on \(\Omega\); exchanging barred
		and unbarred indices, the same holds for \(p\). Hence \(f\) is foliated
		and CR or anti-CR.
		
		Under the additional assumptions, \(f\) is CR with respect to \(\Jt\) or
		to \(-\Jt\); the latter structure has the same bisectional curvature, and
		\(\tau(f)=0=2m(-\Jt)V\). Corollary~\ref{cor:cr-liouville} shows that \(f\)
		is constant, contradicting \(\rank_\R df\ge3\). Hence
		\(\rank_\R df\le2\) everywhere.
	\end{proof}
	
	The hypothesis \(\Ric^{\TW}\ge0\) in the second part of
	Theorem~\ref{thm:strong-negative-structure} cannot be removed: the
	following example satisfies all other hypotheses of the theorem, but
	the real rank of its differential is \(2m\ge4\).
	
	\begin{example}\label{ex:finite-energy}
		Let \(m\ge2\), let \(\Gamma\) be a torsion-free cocompact lattice in the
		holomorphic isometry group of \(\mathbb{CH}^m_{\kappa_0}\), and let
		\(X=\mathbb{CH}^m_{\kappa_0}/\Gamma\) be the compact quotient, with
		K\"ahler metric \(g_X\) and K\"ahler form \(\omega_X\). By
		\eqref{eq:complex-hyperbolic-tensor},
		\[
		-2\kappa_0\le\Bisec_X\le-\kappa_0,\qquad
		\Ric_X=-(m+1)\kappa_0\,g_X .
		\]
		Hence \(g_X\) is K\"ahler--Einstein and \([\omega_X]\) is a positive
		multiple of \(2\pi c_1(K_X)\). Choose \(c>0\) such that
		\([c\,\omega_X/\pi]\) lies in the image of \(H^2(X;\mathbb Z)\), and let
		\(\pi\colon M\to X\) be a principal circle bundle with connection form
		\(\eta\) satisfying \(d\eta=2c\,\pi^*\omega_X\) (Boothby--Wang). Put
		\[
		\theta=\eta,\qquad G=\pi^*(c\,g_X),\qquad g=G+\theta^2 .
		\]
		By the computation in Example~\ref{ex:sharp-cr}, \((M,\theta)\) is a
		compact Sasakian manifold of dimension \(2m+1\) with
		\[
		\mathsf A=0,\qquad
		\Ric^{\TW}=\pi^*\Ric_{c g_X}=-(m+1)\kappa_0\,\pi^*g_X
		=-\frac{(m+1)\kappa_0}{c}\,G ,
		\]
		so \(\Ric^{\TW}\) is negative definite. The projection
		\(f=\pi\colon M\to X\) is CR and foliated. Its transverse differential is
		an isometry from \((H,G)\) onto \((TX,c\,g_X)\), so
		\((\nabla df)|_{H\times H}=0\) and \(f_{i\bar j}=f_{00}=0\). Hence \(f\) is
		\(\bp\)-harmonic and
		\[
		df(\xi)=0,\qquad \rank_\R df=2m\ge4,\qquad
		\int_M E\dd=2mc\,\Vol_g M<\infty .
		\]
		Since \(M\) is compact, \(\chi_k\equiv1\) satisfies
		\eqref{eq:cutoff-condition}. Compact quotients of complex hyperbolic
		space are strongly negative \cite{Siu}, and
		\(\Bisec_X\le-\kappa_0<0\). Thus all hypotheses of
		Theorem~\ref{thm:strong-negative-structure} hold except
		\(\Ric^{\TW}\ge0\). In accordance with the first part of the theorem,
		\(f\) is foliated and CR; the conclusion \(\rank_\R df\le2\) of the
		second part fails.
	\end{example}
	
	\appendix
	\section{Connections and commutators}\label{app:connections}
	
	\begin{proof}[Proof of Lemma~\ref{lem:connection-difference}]
		Put \(\widetilde D=\nabla+\mathsf C\); it is a connection because
		\(\mathsf C\) is tensorial. It is metric if and only if
		\(g(\mathsf C(X,Y),W)+g(Y,\mathsf C(X,W))=0\). Pairing the four terms of
		\(\mathsf C\) with \(W\) and symmetrizing in \(Y,W\) gives
		\begin{align*}
			&-g(JX,Y)\theta(W)-g(JX,W)\theta(Y)-g(\mathsf AX,Y)\theta(W)
			-g(\mathsf AX,W)\theta(Y)\\
			&+\theta(Y)g(\mathsf AX,W)+\theta(W)g(\mathsf AX,Y)
			+\theta(X)\{g(JY,W)+g(Y,JW)\}\\
			&+\theta(Y)g(JX,W)+\theta(W)g(JX,Y)=0 .
		\end{align*}
		For the torsion,
		\(\widetilde D_XY-\widetilde D_YX-[X,Y]=T^\nabla(X,Y)+\mathsf C(X,Y)
		-\mathsf C(Y,X)\), and the symmetry of \(\mathsf A\) gives
		\[
		\mathsf C(X,Y)-\mathsf C(Y,X)=-2g(JX,Y)\xi+\theta(Y)\mathsf AX
		-\theta(X)\mathsf AY=-T^\nabla(X,Y).
		\]
		By uniqueness of the Levi-Civita connection, \(\widetilde D=D\). Putting
		\(Y=\xi\) and using \(\nabla\xi=0\), \(J\xi=\mathsf A\xi=0\) gives
		\(D_X\xi=JX+\mathsf AX\). Moreover \(\divg\xi=\tr(J+\mathsf A)=0\) and
		\(\sum_\mu\mathsf C(e_\mu,e_\mu)=-\tr(\mathsf A)\xi=0\).
	\end{proof}
	
	\begin{proof}[Proof of Lemma~\ref{lem:source-reeb}]
		For a vector field \(Y\) with \(\divg Y=0\), commuting covariant
		derivatives in a normal frame gives
		\begin{align*}
			\sum_iD_i(D_iY_j+D_jY_i)&=(\Delta^DY)_j+D_j(\divg Y)+\Ric_{jk}Y_k\\
			&=(\Delta^DY+\Ric^\sharp Y)_j ,
		\end{align*}
		so \(\Delta^DY+\Ric^\sharp Y=2\divg(\operatorname{sym}DY)\). For \(Y=\xi\),
		\(\operatorname{sym}D\xi=\mathsf A\) by \eqref{eq:reeb-geometry}, which is
		\eqref{eq:z-div}. With \(\mathsf C=D-\nabla\),
		\begin{align*}
			R^g(X,Y)W={}&R^\nabla(X,Y)W+(\nabla_X\mathsf C)(Y,W)
			-(\nabla_Y\mathsf C)(X,W)\\
			&+\mathsf C(X,\mathsf C(Y,W))-\mathsf C(Y,\mathsf C(X,W))
			+\mathsf C(T^\nabla(X,Y),W).
		\end{align*}
		Take \(Y=W=\xi\) and \(X\) horizontal. Then
		\begin{align*}
			R^\nabla(X,\xi)\xi&=0,\qquad\mathsf C(\xi,\xi)=0,\qquad
			\mathsf C(X,\xi)=JX+\mathsf AX,\qquad\mathsf C(\xi,X)=JX,\\
			(\nabla_X\mathsf C)(\xi,\xi)&=0,\qquad
			(\nabla_\xi\mathsf C)(X,\xi)=(\nabla_\xi\mathsf A)X,\qquad
			T^\nabla(X,\xi)=-\mathsf AX ,
		\end{align*}
		and therefore
		\[
		R^g(X,\xi)\xi=-(\nabla_\xi\mathsf A)X-\mathsf C(\xi,JX+\mathsf AX)
		-\mathsf C(\mathsf AX,\xi).
		\]
		Since \(X\) is horizontal,
		\[
		\mathsf C(\xi,JX+\mathsf AX)=-X+J\mathsf AX,\qquad
		\mathsf C(\mathsf AX,\xi)=J\mathsf AX+\mathsf A^2X,
		\]
		and substitution gives \eqref{eq:reeb-curvature}. The operator norm
		bounds imply
		\[
		|\ip{J\mathsf AX}{X}|\le a_0|X|^2,\qquad|\mathsf AX|^2\le a_0^2|X|^2,
		\qquad|\ip{(\nabla_\xi\mathsf A)X}{X}|\le a_1|X|^2,
		\]
		and substitution into \eqref{eq:reeb-curvature} gives the lower bound.
	\end{proof}
	
	\begin{proof}[Proof of \eqref{eq:basic-commutators} and
		\eqref{eq:ricci-identity}]
		Since \(\nablaf_Xdf(Y)-\nablaf_Ydf(X)=df([X,Y])\),
		\begin{align*}
			\beta(X,Y)-\beta(Y,X)
			&=\nablaf_Ydf(X)-\nablaf_Xdf(Y)-df(\nabla_YX-\nabla_XY)\\
			&=df\bigl([Y,X]-\nabla_YX+\nabla_XY\bigr)=df\bigl(T^\nabla(X,Y)\bigr).
		\end{align*}
		By \eqref{eq:tw-torsion} and \(JZ_i=iZ_i\),
		\begin{align*}
			T^\nabla(Z_i,Z_j)&=0,&T^\nabla(Z_i,Z_{\bar j})&=2i\delta_{ij}\xi,\\
			T^\nabla(\xi,Z_i)&=\mathsf AZ_i,&T^\nabla(\xi,Z_{\bar i})&=\mathsf AZ_{\bar i},
		\end{align*}
		and projection onto \(T^{1,0}N\) gives \eqref{eq:basic-commutators}. For
		a section of \(T^*M\otimes f^*TN\), the curvature of the induced
		connection is
		\[
		\bigl(R(X,Y)df\bigr)(Z)=R^N(dfX,dfY)dfZ-df\bigl(R^\nabla(X,Y)Z\bigr),
		\]
		and the alternation of the second covariant derivative is
		\[
		\nabla^2_{X,Y}-\nabla^2_{Y,X}=R(X,Y)-\nabla_{T^\nabla(X,Y)} .
		\]
		Taking \(X=Z_C\), \(Y=Z_B\), evaluating on \(Z_A\) and using
		\(T^\nabla(Z_C,Z_B)=-T^D_{BC}Z_D\) gives \eqref{eq:ricci-identity}; in
		particular the torsion term equals \(2i\delta_{ij}f_{A0}\) for
		\((B,C)=(i,\bar j)\).
	\end{proof}
	
	\section{Localization}\label{app:localization}
	
	Fix \(\vartheta(s)=e^{-1/s}\) for \(s>0\), \(\vartheta(s)=0\) for
	\(s\le0\), and put
	\[
	\zeta(s)=\frac{\vartheta(3/2-s)}{\vartheta(3/2-s)+\vartheta(s-1)},\qquad
	\eta=\zeta^4,\qquad M_1=\|\zeta'\|_\infty,\qquad M_2=\|\zeta''\|_\infty .
	\]
	The denominator is positive, \(\zeta\) decreases from \(1\) to \(0\) on
	\([1,3/2]\) and is constant elsewhere, and
	\begin{equation}\label{eq:eta-bounds}
		\begin{aligned}
			|\eta'|^2&=16\zeta^6|\zeta'|^2\le16M_1^2\eta,\\
			\eta''&=12\zeta^2(\zeta')^2+4\zeta^3\zeta''\ge-4M_2\sqrt\eta .
		\end{aligned}
	\end{equation}
	For \(o\in M\) and \(R>0\) put \(r=d_g(o,\cdot)\) and
	\begin{equation}\label{eq:common-cutoff}
		\psi_R=\eta(r/R)^2,\qquad
		\operatorname{supp}\psi_R\subset\overline{B_{3R/2}(o)}\Subset B_{2R}(o),
		\qquad\psi_R|_{B_R(o)}=1 .
	\end{equation}
	
	\begin{lemma}\label{lem:scalar-localization}
		Let \((M^\ell,g)\) be complete with \(\Ric^g\ge-K_gg\), and let \(w\ge0\)
		be smooth with
		\begin{equation}\label{eq:scalar-inequality}
			\tfrac12\Delta w\ge Aw^2-b_0w-D_0\qquad\text{on }\{w>T_0\},
		\end{equation}
		where \(A>0\) and \(b_0,D_0,T_0\ge0\). Then
		\begin{equation}\label{eq:scalar-bound}
			w\le\max\Bigl\{T_0,\frac{b_0+\sqrt{b_0^2+4AD_0}}{2A}\Bigr\}.
		\end{equation}
		If \eqref{eq:scalar-inequality} holds on \(B_{2R}(o)\) only, then
		\eqref{eq:scalar-bound} holds on \(B_R(o)\) with \(b_0\) replaced by
		\(b_0+c_\ell(R^{-2}+\sqrt{K_g}R^{-1})\), where \(c_\ell\) depends only
		on \(\ell\), \(M_1\) and \(M_2\).
	\end{lemma}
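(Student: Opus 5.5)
The plan is the Cheng--Yau localization: multiply \(w\) by the cutoff \(\psi_R\) of \eqref{eq:common-cutoff} and look at a maximum of \(\Phi=\psi_Rw\). We will prove the local statement first. The global statement then follows by letting \(R\to\infty\) with \(o\) fixed, since the correction \(c_\ell(R^{-2}+\sqrt{K_g}R^{-1})\) tends to zero and every point lies in some \(B_R(o)\).

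\emph{Cutoff estimates.} Put \(\psi=\psi_R=\eta(r/R)^2\). From \eqref{eq:eta-bounds} we get
\[
|d\psi|^2\le c\,R^{-2}\psi^{3/2}\le cR^{-2}\psi,
\]
and more precisely \(|d\psi|^2/\psi\le cR^{-2}\). By the Laplacian comparison theorem under \(\Ric^g\ge-K_gg\), \(\Delta r\le(\ell-1)(r^{-1}+\sqrt{K_g})\). On the support of \(d\psi\) we have \(R\le r\le 3R/2\), and \(\psi\) is nonincreasing in \(r\). Together with \(\eta''\ge-4M_2\sqrt\eta\), this gives, in the barrier sense,
\[
\Delta\psi\ge-c_\ell\bigl(R^{-2}+\sqrt{K_g}R^{-1}\bigr)\sqrt\psi\ge -c_\ell(R^{-2}+\sqrt{K_g}R^{-1}).
\]
Where \(r\) is not smooth (the cut locus), use Calabi's trick: replace \(r\) by a smooth upper barrier \(d_g(o',\cdot)+d_g(o,o')\) with \(o'\) on a minimal geodesic near \(o\). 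This is the standard technical point, and we expect it to be the main nuisance rather than a real obstacle.

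\emph{Maximum argument.} \(\Phi\) is continuous with compact support in \(B_{2R}(o)\), so it attains its maximum at some \(x_1\). Write \(\mu\) for the right-hand side of \eqref{eq:scalar-bound} computed with the corrected \(b_0\). If \(\Phi(x_1)\le\mu\), then on \(B_R(o)\), where \(\psi=1\), we get \(w\le\Phi(x_1)\le\mu\), and we are done. Otherwise \(w(x_1)\ge\Phi(x_1)>\mu\ge T_0\), so \eqref{eq:scalar-inequality} holds at \(x_1\). At \(x_1\) we have \(d\Phi=0\), hence \(dw=-w\,d\psi/\psi\), and \(\Delta\Phi\le0\). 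Multiplying \(\Delta\Phi=\psi\Delta w+w\Delta\psi+2\ip{d\psi}{dw}\) by \(\psi\) gives
\[
0\ge\psi^2\Delta w+\psi w\Delta\psi-2w|d\psi|^2 .
\]
Insert \(\tfrac12\Delta w\ge Aw^2-b_0w-D_0\). Use \(\psi\le1\) to bound \(\psi^2D_0\le D_0\) and \(\psi^2b_0w\le b_0\Phi\), and use the cutoff bounds \(\psi w\Delta\psi\ge-c\,\Phi\,(R^{-2}+\sqrt{K_g}R^{-1})\) and \(w|d\psi|^2\le cR^{-2}\Phi\). This yields
\[
0\ge 2A\Phi^2-2\bigl(b_0+c_\ell(R^{-2}+\sqrt{K_g}R^{-1})\bigr)\Phi-2D_0 .
\]
Solving the quadratic gives \(\Phi(x_1)\le\bigl(b_0'+\sqrt{b_0'^2+4AD_0}\bigr)/(2A)\), where \(b_0'=b_0+c_\ell(R^{-2}+\sqrt{K_g}R^{-1})\). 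This contradicts \(\Phi(x_1)>\mu\), so only the first case can occur.

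Because the first case already yields the bound, we never need \eqref{eq:scalar-inequality} at points where \(w\le T_0\). The maximum-principle step is routine; the only care needed is the nonsmoothness of \(r\), handled by Calabi's barrier. The constant \(c_\ell\) depends only on \(\ell\), \(M_1\) and \(M_2\).
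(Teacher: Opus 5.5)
Your proposal is correct and is essentially the paper's proof. It uses the same cutoff \(\psi_R=\eta(r/R)^2\), the bounds \(|d\psi|^2/\psi\le cR^{-2}\) and \(-\Delta\psi\le c_\ell(R^{-2}+\sqrt{K_g}R^{-1})\) from \eqref{eq:eta-bounds} and Laplacian comparison, the maximum of \(\Phi=\psi_Rw\) with Calabi's barrier at cut points, and \(R\to\infty\) for the global bound; the only differences are cosmetic (you argue by contradiction against the full threshold \(\mu\) where the paper splits directly on \(w(x_R)\le T_0\), and your comparison constant \((\ell-1)\sqrt{K_g}\) is slightly weaker than \(\sqrt{(\ell-1)K_g}\)).
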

	
	\begin{proof}
		Write \(\psi=\psi_R\). With the derivatives of \(\eta\) evaluated at
		\(r/R\),
		\[
		d\psi=\frac{2\eta\eta'}R\,dr,\qquad
		\Delta\psi=\frac{2(\eta')^2+2\eta\eta''}{R^2}+\frac{2\eta\eta'}R\,\Delta r .
		\]
		Off the cut locus, Laplacian comparison gives
		\(\Delta r\le(\ell-1)/r+\sqrt{(\ell-1)K_g}\). Since \(\eta'\le0\) and
		\(\eta'=0\) for \(r<R\), \eqref{eq:eta-bounds} gives
		\begin{align}
			\frac{|d\psi|^2}\psi&=\frac{4(\eta')^2}{R^2}\le\frac{64M_1^2}{R^2},
			\notag\\
			-\Delta\psi&\le-\frac{2\eta\eta''}{R^2}
			+\frac{2\eta|\eta'|}R\Bigl(\frac{\ell-1}R+\sqrt{(\ell-1)K_g}\Bigr)
			\notag\\
			&\le\frac{8M_2}{R^2}+\frac{8M_1}R\Bigl(\frac{\ell-1}R
			+\sqrt{(\ell-1)K_g}\Bigr)
			\le c_\ell\Bigl(\frac1{R^2}+\frac{\sqrt{K_g}}R\Bigr).
			\label{eq:cutoff-bounds}
		\end{align}
		Extend \(\Phi=\psi w\) by zero outside \(B_{2R}(o)\); it is continuous
		with compact support and attains its maximum at some
		\(x_R\in\overline{B_{3R/2}(o)}\). If \(\Phi(x_R)=0\), then \(w=0\) on
		\(B_R(o)\). Assume \(\Phi(x_R)>0\), so \(\psi(x_R)>0\). If
		\(w(x_R)\le T_0\), then \(\sup_{B_R}w\le\Phi(x_R)\le T_0\). Otherwise
		\(w(x_R)>T_0\), and at \(x_R\) we have \(d\Phi=0\), that is
		\(dw=-w\,d\psi/\psi\), and
		\[
		0\ge\psi\,\Delta\Phi=\psi^2\Delta w+\psi w\,\Delta\psi-2w|d\psi|^2 .
		\]
		By \eqref{eq:scalar-inequality}, \eqref{eq:cutoff-bounds} and
		\(\psi\le1\),
		\[
		0\ge2\psi^2(Aw^2-b_0w-D_0)
		-c_\ell\Bigl(\frac1{R^2}+\frac{\sqrt{K_g}}R\Bigr)\psi w
		-\frac{128M_1^2}{R^2}\psi w ,
		\]
		hence \(A\Phi^2-(b_0+\eps_R)\Phi-D_0\le0\) with
		\(\eps_R=c'_\ell(R^{-2}+\sqrt{K_g}R^{-1})\). Thus
		\[
		\sup_{B_R(o)}w\le\Phi(x_R)\le
		\frac{b_0+\eps_R+\sqrt{(b_0+\eps_R)^2+4AD_0}}{2A}.
		\]
		If \(x_R\) is a cut point of \(o\), we use Calabi's argument
		\cite{Calabi}: for a minimizing geodesic \(\gamma\) from \(o\) to
		\(x_R\), the function \(r_\eps=\eps+d_g(\gamma(\eps),\cdot)\) is smooth
		near \(x_R\), satisfies \(r_\eps\ge r\) with equality at \(x_R\), the
		corresponding cutoff \(\psi_\eps\le\psi\) touches \(\psi\) at \(x_R\), and
		the constants converge to those of \eqref{eq:cutoff-bounds} as
		\(\eps\downarrow0\). Letting \(R\to\infty\) gives \eqref{eq:scalar-bound}.
	\end{proof}
	
	\begin{lemma}\label{lem:reeb-hessian}
		Assume \(|D\xi|_{\op}\le L\), \(D_\xi\xi=0\) and
		\(\ip{R^g(X,\xi)\xi}{X}\ge-k_\xi|X|^2\) for \(X\perp\xi\), with
		\(k_\xi\ge0\). Off \(o\) and its cut locus,
		\begin{equation}\label{eq:reeb-hessian}
			\Hess r(\xi,\xi)\le\frac2r+2\ell_\xi,\qquad
			\ell_\xi=\sqrt{L^2+k_\xi/2},
		\end{equation}
		and consequently
		\begin{equation}\label{eq:r-square-reeb}
			\xi^2(r^2)\le6+4\ell_\xi r .
		\end{equation}
	\end{lemma}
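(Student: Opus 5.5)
We bound \(\Hess r(\xi,\xi)\) by index comparison along a minimizing geodesic, and then read off \eqref{eq:r-square-reeb} from the chain rule. Fix \(x\) off \(o\) and its cut locus, and let \(\gamma\colon[0,r_0]\to M\) be the unit-speed minimizing geodesic from \(o\) to \(x\), where \(r_0=r(x)\). Decompose \(\xi(x)=\xi^\top+\xi^\perp\) with \(\xi^\top=\ip{\xi}{\dot\gamma}\dot\gamma\). Since \(\Hess r(\dot\gamma,\cdot)=0\), we have \(\Hess r(\xi,\xi)=\Hess r(\xi^\perp,\xi^\perp)\). Recall that \(\Hess r(W,W)=I(Y,Y)\) for the Jacobi field with \(Y(0)=0\), \(Y(r_0)=W\), and that this Jacobi field minimizes the index form among fields with these endpoint values. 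The task is therefore to pick a good test field \(Y\) with \(Y(0)=0\) and \(Y(r_0)=\xi^\perp\), and to estimate
\[
I(Y,Y)=\int_0^{r_0}\bigl(|Y'|^2-\ip{R(Y,\dot\gamma)\dot\gamma}{Y}\bigr)\,ds .
\]

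The natural choice uses the Reeb field along \(\gamma\). Set \(\varphi(s)=s/r_0\) and \(Y(s)=\varphi(s)\,\xi^\perp(\gamma(s))\), where \(\xi^\perp=\xi-\ip\xi{\dot\gamma}\dot\gamma\) is taken pointwise along \(\gamma\); this field vanishes at \(0\) and equals \(\xi^\perp(x)\) at \(r_0\). Along \(\gamma\),
\[
(\xi^\perp)'=D_{\dot\gamma}\xi-\ip{D_{\dot\gamma}\xi}{\dot\gamma}\dot\gamma,\qquad |D_{\dot\gamma}\xi|\le L .
\]
The curvature term needs \(-\ip{R(\xi^\perp,\dot\gamma)\dot\gamma}{\xi^\perp}\), which is a sectional curvature of the plane spanned by \(\xi\) and \(\dot\gamma\). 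By the symmetries of \(R\) this equals \(-\ip{R(\dot\gamma^\perp,\xi)\xi}{\dot\gamma^\perp}\), with \(\dot\gamma^\perp\) the component of \(\dot\gamma\) orthogonal to \(\xi\). The hypothesis on \(\ip{R^g(X,\xi)\xi}{X}\) for \(X\perp\xi\) then bounds it above by \(k_\xi|\dot\gamma^\perp|^2\le k_\xi\). The condition \(D_\xi\xi=0\) enters when \(\ip{D_{\dot\gamma}\xi}{\dot\gamma}\) and the derivative of \(\ip\xi{\dot\gamma}\) are expanded.

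Assembling,
\[
I(Y,Y)\le\int_0^{r_0}\bigl(\varphi'^2|\xi^\perp|^2+2\varphi\varphi'|\xi^\perp|\,|(\xi^\perp)'|+\varphi^2(|(\xi^\perp)'|^2+k_\xi)\bigr)\,ds .
\]
Using \(|\xi^\perp|\le1\) and \(|(\xi^\perp)'|\le 2L\) (a crude bound; with care one gets \(L\)), this becomes
\[
I(Y,Y)\le \frac1{r_0}+c_1L+\frac{r_0}{3}\,(cL^2+k_\xi).
\]
This is linear in \(r_0\), not of the form \(2/r+2\ell_\xi\). The fix is a cutoff profile at the optimal scale: take \(\varphi(s)=\sinh(\ell s)/\sinh(\ell r_0)\), or equivalently \(\varphi=0\) on \([0,r_0-\delta]\) and linear on \([r_0-\delta,r_0]\). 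With \(\delta=\min\{r_0,1/\ell_\xi\}\) one gets \(I\le 1/\delta+O(L)+\delta\,\ell_\xi^2\le 2/r_0+2\ell_\xi\). The \(\sinh\) choice gives \(\ell\coth(\ell r_0)\le 1/r_0+\ell\) with \(\ell^2\) matching \(L^2+k_\xi/2\). The main obstacle is getting exactly the constant \(\ell_\xi=\sqrt{L^2+k_\xi/2}\). The plan is to absorb the cross term \(2\varphi\varphi'\ip{\xi^\perp}{(\xi^\perp)'}=\varphi\varphi'\,(|\xi^\perp|^2)'\) by integrating it by parts. It then contributes only the boundary term \(|\xi^\perp(x)|^2/r_0\)-type quantities and \(-\varphi'^2|\xi^\perp|^2\)-type quantities, which explains the factor \(2/r\). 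The remaining terms are \(\varphi^2(L^2+k_\xi/2)\)-type after Young's inequality, and they are optimized by the \(\sinh\) profile. Since the target bound is generous (factors of 2), I expect slack to cover these constants.

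Finally, \(\xi^2(r^2)=2(\xi r)^2+2r\,\xi(\xi r)\). Because \(D_\xi\xi=0\), \(\xi(\xi r)=\Hess r(\xi,\xi)\), and \(|\xi r|\le1\). Therefore
\[
\xi^2(r^2)\le 2+2r\Bigl(\frac2r+2\ell_\xi\Bigr)=6+4\ell_\xi r,
\]
which is \eqref{eq:r-square-reeb}.
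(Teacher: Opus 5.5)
Your setup is the paper's: the minimizing geodesic, the test field \(hW\) with \(W=\xi-\ip\xi{\dot\gamma}\dot\gamma\) along \(\gamma\) (your \(\xi^\perp\)), the curvature term reduced by pair symmetry to the hypothesis, a \(\sinh\) profile, and the chain rule for \(\xi^2(r^2)\). You handle the curvature step more carefully than the paper, by passing to the component of \(\dot\gamma\) orthogonal to \(\xi\).

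The gap is the step that actually produces the constants. You call it the main obstacle and then leave it to ``slack'', and what you write there does not hold up. The crude bound \(|(\xi^\perp)'|\le2L\) is not good enough: it yields \(\sqrt{4L^2+k_\xi/2}\) via Young, or \(\sqrt{2L^2+k_\xi/2}\) via integration by parts, in place of \(\ell_\xi\). The linear cutoff is also not equivalent to the \(\sinh\) profile. From your own assembled inequality it gives \(1/\delta+L+\delta(L^2+k_\xi)/3\). For \(k_\xi=0\) and \(\delta=1/L\) this equals \(7L/3\), which exceeds \(2/r_0+2L\) as soon as \(r_0>6/L\).

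The missing step is one line. Your formula for \((\xi^\perp)'\) is the projection of \(D_{\dot\gamma}\xi\) orthogonal to \(\dot\gamma\), so \(|W'|\le L\) at once. (\(D_\xi\xi=0\) plays no role there; it is used only for \(\xi(\xi r)=\Hess r(\xi,\xi)\).) Then \(|(hW)'|\le|h'|+L|h|\), so \(|(hW)'|^2\le2(h')^2+2L^2h^2\). Together with the curvature bound,
\[
\Hess r(W,W)\le\int_0^{r_0}\bigl(2(h')^2+(2L^2+k_\xi)h^2\bigr)\,ds
=2\int_0^{r_0}\bigl((h')^2+\ell_\xi^2h^2\bigr)\,ds .
\]
This is exactly where \(\ell_\xi^2=L^2+k_\xi/2\) and the factor \(2\) come from. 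For \(h=\sinh(\ell_\xi s)/\sinh(\ell_\xi r_0)\) one has \(h''=\ell_\xi^2h\), so the integral equals \([hh']_0^{r_0}=\ell_\xi\coth(\ell_\xi r_0)\le\ell_\xi+1/r_0\).

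Your integration-by-parts idea also closes the gap if you carry it out. Write \(2hh'\ip W{W'}=hh'(|W|^2)'\) and integrate by parts; this cancels \(\int(h')^2|W|^2\) exactly and leaves
\[
\Hess r(W,W)\le h'(r_0)|W(r_0)|^2-\int_0^{r_0}hh''|W|^2\,ds+(L^2+k_\xi)\int_0^{r_0}h^2\,ds .
\]
Take \(h=\sinh(\mu s)/\sinh(\mu r_0)\) with \(\mu^2=(L^2+k_\xi)/2\), or \(h=s/r_0\) if \(\mu=0\). The middle term is then nonpositive and \(\int h^2\le\coth(\mu r_0)/(2\mu)\). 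This gives \(\Hess r(W,W)\le2\mu\coth(\mu r_0)\le2/r_0+2\mu\), slightly sharper than the lemma since \(\mu\le\ell_\xi\). Either way, the proof is complete only once this computation is written out with the sharp bound \(|W'|\le L\).
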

	
	\begin{proof}
		Let \(\gamma\colon[0,s]\to M\) be a unit speed minimizing geodesic from
		\(o\) to \(x\), and put \(W=\xi-\ip\xi{\dot\gamma}\dot\gamma\). Then
		\(W\perp\dot\gamma\), \(|W|\le1\), and
		\(D_{\dot\gamma}W=D_{\dot\gamma}\xi-\ip{D_{\dot\gamma}\xi}{\dot\gamma}
		\dot\gamma\), so \(|D_{\dot\gamma}W|\le L\). For \(h\in C^1\) with
		\(h(0)=0\), \(h(s)=1\), the second variation formula gives
		\[
		\Hess r(W,W)\le\int_0^s\bigl(|D_{\dot\gamma}(hW)|^2
		-h^2\ip{R^g(W,\dot\gamma)\dot\gamma}{W}\bigr)dt .
		\]
		The curvature symmetries and the hypothesis give
		\[
		-h^2\ip{R^g(W,\dot\gamma)\dot\gamma}{W}
		=-h^2\ip{R^g(\dot\gamma,\xi)\xi}{\dot\gamma}\le k_\xi h^2 ,
		\]
		and \(|D_{\dot\gamma}(hW)|^2\le2(h')^2+2L^2h^2\). Therefore
		\[
		\Hess r(W,W)\le\int_0^s\bigl(2(h')^2+(2L^2+k_\xi)h^2\bigr)dt
		=2\int_0^s\bigl((h')^2+\ell_\xi^2h^2\bigr)dt .
		\]
		Take \(h(t)=\sinh(\ell_\xi t)/\sinh(\ell_\xi s)\) if \(\ell_\xi>0\) and
		\(h(t)=t/s\) if \(\ell_\xi=0\); then \(h''=\ell_\xi^2h\), and integration
		by parts gives
		\[
		\int_0^s\bigl((h')^2+\ell_\xi^2h^2\bigr)dt
		=\bigl[hh'\bigr]_0^s-\int_0^sh\,(h''-\ell_\xi^2h)\,dt=h'(s),
		\]
		with \(h'(s)=\ell_\xi\coth(\ell_\xi s)\le\ell_\xi+s^{-1}\) if
		\(\ell_\xi>0\) and \(h'(s)=s^{-1}\) if \(\ell_\xi=0\). Hence
		\(\Hess r(W,W)\le2\ell_\xi+2/s\). Finally
		\(\Hess r(\xi,\xi)=\Hess r(W,W)\) because \(\Hess r(\dot\gamma,\cdot)=0\),
		and since \(D_\xi\xi=0\),
		\[
		\xi^2(r^2)=2(\xi r)^2+2r\Hess r(\xi,\xi)
		\le2+2r\Bigl(\frac2r+2\ell_\xi\Bigr)=6+4\ell_\xi r .\qedhere
		\]
	\end{proof}
	
	For the localization of \(\calL_t\) we use the following comparison
	for the sub-Laplacian of the Webster distance on complete
	pseudo-Hermitian manifolds with \(\Ric^{\TW}\ge-KG\) and
	\(|\mathsf A|+|\nabla\mathsf A|\le C_A\) (cf.\ \cite{CDRY21}):
	\begin{equation}\label{eq:sublap-comparison}
		\Delta_br\le C\Bigl(\frac1r+1\Bigr)\quad\text{off the cut locus of }o,
		\qquad C=C(m,K,C_A).
	\end{equation}
	By Lemma~\ref{lem:source-reeb}, Lemma~\ref{lem:reeb-hessian} applies with
	\(L=1+a_0\) and \(k_\xi=(2a_0+a_0^2+a_1-1)_+\).
	
	\begin{remark}\label{rem:sublap}
		If in addition \(\Ric^g\ge-K_gg\), then for \(t\ge1\) inequality
		\eqref{eq:sublap-comparison} can be replaced by the Riemannian Laplacian
		comparison. Indeed, since \(D_\xi\xi=0\), \(\xi^2r=\Hess r(\xi,\xi)\), and
		\[
		\calL_tr=\Delta_br+t\xi^2r=\Delta r+(t-1)\Hess r(\xi,\xi)
		\le\frac{2m}r+\sqrt{2mK_g}+(t-1)\Bigl(\frac2r+2\ell_\xi\Bigr)
		\]
		by Laplacian comparison and \eqref{eq:reeb-hessian}. In
		Theorem~\ref{thm:cr-schwarz} the optimal parameter \(t=c_m/a_0\)
		satisfies \(t\ge1\) if \(a_0\le c_m\), and \(t\to\infty\) is used if
		\(a_0=0\).
	\end{remark}
	
	\begin{lemma}\label{lem:anisotropic-localization}
		Under \eqref{eq:sublap-comparison} and the hypotheses of
		Lemma~\ref{lem:reeb-hessian}, fix \(t>0\), put \(\calL_t=\Delta_b+t\xi^2\),
		and let \(c_t=c_t(m,K,a_0,a_1,t)>0\). Then
		\begin{equation}\label{eq:anisotropic-cutoff-bounds}
			\frac{\Gamma_t(\psi_R)}{\psi_R}\le\frac{c_t}{R^2},\qquad
			-\calL_t\psi_R\le c_t\Bigl(\frac1{R^2}+\frac1R\Bigr).
		\end{equation}
		If \(w\ge0\) is smooth and \(\calL_tw\ge2Aw^2-2b_0w\) on \(B_{2R}(o)\)
		with \(A>0\), \(b_0\ge0\), then
		\[
		\sup_{B_R(o)}w\le\frac{b_0+c_t(R^{-2}+R^{-1})}A ,
		\]
		and if the inequality holds on \(M\), then \(w\le b_0/A\).
	\end{lemma}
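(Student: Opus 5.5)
The plan is to mirror the proof of Lemma~\ref{lem:scalar-localization}, with \(\Delta\) replaced by \(\calL_t\) and \(|d\psi|^2\) by \(\Gamma_t(\psi)\). Write \(\psi=\psi_R=\eta(r/R)^2\) and evaluate the derivatives of \(\eta\) at \(r/R\). Off the cut locus of \(o\), the chain rule gives
\[
\Gamma_t(\psi)=\frac{4\eta^2(\eta')^2}{R^2}\,\Gamma_t(r),\qquad
\calL_t\psi=\frac{2(\eta')^2+2\eta\eta''}{R^2}\,\Gamma_t(r)
+\frac{2\eta\eta'}{R}\,\calL_tr .
\]
Since \(|dr|_g=1\), we have \(\Gamma_t(r)=|d_Hr|^2+t(\xi r)^2\le\max\{1,t\}\). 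Together with \eqref{eq:eta-bounds}, this gives the first inequality of \eqref{eq:anisotropic-cutoff-bounds} with \(c_t\ge64M_1^2\max\{1,t\}\).

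For the second inequality, use that \(\eta'\le0\) and that \(\eta'\ne0\) only where \(R\le r\le3R/2\). The \(\eta''\) term is bounded below by \(-4M_2\sqrt\eta\,\Gamma_t(r)\cdot2\eta/R^2\), which contributes \(O(\max\{1,t\}R^{-2})\). The \(\calL_tr\) term has a favourable sign wherever \(\calL_tr\ge0\), so only an upper bound for \(\calL_tr\) on \(\{R\le r\le 3R/2\}\) is needed. Decompose \(\calL_tr=\Delta_br+t\,\xi^2r\). The first part is bounded by \eqref{eq:sublap-comparison}. For the second part, use \(D_\xi\xi=0\) to write \(\xi^2r=\Hess r(\xi,\xi)\), and bound this by \(2/r+2\ell_\xi\) using Lemma~\ref{lem:reeb-hessian}, with \(L=1+a_0\) and \(k_\xi\) as stated just before the lemma. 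On the annulus this gives \(\calL_tr\le C(1/R+1)+t(4/(3R)\cdot\tfrac32+2\ell_\xi)\). Multiplying by \(2\eta|\eta'|/R\le 8M_1/R\) yields \(c_t(R^{-2}+R^{-1})\). All constants depend only on \(m,K,a_0,a_1,t\).

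The maximum principle step runs as in the proof of Theorem~\ref{thm:cr-schwarz}. Suppose \(\Phi=\psi w\) has a positive maximum at \(x_R\), and assume first that \(x_R\) is off the cut locus. There \(dw=-w\,d\psi/\psi\), and \(\calL_t\) is a second-order operator with nonnegative symbol, so \(\calL_t\Phi\le0\) at a maximum. Hence
\[
0\ge\psi^2\calL_tw+\psi w\,\calL_t\psi-2w\,\Gamma_t(\psi)
\ge2A\Phi^2-2b_0\Phi-\bigl(c_t(R^{-2}+R^{-1})+2c_tR^{-2}\bigr)\Phi .
\]
This uses \(\psi\le1\). Dividing by \(2A\Phi\) and enlarging \(c_t\) gives the bound on \(B_R(o)\). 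Letting \(R\to\infty\) gives \(w\le b_0/A\).

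\textbf{The main obstacle is the cut locus.} Here I would use Calabi's trick exactly as in Lemma~\ref{lem:scalar-localization}: replace \(r\) by \(r_\eps=\eps+d_g(\gamma(\eps),\cdot)\), which is smooth near \(x_R\). The comparison \eqref{eq:sublap-comparison} and Lemma~\ref{lem:reeb-hessian} must then hold for \(r_\eps\) with constants converging as \(\eps\downarrow0\). For Lemma~\ref{lem:reeb-hessian} this is clear, since its proof is a second-variation argument along any minimizing geodesic from the base point \(\gamma(\eps)\), and \(1/(r_\eps-\eps)\) is bounded on the annulus. For \eqref{eq:sublap-comparison} the same base-point shift is assumed, as in \cite{CDRY21}. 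The perturbed cutoff \(\psi_\eps\le\psi\) touches \(\psi\) at \(x_R\), so \(\psi_\eps w\) still attains its maximum there and the inequality above applies.
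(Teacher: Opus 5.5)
Your proposal is correct and follows the paper's route: the chain rule for \(\calL_t(\phi\circ r)\), \(\Gamma_t(r)\le\max\{1,t\}\), the bound on \(\calL_tr=\Delta_br+t\Hess r(\xi,\xi)\) from \eqref{eq:sublap-comparison} and Lemma~\ref{lem:reeb-hessian}, the maximum principle for \(\psi_Rw\), and Calabi's argument at cut points. One small slip: since \(\eta'\le0\), the term \(\frac{2\eta\eta'}{R}\calL_tr\) has a favourable sign where \(\calL_tr\le0\), not where \(\calL_tr\ge0\), although your conclusion that only an upper bound for \(\calL_tr\) is needed is right.
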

	
	\begin{proof}
		The operator \(\calL_t\) is elliptic with
		\(\Gamma_t(w)=|d_Hw|^2+t(\xi w)^2\le\max\{1,t\}|dw|^2\). By
		\eqref{eq:eta-bounds}, on \(\{\psi_R>0\}\),
		\[
		\frac{\Gamma_t(\psi_R)}{\psi_R}=\frac{4|\eta'(r/R)|^2}{R^2}\Gamma_t(r)
		\le\frac{64M_1^2\max\{1,t\}}{R^2}\,\eta(r/R)
		\le\frac{64M_1^2\max\{1,t\}}{R^2},
		\]
		and the quotient extends continuously by zero where \(\psi_R=0\). Since
		\(\calL_t(\phi\circ r)=\phi'\calL_tr+\phi''\Gamma_t(r)\),
		\begin{align*}
			\calL_t\psi_R&=\frac{2(\eta')^2+2\eta\eta''}{R^2}\Gamma_t(r)
			+\frac{2\eta\eta'}R\calL_tr,\\
			\calL_tr&=\Delta_br+t\Hess r(\xi,\xi)\le C(r^{-1}+1)
			+t\Bigl(\frac2r+2\ell_\xi\Bigr)\le c_t(r^{-1}+1),
		\end{align*}
		using \eqref{eq:sublap-comparison}, \eqref{eq:reeb-hessian} and
		\(\xi^2r=\Hess r(\xi,\xi)\). Together with \(\Gamma_t(r)\le\max\{1,t\}\)
		and \(r\ge R\) on the support of \(\eta'\), this gives
		\eqref{eq:anisotropic-cutoff-bounds}. Let \(x_R\) be a maximum point of
		\(\Phi=\psi_Rw\) with \(\Phi(x_R)>0\). There \(dw=-w\,d\psi_R/\psi_R\),
		and
		\begin{align*}
			0\ge\psi_R\calL_t\Phi&=\psi_R^2\calL_tw+\psi_Rw\calL_t\psi_R
			-2w\Gamma_t(\psi_R)\\
			&\ge2A\Phi^2-2b_0\Phi-c_t\Bigl(\frac1{R^2}+\frac1R\Bigr)\Phi ,
		\end{align*}
		after enlarging \(c_t\). Hence \(A\Phi\le b_0+c_t(R^{-2}+R^{-1})\) at the
		maximum, and the same bound holds for \(Aw\) on \(B_R(o)\). Cut points are
		handled by Calabi's argument as in Lemma~\ref{lem:scalar-localization},
		the singular term in \eqref{eq:reeb-hessian} becoming \(2/(r-\eps)\).
		Letting \(R\to\infty\) with \(t\) fixed gives the global bound.
	\end{proof}
	
	\noindent\textbf{Acknowledgements} The first author was partially supported by the Basic Research Program of Jiangsu (Grant No. BK20250730). We would also like to thank the referee for the comments and suggestions that helped improve our manuscript.
	
	\medskip
	\noindent\textbf{Data availability}
	Data sharing is not applicable to this article as no datasets were generated or analyzed during the current study.
	
	\vspace{1em}
	\noindent\textbf{Declarations}
	
	\medskip
	\noindent\textbf{Conflict of interest}
	On behalf of all authors, the corresponding author states that there is no conflict of interest.
	
	\medskip
	\noindent\textbf{Open Access}
	This article is licensed under a Creative Commons Attribution 4.0 International License, which permits use, sharing, adaptation, distribution and reproduction in any medium or format, as long as you give appropriate credit to the original author(s) and the source, provide a link to the Creative Commons licence, and indicate if changes were made. The images or other third party material in this article are included in the article's Creative Commons licence, unless indicated otherwise in a credit line to the material. If material is not included in the article's Creative Commons licence and your intended use is not permitted by statutory regulation or exceeds the permitted use, you will need to obtain permission directly from the copyright holder. To view a copy of this licence, visit \url{http://creativecommons.org/licenses/by/4.0/}.

\end{document}